\documentclass[11pt]{article}

\usepackage[margin=1in]{geometry}
\usepackage{amsmath,amssymb,amsthm,mathtools,bm,mathrsfs}
\usepackage{enumitem}
\usepackage{booktabs}
\usepackage{array}
\usepackage{longtable}
\usepackage{capt-of}
\usepackage{microtype}
\usepackage{graphicx}
\usepackage{placeins}
\usepackage{xcolor}
\definecolor{porPurple}{HTML}{482878}
\definecolor{porIndigo}{HTML}{414487}
\definecolor{porBlue}{HTML}{355F8D}
\definecolor{porTeal}{HTML}{2A788E}
\definecolor{porGreen}{HTML}{5AAE86}
\definecolor{porLime}{HTML}{86B94A}
\definecolor{porYellow}{HTML}{C8D93D}
\definecolor{porMagenta}{HTML}{CC4778}
\definecolor{porOrange}{HTML}{C76C2B}
\usepackage{tikz}
\usetikzlibrary{arrows.meta,positioning,calc,fit,backgrounds}
\usepackage{pgfplots}
\pgfplotsset{compat=1.18}
\usepackage{url}
\usepackage[authoryear,round]{natbib}
\bibpunct{(}{)}{;}{a}{}{,}
\providecommand{\BIBand}{and}
\usepackage[unicode,hidelinks,hypertexnames=false]{hyperref}
\hypersetup{pdftitle={The Price of Relearning: Ambiguity Provenance in Dynamic Decisions},pdfauthor={Han Yan\c{c}}}
\newcommand{\ECref}[1]{\hyperref[sec:EC#1]{EC.#1}}
\usepackage{aliascnt}
\allowdisplaybreaks
\newtheorem{theorem}{Theorem}[section]
\newaliascnt{proposition}{theorem}
\newtheorem{proposition}[proposition]{Proposition}
\aliascntresetthe{proposition}
\newaliascnt{lemma}{theorem}
\newtheorem{lemma}[lemma]{Lemma}
\aliascntresetthe{lemma}
\newaliascnt{corollary}{theorem}
\newtheorem{corollary}[corollary]{Corollary}
\aliascntresetthe{corollary}
\theoremstyle{definition}
\newaliascnt{definition}{theorem}
\newtheorem{definition}[definition]{Definition}
\aliascntresetthe{definition}
\newaliascnt{example}{theorem}

\aliascntresetthe{example}
\newaliascnt{assumption}{theorem}
\newtheorem{assumption}[assumption]{Assumption}
\aliascntresetthe{assumption}
\theoremstyle{remark}
\newaliascnt{remark}{theorem}

\aliascntresetthe{remark}
\usepackage[nameinlink,capitalize,noabbrev]{cleveref}
\crefname{theorem}{Theorem}{Theorems}
\crefname{proposition}{Proposition}{Propositions}
\crefname{lemma}{Lemma}{Lemmas}
\crefname{corollary}{Corollary}{Corollaries}
\crefname{definition}{Definition}{Definitions}
\crefname{example}{Example}{Examples}
\crefname{assumption}{Assumption}{Assumptions}
\crefname{remark}{Remark}{Remarks}

\newcommand{\R}{\mathbb{R}}

\newcommand{\Pp}{\mathbb{P}}
\newcommand{\E}{\mathbb{E}}

\newcommand{\A}{\mathcal{A}}

\newcommand{\F}{\mathcal{F}}

\newcommand{\Hh}{\mathcal{H}}
\newcommand{\ClrH}{\mathsf{H}^{\circ}}
\newcommand{\K}{\mathcal{K}}

\newcommand{\M}{\mathcal{M}}
\newcommand{\Q}{\mathcal{Q}}

\newcommand{\V}{\mathcal{V}}
\newcommand{\X}{\mathcal{X}}
\newcommand{\Y}{\mathcal{Y}}

\newcommand{\clco}{\overline{\operatorname{co}}}
\newcommand{\argmax}{\operatorname*{arg\,max}}

\newcommand{\dist}{\operatorname{dist}}

\newcommand{\Rect}{\operatorname{Rect}}
\newcommand{\supp}{\operatorname{supp}}
\newcommand{\clr}{\operatorname{clr}}

\newcommand{\TV}{\operatorname{TV}}
\newcommand{\PoR}{\operatorname{PoR}}

\newcommand{\Dop}{D_{\mathrm{op}}}
\newcommand{\Dpred}{D_{\mathrm{pred}}}
\newcommand{\Dctrl}{D_{\mathrm{ctrl}}}
\newcommand{\Deltao}{\Delta_m^\circ}
\newcommand{\normalize}[1]{\mathsf{N}\!\left(#1\right)}
\newcommand{\fresh}{\mathrm{F}}
\newcommand{\inh}{\mathrm{I}}

\newcommand{\spe}{\mathrm{SPE}}

\newcommand{\softmax}{\operatorname{softmax}}

\title{\textbf{The Price of Relearning:}\\
Ambiguity Provenance in Dynamic Decisions}
\author{Han Yan\c{c}\\
School of Physical and Mathematical Sciences\\
Nanyang Technological University, Singapore}
\date{}

\begin{document}

\maketitle
\vspace{-3em}

\begin{abstract}
Dynamic robust systems often rebuild ambiguity sets as data arrive. Relearning can then replace the evaluator rather than merely update its beliefs; we call this dependence on an uncertainty set's date of origin ambiguity provenance. Under weak-evidence richness, a likelihood quotient exactly characterizes continuous compact-valued reconstruction rules that preserve Bayesian provenance. When compatibility fails and the discrepancy is decision-visible, sophisticated behavior admits an exact triangular evaluator-vintage representation before operationally redundant vintages are quotiented out. The directed welfare loss from later reconstruction is the Price of Relearning. We characterize when vintage state can be compressed, the divide between polynomial evaluation of specified finite models and NP-hard universal certification, and a compatible reconstruction that repairs the protocol. A Gaussian pricing benchmark closes the reconstruction--action--information loop; scanner data calibrate its scale without a causal protocol claim.
\end{abstract}

\noindent\textbf{Keywords:} ambiguity provenance; Price of Relearning; robust sequential decisions.

\section{Introduction}\label{sec:intro}

An organization can update every probability correctly and still change the criterion by which its earlier plan is judged. A retailer revises demand, a platform revises response, or a service system revises arrivals and rebuilds an ambiguity set around the new estimate. The update may be statistically coherent at every date, yet the reconstructed set need not be the Bayesian continuation of the models that justified the earlier action. Relearning can replace the evaluator rather than merely update its beliefs.

Prior-by-prior continuation is the standard multiple-prior Bayesian benchmark \citep{Pires2002,Kovach2024}; we call it \emph{Bayesian inheritance}, and call reapplication of a statistical constructor \emph{fresh reconstruction}. The distinction concerns the \emph{provenance} of the uncertainty object, not a failure of Bayes' rule or a change in preferences. The two-state example in \cref{sec:model} already produces opposite robust choices after the same signal.

\paragraph{Ambiguity Provenance Principle.}
\emph{A statistically coherent fresh reconstruction can replace the inherited evaluator. When that replacement is decision-visible, evaluator vintage becomes a dynamic state variable; its directed welfare consequence is the Price of Relearning.} The paper asks when provenance is preserved, what state is needed when it is not, whether that state can be discarded, and how reconstruction can be repaired.

\begin{figure}[t]
\centering
\resizebox{0.99\textwidth}{!}{%
\begin{tikzpicture}[
  >=Latex,
  every node/.style={font=\small},
  box/.style={draw=black!72, rounded corners=2.2pt, align=center,
             minimum height=13mm, inner xsep=6pt, inner ysep=5pt,
             fill=black!2, line width=0.45pt},
  protocol/.style={draw=black!64, rounded corners=2pt, align=center,
                  minimum width=46mm, minimum height=11.5mm,
                  inner xsep=5pt, inner ysep=4.5pt, fill=white, line width=0.45pt},
  outcome/.style={draw=black!78, rounded corners=2.2pt, align=center,
                 minimum height=13mm, inner xsep=6pt, inner ysep=5pt,
                 fill=black!5, line width=0.5pt},
  flow/.style={semithick, draw=black!82},
  arr/.style={-{Latex[length=2.0mm,width=1.25mm]}, semithick, draw=black!82,
             shorten <=0pt, shorten >=0.35pt},
  fb/.style={-{Latex[length=2.0mm,width=1.25mm]}, semithick, dashed, draw=black!58,
            shorten <=1.1pt, shorten >=2.0pt}
]
\node[box, minimum width=39mm] (evidence) at (0,0)
  {Controlled evidence\\action, observation,\\belief update};

\coordinate (split) at (3.05,0);
\node[protocol] (inherit) at (6.30,1.45)
  {Bayesian inheritance\\continue earlier ambiguity};
\node[protocol] (fresh) at (6.30,-1.45)
  {Fresh reconstruction\\rebuild from new data};

\coordinate (merge) at (10.25,0);
\node[outcome, minimum width=42mm] (visible) at (13.35,0)
  {Decision-visible\\protocol discrepancy};
\node[outcome, minimum width=36mm] (revision) at (17.95,0)
  {Future policy\\revision};
\node[outcome, minimum width=31mm] (por) at (22.15,0)
  {Price of\\Relearning};

\node[box, minimum width=49mm] (learning) at (17.95,-3.35)
  {Future data and learning};

\draw[flow] (evidence.east) -- (split);
\draw[arr] (split) |- (inherit.west);
\draw[arr] (split) |- (fresh.west);

\draw[flow] (inherit.east) -| (merge);
\draw[flow] (fresh.east) -| (merge);
\draw[arr] (merge) -- (visible.west);

\draw[arr] (visible.east) -- (revision.west);
\draw[arr] (revision.east) -- (por.west);
\draw[arr] (revision.south) -- (learning.north);

\coordinate (fbRight) at ($(learning.south)+(0,-9mm)$);
\coordinate (fbLeft) at (evidence.south |- fbRight);
\draw[fb] (learning.south) -- (fbRight)
  -- node[midway,below,font=\scriptsize,fill=white,inner sep=1.8pt]
  {future decisions generate the next data}
  (fbLeft) -- (evidence.south);

\node[font=\scriptsize, text=black!60, align=center]
  at (6.30,2.48) {same evidence, different ambiguity update};
\end{tikzpicture}%
}
\caption{Ambiguity provenance mechanism.  The same evidence generates Bayesian inheritance or fresh reconstruction; only a decision-visible discrepancy revises policy, creates the Price of Relearning, and feeds back into future data.}
\label{fig:conceptual-mechanism}
\end{figure}

The four questions form one hierarchy. \emph{When is provenance preserved?} \Cref{thm:likelihood-quotient-characterization} gives the likelihood-quotient characterization. \emph{What state is required otherwise, and what does replacement cost?} \Cref{thm:triangular-v2,thm:exact-dynamic-por-representation} give the evaluator-vintage recursion and exact Price-of-Relearning representation. \emph{Can that state be discarded?} Vintage quotients give positive reductions; endpoint and memory results rule out the stated low-complexity representations uniformly over the unrestricted class; explicitly listed finite models remain polynomially evaluable, whereas universal worst-reward certification is NP-hard. \emph{What should be done?} Compatible design restricts reconstruction to provenance-preserving protocols while accounting for the induced sophisticated policy.

A conditional myopic Gaussian benchmark closes the chain: fresh reconstruction can lower price, reduce Fisher information, and preserve the uncertainty that triggers later reconstruction. Scanner data calibrate the channel's scale; the identification result states why observational histories alone do not identify a causal protocol effect.

\paragraph{Relation to the literature.}
Bayesian updating, multiple-prior preferences, rectangularity, sophisticated behavior, and robust dynamic programming are established \citep{GilboaSchmeidler1989,Pires2002,EpsteinSchneider2003,Strotz1955,Iyengar2005}. Dynamic risk measures and robust control study consistency or learning within specified intertemporal evaluation architectures \citep{MorescoMailhotPesenti2024,EpsteinJi2020,LinRenZhou2022,ShapiroZhouLinWang2025,MaChenXu2026BCR}; Bayesian-DRO and learned uncertainty sets reconstruct a current robust criterion from data \citep{ShapiroZhouLin2023,DellaportaOHaraDamoulas2025,WangVanParysStellato2023}. Our question is their interface: when does repeated reconstruction reproduce prior-by-prior Bayesian inheritance, and what follows when it does not? Adjacent 2026 work studies posterior-updated ambiguity in control and portfolio models \citep{MaChenXuZhou2026,LiangLiuMa2026}, ambiguity learning and ex-ante/interim statistical consistency \citep{Lim2026,LimPark2026}, learned or stationary ambiguity \citep{Guo2026,MuellerAkkariWoodGonon2026}, preference-augmented state reduction \citep{Haskell2026}, and risk measures compatible with dynamic programming under epistemic uncertainty \citep{BenyamineGrandClementPetrikJordanDurmus2026}. \citet{ParkPark2026} recover latent uncertainty from dynamic sublinear valuations. Our object is different: for the same evidence path, we compare fresh reconstruction with the Bayesian image of an earlier ambiguity set and trace any mismatch through evaluator vintage, equilibrium, welfare, representation, certification, and compatible repair. Standard state abstraction remains complementary \citep{GivanDeanGreig2003,LiWalshLittman2006}.

\section{Controlled Learning and Ambiguity Vintages}\label{sec:model}

We formalize the one distinction used throughout: an earlier ambiguity set may be continued prior by prior, or a constructor may be reapplied to the same updated statistical state. The first preserves provenance by construction; the second need not.

\subsection{Finite parameter model and Bayesian update}

Let \(\Theta=\{1,\ldots,m\}\), \(m\ge2\), and define
\[
\Delta_m:=\left\{p\in\R_+^m:\sum_{i=1}^mp_i=1\right\},
\qquad
\Deltao:=\{p\in\Delta_m:p_i>0\ \forall i\}.
\]
For \(x\in\R_+^m\setminus\{0\}\), write
\[
\normalize{x}:=\frac{x}{\sum_i x_i}.
\]
For \(p\in\Delta_m\) and a nonnegative likelihood vector \(\lambda\in\R_+^m\) with \(p^\top\lambda>0\), define the posterior
\begin{equation}\label{eq:bayes}
B_\lambda(p):=\normalize{(p_i\lambda_i)_{i=1}^m}.
\end{equation}
Thus strictly positive likelihoods define \(B_\lambda\) on all of \(\Delta_m\), and map \(\Deltao\) into \(\Deltao\). For any map \(F\) and any set \(A\) contained in its domain, write
\[
F^{\#}[A]:=\{F(q):q\in A\}
\]
for its set image.
A fresh ambiguity constructor at date \(t\) is a nonempty-valued map
\[
\Phi_t:\Deltao\rightrightarrows\Deltao.
\]
Regularity is result specific: whenever Hausdorff geometry is used, the relevant constructor images are assumed nonempty and compact, and Hausdorff continuity is imposed only where stated.
Fix a nominal prior \(\mu_0\in\Deltao\).  Along a realized path with likelihood vectors \(\lambda_1,\lambda_2,\ldots\), define the nominal posterior recursively by
\[
\mu_{u+1}:=B_{\lambda_{u+1}}(\mu_u)
\]
whenever the Bayes denominator is positive.  We require \(\mu_t\in\Deltao\) whenever fresh reconstruction \(\Phi_t(\mu_t)\) is invoked; strictly positive likelihood vectors along every feasible history are sufficient.

Recent preprints propose posterior-centered or data-adaptive ambiguity constructions in Bayesian and decision-focused distributionally robust optimization \citep{MaChenXuZhou2026,Guo2026}. If date \(s\) starts from \(\M_s^{\fresh}=\Phi_s(\mu_s)\), let \(B_{s:t}\) denote the composition of the realized Bayes maps on its natural domain
\[
\mathsf D_{s:t}:=\operatorname{dom}(B_{s:t})
=\{p\in\Delta_m:\text{every Bayes update along the realized path is well defined}\}.
\]
At every inherited evaluation in this parameter-generated protocol we require the survivor set \(\M_s^{\fresh}\cap\mathsf D_{s:t}\) to be nonempty, and define the inherited ambiguity by the prior-by-prior Bayesian image
\begin{equation}\label{eq:survivor-bayes-image}
\M_{s,t}^{\inh}:=B_{s:t}^{\#}\!\left[\M_s^{\fresh}\cap\mathsf D_{s:t}\right].
\end{equation}
Under strictly positive likelihoods, \(\mathsf D_{s:t}=\Delta_m\). For the interior fresh-constructor geometry below, survival is therefore automatic along every admissible path. The partial-domain notation is retained only for boundary/static-parameter continuations and null-history scope. Prior-by-prior Bayesian updating is classical \citep{Pires2002,Kovach2024}; data-revised sets distinct from this benchmark are studied by \citet{Cheng2026}. The fresh date-\(t\) set is
\[
\M_t^{\fresh}:=\Phi_t(\mu_t).
\]
The cross-vintage object is the pair \((\M_{s,t}^{\inh},\M_t^{\fresh})\).  These domain conditions belong only to the finite-parameter reconstruction device in this section; the later vintage-kernel theory takes its nonempty kernel correspondences as primitives and does not impose full support.

\subsection{Finite controlled event tree}

Time is \(t=0,\ldots,T\). The set of histories at date \(t\) is finite and denoted \(\Hh_t\). At \(h_t\), the feasible action set \(\A_t(h_t)\) is finite and nonempty. The next observation lies in a finite set \(\Y_{t+1}(h_t,a)\). Under parameter \(i\), its conditional law is
\[
P_i(\cdot\mid h_t,a)\in\Delta(\Y_{t+1}(h_t,a)).
\]
For the controlled tree, the sufficient full-support condition above is \(P_i(y\mid h,a)>0\) for every parameter \(i\), feasible \((h,a)\), and \(y\in\Y_{t+1}(h,a)\).  Without it, \eqref{eq:survivor-bayes-image} restricts inheritance to priors assigning the realized path positive probability, as formalized again in \cref{prop:static-parameter-rect}.  Off-path equilibrium kernels at histories not reached by a candidate path law are specified directly by the rectangular correspondences introduced in \cref{sec:equilibrium}, rather than inferred from a null-event conditional.
The stage reward is \(r_t(h_t,a)\), the terminal reward is \(g(h_T)\), and \(\beta\in(0,1]\). We assume
\begin{equation}\label{eq:reward-bounds}
|r_t(h,a)|\le R_t,
\qquad
|g(h_T)|\le G.
\end{equation}
Define continuation bounds recursively by
\begin{equation}\label{eq:B-bounds}
B_T:=G,
\qquad
B_t:=R_t+\beta B_{t+1}.
\end{equation}

For \((h_t,a)\), stack the parameter-specific predictive laws into the stochastic matrix
\[
P_{h_t,a}:=
\begin{pmatrix}
P_1(\cdot\mid h_t,a)\\
\vdots\\
P_m(\cdot\mid h_t,a)
\end{pmatrix}.
\]
A parameter distribution \(\nu\in\Delta_m\) induces the one-step predictive law \(\nu P_{h_t,a}\). Hence a parameter ambiguity set \(M\subseteq\Delta_m\) induces
\begin{equation}\label{eq:predictive-push}
\mathscr P_{h_t,a}(M):=\{\nu P_{h_t,a}:\nu\in M\}.
\end{equation}
At a given history, the inherited and fresh \emph{admissible one-step mixture sets} are
\begin{equation}\label{eq:kernel-from-parameter}
\K^{\mathrm{mix}}_{s,t}(h_t,a):=\mathscr P_{h_t,a}(\M_{s,t}^{\inh}),
\qquad
\K^{\mathrm{mix}}_{t,t}(h_t,a):=\mathscr P_{h_t,a}(\M_t^{\fresh}).
\end{equation}
A crucial semantic distinction is that a fixed but unknown parameter is drawn once and therefore couples future transitions across histories. The path-law family generated by a single parameter mixture is generally nonrectangular; independently selecting kernels from the local sets in \eqref{eq:kernel-from-parameter} produces its rectangularization rather than the original fixed-parameter model. \Cref{sec:dynamic} formalizes this committed-versus-rectangular distinction. The bridge in \eqref{eq:kernel-from-parameter} should therefore be read as a statement about local conditional kernels, not as an assertion that static parameter ambiguity is automatically rectangular.

\subsection{A minimal reconstruction paradox}

The phenomenon does not require nonadditive utility or nonexponential discounting. Let \(\Theta=\{L,H\}\), nominal prior \(p_0=1/2\), and initial ambiguity \([0.3,0.7]\). A signal satisfies
\[
\Pp(Y=1\mid H)=0.8,
\qquad
\Pp(Y=1\mid L)=0.2.
\]
After \(Y=1\), the nominal posterior is \(0.8\). Prior-by-prior updating sends the initial interval to
\[
\left[
\frac{0.8(0.3)}{0.8(0.3)+0.2(0.7)},
\frac{0.8(0.7)}{0.8(0.7)+0.2(0.3)}
\right]
\approx[0.6316,0.9032].
\]
Suppose the next self instead reconstructs \([0.7,0.9]\). For a risky payoff \((1,-1)\) and safe payoff \(0.3\), the inherited evaluator assigns risky value approximately \(0.2632\), whereas the fresh evaluator assigns value \(0.4\). Thus the earlier self plans safe and the future self chooses risky. The remainder of the paper identifies the geometry, observability, equilibrium representation, and welfare cost behind this reversal.

\section{Preserving Ambiguity Provenance}\label{sec:geometry}

We begin with compatibility: when does fresh reconstruction preserve the inherited evaluator? In centered log-ratio coordinates, Bayesian updating is translation, so the problem becomes restricted equivariance.

\subsection{Bayesian updating as translation}

For \(p,q\in\Deltao\), define Aitchison perturbation \citep{Aitchison1986,EgozcueDiazPawlowsky2006}
\begin{equation}\label{eq:oplus}
p\oplus q:=\normalize{(p_iq_i)_{i=1}^m}.
\end{equation}
The neutral element is \(u=(1/m,\ldots,1/m)\). Let
\[
\ClrH:=\left\{x\in\R^m:\mathbf 1^\top x=0\right\}
\]
and define the centered log-ratio map
\begin{equation}\label{eq:clr-v2}
\clr(p)_i:=\log p_i-\frac1m\sum_{j=1}^m\log p_j.
\end{equation}
It is a bijection from \(\Deltao\) to \(\ClrH\), with inverse \(\softmax(x)=\normalize{(e^{x_i})_i}\).

\begin{lemma}[Bayes translation]\label{lem:bayes-translation-v2}
For \(p\in\Deltao\) and \(\lambda\in\R_{++}^m\), let \(q_\lambda=\normalize{\lambda}\). Then
\[
B_\lambda(p)=p\oplus q_\lambda,
\qquad
\clr(p\oplus q)=\clr(p)+\clr(q).
\]
\end{lemma}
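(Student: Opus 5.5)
The plan is to verify both identities directly from the definitions \eqref{eq:bayes}, \eqref{eq:oplus}, and \eqref{eq:clr-v2}. The only tool needed is that \(\normalize{\cdot}\) is invariant under positive rescaling: for \(c>0\) and \(x\in\R_+^m\setminus\{0\}\), \(\normalize{cx}=\normalize{x}\).

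\emph{First identity.} Write \(q_\lambda=\lambda/S\) with \(S:=\sum_j\lambda_j>0\). Then \(q_\lambda\in\Deltao\), because \(\lambda\in\R_{++}^m\). By \eqref{eq:oplus},
\[
p\oplus q_\lambda=\normalize{(p_i\lambda_i/S)_{i}}=\normalize{(p_i\lambda_i)_i}=B_\lambda(p).
\]
The Bayes denominator is \(p^\top\lambda>0\), since \(p\in\Deltao\) and \(\lambda\) is strictly positive, so \(B_\lambda(p)\) is well defined. This step is immediate.

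\emph{Second identity.} First observe that \(\clr\) is invariant under positive rescaling of its argument. For \(x\in\R_{++}^m\) and \(c>0\), the expression \(\log(cx_i)-\frac1m\sum_j\log(cx_j)\) does not depend on \(c\), because the \(\log c\) terms cancel. Hence \(\clr\) may be applied to the unnormalized vector \((p_iq_i)_i\):
\[
\clr(p\oplus q)_i=\log(p_iq_i)-\frac1m\sum_j\log(p_jq_j).
\]
This splits additively into \(\bigl(\log p_i-\frac1m\sum_j\log p_j\bigr)+\bigl(\log q_i-\frac1m\sum_j\log q_j\bigr)=\clr(p)_i+\clr(q)_i\).

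There is no genuine obstacle. The only point needing care is the scale invariance of \(\clr\) when the normalizing constant \(\sum_j p_jq_j\) is dropped. This is handled by the \(\log c\) cancellation above, together with the positivity of all entries, which keeps the logarithms finite. As a remark, these identities show that \(\clr\) is a group isomorphism from \((\Deltao,\oplus)\) to \((\ClrH,+)\). Consequently, Bayesian updating by \(\lambda\) becomes translation by \(\clr(q_\lambda)\), which is the use the section makes of the lemma.
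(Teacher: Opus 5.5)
Your proof is correct and takes essentially the same approach as the paper's one-line argument. In the first identity the normalizing constant $S$ drops out under $\normalize{\cdot}$, and in the second the common log normalizer $\log\sum_j p_jq_j$ is removed by centering.
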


\noindent\emph{Proof.} Normalization cancels in the componentwise product, and centering removes the common log normalizer. \hfill\(\square\)

The operation is classical in compositional geometry and Bayes Hilbert spaces \citep{Aitchison1986,EgozcueDiazPawlowsky2006,VanDenBoogaartEgozcuePawlowsky2014}; we use it only to expose the reconstruction symmetry.

\subsection{Full and restricted likelihood equivariance}

For \(A\subseteq\Deltao\), write \(A\oplus q=\{a\oplus q:a\in A\}\).

\begin{definition}[Full Bayes equivariance]\label{def:full-equiv}
A nonempty-valued constructor \(\Phi:\Deltao\rightrightarrows\Deltao\) is fully Bayes-equivariant if
\begin{equation}\label{eq:full-equiv}
\Phi(p\oplus q)=\Phi(p)\oplus q
\qquad\forall p,q\in\Deltao.
\end{equation}
\end{definition}

\begin{proposition}[Full-orbit representation]\label{thm:full-representation-v2}
A constructor \(\Phi\) is fully Bayes-equivariant if and only if there exists a unique nonempty set \(K\subseteq\Deltao\) such that
\begin{equation}\label{eq:fixed-shape}
\Phi(p)=K\oplus p
\qquad\forall p\in\Deltao.
\end{equation}
The shape is \(K=\Phi(u)\). Equivalently, with \(C=\clr(K)\),
\[
\clr(\Phi(p))=\clr(p)+C.
\]
\end{proposition}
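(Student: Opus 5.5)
The argument rests on the fact that \((\Deltao,\oplus)\) is an abelian group with neutral element \(u\), and that \(\clr\) is a group isomorphism onto \((\ClrH,+)\). Both follow from \cref{lem:bayes-translation-v2} together with the bijectivity of \(\clr\). The inverse of \(q\) is \(q^{-1}=\normalize{(1/q_i)_i}\), since \(\clr(q^{-1})=-\clr(q)\). Commutativity and associativity of \(\oplus\) are inherited from componentwise multiplication, because normalization commutes with rescaling. I will also use the set-level identities \((A\oplus q)\oplus r=A\oplus(q\oplus r)\) and \(A\oplus u=A\), which hold elementwise.

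\emph{Sufficiency.} Suppose \(\Phi(p)=K\oplus p\) for a nonempty \(K\subseteq\Deltao\). Then \(\Phi\) is nonempty-valued. For \(p,q\in\Deltao\), associativity gives
\[
\Phi(p\oplus q)=K\oplus(p\oplus q)=(K\oplus p)\oplus q=\Phi(p)\oplus q.
\]
This is exactly \eqref{eq:full-equiv}.

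\emph{Necessity.} Suppose \(\Phi\) satisfies \eqref{eq:full-equiv}, and set \(K:=\Phi(u)\), which is nonempty. For any \(p\in\Deltao\), write \(p=u\oplus p\) and apply \eqref{eq:full-equiv} with the base point \(u\) and the translation \(p\):
\[
\Phi(p)=\Phi(u\oplus p)=\Phi(u)\oplus p=K\oplus p.
\]
Only the instances of \eqref{eq:full-equiv} with base point \(u\) are needed here.

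\emph{Uniqueness.} If \(K\oplus p=K'\oplus p\) for all \(p\), take \(p=u\) to get \(K=K\oplus u=K'\oplus u=K'\). This argument also identifies the shape as \(K=\Phi(u)\).

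\emph{Log-ratio form.} By \cref{lem:bayes-translation-v2}, \(\clr(k\oplus p)=\clr(k)+\clr(p)\) for every \(k\in K\). Taking images over \(k\in K\) gives
\[
\clr(\Phi(p))=\clr(K)+\clr(p)=\clr(p)+C.
\]
Conversely, because \(\clr\) is a bijection, the displayed identity determines \(\Phi(p)=\softmax(\clr(p)+C)=K\oplus p\). So the two formulations are equivalent.

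I do not expect a genuine obstacle. The only point needing care is verifying the group laws for \(\oplus\) at the level of sets, in particular that translating a set by \(q\) and then by \(r\) equals translating by \(q\oplus r\). This reduces elementwise to associativity of normalized componentwise products, or equivalently to associativity of addition in \(\ClrH\) transported through \(\clr\). No regularity assumptions such as compactness or continuity enter the argument.
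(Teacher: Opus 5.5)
Your proposal is correct and matches the paper's proof: set $K=\Phi(u)$, obtain $\Phi(p)=\Phi(u\oplus p)=K\oplus p$ from equivariance at the neutral element, get uniqueness by evaluating at $p=u$, and get the converse from associativity of $\oplus$. Your check of the $\clr$ form via \cref{lem:bayes-translation-v2} and the bijectivity of $\clr$ spells out a step the paper leaves implicit.
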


\noindent\emph{Proof.} Set \(K=\Phi(u)\); equivariance gives \(\Phi(p)=K\oplus p\), uniqueness follows at \(u\), and the converse follows by associativity. \hfill\(\square\)

The transitive case is elementary.  A statistical model may, however, generate only a restricted set $L$ of centered log-likelihood increments.  EC.1 proves the corresponding orbit theorem: exact equivariance fixes the translated ambiguity shape along every coset of the additive subgroup generated by $L$.  If that subgroup is dense, Hausdorff continuity restores the fixed-shape conclusion; when the generated closed subgroup is proper---equivalently, when the likelihood quotient is nontrivial---shape variation can survive only in likelihood-unidentified quotient degrees of freedom.  This extension clarifies scope without interrupting the cross-time argument used below.

\subsection{Cross-time compatibility and the no-free-shrinkage boundary}

\begin{definition}[Full cross-time compatibility]\label{def:cross-time-v2}
A family \(\{\Phi_t\}_{t=0}^T\) is fully cross-time compatible if
\begin{equation}\label{eq:cross-time-v2}
\Phi_{t+1}(p\oplus q)=\Phi_t(p)\oplus q
\qquad\forall t<T,\ p,q\in\Deltao.
\end{equation}
\end{definition}

\begin{proposition}[No-free-shrinkage under full compatibility]\label{thm:no-free-shrinkage-v2}
A nonempty-valued family is fully cross-time compatible if and only if there exists one nonempty set \(K\subseteq\Deltao\) such that
\[
\Phi_t(p)=K\oplus p
\qquad\forall t\in\{0,\ldots,T\},\ p\in\Deltao.
\]
Consequently, for any log-ratio norm \(\|\cdot\|_*\), a ball family
\[
\Phi_t(p)=\{q:\|\clr(q)-\clr(p)\|_*\le r_t\}
\]
is fully cross-time compatible if and only if \(r_t\) is constant.
\end{proposition}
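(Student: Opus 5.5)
The plan is to reduce the cross-time identity to a comparison with the neutral element \(u\) and then invoke \cref{thm:full-representation-v2}. For the forward direction, assume \eqref{eq:cross-time-v2}. Setting \(p=u\) gives \(\Phi_{t+1}(q)=\Phi_t(u)\oplus q\) for every \(q\in\Deltao\), since \(u\) is the neutral element. Evaluating this at \(q=u\) yields \(\Phi_{t+1}(u)=\Phi_t(u)\), so by induction on \(t\) all shapes coincide: \(\Phi_t(u)=\Phi_0(u)=:K\). This already gives \(\Phi_t(p)=K\oplus p\) for every \(t\ge1\). For \(t=0\) I will instead set \(q=u\) in \eqref{eq:cross-time-v2}, which gives \(\Phi_0(p)=\Phi_1(p)=K\oplus p\). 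Nonemptiness of \(K\) follows from nonemptiness of \(\Phi_0(u)\), and \(K\subseteq\Deltao\) because the constructor maps into \(\Deltao\).

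For the converse, assume \(\Phi_t(p)=K\oplus p\) for all \(t\). Then
\[
\Phi_{t+1}(p\oplus q)=K\oplus(p\oplus q)=(K\oplus p)\oplus q=\Phi_t(p)\oplus q,
\]
using associativity and commutativity of \(\oplus\). These properties hold because \(\oplus\) is a normalized componentwise product; equivalently, via \cref{lem:bayes-translation-v2}, it is vector addition in \(\ClrH\) transported by the bijection \(\clr\).

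For the ball corollary, I will work in clr coordinates. There \(\clr(\Phi_t(p))=\clr(p)+\mathbb B(r_t)\), where \(\mathbb B(r)=\{x\in\ClrH:\|x\|_*\le r\}\) and \(\|\cdot\|_*\) is a norm on \(\ClrH\). If \(r_t\equiv r\), take \(K=\softmax(\mathbb B(r))\) and apply the converse. Conversely, compatibility forces \(\Phi_t(u)=K\) for all \(t\), so the balls \(\mathbb B(r_t)\) must all coincide. Since a norm ball of radius \(r\) contains points of norm exactly \(r\) and none larger, the radius is determined by the set as its maximal norm. Hence \(r_t\) is constant.

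The only step needing care is the radius-recovery claim, which requires \(\ClrH\neq\{0\}\). This holds because \(m\ge2\). It also needs \(r_t\ge0\) so that each ball is nonempty, which is implicit in the family being nonempty-valued. Apart from this, the argument is a bookkeeping exercise in choosing \(p=u\) or \(q=u\).
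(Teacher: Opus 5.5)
Your proof is correct and follows the paper's route. The identity update $q=u$ makes the family time independent, and evaluating at the neutral element $u$ fixes the shape $K=\Phi_0(u)$; you inline this step rather than citing \cref{thm:full-representation-v2}, and your radius-recovery argument fills in the ball statement that the paper calls immediate. Like the paper, you tacitly need $T\ge1$, since for $T=0$ the compatibility condition is vacuous.
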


\noindent\emph{Proof.} The identity update makes the constructor time independent; \cref{thm:full-representation-v2} then applies. The converse and the ball statement are immediate. \hfill\(\square\)

Full compatibility is stronger than model-generated evidence. The next result gives the exact restricted-evidence structure and identifies when concentration forces degeneracy.

Let $L\subseteq\ClrH$ be the admissible centered log-likelihood increments, let $\Gamma(L)$ be the additive subgroup generated by $L$ (with $\Gamma(\varnothing)=\{0\}$), and set $G_L:=\overline{\Gamma(L)}$.  Write $\pi_L:\ClrH\to\ClrH/G_L$ for the quotient map and
$\Psi_t(x):=\clr(\Phi_t(\softmax x))$.  We equip $\ClrH/G_L$ with its quotient topology; equivalently, for any norm $\|\cdot\|_*$ on $\ClrH$, one may use the compatible metric
\[
d_{Q,*}(\pi_Lx,\pi_Ly):=\inf_{g\in G_L}\|x-y-g\|_*.
\]
Nonempty compact subsets of $\ClrH$ are endowed with the Hausdorff metric induced by the same norm.

For a centered log-likelihood increment $\ell\in\ClrH$, define the associated Bayes map
\[
\mathsf B_\ell:=B_{\exp(\ell)}:\Delta_m\to\Delta_m.
\]
On $\Deltao$, $\clr(\mathsf B_\ell p)=\clr(p)+\ell$ by \cref{lem:bayes-translation-v2}.

\begin{definition}[Restricted cross-time evidence compatibility]\label{def:restricted-cross-time}
The family $\{\Phi_t\}_{t\ge0}$ is $L$-compatible if
\begin{equation}\label{eq:restricted-cross-time}
\Psi_{t+1}(x+\ell)=\Psi_t(x)+\ell
\qquad\forall t\ge0,\ x\in\ClrH,\ \ell\in L.
\end{equation}
The condition compares reconstruction only along likelihood increments admitted by the statistical experiment.
\end{definition}
We call the condition $0\in\overline L$ \emph{weak-evidence richness}; equivalently, admissible centered log-likelihood increments can approach zero.

\begin{theorem}[Likelihood-quotient characterization]\label{thm:likelihood-quotient-characterization}
Assume $0\in\overline L$---equivalently, there exists a sequence $\ell_n\in L$ with $\ell_n\to0$, including the case $0\in L$---and that every $\Psi_t$ is nonempty compact-valued and Hausdorff-continuous.  The family is $L$-compatible if and only if there is a unique Hausdorff-continuous compact-valued map
\[
\mathcal C:\ClrH/G_L\rightrightarrows\ClrH
\]
such that
\begin{equation}\label{eq:likelihood-quotient-representation}
\boxed{\Psi_t(x)=x+\mathcal C(\pi_Lx)\qquad\forall t\ge0,\ x\in\ClrH.}
\end{equation}
Thus time variation disappears when arbitrarily weak evidence is admissible, while shape variation can survive exactly in likelihood-unidentified quotient degrees of freedom.

Let $\mathcal A\subseteq\ClrH$ be a set of consistency anchors: for each $x^\star\in\mathcal A$ there is a sequence $x_t\to x^\star$ such that
\[
d_{H,*}(\Psi_t(x_t),\{x^\star\})\to0.
\]
Then $\mathcal C(\pi_Lx^\star)=\{0\}$ for every anchor.  Moreover, the anchor condition forces \emph{every} continuous $L$-compatible family to be the singleton constructor if and only if
\begin{equation}\label{eq:anchor-density}
\overline{\pi_L(\mathcal A)}=\ClrH/G_L.
\end{equation}
\end{theorem}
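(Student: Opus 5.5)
The plan is to pass to the \emph{shape maps} $\mathcal C_t(x):=\Psi_t(x)-x$, meaning the translate of the compact set $\Psi_t(x)$ by $-x$. These are nonempty compact-valued and Hausdorff-continuous, because translation by $-x$ is a Hausdorff isometry that depends continuously on $x$. In these terms \eqref{eq:restricted-cross-time} reads
\[
\mathcal C_{t+1}(x+\ell)=\mathcal C_t(x)\qquad\forall t,\ x\in\ClrH,\ \ell\in L .
\]
The ``if'' direction of \eqref{eq:likelihood-quotient-representation} is then immediate: $\pi_L(x+\ell)=\pi_Lx$ for $\ell\in L\subseteq G_L$. Uniqueness is also immediate, since $\mathcal C(\pi_Lx)=\Psi_0(x)-x$ is determined by the family.

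For the ``only if'' direction I proceed in three steps.

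\emph{Step 1 (time invariance).} Take $\ell_n\in L$ with $\ell_n\to0$. Then $\mathcal C_{t+1}(x+\ell_n)=\mathcal C_t(x)$ for all $n$. Letting $n\to\infty$ and using continuity of $\mathcal C_{t+1}$ gives $\mathcal C_{t+1}(x)=\mathcal C_t(x)$, so all $\mathcal C_t$ equal a single map $\mathcal C_0$.

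\emph{Step 2 (invariance under $\Gamma(L)$).} Step 1 gives $\mathcal C_0(x+\ell)=\mathcal C_0(x)$ for $\ell\in L$. Substituting $x-\ell$ for $x$ gives invariance under $-\ell$, and iterating gives invariance under every element of $\Gamma(L)$.

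\emph{Step 3 (invariance under $G_L$).} If $g_k\in\Gamma(L)$ and $g_k\to g$, then $\mathcal C_0(x+g)=\lim_k\mathcal C_0(x+g_k)=\mathcal C_0(x)$ by continuity. Hence $\mathcal C_0$ is constant on the cosets of $G_L$ and factors as $\mathcal C_0=\mathcal C\circ\pi_L$.

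Continuity of $\mathcal C$ on $\ClrH/G_L$ follows from the universal property of the quotient topology. Alternatively, with the metric $d_{Q,*}$: given $\pi_Lx_n\to\pi_Lx$, choose representatives $x_n'=x_n-g_n$ with $x_n'\to x$. This is the one place where some care is needed, because $d_{Q,*}$ is defined by an infimum over the closed subgroup $G_L$; the infimum need not be attained, so I only take near-minimizing $g_n$. This is enough for convergence of representatives.

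For the anchor claim, fix $x^\star\in\mathcal A$ with sequence $x_t\to x^\star$. We have $\Psi_t(x_t)=x_t+\mathcal C(\pi_Lx_t)$, and by continuity of $\mathcal C\circ\pi_L$ this converges in the Hausdorff metric to $x^\star+\mathcal C(\pi_Lx^\star)$. The anchor hypothesis says the same sequence converges to $\{x^\star\}$. Uniqueness of Hausdorff limits then gives $\mathcal C(\pi_Lx^\star)=\{0\}$.

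For the equivalence with \eqref{eq:anchor-density}:
\begin{itemize}
\item \emph{Sufficiency.} If $\pi_L(\mathcal A)$ is dense, then $\mathcal C\equiv\{0\}$ on a dense set. Since $\mathcal C$ is Hausdorff-continuous, $\mathcal C\equiv\{0\}$ everywhere, so $\Psi_t(x)=\{x\}$ is the singleton constructor.
\item \emph{Necessity.} This is the main step. Suppose the closure $F:=\overline{\pi_L(\mathcal A)}$ is a proper closed subset of the metric space $(\ClrH/G_L,d_{Q,*})$. I would exhibit a non-singleton continuous $L$-compatible family satisfying every anchor condition.
\begin{itemize}
\item Pick $z_0\notin F$, and let $f(z):=\min\{1,\ d_{Q,*}(z,F)\}$, a continuous function vanishing exactly on $F$ with $f(z_0)>0$.
\item Fix $v\in\ClrH\setminus\{0\}$ and set $\mathcal C(z):=\{s v: 0\le s\le f(z)\}$. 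This is compact-valued and Hausdorff-continuous, since the Hausdorff distance between two such segments is at most $|f(z)-f(z')|\,\|v\|_*$.
\item Define $\Psi_t(x):=x+\mathcal C(\pi_Lx)$. By the ``if'' direction this family is continuous and $L$-compatible.
\item For each anchor, take the constant sequence $x_t=x^\star$. Then $\Psi_t(x^\star)=\{x^\star\}$, because $f$ vanishes on $F\ni\pi_Lx^\star$, so every anchor condition holds.
\item Yet $\Psi_t(x)$ is not a singleton at any $x$ with $\pi_Lx=z_0$.
\end{itemize}
\end{itemize}
Since the family takes values in $\ClrH$ as required, and $\softmax$ maps $\ClrH$ bijectively onto $\Deltao$, nothing further is needed to regard it as a constructor on the simplex. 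Beyond the representative-selection point above, I expect the main work to lie in this necessity construction, specifically in checking that the bump is genuinely continuous on the quotient.
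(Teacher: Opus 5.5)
Your proposal is correct and follows essentially the same route as the paper's proof. You pass to the translated shapes $\Psi_t(x)-x$, use $\ell_n\to0$ to remove time dependence, extend invariance from $L$ to $\Gamma(L)$ and by continuity to $G_L$, factor through the quotient, and prove necessity of density with a segment scaled by $\min\{1,d_{Q,*}(\cdot,F)\}$. The paper treats $\mathcal A=\varnothing$ (so $F=\varnothing$) separately with a fixed segment; your formula covers that case only under the convention $d_{Q,*}(z,\varnothing)=+\infty$, so state that convention explicitly.
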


\noindent\emph{Proof idea.} Let $S_t(x)=\Psi_t(x)-x$.  Taking admissible increments converging to zero in \eqref{eq:restricted-cross-time} gives $S_{t+1}=S_t$.  The common residual is invariant under $\Gamma(L)$ and, by continuity, under $G_L$, so it factors uniquely through the quotient, proving \eqref{eq:likelihood-quotient-representation}; the converse is immediate.  Concentration at an anchor forces its quotient shape to be $\{0\}$.  Density and continuity then force every quotient shape to vanish.  Conversely, if the projected anchors are not dense, a nondegenerate compatible constructor survives: when there are no anchors take a fixed nonzero compact segment, and otherwise multiply that segment by the distance to $\overline{\pi_L(\mathcal A)}$.  EC.1 gives the complete factorization and converse construction.

The condition $0\in\overline L$ means admissible evidence can approach an uninformative likelihood ratio; it can fail in a fixed discrete experiment whose nonzero increments are bounded away from zero. Thus the rigidity result is conditional on weak-evidence richness.

This characterization is distinct from a recent preprint on \emph{stationary ambiguity}, which prevents latent uncertainty from systematically vanishing in robust-control simulators \citep{MuellerAkkariWoodGonon2026}.  Here the quotient records precisely which statistical likelihood directions can discipline intertemporal reconstruction.

\subsection{Why statistical concentration cannot be dynamically free}

The preceding rigidity result becomes economically sharper once a reconstruction rule is required to learn.  For this subsection only, allow an indefinitely extendable sequence of constructors; any finite-horizon model is a truncation of such a sequence.

\begin{definition}[Log-ratio point consistency]\label{def:logratio-consistency}
A compact-valued constructor sequence $\{\Phi_t\}_{t\ge0}$ is \emph{log-ratio point-consistent along} $p_t\to p^\star\in\Deltao$ if
\[
d_{H,*}\!\left(\clr(\Phi_t(p_t)),\{\clr(p^\star)\}\right)\longrightarrow0
\]
for some norm $\|\cdot\|_*$ on $\ClrH$.  This is Hausdorff concentration of the entire ambiguity set at an interior limiting model, not merely consistency of its center.
\end{definition}

\begin{corollary}[Compatibility--concentration trilemma]\label{thm:compatibility-concentration}
Suppose the admissible likelihood directions are topologically generating, $G_L=\ClrH$, and the family satisfies the assumptions of \cref{thm:likelihood-quotient-characterization}.  If it is log-ratio point-consistent along one sequence $p_t\to p^\star\in\Deltao$, then
\[
\boxed{\Phi_t(p)=\{p\}\qquad\forall t\ge0,\ p\in\Deltao.}
\]
Thus nondegenerate ambiguity, exact compatibility along a topologically generating evidence class, and whole-set concentration at an interior truth cannot coexist.  Full cross-time compatibility is the special case in which every centered likelihood increment is admissible.
\end{corollary}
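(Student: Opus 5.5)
The plan is to apply \cref{thm:likelihood-quotient-characterization} directly and observe that topological generation trivializes the quotient. Since $G_L=\ClrH$, the quotient $\ClrH/G_L$ is a single point, so the representation \eqref{eq:likelihood-quotient-representation} collapses to
\[
\Psi_t(x)=x+C\qquad\forall t\ge0,\ x\in\ClrH,
\]
for one fixed nonempty compact set $C\subseteq\ClrH$ that does not depend on $t$. Everything then reduces to showing $C=\{0\}$. Translating back through $\softmax$ and \cref{lem:bayes-translation-v2} will give $\Phi_t(p)=\{p\}$.

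To show $C=\{0\}$, I will convert the log-ratio point-consistency of \cref{def:logratio-consistency} into an anchor in the sense of the theorem. Set $x_t:=\clr(p_t)$ and $x^\star:=\clr(p^\star)$. Because $\clr$ is continuous on $\Deltao$ and $p^\star$ is interior, $x_t\to x^\star$. By definition $\Psi_t(x_t)=\clr(\Phi_t(p_t))$, so the hypothesis reads $d_{H,*}(\Psi_t(x_t),\{x^\star\})\to0$ for some norm. All norms on the finite-dimensional space $\ClrH$ are equivalent, so the particular norm does not matter. Hence $x^\star$ is a consistency anchor, and the theorem gives $\mathcal C(\pi_Lx^\star)=\{0\}$, that is, $C=\{0\}$.

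For robustness I can also check this directly. We have $d_{H,*}(x_t+C,\{x^\star\})=\sup_{c\in C}\|x_t+c-x^\star\|_*$, and this tends to $\sup_{c\in C}\|c\|_*$. That limit must be zero, so again $C=\{0\}$. This direct route also shows that density condition \eqref{eq:anchor-density} holds trivially, because the quotient is a point.

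I do not expect a real obstacle. The only points needing care are bookkeeping: confirming that the theorem's hypotheses (weak-evidence richness, compact values, Hausdorff continuity) are inherited as assumed, and that $\Psi_t$ being defined on all of $\ClrH$ matches $\Phi_t$ being defined on all of $\Deltao$ under the bijection $\clr$. For the final remark, full cross-time compatibility means $L=\ClrH$, so $G_L=\ClrH$ and this corollary covers it as a special case.
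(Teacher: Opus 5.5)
Your proposal is correct and follows the paper's own argument: with $G_L=\ClrH$ the quotient $\ClrH/G_L$ is a single point, point-consistency makes $\clr(p^\star)$ a consistency anchor, and \cref{thm:likelihood-quotient-characterization} then forces the unique shape $\mathcal C$ to equal $\{0\}$, so $\Phi_t(p)=\{p\}$ by bijectivity of $\clr$. Your direct Hausdorff computation and your remark that the anchor-density condition holds trivially on a one-point quotient simply spell out the paper's one-line proof.
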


\noindent\emph{Proof.} Here $\ClrH/G_L$ is a singleton, so one anchor is dense and \cref{thm:likelihood-quotient-characterization} forces $\mathcal C=\{0\}$. \hfill\(\square\)

Concentration is not undesirable; the result says only that contraction must register as a cross-vintage protocol change unless ambiguity was absent. Statistically motivated adaptive sets can therefore fail exact inheritance compatibility \citep{MaChenXuZhou2026,LiangLiuMa2026,Guo2026,WangVanParysStellato2023}. By the triangle inequality, any concentrating shape sequence accumulates at least $d_{H,*}(C_s,\{0\})$ total Hausdorff movement from date $s$ onward, with equality for monotone concentric balls.

Whether this geometric movement reaches welfare depends on prediction and control. The following identity supplies the first link.  Let \(C_t\subseteq\ClrH\) be nonempty compact sets and define
\begin{equation}\label{eq:shape-family}
\Phi_t(p):=\softmax(\clr(p)+C_t)
:=\{\softmax(\clr(p)+c):c\in C_t\}.
\end{equation}

\begin{proposition}[Exact geometric and operational reconstruction defects]\label{thm:shape-defect}
Fix a norm \(\|\cdot\|_*\) on \(\ClrH\) and let \(d_{H,*}\) be its Hausdorff distance. After any Bayes increment \(\ell\in\ClrH\), the inherited and fresh shapes in \(\clr\)-coordinates satisfy
\begin{equation}\label{eq:exact-shape-defect}
d_{H,*}\bigl(\clr(\mathsf B_\ell^{\#}[\Phi_t(p)]),\clr(\Phi_{t+1}(\mathsf B_\ell p))\bigr)
=d_{H,*}(C_t,C_{t+1}).
\end{equation}
Let \(c_*<\infty\) satisfy \(\|x\|_\infty\le c_*\|x\|_*\). Then the parameter-space operational defect obeys
\begin{equation}\label{eq:operational-shape-bound}
\Dop\bigl(\mathsf B_\ell^{\#}[\Phi_t(p)],\Phi_{t+1}(\mathsf B_\ell p)\bigr)
\le 2c_*\,d_{H,*}(C_t,C_{t+1}).
\end{equation}
For concentric norm balls \(C_t=r_t\mathbb B_*\), the geometric defect equals \(|r_{t+1}-r_t|\).
\end{proposition}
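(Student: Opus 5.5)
The plan is to move everything into centered log-ratio coordinates, where \cref{lem:bayes-translation-v2} turns Bayes updating into translation. This gives the geometric identity \eqref{eq:exact-shape-defect} directly. The operational bound then comes from a Lipschitz estimate for \(\softmax\), and the ball statement from an elementary fact about Hausdorff distance between concentric balls.

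\emph{Step 1 (exact geometric defect).} Write \(x=\clr(p)\). By \eqref{eq:shape-family}, \(\Phi_t(p)=\softmax(x+C_t)\). Apply \(\mathsf B_\ell\) prior by prior and use \(\clr(\mathsf B_\ell q)=\clr(q)+\ell\) on \(\Deltao\). This gives
\[
\clr\bigl(\mathsf B_\ell^{\#}[\Phi_t(p)]\bigr)=x+\ell+C_t .
\]
On the fresh side, \(\clr(\mathsf B_\ell p)=x+\ell\), so
\[
\clr\bigl(\Phi_{t+1}(\mathsf B_\ell p)\bigr)=x+\ell+C_{t+1}.
\]
Hausdorff distance induced by a norm is translation invariant, so the defect equals \(d_{H,*}(C_t,C_{t+1})\), independently of \(p\) and \(\ell\).

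\emph{Step 2 (operational bound).} I read \(\Dop\) as a Hausdorff-type distance between parameter sets built from the \(\ell_1\) (dual bounded-test-function) distance on \(\Delta_m\). If its definition has a different normalization, the constant is adjusted accordingly.

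The key estimate is \(\|\softmax(y)-\softmax(z)\|_1\le 2\|y-z\|_\infty\). To prove it, integrate the Jacobian \(J(w)=\operatorname{diag}(\pi)-\pi\pi^\top\), with \(\pi=\softmax(w)\), along the segment from \(z\) to \(y\). For any direction \(v\),
\[
\|J(w)v\|_1=\sum_i\pi_i\,|v_i-\pi^\top v|\le 2\|v\|_\infty .
\]
Combining with \(\|v\|_\infty\le c_*\|v\|_*\), \(\softmax\) is \(2c_*\)-Lipschitz from \((\ClrH,\|\cdot\|_*)\) into \((\Delta_m,\|\cdot\|_1)\).

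A Lipschitz map expands Hausdorff distance of images by at most its constant. Indeed, for each \(a\in A\) pick \(b\in B\) with \(\|a-b\|_*\le d_H(A,B)+\varepsilon\), push both points forward, and symmetrize. Applying this to the two sets of Step 1 gives \eqref{eq:operational-shape-bound}.

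\emph{Step 3 (balls).} Take \(r\le s\). Every point of \(r\mathbb B_*\) already lies in \(s\mathbb B_*\). Every point \(y\in s\mathbb B_*\) is within \(s-r\) of \(r\mathbb B_*\): if \(\|y\|_*\le r\) take \(y\) itself, and otherwise take the radial contraction \(ry/\|y\|_*\). Hence \(d_H\le s-r\).

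For the reverse inequality, take \(y\) with \(\|y\|_*=s\). For any \(z\) with \(\|z\|_*\le r\), the triangle inequality gives \(\|y-z\|_*\ge s-r\). Hence \(d_H=|r_{t+1}-r_t|\).

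The main obstacle is Step 2. The paper's precise definition of \(\Dop\) is not visible here, so the factor \(2\) must be matched to its normalization. The Jacobian \(\ell_\infty\to\ell_1\) bound is what I would verify first, since it yields the sharp linear constant. The tempting alternative via the ratio bound \(e^{\pm2\delta}\) gives a nonlinear and weaker constant.
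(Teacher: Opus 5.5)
Your proof is the paper's proof step for step: Bayes translation in $\clr$ coordinates plus translation invariance of $d_{H,*}$, the Jacobian estimate $\sum_i\pi_i|v_i-\pi^\top v|\le 2\|v\|_\infty$ integrated along a segment in $\ClrH$, Lipschitz push-forward of Hausdorff distance, and radial contraction for the concentric balls.

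The one link you leave open is the identification of $\Dop$, and it is not just a matter of normalization.
\begin{itemize}
\item By \cref{def:Dop-v2}, $\Dop(A,B)=\sup_{\|z\|_\infty\le1}|\rho_A(z)-\rho_B(z)|$. By \cref{thm:operational-dual-v2} this equals $d_H^{\ell_1}(\clco A,\clco B)$, not $d_H^{\ell_1}(A,B)$.
\item Your Step~2 bounds the $\ell_1$-Hausdorff distance between $\softmax(x+\ell+C_t)$ and $\softmax(x+\ell+C_{t+1})$ themselves.
\item For $m\ge3$ these softmax images are generally not convex, even when $C_t$ and $C_{t+1}$ are, so your bound does not yet reach $\Dop$.
\end{itemize}

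You therefore need one more fact, and it is exactly the convexification step the paper inserts: passing to closed convex hulls cannot increase Hausdorff distance. The reason is that $A\subseteq B+\eta\mathbb B_1$ implies $\operatorname{co}A\subseteq\operatorname{co}B+\eta\mathbb B_1$, and the right-hand side is compact.

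Alternatively, you can bypass duality. For $\|z\|_\infty\le1$ one has $|p^\top z-q^\top z|\le\|p-q\|_1$, so each lower expectation moves by at most $d_H^{\ell_1}(A,B)$. This gives $\Dop(A,B)\le d_H^{\ell_1}(A,B)$ directly; only this easy direction of the duality is needed here.

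With either patch the normalization already matches, and the constant $2c_*$ holds exactly as stated.
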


\noindent\emph{Proof idea.} Bayesian updating translates both shapes by the same log-likelihood increment, so translation invariance gives the exact Hausdorff identity.  The probability-coordinate bound follows from the global estimate $\|\softmax(x)-\softmax(y)\|_1\le 2c_*\|x-y\|_*$, followed by Hausdorff contraction, convexification, and the operational duality in \cref{thm:operational-dual-v2}.  EC.1 supplies the Jacobian calculation and all set-valued details.

\section{When Provenance Reaches Decisions}\label{sec:bridge}

A geometric reconstruction defect matters only if it survives the predictive channel and reaches a continuation payoff that the control problem can realize. This section turns the provenance defect into an operational, decision-visible quantity.

\subsection{A decision-operational metric}

Let \(A\subseteq\Delta_m\) be nonempty and compact. For a parameter payoff vector \(z\in\R^m\), define the lower-envelope/maxmin evaluation
\[
\rho_A(z):=\inf_{p\in A}p^\top z.
\]
This is the finite-state lower-expectation object underlying maxmin expected utility \citep{GilboaSchmeidler1989}.  Since the objective is linear, \(\rho_A=\rho_{\clco A}\).

\begin{definition}[Parameter operational defect]\label{def:Dop-v2}
For nonempty compact \(A,B\subseteq\Delta_m\), define
\begin{equation}\label{eq:Dop-v2}
\Dop(A,B):=\sup_{\|z\|_\infty\le1}|\rho_A(z)-\rho_B(z)|.
\end{equation}
\end{definition}

\begin{proposition}[Exact dual representation]\label{thm:operational-dual-v2}
For nonempty compact \(A,B\subseteq\Delta_m\),
\begin{equation}\label{eq:dual-v2}
\Dop(A,B)=d_H^{\ell_1}(\clco A,\clco B).
\end{equation}
Equivalently, \(\Dop(A,B)=2d_H^{\TV}(\clco A,\clco B)\) under \(d_{\TV}(p,q)=\frac12\|p-q\|_1\).
\end{proposition}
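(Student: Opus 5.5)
The plan is to reduce both sides to support functions of the closed convex hulls and then invoke the standard Hausdorff–support-function duality for a polyhedral norm. Write $\hat A=\clco A$ and $\hat B=\clco B$. Both are nonempty compact convex subsets of $\Delta_m$; compactness of the convex hull of a compact set in finite dimensions means no closure is actually needed. Since the objective is linear, $\rho_A=\rho_{\hat A}$, so $\rho_A(z)=-\sigma_{\hat A}(-z)$, where $\sigma_K(w):=\max_{p\in K}p^\top w$ is the support function. The sign flip $z\mapsto -z$ preserves the unit ball $\{\|z\|_\infty\le1\}$. Hence
\[
\Dop(A,B)=\sup_{\|w\|_\infty\le1}\bigl|\sigma_{\hat A}(w)-\sigma_{\hat B}(w)\bigr|.
\]
The $\ell_\infty$ ball is exactly the dual unit ball of $\ell_1$. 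So the claim becomes the classical identity $d_H^{\|\cdot\|}(K,K')=\sup_{\|w\|_{*}\le1}|\sigma_K(w)-\sigma_{K'}(w)|$ for compact convex $K,K'$ and dual norm $\|\cdot\|_*$.

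For the easy direction ($\le$ for $\Dop$), let $d=d_H^{\ell_1}(\hat A,\hat B)$. Every $p\in\hat A$ has some $q\in\hat B$ with $\|p-q\|_1\le d$. For $\|w\|_\infty\le1$, Hölder gives $p^\top w\le q^\top w+d\le \sigma_{\hat B}(w)+d$. Taking the maximum over $p$ and symmetrizing bounds $|\sigma_{\hat A}-\sigma_{\hat B}|\le d$.

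The reverse direction is the main obstacle, and I would handle it by separation. Write $\mathbb B_1$ for the closed $\ell_1$ unit ball. Suppose $\sup_w|\sigma_{\hat A}(w)-\sigma_{\hat B}(w)|=\delta<d$, and pick $\delta<\delta'<d$. Without loss of generality some $p\in\hat A$ lies outside the compact convex set $\hat B+\delta'\mathbb B_1$. Strict separation gives a $w$, normalized to $\|w\|_\infty=1$, with
\[
p^\top w>\sigma_{\hat B+\delta'\mathbb B_1}(w)=\sigma_{\hat B}(w)+\delta'\sigma_{\mathbb B_1}(w)=\sigma_{\hat B}(w)+\delta'.
\]
The last equality uses $\sigma_{\mathbb B_1}(w)=\|w\|_\infty$. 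Then $\sigma_{\hat A}(w)-\sigma_{\hat B}(w)>\delta'>\delta$, which is a contradiction. Convexity of $\hat B$ is essential here, and this is why the convex hull appears on the right-hand side.

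The total-variation form follows immediately, since $d_{\TV}=\tfrac12\|\cdot\|_1$ scales the Hausdorff distance by $\tfrac12$. A remark is worth adding: because $\hat A,\hat B$ lie in the simplex, the supremum over $z$ could be restricted to $\mathbf 1^\top$-shifted directions. This does not affect the argument, since we use the full $\ell_\infty$ ball on $\R^m$ throughout.
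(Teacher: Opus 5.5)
Your proposal is correct and follows essentially the same route as the paper: both rewrite the lower expectations as $\rho_A(z)=-h_{\clco A}(-z)$, reducing the claim to the $\ell_1$/$\ell_\infty$ Hausdorff--support-function duality. The only difference is that you prove this duality inline, by H\"older for one inequality and strict separation of a point from $\hat B+\delta'\mathbb B_1$ for the other. The paper instead quotes the dual formula $\dist_1(x,D)=\sup_{\|z\|_\infty\le1}\{z^\top x-h_D(z)\}$ and interchanges the two suprema to obtain each directed excess exactly.
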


\noindent\emph{Proof idea.} Linear lower expectations depend only on closed convex hulls and satisfy $\rho_A(z)=-h_{\clco A}(-z)$.  The dual formula for distance to a compact convex set identifies each directed Hausdorff excess with a one-sided support-function difference over the $\ell_\infty$ unit ball.  Taking the larger directed excess gives \eqref{eq:dual-v2}; see \citet{RockafellarWets1998}.  EC.2 gives the complete duality calculation.

\begin{corollary}[Decision equivalence and behavioral separation]\label{cor:decision-equivalence-v2}
The following are equivalent: $\Dop(A,B)=0$; the two lower expectations agree on every payoff; and $\clco A=\clco B$.  If $\Dop(A,B)>0$, a bounded risky payoff and a constant safe payoff can be chosen so that the two evaluators rank them strictly in opposite orders.
\end{corollary}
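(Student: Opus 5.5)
The equivalences form a short chain, and I would run it through \cref{thm:operational-dual-v2}.

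First, \(\Dop(A,B)=0\) holds iff \(\rho_A(z)=\rho_B(z)\) for every \(z\) with \(\|z\|_\infty\le1\), directly from \eqref{eq:Dop-v2}. Both \(\rho_A\) and \(\rho_B\) are positively homogeneous, since \(\rho_A(cz)=c\rho_A(z)\) for \(c\ge0\). Agreement on the unit ball therefore extends to every payoff \(z\in\R^m\), and the converse direction is trivial.

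Next, \(\Dop(A,B)=0\) iff \(d_H^{\ell_1}(\clco A,\clco B)=0\) by \eqref{eq:dual-v2}. The closed convex hulls of compact sets in \(\Delta_m\) are compact. The Hausdorff distance is a metric on nonempty compact sets, so it vanishes iff \(\clco A=\clco B\). Alternatively, one can go through support functions: \(\rho_A(z)=-h_{\clco A}(-z)\), and closed convex sets are determined by their support functions.

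For behavioral separation, suppose \(\Dop(A,B)>0\). I would pick \(z\) with \(\|z\|_\infty\le1\) and \(\rho_A(z)\ne\rho_B(z)\); after relabeling, \(\rho_A(z)<\rho_B(z)\). The risky payoff is \(z\) itself, which is bounded. The constant safe payoff is \(c:=\tfrac12(\rho_A(z)+\rho_B(z))\). Every \(p\in\Delta_m\) sums to one, so both evaluators value the safe act at exactly \(c\). Hence evaluator \(A\) strictly prefers safe, since \(\rho_A(z)<c\), and evaluator \(B\) strictly prefers risky, since \(\rho_B(z)>c\).

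There is no real obstacle here; the only care needed is the homogeneity extension from the unit ball to all payoffs, and the observation that constants are evaluated identically because the sets lie in the simplex. The numerical example in \cref{sec:model}, with safe payoff \(0.3\), is an instance of this construction.
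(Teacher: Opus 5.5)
Your proof is correct and follows the paper's route. The three equivalences come from the dual identity in \cref{thm:operational-dual-v2}, and your positive-homogeneity step makes explicit the passage from the unit ball to all payoffs, which the paper leaves implicit. Separation then comes from a constant strictly between the two unequal lower expectations. One small quibble with your closing remark: the safe payoff $0.3$ in \cref{sec:model} is not the midpoint ($\approx 0.3316$) of $0.2632$ and $0.4$. It therefore illustrates the general ``any constant strictly between'' device rather than your midpoint choice specifically.
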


\noindent\emph{Proof.} The equivalences follow from \cref{thm:operational-dual-v2}; a constant strictly between two unequal lower expectations gives separation. \hfill\(\square\)

\subsection{The parameter-to-predictive bridge}

Fix a stochastic matrix \(P\in\R^{m\times n}\), whose \(i\)th row is the next-observation law under parameter \(i\). Define the linear mixture map
\[
T_P(p):=pP,
\qquad
T_P(A):=\{pP:p\in A\}.
\]

\begin{definition}[Predictive operational defect]\label{def:Dpred}
For compact parameter ambiguity sets \(A,B\subseteq\Delta_m\), define
\begin{equation}\label{eq:Dpred}
\Dpred^P(A,B)
:=\sup_{\|v\|_\infty\le1}
\left|
\inf_{p\in A}pPv-
\inf_{q\in B}qPv
\right|.
\end{equation}
\end{definition}

\begin{proposition}[Parameter-to-predictive bridge]\label{thm:parameter-predictive-bridge}
For nonempty compact \(A,B\subseteq\Delta_m\),
\begin{equation}\label{eq:Dpred-exact}
\Dpred^P(A,B)
=\Dop(T_P(A),T_P(B))
=d_H^{\ell_1}\bigl(T_P(\clco A),T_P(\clco B)\bigr).
\end{equation}
Moreover,
\begin{equation}\label{eq:bridge-contraction}
\Dpred^P(A,B)\le\Dop(A,B).
\end{equation}
Define the predictive identification modulus
\begin{equation}\label{eq:alphaP}
\alpha(P):=
\inf_{\substack{x\in\ClrH\\\|x\|_1=1}}
\|xP\|_1.
\end{equation}
Then
\begin{equation}\label{eq:bridge-bilipschitz}
\alpha(P)\Dop(A,B)
\le\Dpred^P(A,B)
\le\Dop(A,B).
\end{equation}
Finally, \(\alpha(P)>0\) if and only if \(xP=0\) with \(x\in\ClrH\) implies \(x=0\), equivalently the rows of \(P\) are affinely independent.
\end{proposition}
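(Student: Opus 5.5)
The plan is to reduce everything to \cref{thm:operational-dual-v2} applied to the pushed-forward sets. Since \(P\) is row-stochastic, \(T_P\) maps \(\Delta_m\) into \(\Delta_n\) and is linear and continuous. Hence \(T_P(A)\) and \(T_P(B)\) are nonempty compact subsets of \(\Delta_n\), and \(T_P(\clco A)=\clco T_P(A)\): linear images preserve convex hulls, and the image of a compact set is compact, hence closed. For every \(v\) we have \(\inf_{p\in A}pPv=\inf_{w\in T_P(A)}w^\top v=\rho_{T_P(A)}(v)\). Here \(v\) ranges over the \(\ell_\infty\) unit ball of \(\R^n\), so \(\Dpred^P(A,B)\) is literally \(\Dop(T_P(A),T_P(B))\) computed in \(\Delta_n\). \Cref{thm:operational-dual-v2} (whose proof does not depend on the dimension) then gives \eqref{eq:Dpred-exact}.

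For the contraction \eqref{eq:bridge-contraction}, I would note that \(\|Pv\|_\infty\le\|v\|_\infty\) because each row of \(P\) is a probability vector. So each \(z=Pv\) is an admissible payoff in \eqref{eq:Dop-v2}, and the supremum defining \(\Dpred^P\) runs over a subset of the payoffs defining \(\Dop\). An equivalent route uses the Hausdorff form: for \(p,q\in\Delta_m\) we have \(p-q\in\ClrH\), since the coordinates of \(p-q\) sum to zero. Then \(\|(p-q)P\|_1\le\|p-q\|_1\) because \(\ell_1\) norms do not increase under stochastic matrices acting on row vectors.

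For the lower bound, I would work with directed Hausdorff excesses of the convex hulls \(\bar A=\clco A\) and \(\bar B=\clco B\). Fix \(p\in\bar A\) and \(q\in\bar B\). Then \(p-q\in\ClrH\), so the definition of \(\alpha(P)\) and homogeneity give \(\|pP-qP\|_1\ge\alpha(P)\|p-q\|_1\). Taking the infimum over \(q\in\bar B\) yields \(\dist_{\ell_1}(pP,T_P(\bar B))\ge\alpha(P)\dist_{\ell_1}(p,\bar B)\); the key point is that every element of \(T_P(\bar B)\) has the form \(qP\), so the infimum on the left is also over \(q\). Taking the supremum over \(p\in\bar A\), and then symmetrizing, gives \(d_H^{\ell_1}(T_P\bar A,T_P\bar B)\ge\alpha(P)\,d_H^{\ell_1}(\bar A,\bar B)\). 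Combining this with the first part and \cref{thm:operational-dual-v2} proves \eqref{eq:bridge-bilipschitz}.

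Finally, for the characterization of \(\alpha(P)>0\), I would use that the set \(\{x\in\ClrH:\|x\|_1=1\}\) is compact and that \(x\mapsto\|xP\|_1\) is continuous. So the infimum is attained, and it equals zero exactly when some nonzero \(x\in\ClrH\) satisfies \(xP=0\). Such an \(x\) is a nontrivial vanishing combination \(\sum_i x_iP_i=0\) of the rows with \(\sum_i x_i=0\), which is precisely affine dependence of the rows. I expect no serious obstacle in any of these steps. The only points needing care are the identification \(T_P(\clco A)=\clco T_P(A)\), which requires compactness, and making sure the infimum in the lower bound is taken over preimages rather than over arbitrary points of \(\Delta_n\).
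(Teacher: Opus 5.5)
Your proposal is correct and follows the paper's proof essentially step for step. You push forward through $T_P$ and invoke \cref{thm:operational-dual-v2} for the exact identity, you bound the directed Hausdorff excesses below via $\|(p-q)P\|_1\ge\alpha(P)\|p-q\|_1$ on the zero-sum subspace, and you use compactness of the $\ell_1$ unit sphere in $\ClrH$ for the affine-independence characterization. The only cosmetic difference is in \eqref{eq:bridge-contraction}: your primary argument works on the payoff side, since $\|Pv\|_\infty\le\|v\|_\infty$ makes the $\Dpred^P$ supremum run over a subclass of the $\Dop$ payoffs, whereas the paper uses the dual $\ell_1$-contraction of $x\mapsto xP$ on both Hausdorff excesses, which you also mention as an alternative.
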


\noindent\emph{Proof idea.} Pushing a parameter mixture through $P$ converts every parameter payoff $Pv$ into a predictive payoff, so the exact identity follows from \cref{thm:operational-dual-v2}.  Stochasticity makes $x\mapsto xP$ an $\ell_1$ contraction.  On the zero-sum subspace it is also bounded below by $\alpha(P)$; applying these two inequalities to both directed Hausdorff excesses gives \eqref{eq:bridge-bilipschitz}.  Compactness of the unit sphere shows that $\alpha(P)>0$ exactly when the rows of $P$ are affinely independent.  EC.2 records the full argument.

If $\alpha(P)=0$, the lower inequality can collapse completely: choose a nonzero $x\in\ClrH$ with $xP=0$ and nearby beliefs $p$ and $p+\varepsilon x$.  Their parameter defect is positive but their predictive laws coincide.

The raw parameter defect asks whether arbitrary parameter payoffs can separate two ambiguity sets; the predictive defect asks whether a payoff generated through the controlled observation channel can do so.  The identification modulus quantifies how much of the parameter geometry survives that channel, which is the bridge from ambiguity construction to stochastic control.

\subsection{Control-reachable defects}

In a fixed control problem, not every bounded function of the next observation must be attainable as a continuation value. Let \(\V_{t+1}(h,a)\subseteq\R^{|\Y_{t+1}|}\) be a normalized class of reachable continuation values.

\begin{definition}[Control-reachable defect]\label{def:Dctrl}
Define
\[
\Dctrl^{P,\V}(A,B)
:=\sup_{v\in\V}
\left|
\inf_{p\in A}pPv-
\inf_{q\in B}qPv
\right|.
\]
\end{definition}

For a nonempty reachable class \(\varnothing\ne\V\subseteq\{v:\|v\|_\infty\le1\}\), define its \emph{richness radius} and \emph{covering error} by
\begin{align}
\chi(\V)
&:=\sup\Bigl(\{0\}\cup
\bigl\{c\in(0,1]:c\mathbb B_\infty\subseteq\overline{\V}\bigr\}\Bigr),
\label{eq:reachability-richness}\\
\epsilon(\V)
&:=\sup_{\|v\|_\infty\le1}\inf_{w\in\overline{\V}}\|v-w\|_\infty,
\label{eq:reachability-covering}
\end{align}
where \(\mathbb B_\infty=\{v:\|v\|_\infty\le1\}\).  The adjoined value $0$ makes $\chi(\V)=0$ when $0\notin\overline{\V}$; no reachability of the zero payoff is otherwise assumed.

\begin{proposition}[Control-observability equivalence and approximation]\label{thm:control-observability}
For nonempty compact \(A,B\subseteq\Delta_m\) and any nonempty normalized reachable class \(\varnothing\ne\V\subseteq\mathbb B_\infty\),
\begin{equation}\label{eq:control-richness-bound}
\chi(\V)\,\Dpred^P(A,B)
\le
\Dctrl^{P,\V}(A,B)
\le
\Dpred^P(A,B),
\end{equation}
and
\begin{equation}\label{eq:control-covering-bound}
\Dctrl^{P,\V}(A,B)
\ge
\max\bigl\{0,\Dpred^P(A,B)-2\epsilon(\V)\bigr\}.
\end{equation}
Consequently, \(\chi(\V)=1\) or \(\epsilon(\V)=0\) implies
\[
\Dctrl^{P,\V}(A,B)=\Dpred^P(A,B).
\]
Thus the predictive defect is exactly decision-visible whenever the continuation-value class is sup-norm dense in the normalized payoff ball, and it remains quantitatively visible whenever the class has positive richness radius.
\end{proposition}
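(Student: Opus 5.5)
The plan is to work with the scalar gap function
\[
\delta(v):=\inf_{p\in A}pPv-\inf_{q\in B}qPv ,
\]
which is defined for every \(v\in\R^{|\Y|}\). First record its basic properties. It is positively homogeneous: for \(c\ge0\), \(\delta(cv)=c\,\delta(v)\), because \(\inf_{p\in A}p(cPv)=c\inf_{p\in A}pPv\). It is also \(2\)-Lipschitz in sup norm: \(v\mapsto \inf_{p\in A}pPv\) is \(1\)-Lipschitz in \(\|\cdot\|_\infty\), since each \(pP\) is a probability vector, so each of the two infima moves by at most \(\|v-w\|_\infty\). By definition \(\Dpred^P(A,B)=\sup_{\|v\|_\infty\le1}|\delta(v)|\) and \(\Dctrl^{P,\V}(A,B)=\sup_{v\in\V}|\delta(v)|\). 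Continuity of \(\delta\) lets me replace \(\V\) by \(\overline{\V}\) in the second supremum at no cost.

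The upper bound in \eqref{eq:control-richness-bound} is then immediate, since \(\V\subseteq\mathbb B_\infty\) and we are taking a supremum over a smaller set.

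For the lower richness bound, I would treat \(\chi(\V)=0\) as trivial. Otherwise, take any \(c\in(0,1]\) with \(c\mathbb B_\infty\subseteq\overline{\V}\). For each \(v\in\mathbb B_\infty\), the vector \(cv\) lies in \(\overline{\V}\), so
\[
\Dctrl^{P,\V}\ge|\delta(cv)|=c|\delta(v)|.
\]
Taking the supremum over \(v\) gives \(c\,\Dpred^P\le\Dctrl^{P,\V}\), and taking the supremum over admissible \(c\) gives \(\chi(\V)\,\Dpred^P\le\Dctrl^{P,\V}\).

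For the covering bound \eqref{eq:control-covering-bound}, fix \(v\in\mathbb B_\infty\) and \(\eta>0\). Choose \(w\in\overline{\V}\) with \(\|v-w\|_\infty\le\epsilon(\V)+\eta\). The Lipschitz estimate gives
\[
|\delta(w)|\ge|\delta(v)|-2\epsilon(\V)-2\eta .
\]
Taking the supremum over \(v\), letting \(\eta\downarrow0\), and noting \(\Dctrl^{P,\V}\ge0\) yields the claim.

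The consequences follow directly. If \(\chi(\V)=1\), the two sides of \eqref{eq:control-richness-bound} coincide. If \(\epsilon(\V)=0\), the covering bound gives \(\Dctrl^{P,\V}\ge\Dpred^P\), and combined with the upper bound this is equality. The interpretive sentence about sup-norm density is a restatement of \(\epsilon(\V)=0\).

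The only delicate point I expect is the \(2\)-Lipschitz step. The constant \(2\) must come from bounding the two infima separately, rather than from a finer joint estimate. It also relies on the rows of \(P\) and the elements of \(A\) and \(B\) being probability vectors, so that \(\|pP\|_1=1\); nothing beyond this is needed.
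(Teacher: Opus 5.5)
Your proof is correct and follows essentially the same route as the paper's. Both use one-Lipschitz lower expectations to get continuity and the closure identity, positive homogeneity for the richness bound, and a reachable approximation of each unit payoff, costing at most $2\epsilon(\V)$, for the covering bound. The only differences are cosmetic: you take suprema over all unit payoffs and admissible scalings directly, whereas the paper fixes an $\eta$-near-separating payoff and $c<\chi(\V)$.
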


\noindent\emph{Proof idea.} The upper bound is immediate from the inclusion of payoff classes.  For the richness bound, embed a scaled near-separating predictive payoff into $\overline{\V}$ and use homogeneity and continuity of lower expectations.  For the covering bound, approximate a near-separating unit payoff by a reachable payoff; each lower expectation is one-Lipschitz in the sup norm, so the two-evaluator difference loses at most twice the approximation error.  EC.2 gives the complete $\eta$-argument.

If the continuation problem implements every normalized terminal transfer in $c\mathbb B_\infty$, then $\chi(\V)\ge c$ and \eqref{eq:control-richness-bound} gives
\[
c\,\Dpred^P(A,B)\le\Dctrl^{P,\V}(A,B)\le\Dpred^P(A,B).
\]
In particular, implementation of the full unit ball makes predictive and control-reachable defects identical.

\section{Why Rectangularity Does Not Repair Relearning}\label{sec:dynamic}

Before treating provenance as a new state variable, we separate it from classical nonrectangularity. Rectangularity resolves coherence within a fixed evaluator vintage; it does not force a later reconstruction to preserve that evaluator.

\subsection{Continuation laws and rectangular hulls}

Fix a finite continuation subtree rooted at a history-action pair \((h_t,a)\), with terminal path set \(\Omega(h_t,a)\). A nonempty compact set \(\Q\subseteq\Delta(\Omega(h_t,a))\) represents a committed set of continuation laws. Its rectangular hull \(\Rect(\Q)\) is the smallest closed set of path laws generated by history-wise pasting of conditional kernels admitted by \(\Q\). This is the standard rectangularization/pasting operation from recursive multiple-prior and robust-MDP theory \citep{EpsteinSchneider2003,Iyengar2005}. On a finite tree, \(\Q\subseteq\Rect(\Q)\).

For a bounded continuation payoff \(Z:\Omega(h_t,a)\to\R\), define
\[
\rho_\Q(Z):=\inf_{Q\in\Q}E_Q[Z].
\]
For compact law sets \(\Q_1,\Q_2\), define the normalized operational distance
\begin{equation}\label{eq:path-operational}
D_{\infty}(\Q_1,\Q_2)
:=\sup_{\|Z\|_\infty\le1}
|\rho_{\Q_1}(Z)-\rho_{\Q_2}(Z)|.
\end{equation}
The finite-dimensional duality in \cref{thm:operational-dual-v2} applies directly on the path simplex.

At node \((h_t,a)\), let
\[
\Q_{s,t}^{\mathrm C}(h_t,a)
\]
denote the continuation-law set inherited from vintage \(s\) under a committed-nature interpretation. Let
\[
\Q_{s,t}^{\mathrm R}(h_t,a):=\Rect(\Q_{s,t}^{\mathrm C}(h_t,a))
\]
be its rectangular counterpart, and let
\[
\Q_t^{\fresh}(h_t,a)
\]
be the continuation-law set used by the fresh date-\(t\) self.

\begin{proposition}[Static-parameter commitment versus rectangularization]\label{prop:static-parameter-rect}
Fix a continuation policy \(\pi\), a current history \(h_t\), and a compact inherited parameter ambiguity set \(M\subseteq\Delta_m\). Let \(P_i^\pi(\cdot\mid h_t)\) be the future observation-path law when parameter \(i\) is fixed for the entire continuation. The committed fixed-parameter family is
\begin{equation}\label{eq:committed-parameter-path}
\Q_{M}^{\mathrm C,\pi}(h_t)
:=\left\{\sum_{i=1}^m p_iP_i^\pi(\cdot\mid h_t):p\in M\right\}.
\end{equation}
For a descendant history \(h_j\), define
\[
M(h_j):=\left\{p\in M:
\Pp_p^{\pi}(h_j\mid h_t)>0\right\}.
\]
For every \(p\in M(h_j)\), the conditional one-step kernel is obtained by Bayes-updating the mixture weights, and the set of well-defined conditional kernels admitted at \(h_j\) is exactly
\[
\mathscr P_{h_j,\pi_j(h_j)}\bigl(B_{t:j}^{\#}[M(h_j)]\bigr).
\]
In particular, if every \(p\in M\) assigns \(h_j\) positive probability---for example under strictly positive primitive likelihoods---this reduces to \(\mathscr P_{h_j,\pi_j(h_j)}(B_{t:j}^{\#}[M])\).  If some mixture assigns \(h_j\) probability zero, no unique conditional law is implied by that path law on the null history; the displayed formula describes exactly the conditionals generated by the positive-probability mixtures.  Thus the Bayes-image description of every off-path node is version-free under full support.  Without full support, rectangular equilibrium kernels at null histories must be specified separately, as they are in \cref{sec:equilibrium}. In general
\[
\Q_M^{\mathrm C,\pi}(h_t)
\subseteq
\Rect\!\left(\Q_M^{\mathrm C,\pi}(h_t)\right).
\]
The inclusion can be strict, and equality holds exactly when the committed family is stable under the corresponding history-wise pasting operation.
\end{proposition}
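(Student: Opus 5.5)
The plan is to compute the conditional one-step kernel of a committed mixture directly from the joint law, and then treat the rectangularization claims as statements about pasting. First I would write the path probability of a descendant history under a fixed mixture. For \(p\in M\) and a descendant \(h_j\) reached under \(\pi\),
\[
\Pp_p^\pi(h_j\mid h_t)=\sum_i p_i\,\Lambda_i(h_j),
\]
where \(\Lambda_i(h_j)\) is the product of the primitive likelihoods \(P_i(y_{u+1}\mid h_u,\pi_u(h_u))\) along the segment from \(h_t\) to \(h_j\). The key observation is that the realized Bayes composition \(B_{t:j}\) is exactly the single Bayes map with likelihood vector \(\Lambda(h_j)=(\Lambda_i(h_j))_i\). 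Sequential normalizations cancel, as in \cref{lem:bayes-translation-v2}. Its domain is \(\{p:p^\top\Lambda(h_j)>0\}\), so \(p\in M(h_j)\) if and only if \(p\in\mathsf D_{t:j}\).

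For such \(p\), I would compute the conditional law of the next observation \(y\) at \(h_j\) under action \(\pi_j(h_j)\) as a ratio:
\[
\frac{\sum_i p_i\Lambda_i(h_j)P_i(y\mid h_j,\pi_j(h_j))}{\sum_i p_i\Lambda_i(h_j)}
=\bigl(B_{t:j}(p)\,P_{h_j,\pi_j(h_j)}\bigr)(y).
\]
Ranging over \(p\in M(h_j)\) gives exactly \(\mathscr P_{h_j,\pi_j(h_j)}(B_{t:j}^{\#}[M(h_j)])\), as defined in \eqref{eq:predictive-push}. There are two inclusions, and both come from this identity applied pointwise. Under full support, \(\Lambda(h_j)\in\R_{++}^m\), so \(M(h_j)=M\) and the reduction follows. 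For the null-history remark, I would note that when \(\Pp_p^\pi(h_j\mid h_t)=0\) the elementary conditional is undefined, and any kernel is a version of the conditional, so no kernel is implied.

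Next comes the inclusion \(\Q\subseteq\Rect(\Q)\). Each \(Q\in\Q\) is the pasting of its own conditional kernels, each of which is admitted by \(\Q\). At null nodes, some admitted kernel can be chosen arbitrarily without changing \(Q\). The equality characterization is then essentially definitional. If \(\Q\) is closed under history-wise pasting, then \(\Q\) is itself a closed pasting-stable set containing all pastings of its kernels, so \(\Rect(\Q)\subseteq\Q\). Conversely, if \(\Q=\Rect(\Q)\), then \(\Q\) inherits stability, because pastings of kernels admitted by \(\Rect(\Q)\) are still pastings of kernels admitted by \(\Q\). I expect the main subtlety to be stating "admitted kernels" carefully so that pasting is idempotent. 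To handle it, I would define the admitted kernel set at \(h_j\) as the set of conditionals from the previous step, and check that \(\Rect(\Q)\) admits the same kernel sets as \(\Q\).

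Finally, strictness requires an example. I would take \(m=2\), two periods, \(M=\{e_1,e_2\}\) or its convex hull, and coin-flip observations with \(P_1(1)=0.9\) and \(P_2(1)=0.1\) independent of history. The committed family consists of mixtures of two i.i.d. laws. Pasting the kernel of \(e_1\) at the first step with the kernel of \(e_2\) at the second step yields the law under which \(y_1\sim 0.9\) and \(y_2\sim 0.1\) independently. That law is not of the form \(p_1P_1+p_2P_2\), since matching the \(y_1\) marginal forces \(p=e_1\), which then contradicts the \(y_2\) marginal. So the inclusion is strict.
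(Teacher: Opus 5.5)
Your argument is correct and has the same architecture as the paper's proof: condition a positive-probability mixture to obtain the Bayes-image formula, treat the equality criterion as essentially definitional, and exhibit a full-support witness for strict inclusion.

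Your Bayes step is more explicit than the paper's one-line version. Writing $\Pp_p^\pi(h_j\mid h_t)=p^\top\Lambda(h_j)$ shows that $B_{t:j}$ is a single Bayes map with likelihood vector $\Lambda(h_j)$. It also shows that $M(h_j)=M\cap\mathsf D_{t:j}$, which ties the proposition directly to the survivor set in \eqref{eq:survivor-bayes-image}. Your appeal to \cref{lem:bayes-translation-v2} is slightly loose, since that lemma assumes strictly positive likelihoods, but the cancellation of normalizers is elementary for nonnegative ones.

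The genuine difference is the strictness witness.
\begin{itemize}
\item The paper makes the first observation uninformative under both parameters and pastes posterior weights $0$ and $1$ at the sibling histories $L$ and $R$. The failure of rectangularity then comes purely from one parameter coupling same-date branches, with no learning between dates.
\item You instead paste across dates: the parameter-$1$ kernel at the root and the parameter-$2$ kernel at date one, with informative signals. Matching the first marginal forces $p=e_1$, which contradicts the second marginal. This certificate is clean and works for both $M=\{e_1,e_2\}$ and $M=\Delta_2$.
\end{itemize}
Either witness establishes the claim. The paper's isolates cross-sectional coupling; yours isolates intertemporal coupling.
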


\noindent\emph{Proof idea.} Conditioning a positive-probability mixture path law at $h_j$ updates its parameter weights by Bayes' rule, giving exactly the displayed image of $M(h_j)$.  Under full support this specifies every descendant kernel without a version choice; rectangularization then closes the family under independent history-wise pasting.  EC.2 gives the complete conditioning and null-history argument.

In the remainder of the equilibrium analysis, \emph{within-vintage ambiguity is deliberately rectangularized} in order to isolate reconstruction from classical nonrectangularity. Thus fresh and inherited recursive evaluators use the appropriate rectangular hulls. The original fixed-parameter model remains represented by the committed family \(\Q^{\mathrm C}\), and the within-vintage defect measures the consequence of replacing it by \(\Q^{\mathrm R}\).

\begin{definition}[Within- and cross-vintage defects]\label{def:two-defects-v2}
Define
\begin{align}
\Delta_{s,t}^{\mathrm{rect}}(h_t,a)
&:=D_\infty\bigl(\Q_{s,t}^{\mathrm C}(h_t,a),
                  \Q_{s,t}^{\mathrm R}(h_t,a)\bigr),
\label{eq:rect-defect-v2}\\
\Delta_{s,t}^{\mathrm{cross}}(h_t,a)
&:=D_\infty\bigl(\Q_{s,t}^{\mathrm R}(h_t,a),
                  \Q_t^{\fresh}(h_t,a)\bigr).
\label{eq:cross-defect-v2}
\end{align}
The first measures classical within-vintage nonrectangularity. The second measures disagreement between the recursively inherited evaluator and the freshly reconstructed evaluator.
\end{definition}

\subsection{Three nature protocols}

The same primitive ambiguity therefore supports three protocols: the standard committed/rectangular alternatives and the fresh-self intrapersonal-game interpretation \citep{EpsteinSchneider2003,Iyengar2005,AusterCheMierendorff2024}.
\begin{enumerate}[label=(\roman*)]
\item \emph{Committed nature}: one continuation law is selected from \(\Q^{\mathrm C}\) and persists over the subtree.
\item \emph{Rectangular nature}: conditional kernels may be selected history by history from the rectangular hull \(\Q^{\mathrm R}\).
\item \emph{Fresh-self evaluation}: there is no persistent adversary; the self moving at date \(t\) evaluates through \(\Q_t^{\fresh}\), anticipating future selves' actions.
\end{enumerate}

\begin{proposition}[Protocol ordering and separation]\label{prop:protocol-ordering}
For every bounded \(Z\),
\[
\rho_{\Q^{\mathrm R}}(Z)\le\rho_{\Q^{\mathrm C}}(Z),
\]
while no universal ordering exists between \(\rho_{\Q^{\mathrm R}}\) and \(\rho_{\Q^{\fresh}}\). Equality between committed and rectangular values for every bounded \(Z\) holds if and only if \(\Delta^{\mathrm{rect}}=0\). Equality between rectangular-inherited and fresh values for every bounded \(Z\) holds if and only if \(\Delta^{\mathrm{cross}}=0\).
\end{proposition}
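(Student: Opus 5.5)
The plan treats the four assertions of \cref{prop:protocol-ordering} in turn, working on the finite path simplex \(\Delta(\Omega(h_t,a))\), where the duality of \cref{thm:operational-dual-v2} is available.

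\emph{Ordering.} This follows from the inclusion \(\Q^{\mathrm C}\subseteq\Rect(\Q^{\mathrm C})=\Q^{\mathrm R}\) recorded for finite trees. An infimum of the linear functional \(Q\mapsto E_Q[Z]\) over a larger set can only be smaller, so \(\rho_{\Q^{\mathrm R}}(Z)\le\rho_{\Q^{\mathrm C}}(Z)\) for every bounded \(Z\). Compactness of both sets is needed only so that the infima are attained, and it is not otherwise used.

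\emph{No universal ordering between \(\rho_{\Q^{\mathrm R}}\) and \(\rho_{\Q^{\fresh}}\).} I will exhibit two concrete configurations. In the minimal two-state example of \cref{sec:model}, the one-period tree is trivially rectangular, so \(\Q^{\mathrm R}\) is the Bayes image \([0.6316,0.9032]\). Taking the fresh set \([0.7,0.9]\), the payoff \(Z=(1,-1)\) gives \(\rho_{\Q^{\fresh}}(Z)=0.4>0.2632\approx\rho_{\Q^{\mathrm R}}(Z)\). For the reverse inequality, I keep the same inherited set and take a fresh set strictly larger than it, for example \([0.5,0.95]\); with the same \(Z\), this yields \(\rho_{\Q^{\fresh}}(Z)<\rho_{\Q^{\mathrm R}}(Z)\). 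Because the fresh set is a primitive that can be chosen freely, both strict orders occur, so no universal inequality can hold.

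\emph{The two equivalences.} Both are instances of \cref{cor:decision-equivalence-v2}, applied on the path simplex rather than \(\Delta_m\). The function \(D_\infty(\Q_1,\Q_2)\) is exactly \(\Dop\) with parameters replaced by terminal paths. Hence \(D_\infty=0\) holds if and only if the two lower expectations agree on every payoff with \(\|Z\|_\infty\le1\). Positive homogeneity, \(\rho(cZ)=c\rho(Z)\) for \(c\ge0\), then extends this agreement to every bounded \(Z\). Reading the resulting equivalence with \(\Delta^{\mathrm{rect}}\) and \(\Delta^{\mathrm{cross}}\) in place of \(D_\infty\) gives both statements. The only care needed is to confirm that the corollary's finite-dimensional proof transfers verbatim to a finite path set, which it does because \(\Omega(h_t,a)\) is finite.

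I expect the only real obstacle to be the second assertion, and even there the difficulty is bookkeeping. The counterexamples must be legitimate instances of the protocol, meaning the fresh law set must be compact. The one-step tree is the safest choice because it makes rectangularization trivial, and I will build both counterexamples on that tree.
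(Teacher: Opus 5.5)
Your proposal is correct and follows essentially the paper's own argument. The inclusion $\Q^{\mathrm C}\subseteq\Q^{\mathrm R}$ gives the ordering, and the two equivalences are the zero-defect definitions; your positive-homogeneity step, which extends agreement from $\|Z\|_\infty\le1$ to all bounded $Z$, simply makes explicit what the paper leaves implicit. The absence of an ordering is shown, in both the paper and your proposal, by one-step binary counterexamples where rectangularization is trivial. The only difference is the choice of counterexample. The paper uses the minimal pair $\{\delta_0\}$ versus $\operatorname{co}\{\delta_0,\delta_1\}$ with $Z=(1,0)$ and then interchanges the two sets. You instead reuse the reconstruction-paradox intervals and enlarge the fresh set to $[0.5,0.95]$, which works equally well once the binary parameter is read as the node's terminal outcome.
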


\noindent\emph{Proof.} Since \(\Q^{\mathrm C}\subseteq\Q^{\mathrm R}\), rectangularization cannot increase the lower value.  The two equivalences are exactly the zero-defect definitions.  No fresh/inherited ordering is possible even on a binary one-step node: with \(\Q^{\fresh}=\{\delta_0\}\) and \(\Q^{\mathrm R}=\operatorname{co}\{\delta_0,\delta_1\}\), the payoff \(Z=(1,0)\) gives \(\rho_{\Q^{\mathrm R}}(Z)=0<1=\rho_{\Q^{\fresh}}(Z)\); interchanging the two sets reverses the inequality. \hfill\(\square\)

\subsection{Universal coherence theorem}

A fixed reward specification may conceal a structural defect because an action dominates by a large margin or because the relevant payoff direction is absent. Necessity therefore requires a rich behavioral criterion.

\begin{definition}[Universal local protocol coherence]\label{def:universal-coherence}
A triple \((\Q^{\mathrm C},\Q^{\mathrm R},\Q^{\fresh})\) at a continuation node is \emph{universally protocol coherent} if, for every bounded risky payoff \(Z\) and every constant safe payoff \(c\), the three robust evaluators induce the same weak ranking between safe and risky.
\end{definition}

\begin{proposition}[Universal two-defect characterization]\label{thm:universal-two-defect}
At a finite continuation node, the following are equivalent:
\begin{enumerate}[label=(\roman*)]
\item \(\Delta^{\mathrm{rect}}=\Delta^{\mathrm{cross}}=0\);
\item committed, rectangular-inherited, and fresh lower expectations agree for every bounded continuation payoff;
\item the three protocols are universally locally coherent;
\item no finite action menu with bounded continuation payoffs can produce different robust choice correspondences under the three protocols.
\end{enumerate}
If either defect is positive, a two-action safe-versus-risky problem strictly separates at least two protocols.
\end{proposition}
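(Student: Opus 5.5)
I will prove the cycle (i)\(\Rightarrow\)(ii)\(\Rightarrow\)(iii)\(\Rightarrow\)(iv)\(\Rightarrow\)(i), taking the last implication in contrapositive form so that it also yields the final separation claim. Everything runs on the finite path simplex \(\Delta(\Omega(h_t,a))\). By \cref{thm:operational-dual-v2} applied there, \(D_\infty(\Q_1,\Q_2)=d_H^{\ell_1}(\clco\Q_1,\clco\Q_2)\). So \(D_\infty(\Q_1,\Q_2)=0\) holds exactly when \(\rho_{\Q_1}\) and \(\rho_{\Q_2}\) coincide on every bounded \(Z\), which is \cref{cor:decision-equivalence-v2} transported to path laws.

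\emph{(i)\(\Leftrightarrow\)(ii).} The condition \(\Delta^{\mathrm{rect}}=0\) says \(\rho_{\Q^{\mathrm C}}=\rho_{\Q^{\mathrm R}}\) on the unit ball. Positive homogeneity of \(\rho\) extends this to all bounded \(Z\). The same argument for \(\Delta^{\mathrm{cross}}\) gives \(\rho_{\Q^{\mathrm R}}=\rho_{\Q^{\fresh}}\), and agreement of all three follows by transitivity. The converse is immediate from the definition \eqref{eq:path-operational}, as already noted in \cref{prop:protocol-ordering}.

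\emph{(ii)\(\Rightarrow\)(iv)\(\Rightarrow\)(iii).} A robust choice correspondence on a finite menu of bounded continuation payoffs is the argmax of \(\rho_\Q\) over the menu. If the three lower expectations agree, the three argmax sets coincide, which gives (iv). A safe-versus-risky pair \(\{Z,c\}\) is one particular finite menu, and a constant \(c\) is evaluated at \(c\) by every protocol because each \(\Q\) is a nonempty set of probability laws. Identical choice correspondences on the menu \(\{Z,c\}\) therefore mean identical weak rankings between safe and risky, which is (iii).

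\emph{(iii)\(\Rightarrow\)(i), and the separation claim.} Suppose one defect is positive, say \(D_\infty(\Q_1,\Q_2)>0\) for the relevant pair. Pick \(Z\) with \(\|Z\|_\infty\le1\) and \(\rho_{\Q_1}(Z)\neq\rho_{\Q_2}(Z\)); without loss of generality \(\rho_{\Q_1}(Z)<\rho_{\Q_2}(Z)\). Choose a constant \(c\) strictly between these two values. Then protocol 1 strictly prefers safe and protocol 2 strictly prefers risky, so the weak rankings differ and universal coherence fails. This is exactly the two-action safe-versus-risky problem that strictly separates two protocols, as in \cref{cor:decision-equivalence-v2}. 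The same menu also refutes (iv), so the cycle closes.

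The only step I expect to need care is the link between the abstract equality of lower expectations and the metric \(D_\infty\) on the path simplex. I have to check that \cref{thm:operational-dual-v2}, stated for compact subsets of \(\Delta_m\), applies verbatim to compact subsets of \(\Delta(\Omega)\) with \(\Omega\) finite. It does, because \(\Omega\) is a finite index set and \(\Q^{\mathrm R}=\Rect(\Q^{\mathrm C})\) is closed and bounded, hence compact; the remaining steps are elementary.
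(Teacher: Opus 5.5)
Your proposal is correct and follows essentially the same route as the paper. Zero defects identify the three lower-expectation functionals, and hence all finite-menu choices and safe-versus-risky rankings. Two-action menus are a subclass of finite menus. A positive defect yields a payoff $Z$ and a constant strictly between its two robust values, which strictly separates two protocols. The appeal to the duality in \cref{thm:operational-dual-v2} is harmless but unnecessary: $D_\infty=0$ is equivalent to agreement of the lower expectations on the unit ball directly from the definition \eqref{eq:path-operational}.
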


\noindent\emph{Proof idea.} Zero defects identify the three lower-expectation functionals and therefore all bounded-payoff rankings.  Conversely, any positive operational defect supplies a bounded payoff on which two functionals differ; a constant strictly between their values creates a two-action safe-versus-risky separation.  EC.2 gives the complete implication chain.

\begin{definition}[Universal dynamic coherence]\label{def:global-coherence}
A finite ambiguity-vintage system is \emph{universally dynamically coherent} if the condition of \cref{thm:universal-two-defect} holds at every subgame, every feasible action, and every pair of relevant vintages.
\end{definition}

\begin{proposition}[Precommitment--equilibrium characterization]\label{thm:global-dynamic-iff}
On a finite event tree, suppose every subgame is evaluated and optimized, including off-path histories. The following are equivalent:
\begin{enumerate}[label=(\roman*)]
\item the system is universally dynamically coherent;
\item for every bounded reward specification and every finite augmentation by local safe-versus-risky actions, committed precommitment, rectangular robust dynamic programming, and fresh-self subgame-perfect optimization have identical continuation preference and optimal-policy correspondences at every subgame;
\item all within-vintage and cross-vintage operational defects in \eqref{eq:rect-defect-v2}--\eqref{eq:cross-defect-v2} vanish.
\end{enumerate}
Thus universal dynamic agreement requires both within-vintage rectangular coherence and cross-vintage reconstruction coherence; neither condition subsumes the other.
\end{proposition}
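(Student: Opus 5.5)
I would prove the cycle (i)\(\Rightarrow\)(iii)\(\Rightarrow\)(ii)\(\Rightarrow\)(i), reducing everything to the local result \cref{thm:universal-two-defect} applied subgame by subgame. \emph{(i)\(\Leftrightarrow\)(iii).} By \cref{def:global-coherence}, universal dynamic coherence means condition (i) of \cref{thm:universal-two-defect} at every subgame, every feasible action, and every relevant vintage pair. That condition is exactly \(\Delta^{\mathrm{rect}}_{s,t}(h_t,a)=\Delta^{\mathrm{cross}}_{s,t}(h_t,a)=0\) in \eqref{eq:rect-defect-v2}--\eqref{eq:cross-defect-v2}. So this step is definitional once the quantifiers are aligned.

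\emph{(iii)\(\Rightarrow\)(ii).} With all defects zero, \cref{thm:operational-dual-v2} (lifted to the path simplex) gives \(\clco\Q^{\mathrm C}_{s,t}=\Q^{\mathrm R}_{s,t}=\clco\Q^{\fresh}_t\) at every node and action. I would then argue by backward induction from \(T\). At each history, assume the continuation value functions and optimal-policy correspondences of the three procedures agree on all strict subgames. For fresh-self SPE this needs to hold for every selection from later selves' correspondences. The date-\(t\) objective of each procedure is then \(r_t(h_t,a)\) plus \(\beta\) times a lower expectation of the same bounded continuation payoff. For the committed precommitment problem, I would use zero \(\Delta^{\mathrm{rect}}\) together with the standard rectangular recursion \citep{EpsteinSchneider2003,Iyengar2005}: the committed lower value equals the rectangular one, and the rectangular one decomposes recursively, so the optimal plan is time consistent and coincides with the DP policy. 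Since the lower-expectation functionals agree, all three rank actions identically. This includes any appended local safe-versus-risky actions, whose payoffs are just further bounded continuation payoffs, so the correspondences coincide at \(h_t\). Off-path histories are covered because every subgame is optimized.

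\emph{(ii)\(\Rightarrow\)(iii)} is the contrapositive via the separation clause of \cref{thm:universal-two-defect}. Suppose some defect is positive at a node \((h_t,a)\). By \cref{thm:operational-dual-v2} there is a bounded path payoff \(Z\) on which two of the three evaluators differ. I would set all rewards on the subtree so that the terminal payoff is \(Z\), and elsewhere set them to zero. I would then augment \(h_t\) with a safe action paying a constant \(c\) strictly between the two differing values. The two protocols then choose differently at \(h_t\), contradicting (ii).

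The main obstacle is making the separating construction genuinely dynamic. Future selves act in between, so a fixed path payoff \(Z\) is only realized if the continuation choices do not interfere. I would handle this by placing all separation at the root node with singleton action sets below, or with actions whose rewards are identical, so that the continuation policy is trivial across protocols. Finally, "neither condition subsumes the other" needs two examples. \cref{prop:static-parameter-rect} with a strict inclusion gives positive rect defect and zero cross defect when the fresh set equals the rectangular hull. The two-state reconstruction paradox of \cref{sec:model}, on a one-step tree where rectangularity is automatic, gives zero rect defect and positive cross defect.
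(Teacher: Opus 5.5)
Your proposal follows the paper's proof. You obtain (i)$\Leftrightarrow$(iii) by applying \cref{thm:universal-two-defect} node by node, (iii)$\Rightarrow$(ii) by backward induction on common lower-expectation functionals, and (ii)$\Rightarrow$(iii) by the local safe-versus-risky separation. Your explicit treatment of committed precommitment and your two examples for ``neither condition subsumes the other'' add detail that the paper only asserts.

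The one step to repair is your remedy for the ``main obstacle,'' because neither fix works. The event tree is fixed, so you cannot make the action sets below $(h_t,a)$ singletons; the statement permits only augmentation. Giving descendant actions identical rewards does not neutralize them either. Different actions still induce different successor kernels, and genuine ties let the three protocols select different continuations.

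You do not need the fix. Statement (ii) asserts identical continuation \emph{preferences} at every subgame. The local augmentation is exactly the two-action menu of \cref{def:universal-coherence}, whose risky option $Z$ is evaluated directly through the node's continuation-law sets. The strict opposite rankings from \cref{thm:universal-two-defect} therefore contradict (ii) at that subgame without passing through intermediate selves; this is how the paper argues. Comparing rankings rather than argmax sets also handles a second issue you left open: with zero rewards elsewhere, the other actions at $h_t$ have value $0$ and could dominate both $Z$ and $c$.

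If you do want to realize $Z$ through the actual subtree, force a designated continuation instead. Pay a strict per-node bonus to the designated action, large relative to a scaled-down $Z$, so that deviation is unprofitable under every protocol. The accumulated bonus is the same along every forced path, so translation equivariance of each lower expectation preserves the separation.
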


\noindent\emph{Proof idea.} Apply \cref{thm:universal-two-defect} at every subgame.  If all local functionals agree, backward induction makes continuation preferences and policy correspondences coincide.  If a local defect is positive, the local two-action separation can be inserted at that subgame and violates global agreement.  EC.2 gives the full off-path argument.

The characterization is deliberately operational: it uses equality of lower-expectation functionals rather than literal equality of ambiguity sets.  Maxmin evaluation identifies only closed convex hulls, and the functional formulation prevents dynamically irrelevant geometric differences from being mislabeled as inconsistency.

\section{Ambiguity Provenance as Dynamic State}\label{sec:equilibrium}

Once the evaluator can change, the dynamic problem is representational: what state carries the relevant provenance? Rectangularize each fixed vintage as in \cref{prop:static-parameter-rect}. For \(0\le s\le t<T\), history \(h_t\), and action \(a\), let
\[
\K_{s,t}(h_t,a)\subseteq\Delta(\Y_{t+1}(h_t,a))
\]
be its nonempty compact one-step inherited correspondence, with \(\K_{t,t}\) the fresh date-\(t\) counterpart. These correspondences may also be taken as primitives.

\subsection{Subgame-perfect reconstructed-ambiguity policies}

Sophisticated/subgame-perfect behavior is the classical intrapersonal-game treatment of time inconsistency, not a new equilibrium concept \citep{Strotz1955,AusterCheMierendorff2024}; \citet{LimPark2026} study dynamic consistency of statistical rules in a different framework.

\begin{definition}[Subgame-perfect policy]\label{def:spe-v2}
A policy \(\pi^*=(\pi_t^*)_{t=0}^{T-1}\) is a reconstructed-ambiguity subgame-perfect equilibrium (SPE) if, at every history \(h_t\), \(\pi_t^*(h_t)\) maximizes date-\(t\) self's fresh robust evaluation among all one-period deviations, taking the future policy \(\pi_{t+1:T-1}^*\) as given.
\end{definition}

\begin{proposition}[Finite-tree pure SPE existence]\label{thm:spe-existence-v2}
If histories and action sets are finite, rewards are finite, and every fresh continuation ambiguity set is nonempty and compact, then a pure SPE exists.
\end{proposition}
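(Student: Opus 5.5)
We construct the equilibrium by backward induction on the finite tree, exactly as in the classical intrapersonal-game treatment \citep{Strotz1955}. At terminal histories $h_T$ set the continuation value $V_T(h_T):=g(h_T)$. Suppose that for some $t<T$ a pure policy $\pi^*_{t+1:T-1}$ has already been fixed on all histories of dates $t+1,\ldots,T-1$, including off-path ones. Suppose also that it induces a well-defined bounded continuation value $V_{t+1}(h_{t+1})$ at every $h_{t+1}\in\Hh_{t+1}$, with $|V_{t+1}|\le B_{t+1}$ by \eqref{eq:B-bounds}. Fix $h_t$. For each feasible $a\in\A_t(h_t)$, the continuation of the date-$t$ self after one-period deviation $a$ is the random variable $y\mapsto V_{t+1}(h_t,a,y)$ on the finite set $\Y_{t+1}(h_t,a)$. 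The date-$t$ fresh robust evaluation of $a$ is
\[
W_t(h_t,a):=r_t(h_t,a)+\beta\inf_{Q\in\Q_t^{\fresh}(h_t,a)}E_Q\bigl[V^{\pi^*}_{\cdot}\bigr],
\]
where the inner object is the fresh continuation law set applied to the path payoff that the fixed future policy generates.

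The first step is to check that $W_t(h_t,a)$ is a finite real number. The continuation payoff $Z_a$ under $\pi^*_{t+1:T-1}$ is a bounded function on the finite terminal path set $\Omega(h_t,a)$, with $\|Z_a\|_\infty\le B_{t+1}$. The set $\Q_t^{\fresh}(h_t,a)$ is nonempty and compact in the finite-dimensional simplex $\Delta(\Omega(h_t,a))$. The map $Q\mapsto E_Q[Z_a]$ is linear, hence continuous, so the infimum is attained and finite; this is just $\rho_{\Q}(Z_a)$ from \cref{sec:dynamic}. The second step is the selection: $\A_t(h_t)$ is finite and nonempty, so $\argmax_a W_t(h_t,a)$ is nonempty, and we pick any maximizer as $\pi^*_t(h_t)$. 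We do this independently at every $h_t\in\Hh_t$, which is a finite set. We then define $V_t(h_t)$ as the realized fresh evaluation of the chosen action when the date-$t$ self is viewed as part of the policy. For later selves, only the path payoff matters, so we record the induced payoff function rather than the evaluation. Iterating down to $t=0$ yields a pure policy.

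The third step verifies \cref{def:spe-v2}. At each $h_t$, the action $\pi^*_t(h_t)$ maximizes the date-$t$ self's fresh robust evaluation among all one-period deviations, taking $\pi^*_{t+1:T-1}$ as given. This holds by construction, because the future policy fixed at the time of selection is exactly the one that appears in the equilibrium. Since the construction covers every history, both on- and off-path, subgame perfection holds everywhere.

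The main subtlety is bookkeeping rather than analysis. Under fresh-self evaluation, the date-$t$ self does not use later selves' evaluations as its continuation values. Instead, it evaluates the \emph{path payoff} produced by the fixed future policy through its own set $\Q_t^{\fresh}$, which may differ from the later selves' sets. So the induction must carry the policy, or equivalently the induced payoff $Z$ on the subtree, not a scalar value function. Since the tree is finite and every action choice is pure, that payoff is a well-defined bounded function at each stage, and attainment follows as above. If $\Q_t^{\fresh}$ is specified instead through one-step rectangular kernels $\K_{t,t}$, the same argument applies: each nested infimum is over a nonempty compact set of a continuous function, so it is attained at each node.
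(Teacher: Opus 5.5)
Your proposal is correct and follows the paper's own argument: backward induction on the finite tree, choosing at every history (on or off path) a maximizer of the current self's fresh robust evaluation given the already-fixed future policy, with finitely many actions guaranteeing that a maximizer exists. Your extra bookkeeping is also right: self $t$ values the fixed future policy through its own vintage (the paper's $V_{t,t+1}^{\pi}$ in \cref{thm:triangular-v2}) rather than through a successor's scalar value, so your opening induction hypothesis, phrased via a scalar $V_{t+1}$, should carry the future policy from the start.
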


\noindent\emph{Proof.} Backward induction selects a fresh-evaluation maximizer at every finite subgame, yielding a best response after every history. \hfill\(\square\)

The provenance-specific content is the representation: evaluator origin survives as a state index even when the physical state is unchanged.

\subsection{Triangular equilibrium recursion}

For a candidate policy \(\pi\), define, for \(0\le s\le t\le T\),
\begin{equation}\label{eq:vintage-terminal-v2}
V_{s,T}^{\pi}(h_T):=g(h_T),
\end{equation}
and for \(t<T\),
\begin{equation}\label{eq:vintage-recursion-v2}
\begin{split}
V_{s,t}^{\pi}(h_t)
:=\;&r_t(h_t,\pi_t(h_t))\\
&+\beta\inf_{q\in\K_{s,t}(h_t,\pi_t(h_t))}
\sum_{y}q(y)
V_{s,t+1}^{\pi}(h_t,\pi_t(h_t),y).
\end{split}
\end{equation}
For a date-\(t\) deviation \(a\), followed by \(\pi\), define
\begin{equation}\label{eq:fresh-Q-v2}
\begin{split}
Q_t^{s,\pi}(h_t,a)
:=\;&r_t(h_t,a)\\
&+\beta\inf_{q\in\K_{s,t}(h_t,a)}
\sum_y q(y)V_{s,t+1}^{\pi}(h_t,a,y).
\end{split}
\end{equation}
The fresh deviation value is \(Q_t^{t,\pi}\).

\begin{theorem}[Triangular SPE characterization]\label{thm:triangular-v2}
A policy \(\pi^*\) is an SPE if and only if the family \(\{V_{s,t}^{\pi^*}:0\le s\le t\le T\}\) satisfies \eqref{eq:vintage-terminal-v2}--\eqref{eq:vintage-recursion-v2} and
\begin{equation}\label{eq:fresh-argmax-v2}
\pi_t^*(h_t)
\in\argmax_{a\in\A_t(h_t)}Q_t^{t,\pi^*}(h_t,a)
\end{equation}
for every date and history.
\end{theorem}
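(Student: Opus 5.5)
The proof unpacks the SPE definition (\cref{def:spe-v2}) through the vintage value functions. The key observation is that, for a fixed policy \(\pi\), the recursion \eqref{eq:vintage-terminal-v2}--\eqref{eq:vintage-recursion-v2} is a well-posed backward definition. On a finite tree, \(V^\pi_{s,T}=g\) pins down the terminal layer. For each fixed \(s\), \(V^\pi_{s,t}\) is then determined from \(V^\pi_{s,t+1}\) by one application of the \(\K_{s,t}\)-lower expectation. This is a finite infimum-of-linear problem over a nonempty compact set, so it is attained and finite by \eqref{eq:reward-bounds}. Consequently the family \(\{V^{\pi}_{s,t}\}\) always exists, is unique, and satisfies the recursion. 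The ``satisfies \eqref{eq:vintage-terminal-v2}--\eqref{eq:vintage-recursion-v2}'' clause is therefore automatic. The real content is that the SPE condition is equivalent to \eqref{eq:fresh-argmax-v2}.

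The central step identifies the date-\(t\) self's fresh robust evaluation of a one-period deviation with \(Q^{t,\pi}_t(h_t,a)\). Under the standing rectangularization of each vintage (\cref{prop:static-parameter-rect} and the convention stated after it), the date-\(t\) self evaluates the continuation through the rectangular vintage-\(t\) law set. For any fixed future policy, the lower expectation over a rectangular set of path laws equals iterated one-step lower expectations over the kernels \(\K_{t,u}\), \(u\ge t\). This is the standard rectangular dynamic-programming identity (\citealt{EpsteinSchneider2003,Iyengar2005}). I would prove it by induction on the remaining horizon, using that history-wise pasting lets nature minimize separately at each node. Applying it after action \(a\) at \(h_t\), with play following \(\pi_{t+1:T-1}\), gives exactly
\[
r_t(h_t,a)+\beta\inf_{q\in\K_{t,t}(h_t,a)}\sum_y q(y)V^{\pi}_{t,t+1}(h_t,a,y)=Q^{t,\pi}_t(h_t,a).
\]
This step is where the triangular structure enters. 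The date-\(t\) self uses vintage \(t\) at every later date, not the fresh kernels \(\K_{u,u}\) of the future selves. Hence the continuation must be the inherited row \(V^\pi_{t,\cdot}\), and a single-index value function would not suffice.

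With this identity the equivalence is immediate in both directions. If \(\pi^*\) is an SPE, then at every \(h_t\) the action \(\pi^*_t(h_t)\) maximizes the date-\(t\) fresh evaluation among one-period deviations with \(\pi^*_{t+1:T-1}\) fixed, and by the identity that evaluation is \(Q^{t,\pi^*}_t(h_t,\cdot)\). This gives \eqref{eq:fresh-argmax-v2}. Conversely, \eqref{eq:fresh-argmax-v2} at every history, off-path ones included, says exactly that no one-period deviation improves the fresh evaluation. That is \cref{def:spe-v2}.

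I expect the main obstacle to be the rectangular identity in the second step. It needs the inherited kernel correspondences \(\K_{t,u}\) at off-path and null histories to be specified as primitives, as \cref{sec:equilibrium} allows. It also needs the infimum over pasted path laws to be attained node by node. Compactness and finiteness of the tree handle attainment. A measurable-selection issue does not arise because the tree is finite.
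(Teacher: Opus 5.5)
Your proposal is correct and follows essentially the same route as the paper. Rectangularity within each vintage gives the recursive evaluation, and the date-$t$ self's one-period-deviation value is identified with $Q_t^{t,\pi^*}$, continuing through the inherited row $V_{t,t+1}^{\pi^*}$ rather than $V_{t+1,t+1}^{\pi^*}$; both directions then reduce to the one-period-deviation definition of SPE. Your remarks that the recursion clause holds automatically for a fixed policy, and that attainment follows from compactness on the finite tree, just make explicit what the paper's short proof leaves implicit.
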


\begin{proof}
Rectangularity within vintage \(s\) gives the recursive policy evaluation \eqref{eq:vintage-recursion-v2}. If \(\pi^*\) is an SPE, date \(t\) evaluates a deviation using \(\K_{t,t}\) while anticipating that future selves follow \(\pi^*\); its deviation value is exactly \(Q_t^{t,\pi^*}\), so \eqref{eq:fresh-argmax-v2} holds. Conversely, if the argmax condition holds at every subgame, no self benefits from a one-period deviation, which is the SPE condition.
\end{proof}

The index set $\{(s,t):s\le t\}$ is triangular: the first coordinate records \emph{which evaluator} is judging the continuation, while the second records \emph{when the action is implemented}. This is the dynamic-state form of ambiguity provenance.

For a fixed reward system, the triangle does not require enumeration of physical histories when states recombine.

\begin{proposition}[Vintage-backward evaluation on a recombining state graph]\label{thm:vintage-backward-evaluation}
Let $\X_t$ be finite state sets.  Suppose rewards, feasible actions, controlled successor maps, and all vintage kernels depend on history only through $(t,x_t,a_t)$.  Fix deterministic tie breaking at each date--state pair.  Backward induction that computes, for every $(t,x)$, every action value $Q_{s,t}(x,a)$ with $s\le t$, selects a maximizer of $Q_{t,t}(x,\cdot)$, and records the selected action's value for all active vintages returns the tie-broken pure Markov SPE.  A second initial-vintage Bellman pass returns the rectangular precommitment value and the selected-equilibrium Price of Relearning.

If each one-step ambiguity set has at most $K$ listed laws over at most $Y$ outcomes and there are at most $A$ actions, the work is
\begin{equation}\label{eq:vintage-backward-complexity}
O\!\left(AKY\sum_{t=0}^{T-1}(t+1)|\X_t|\right),
\end{equation}
and storage is $O(\sum_t(t+1)|\X_t|)$.  In particular, when $|\X_t|\le S$, work is $O(T^2SAKY)$.  This evaluates a fixed model and reward system; it does not solve the universal worst-reward maximization defining the selection-free divergence.
\end{proposition}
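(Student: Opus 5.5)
We reduce to \cref{thm:triangular-v2} by checking that the state-indexed backward pass builds a policy and vintage values satisfying \eqref{eq:vintage-terminal-v2}--\eqref{eq:fresh-argmax-v2} on the history tree. Define the candidate Markov policy $\pi^*_t(h_t):=a^*_t(x_t)$, where $a^*_t(x)$ is the tie-broken maximizer of $Q_{t,t}(x,\cdot)$ produced by the pass at date--state pair $(t,x)$. The first step is a lifting lemma, proved by downward induction on $t$: for every history $h_t$ reaching state $x_t$ and every $s\le t$, the history-indexed value $V^{\pi^*}_{s,t}(h_t)$ equals the state-indexed value $W_{s,t}(x_t)$ recorded by the algorithm. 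The terminal case is immediate because $g$ depends only on the terminal state. For the inductive step, rewards, feasible actions, successor maps and $\K_{s,t}$ depend on $(t,x_t,a)$ alone. Hence the infimum in \eqref{eq:vintage-recursion-v2} over $\K_{s,t}(x_t,a)$ of $\sum_y q(y)W_{s,t+1}(f_t(x_t,a,y))$ is the same for all histories sharing $x_t$. The same argument identifies $Q^{s,\pi^*}_t(h_t,a)$ with the computed $Q_{s,t}(x_t,a)$ for every action $a$, not only the selected one. This matters because \eqref{eq:fresh-argmax-v2} quantifies over deviations.

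The second step applies \cref{thm:triangular-v2}. Since the pass selects $a^*_t(x_t)\in\argmax_a Q_{t,t}(x_t,a)$ and this equals $Q_t^{t,\pi^*}(h_t,\cdot)$ by the lifting lemma, the argmax condition holds at every history, including off-path ones. Hence $\pi^*$ is an SPE. It is Markov by construction. It is the tie-broken one in the sense that any Markov SPE using the same deterministic tie-breaking must coincide with it, which follows by the same downward induction. For the second pass, we run the ordinary rectangular Bellman recursion with kernels $\K_{0,t}$ and a maximization over actions. By \cref{prop:static-parameter-rect} and standard robust dynamic programming \citep{Iyengar2005}, this returns the vintage-$0$ rectangular precommitment value at the initial state. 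The selected-equilibrium Price of Relearning is the difference between this value and $W_{0,0}(x_0)=V^{\pi^*}_{0,0}(h_0)$, which the first pass already stores.

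Complexity is counting. At date $t$ the pass visits $|\X_t|$ states and $t+1$ active vintages. For each it evaluates $A$ actions, each requiring a minimum over at most $K$ listed laws of a $Y$-term inner product. This gives $O(AKY(t+1)|\X_t|)$ per date, and summing over $t$ gives \eqref{eq:vintage-backward-complexity}. The second pass costs $O(AKY\sum_t|\X_t|)$ and is dominated. Storage keeps one $W_{s,t}(x)$ per triple plus the selected action, which is $O(\sum_t(t+1)|\X_t|)$. Bounding $|\X_t|\le S$ gives $\sum_t(t+1)S=O(T^2S)$. Rectangularity is what justifies minimizing over listed laws independently at each node.

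The main obstacle is the lifting lemma across recombination. Two distinct histories reaching the same $x_t$ must induce the same continuation for every vintage and every deviation. This requires that the future policy is Markov and that the vintage kernels are state-measurable for all $s$, not just $s=t$; this is exactly the hypothesis. I would state the lemma for an arbitrary Markov continuation policy first and then specialize it to $\pi^*$.
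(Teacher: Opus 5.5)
Your proposal is correct and follows essentially the same route as the paper: backward induction that checks the one-period-deviation condition of \cref{thm:triangular-v2} at every date--state pair, a separate vintage-$0$ Bellman pass for the precommitment benchmark, and a cost of $KY$ per $(t,x,a,s)$ evaluation for the work and storage bounds. Your explicit lifting lemma also spells out the recombination step that the paper's induction leaves implicit. It shows that every vintage value and every deviation action value depends on the history only through the current state.
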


\noindent\emph{Proof idea.} Backward induction computes every vintage action value, selects the current vintage's maximizer, and records all older valuations of that action.  A separate initial-vintage Bellman pass gives precommitment.  Counting vintage--state--action lower expectations yields \eqref{eq:vintage-backward-complexity}; EC.3 gives the full induction and validation.

\subsection{Behavioral vintage quotients}

State abstraction and stochastic bisimulation study when decision states can be merged without changing values or optimal policies \citep{GivanDeanGreig2003,LiWalshLittman2006}.  A recent preprint develops a canonical preference-augmented state for general history-dependent recursive preferences under its stated behavioral assumptions \citep{Haskell2026}.  Our question is narrower: holding the physical history fixed, which evaluator vintages can be identified without losing any robust continuation valuation?

\begin{definition}[Future operational equivalence of vintages]\label{def:vintage-equivalence}
Fix a subtree rooted at $h_t$ and vintages $s,s'\le t$.  Write
$s\sim_{t,h_t}s'$ if, at every descendant history $h_j\succeq h_t$, every date $j\ge t$, and every feasible action $a$,
\begin{equation}\label{eq:vintage-equivalence}
\Dop\!\left(\K_{s,j}(h_j,a),\K_{s',j}(h_j,a)\right)=0.
\end{equation}
Thus equivalent vintages induce the same lower-expectation functional at every future decision node.  Let $\mathfrak m_t(h_t)$ denote the number of equivalence classes among $\{0,\ldots,t\}$.
\end{definition}

For the converse direction below, call the bounded reward class \emph{locally continuation-rich} if, at every descendant state--action pair, every sufficiently small bounded vector over the next observations can be realized as a common next-date continuation value by independently assigning rewards on the successor subtrees.  The unrestricted branchwise reward class used in the exact representation has this property whenever a positive continuation reward budget is available.

\begin{proposition}[Exact vintage quotient]\label{thm:vintage-quotient}
Fix $h_t$ and $s,s'\le t$.
\begin{enumerate}[label=(\roman*)]
\item If $s\sim_{t,h_t}s'$, then
\[
V_{s,j}^{\pi}(h_j)=V_{s',j}^{\pi}(h_j)
\]
for every descendant $h_j\succeq h_t$, every bounded reward system, and every continuation policy $\pi$.
\item If $s\not\sim_{t,h_t}s'$ and the reward class is locally continuation-rich, there is a bounded reward system and a continuation policy for which the two vintage values differ at some descendant node in the subtree.
\item Consequently, an exact triangular recursion may retain one valuation coordinate per class of $\sim_{t,h_t}$.  Under local continuation richness, no two distinct classes can be merged by a universal coordinate-identification rule.
\end{enumerate}
\end{proposition}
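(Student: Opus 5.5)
Part (i) goes by backward induction on $j$ from $T$ down to $t$ along the subtree. At $j=T$ both vintage values equal $g(h_T)$ by \eqref{eq:vintage-terminal-v2}. For the inductive step, fix a descendant $h_j$ and write $a=\pi_j(h_j)$. Suppose $V_{s,j+1}^\pi=V_{s',j+1}^\pi=:w$ on all children $(h_j,a,y)$. These children are themselves descendants of $h_t$, so the hypothesis applies to them. Then \eqref{eq:vintage-recursion-v2} gives
\[
V_{s,j}^\pi(h_j)-V_{s',j}^\pi(h_j)=\beta\bigl(\rho_{\K_{s,j}(h_j,a)}(w)-\rho_{\K_{s',j}(h_j,a)}(w)\bigr).
\]
If $w=0$ the difference is trivially zero. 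Otherwise, by positive homogeneity of lower expectations, the difference is bounded by $\beta\|w\|_\infty\,\Dop(\K_{s,j}(h_j,a),\K_{s',j}(h_j,a))=0$. Equivalently, \cref{cor:decision-equivalence-v2} identifies the two lower-expectation functionals on the next-observation simplex. The argument uses only \eqref{eq:vintage-equivalence} at the realized action, so it holds for every policy and every bounded reward system.

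For part (ii), I will pick a witness $(h_j,a)$ with $h_j\succeq h_t$ at which $\Dop(\K_{s,j}(h_j,a),\K_{s',j}(h_j,a))=\delta>0$. By \cref{thm:operational-dual-v2} and the definition of $\Dop$, there is a unit vector $v$ over $\Y_{j+1}(h_j,a)$ with $|\rho_{\K_{s,j}}(v)-\rho_{\K_{s',j}}(v)|>\delta/2$. The construction then proceeds in three steps.
\begin{enumerate}[label=(\alph*)]
\item Local continuation richness supplies $c>0$ and a reward system on the successor subtrees such that, under some continuation policy, the common next-date value at each child $(h_j,a,y)$ equals $c\,v(y)$.
\item I choose the policy to play $a$ at $h_j$.
\item I set $r_j(h_j,a)$ arbitrarily, for instance to zero.
\end{enumerate}
The two vintage values at $h_j$ then differ by $\beta c\,|\rho_{\K_{s,j}}(v)-\rho_{\K_{s',j}}(v)|>0$; this needs $\beta>0$, which holds since $\beta\in(0,1]$. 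Rewards and actions away from the path to $h_j$ are irrelevant because the conclusion concerns a single descendant node.

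The main obstacle is the word ``common'' in step (a). The children's continuation values must equal $c v(y)$ under \emph{both} vintages $s$ and $s'$, even though the two vintages disagree at the witness node and may disagree further below. I would try two routes, in this order.
\begin{itemize}
\item First, read local continuation richness as delivering exactly this common realization. Terminal-style transfers achieve it: put reward $c v(y)$ at date $j+1$ on branch $y$ and zero thereafter (or use $g$ directly if $j+1=T$). All later values are then zero for every vintage, so every vintage's value at child $y$ is $c v(y)$.
\item If that is not available, choose the witness $(h_j,a)$ at maximal depth among nodes with positive defect. Every strictly deeper node then has zero defect, so part (i), applied to each child subtree, makes the two vintages' continuation values coincide there for any rewards. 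Richness then only needs to realize $c v$ for one vintage.
\end{itemize}

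Part (iii) follows from the two parts above. By (i), the triangular recursion \eqref{eq:vintage-recursion-v2} evaluated with one representative per $\sim_{t,h_t}$-class reproduces every member's valuation on the subtree. So retaining $\mathfrak m_t(h_t)$ coordinates is exact for all rewards and policies. Conversely, suppose a universal identification rule merged two distinct classes, meaning one coordinate is assigned to both for all reward systems. The reward system and policy from (ii) produce different values at some descendant, so one of the two merged valuations would be wrong.
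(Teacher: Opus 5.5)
Your proposal is correct and follows the paper's proof essentially step for step. Part (i) is backward induction using decision equivalence at the policy action. Part (ii) takes a separating payoff from \cref{cor:decision-equivalence-v2}, embeds it through local continuation richness, sets the current reward to zero, and pastes arbitrary rewards and actions elsewhere. Part (iii) then combines (i) and (ii).

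The ``common'' next-date value you flag as the main obstacle is supplied directly by the definition of local continuation richness, and the paper uses it exactly that way. Your maximal-depth fallback is therefore unnecessary, though it is valid. The one item the paper includes that you omit is the one-line check that $\sim_{t,h_t}$ is an equivalence relation. This holds because $\Dop=0$ means equality of the lower-expectation functionals, and part (iii) presupposes it.
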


\noindent\emph{Proof idea.} For (i), terminal values coincide.  If the two continuation values coincide at date $j+1$, operational equivalence makes the two date-$j$ lower expectations equal, so backward induction preserves equality.  For (ii), failure of \eqref{eq:vintage-equivalence} yields a bounded continuation vector separated by the two lower-expectation functionals.  Local continuation richness embeds that vector on the corresponding successor branches, producing unequal vintage values at that descendant node.  Part (iii) follows by merging exactly the coordinates that are equal for every continuation problem.  EC.3 gives the full induction and separation construction.

The quotient can be strict even when the physical state is unchanged: what matters is the future lower-envelope functional carried by the evaluator vintage.  At the opposite extreme, one equivalence class recovers ordinary robust dynamic programming.

Exact quotients describe zero-error compression.  For algorithms and calibrated models, the relevant object is the operational metric entropy of the vintage family.

\begin{definition}[Operational vintage cover]\label{def:operational-vintage-cover}
Fix a subtree rooted at $h_t$.  For $s,s'\le t$ define
\begin{equation}\label{eq:operational-vintage-pseudometric}
d_{t,h_t}^{+}(s,s')
:=
\sum_{j=t}^{T-1}\beta^{j-t+1}B_{j+1}
\sup_{\substack{h_j\succeq h_t\\a\in\A_j(h_j)}}
\Dop\!\left(\K_{s,j}(h_j,a),\K_{s',j}(h_j,a)\right).
\end{equation}
A set $R\subseteq\{0,\ldots,t\}$ is an $\varepsilon$-cover if every vintage is within $\varepsilon$ of some $r\in R$ under $d_{t,h_t}^{+}$.  Its minimum cardinality is $N_{t,h_t}^{\rm vit}(\varepsilon)$.
\end{definition}

\begin{theorem}[Approximate vintage quotient]\label{thm:approximate-vintage-quotient}
For every bounded reward system and fixed continuation policy $\pi$,
\begin{equation}\label{eq:operational-vintage-cover-value}
\left|V_{s,t}^{\pi}(h_t)-V_{s',t}^{\pi}(h_t)\right|
\le d_{t,h_t}^{+}(s,s').
\end{equation}
Consequently an $\varepsilon$-cover recovers the root-date coordinates
$\{V_{s,t}^{\pi}(h_t):s\le t\}$ from $N_{t,h_t}^{\rm vit}(\varepsilon)$ representatives with error at most $\varepsilon$.  Uniform recovery of these root-active vintages $s\le t$ at every descendant root is obtained by covering under
\[
\bar d_{t,h_t}^{+}(s,s')
:=\sup_{\substack{j\ge t\\h_j\succeq h_t}}d_{j,h_j}^{+}(s,s'),
\qquad s,s'\le t.
\]

More generally, suppose a compressed backward recursion has current-self action values satisfying
\begin{equation}\label{eq:compressed-action-error}
\sup_{h_u,a}\left|\widetilde Q_u(h_u,a)-Q_u^{u,\pi}(h_u,a)\right|\le\varepsilon_u.
\end{equation}
Every exact SPE is a $2\varepsilon_u$-SPE of the compressed recursion at date $u$.  A strict exact action gap greater than $2\varepsilon_u$ at every subgame preserves the pure SPE.  If vintage $0$ is retained exactly, its precommitment value and evaluation of the preserved policy can then be recomputed exactly, so the root Price of Relearning is unchanged.
\end{theorem}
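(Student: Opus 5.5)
The plan is a backward-induction telescoping argument on the two vintage value families, using the operational dual (\cref{thm:operational-dual-v2}) to control each one-step discrepancy. Fix a bounded reward system, a continuation policy $\pi$, and two vintages $s,s'\le t$. For $j\ge t$ set
\[
e_j:=\sup_{h_j\succeq h_t}\bigl|V_{s,j}^{\pi}(h_j)-V_{s',j}^{\pi}(h_j)\bigr|,
\qquad
\delta_j:=\sup_{h_j\succeq h_t,\,a\in\A_j(h_j)}\Dop\bigl(\K_{s,j}(h_j,a),\K_{s',j}(h_j,a)\bigr).
\]
By \eqref{eq:vintage-terminal-v2}, $e_T=0$. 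At date $j<T$ the reward term $r_j(h_j,\pi_j(h_j))$ cancels in \eqref{eq:vintage-recursion-v2}, so it suffices to compare $\beta\inf_{q\in\K_{s,j}}qV_{s,j+1}$ with $\beta\inf_{q\in\K_{s',j}}qV_{s',j+1}$. I will split this difference by a triangle inequality into two pieces. The first piece compares the two kernel sets evaluated on the common vector $V_{s,j+1}$. Every continuation value is bounded by $B_{j+1}$ from \eqref{eq:B-bounds}, so I rescale to $z=V_{s,j+1}/B_{j+1}\in\mathbb B_\infty$ and use positive homogeneity of lower expectations; \cref{def:Dop-v2} then bounds this piece by $B_{j+1}\delta_j$. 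The second piece compares one kernel set on the two vectors $V_{s,j+1}$ and $V_{s',j+1}$; since a lower expectation is $1$-Lipschitz in the sup norm, this piece is at most $e_{j+1}$. This gives the recursion $e_j\le\beta(B_{j+1}\delta_j+e_{j+1})$, and unrolling from $e_T=0$ yields $e_t\le\sum_{j=t}^{T-1}\beta^{j-t+1}B_{j+1}\delta_j=d^+_{t,h_t}(s,s')$, which is \eqref{eq:operational-vintage-cover-value}.

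The cover statement then follows at once: replacing each vintage by its cover representative $r$ with $d^+(s,r)\le\varepsilon$ gives root error at most $\varepsilon$. For the uniform version, I apply the same bound with root $h_j$ in place of $h_t$ and take the supremum, which is exactly $\bar d^+$.

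For the compressed recursion, take an exact SPE $\pi$. By \cref{thm:triangular-v2}, $\pi_u(h_u)$ maximizes $Q^{u,\pi}_u$, so for any deviation $a$,
\[
\widetilde Q_u(\pi_u)\ge Q_u^{u,\pi}(\pi_u)-\varepsilon_u\ge Q_u^{u,\pi}(a)-\varepsilon_u\ge\widetilde Q_u(a)-2\varepsilon_u .
\]
Hence $\pi$ is a $2\varepsilon_u$-SPE of the compressed recursion. If the exact action gap exceeds $2\varepsilon_u$, the reverse chain shows that the unique compressed maximizer coincides with the exact one. I will argue this by backward induction over dates so that the future policy being taken as given is the same policy $\pi$ in both recursions. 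With the policy preserved and vintage $0$ retained exactly, the precommitment Bellman value and $V_{0,0}^{\pi}$ are the same objects as in the exact model, so the Price of Relearning is unchanged.

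I expect the main obstacle to be the first piece of the triangle split, that is, justifying the scaling by $B_{j+1}$. $\Dop$ is defined on the unit sup-ball, and the values are only bounded by $B_{j+1}$, so homogeneity of $\rho$ under positive scaling must be invoked carefully, with the case $B_{j+1}=0$ handled separately. A second point to check is whether the hypothesis \eqref{eq:compressed-action-error} is meant to be taken relative to the same future policy. If it is not, I would strengthen the induction hypothesis to carry the policy agreement forward date by date.
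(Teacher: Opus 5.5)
Your proposal is correct and follows the paper's proof essentially step for step. You use the same add-and-subtract recursion $e_j\le\beta e_{j+1}+\beta B_{j+1}\delta_j$; you swap kernels on $V_{s,j+1}$ where the paper swaps them on $V_{s',j+1}$, which is immaterial. The re-rooting for $\bar d^{+}$ and the $2\varepsilon_u$ action-gap argument applied backward are also the paper's. The $B_{j+1}$ scaling you flag is routine by positive homogeneity of $\rho$, and trivial when $B_{j+1}=0$.
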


\noindent\emph{Proof idea.} A one-step lower-expectation comparison gives a backward recursion for the fixed-policy value difference; iteration yields \eqref{eq:operational-vintage-cover-value}.  Applying the same argument at each descendant root proves the $\bar d^{+}$ statement.  Action-gap perturbations cost at most $2\varepsilon_u$.  EC.3 gives the complete induction and the dynamically coherent compressed recursion.

\begin{corollary}[Logarithmic memory under coherent age forgetting]\label{cor:logarithmic-vintage-memory}
Suppose $\beta<1$ and $B_j\le\bar B$.  For every $L$, let $R_j^L\subseteq\{0,\ldots,j\}$ and let $\phi_j^L$ be a retraction onto $R_j^L$ such that $0,j\in R_j^L$, $|R_j^L|\le L+2$, and
\[
\phi_{j+1}^L\!\circ\phi_j^L=\phi_{j+1}^L
\quad\text{on }\{0,\ldots,j\}.
\]
If, for all omitted coordinates,
\[
\sup_{h,a}\Dop\!\left(\K_{s,j}(h,a),\K_{\phi_j^L(s),j}(h,a)\right)
\le C\rho^L
\]
for some $C<\infty$ and $\rho\in(0,1)$, then the coherent compressed recursion defined in EC.3 satisfies
\begin{equation}\label{eq:age-forgetting-error}
\sup_{\substack{0\le s\le j\le T\\ h\in\Hh_j}}\left|V_{s,j}^{\pi}(h)-\widetilde V_{\phi_j^L(s),j}^{\pi}(h)\right|
\le
\frac{\beta\bar B C}{1-\beta}\rho^L.
\end{equation}
The same order bounds fixed-policy action values.  Hence $L=O(\log(1/\varepsilon))$ representative vintage coordinates suffice for fixed accuracy independently of the horizon.  Under the gap condition in \cref{thm:approximate-vintage-quotient}, compression preserves the pure SPE and, because vintage $0$ is retained, the root Price of Relearning.
\end{corollary}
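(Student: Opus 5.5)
The plan is to define the coherent compressed recursion explicitly and then compare it with the exact triangular recursion by backward induction on the date. For each date $j$ and representative $r\in R_j^L$, set $\widetilde V_{r,T}^{\pi}:=g$. For $j<T$, let
\[
\widetilde V_{r,j}^{\pi}(h_j):=r_j(h_j,\pi_j(h_j))+\beta\inf_{q\in\K_{r,j}(h_j,\pi_j(h_j))}\sum_y q(y)\,\widetilde V_{\phi_{j+1}^L(r),j+1}^{\pi}(h_j,\pi_j(h_j),y).
\]
So a representative at date $j$ evaluates with its own kernel and passes its continuation to its date-$(j+1)$ representative. Coherence, $\phi_{j+1}^L\circ\phi_j^L=\phi_{j+1}^L$, makes this well defined: an omitted vintage $s$ mapped to $r=\phi_j^L(s)$ at date $j$ has the same date-$(j+1)$ representative as $r$. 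Hence the compressed trajectory of $s$ is exactly the one tracked through $r$, and no second rounding is introduced.

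Next, write $e_j:=\sup_{s\le j,\,h}\bigl|V_{s,j}^{\pi}(h)-\widetilde V_{\phi_j^L(s),j}^{\pi}(h)\bigr|$, with $e_T=0$. Fix $s$, $h_j$, and $r=\phi_j^L(s)$, and let $a=\pi_j(h_j)$; the rewards cancel. The remaining difference splits into two parts.

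The first part is a kernel swap, comparing the lower expectation over $\K_{s,j}$ with that over $\K_{r,j}$ for the same continuation vector $V_{s,j+1}^{\pi}$. This vector is bounded by $B_{j+1}\le\bar B$, so by homogeneity and \cref{def:Dop-v2} (applied on the observation simplex, which is exactly the $\Dop$ used in the hypothesis), the swap costs at most $\bar B\,C\rho^L$. The cost is zero when $s\in R_j^L$.

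The second part is a continuation swap under $\K_{r,j}$, from $V_{s,j+1}^{\pi}$ to $\widetilde V_{\phi_{j+1}^L(s),j+1}^{\pi}$, where coherence gives $\phi_{j+1}^L(r)=\phi_{j+1}^L(s)$. Lower expectations are 1-Lipschitz in sup norm, so this part costs at most $e_{j+1}$.

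Combining the two gives $e_j\le\beta(\bar B C\rho^L+e_{j+1})$. Iterating, $e_j\le \bar B C\rho^L\sum_{k\ge1}\beta^k=\beta\bar B C\rho^L/(1-\beta)$, which is \eqref{eq:age-forgetting-error}. The bound is uniform in $T$.

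The same one-step decomposition applied to \eqref{eq:fresh-Q-v2} bounds the fixed-policy action values by the same order. The current vintage $j\in R_j^L$ is exact at date $j$, so the only error comes from the continuation term, which is $\beta e_{j+1}$.

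For the memory count, choose $L\ge\log(\beta\bar B C/((1-\beta)\varepsilon))/\log(1/\rho)=O(\log(1/\varepsilon))$; then $|R_j^L|\le L+2$ coordinates suffice. The final claims follow from \cref{thm:approximate-vintage-quotient} with $\varepsilon_u=\beta\bar B C\rho^L/(1-\beta)$. Under the gap condition, the pure SPE is preserved. Since $0\in R_j^L$ and $\phi$ retracts $0$ to itself, the vintage-$0$ coordinate can be recomputed exactly, so the root Price of Relearning is unchanged.

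The main obstacle is making the compressed recursion self-consistent, namely ensuring that an omitted vintage's error is paid once per period rather than compounding through repeated re-projection. The step that needs care is verifying that the coherence identity yields the continuation swap with $e_{j+1}$ and not with $e_{j+1}$ plus an additional projection error.
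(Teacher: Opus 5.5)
Your proposal is correct and follows the paper's proof essentially step for step. It uses the same coherent compressed recursion, in which each representative passes its continuation to $\phi_{j+1}^L(r)$, and the same add-and-subtract of the exact continuation under the representative kernel, yielding $e_j\le\beta e_{j+1}+\beta\bar B C\rho^L$; it then invokes, as the paper does, $\phi_j^L(0)=0$, $\phi_j^L(j)=j$, and the gap condition of \cref{thm:approximate-vintage-quotient} to preserve the SPE and the root Price of Relearning.
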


\subsection{Bellman recovery}

\begin{assumption}[Cross-vintage kernel coherence]\label{ass:kernel-coherence-v2}
For every \(s\le t\), history \(h_t\), and action \(a\), the inherited kernel sets are decision equivalent to a common correspondence \(\K_t(h_t,a)\).
\end{assumption}

\begin{proposition}[Collapse to robust dynamic programming]\label{thm:bellman-recovery-v2}
Under Assumption~\ref{ass:kernel-coherence-v2}, the following statements hold.
\begin{enumerate}[label=(\roman*)]
\item For every fixed policy $\pi$, there are functions $V_t^{\pi}$ and $Q_t^{\pi}$ such that
\[
V_{s,t}^{\pi}=V_t^{\pi},
\qquad
Q_t^{s,\pi}=Q_t^{\pi}
\quad\text{for every }s\le t.
\]
They satisfy the policy-evaluation recursion
\begin{equation}\label{eq:coherent-policy-evaluation-v2}
V_t^{\pi}(h_t)
=
r_t(h_t,\pi_t(h_t))
+\beta\inf_{q\in\K_t(h_t,\pi_t(h_t))}
\sum_yq(y)V_{t+1}^{\pi}(h_t,\pi_t(h_t),y).
\end{equation}
No maximization is asserted for a suboptimal fixed policy.
\item Define the robust Bellman value by
\begin{equation}\label{eq:bellman-v2}
V_t^*(h_t)
=\max_{a\in\A_t(h_t)}
\left\{
r_t(h_t,a)
+\beta\inf_{q\in\K_t(h_t,a)}
\sum_yq(y)V_{t+1}^*(h_t,a,y)
\right\}.
\end{equation}
A policy is an SPE if and only if it selects a maximizer in \eqref{eq:bellman-v2} at every subgame.  For every SPE $\pi^*$,
\[
V_{s,t}^{\pi^*}=V_t^*
\quad\text{for all }s\le t.
\]
Hence the SPE and Bellman subgame-optimal action correspondences coincide, and the rectangularized-vintage precommitment value equals $V_t^*$.
\end{enumerate}
A root-optimal precommitment policy may contain arbitrary off-path actions; policy-correspondence equality therefore refers to policies that are Bellman-optimal at every subgame.  If the vintage originates from a committed static-parameter model, $V_t^*$ also equals the original committed precommitment value whenever all relevant within-vintage rectangularization defects on the continuation subtree are zero.
\end{proposition}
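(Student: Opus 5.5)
The argument is a backward induction on \(t\) from \(T\) down to \(0\), carried out separately for each fixed policy and then for the Bellman value. The single tool is the decision-equivalence corollary (\cref{cor:decision-equivalence-v2}). Under Assumption~\ref{ass:kernel-coherence-v2}, \(\Dop(\K_{s,t}(h_t,a),\K_t(h_t,a))=0\) for every \(s\le t\). Hence, for every bounded vector \(v\) over next observations,
\[
\inf_{q\in\K_{s,t}(h_t,a)}\sum_y q(y)v(y)=\inf_{q\in\K_t(h_t,a)}\sum_y q(y)v(y).
\]
The normalization \(\|v\|_\infty\le1\) in \(\Dop\) is removed by positive homogeneity of the lower expectation. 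This one identity is what lets every vintage index be dropped.

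\emph{Part (i).} Define \(V_T^\pi:=g\). For \(t<T\), define \(V_t^\pi\) by \eqref{eq:coherent-policy-evaluation-v2} and \(Q_t^\pi\) by the analogous deviation formula with \(\K_t\). The induction hypothesis at date \(t+1\) is that \(V_{s,t+1}^\pi=V_{t+1}^\pi\) for all \(s\le t+1\). The base case holds because \eqref{eq:vintage-terminal-v2} sets every vintage value equal to \(g\). For the step, substitute the hypothesis into \eqref{eq:vintage-recursion-v2} and \eqref{eq:fresh-Q-v2}; the continuation vector is then the vintage-free \(V_{t+1}^\pi(h_t,a,\cdot)\). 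The displayed identity replaces \(\K_{s,t}\) by \(\K_t\), which gives \(V_{s,t}^\pi=V_t^\pi\) and \(Q_t^{s,\pi}=Q_t^\pi\) for all \(s\le t\), including \(s=t\). No optimality enters here, which matches the statement's caveat.

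\emph{Part (ii).} By \cref{thm:triangular-v2}, \(\pi^*\) is an SPE if and only if \(\pi_t^*(h_t)\in\argmax_a Q_t^{t,\pi^*}(h_t,a)\) at every subgame. By (i), \(Q_t^{t,\pi^*}=Q_t^{\pi^*}\). I then prove by backward induction that, for an SPE, \(V_t^{\pi^*}=V_t^*\) and \(Q_t^{\pi^*}(h_t,a)\) equals the bracket in \eqref{eq:bellman-v2}. Given \(V_{t+1}^{\pi^*}=V_{t+1}^*\), the two action-value functions coincide at date \(t\). The SPE action maximizes this common function, so \(V_t^{\pi^*}(h_t)=\max_a\{\cdot\}=V_t^*(h_t)\). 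The same induction gives the converse. If \(\pi\) selects a Bellman maximizer everywhere, then \(V^\pi_{t+1}=V^*_{t+1}\) inductively, so \(Q_t^{\pi}\) is the Bellman bracket and \(\pi_t\) maximizes it. \cref{thm:triangular-v2} then makes \(\pi\) an SPE. Combined with (i), this gives \(V_{s,t}^{\pi^*}=V_t^*\) for all \(s\le t\), and the SPE and Bellman-subgame-optimal correspondences agree.

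\emph{Precommitment and committed model.} Vintage-0 precommitment over rectangularized kernels is, by rectangularity, a standard finite-horizon robust MDP. Its optimal value is given by the rectangular robust DP recursion with \(\K_{0,t}\) \citep{Iyengar2005}, so I would cite that result rather than reprove it. The displayed identity again replaces \(\K_{0,t}\) by \(\K_t\), so this value is \(V_t^*\). For the committed static-parameter remark, zero rectangularization defects on the continuation subtree mean \(\rho_{\Q^{\mathrm C}}=\rho_{\Q^{\mathrm R}}\) for every policy's continuation payoff, by \cref{prop:protocol-ordering}. Committed and rectangular precommitment values therefore coincide policy by policy, and hence after maximization.

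The main obstacle is this precommitment step. One must justify that rectangular precommitment over policies equals the robust Bellman value, which uses interchange of \(\max\) over policies with the history-wise \(\inf\) over kernels. This interchange is valid only because the kernel selections are rectangular, so it is where rectangularity genuinely enters; I would invoke the pasting/rectangularity structure from \cref{prop:static-parameter-rect}. I would also note that root-optimal precommitment policies may differ off path, which is why the correspondence claim is restricted to subgame-optimal policies.
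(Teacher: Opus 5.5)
Your proposal is correct and follows essentially the same route as the paper's proof: backward induction with decision equivalence gives the vintage-free evaluation in (i), and the one-period-deviation condition of \cref{thm:triangular-v2}, used in both directions, gives (ii). The precommitment and committed-model claims then come from running the same recursion with $\K_{0,t}$ and invoking zero rectangularization defects. The paper simply asserts that the rectangularized vintage-$0$ precommitment problem has the same Bellman recursion; your citation of rectangular robust DP is fine. No genuine max--inf interchange is needed there, however. For each fixed policy, rectangularity already reduces the infimum over pasted laws to the recursive evaluation, and maximizing that over policies is ordinary monotone backward induction.
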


\noindent\emph{Proof idea.} First fix $\pi$.  Backward induction and decision equivalence of the vintage kernels give the vintage-independent policy-evaluation recursion \eqref{eq:coherent-policy-evaluation-v2}; this step contains no action maximization.  A second backward induction applies the SPE one-period-deviation condition to the common continuation value and yields the Bellman equation only for subgame-wise optimal policies.  The converse is immediate from \cref{thm:triangular-v2}.  EC.2 gives the complete induction.

\section{The Price of Relearning}\label{sec:por}

Having identified the evaluator-vintage state, we quantify its welfare consequence and then ask whether that state can be ignored, compressed, or efficiently certified.

\subsection{A sharp one-step bound}

Let \(A,B\subseteq\Delta_m\) be inherited and fresh parameter ambiguity sets. A finite action family has payoff vectors \(z_a\) with \(\|z_a\|_\infty\le1\). Let
\[
a_A\in\argmax_a\rho_A(z_a),
\qquad
a_B\in\argmax_a\rho_B(z_a).
\]
The inherited self's loss from the fresh self's action is
\[
L(A,B):=\rho_A(z_{a_A})-\rho_A(z_{a_B})\ge0.
\]

\begin{proposition}[Sharp one-step price of relearning]\label{thm:one-step-por-v2}
For every such action family,
\begin{equation}\label{eq:one-step-por-v2}
0\le L(A,B)\le2\Dop(A,B).
\end{equation}
The factor \(2\) is sharp.
\end{proposition}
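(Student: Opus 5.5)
The plan is a standard ``add and subtract'' argument. Nonnegativity of \(L(A,B)\) is immediate because \(a_A\) maximizes \(\rho_A(z_a)\) over the action family. For the upper bound, write
\[
L(A,B)=\bigl[\rho_A(z_{a_A})-\rho_B(z_{a_A})\bigr]+\bigl[\rho_B(z_{a_A})-\rho_B(z_{a_B})\bigr]+\bigl[\rho_B(z_{a_B})-\rho_A(z_{a_B})\bigr].
\]
The middle bracket is \(\le0\) because \(a_B\) maximizes \(\rho_B(z_\cdot)\). Each outer bracket is at most \(\sup_{\|z\|_\infty\le1}|\rho_A(z)-\rho_B(z)|=\Dop(A,B)\), by \cref{def:Dop-v2} and \(\|z_a\|_\infty\le1\). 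Together these give \eqref{eq:one-step-por-v2}. No duality is needed here; \cref{thm:operational-dual-v2} enters only in identifying \(\Dop\) in the sharpness example.

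The sharpness step is the only real content. I would show that for every \(\eta>0\) (or exactly) there is an instance with \(\Dop(A,B)=\delta>0\) and \(L(A,B)=2\delta\), allowing adversarial tie breaking in \(a_B\); alternatively, a family in which \(L/\Dop\to2\).

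The natural candidate uses \(m=2\), singletons \(A=\{p\}\) and \(B=\{q\}\), with \(p=(1/2+\delta/2,\,1/2-\delta/2)\) and \(q=(1/2-\delta/2,\,1/2+\delta/2)\). Then \(\Dop=\|p-q\|_1=2\delta\). This is too large, so I would rescale to \(\|p-q\|_1=\delta\) and choose two actions. The first is \(z_1=(1,-1)\). The second is a constant \(z_2=c\mathbf 1\) with \(c\) strictly between \(\rho_B(z_1)\) and \(\rho_A(z_1)\). This yields only \(L\le\Dop\), since each bracket then absorbs part of the gap.

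To attain the factor \(2\), both outer brackets must be saturated simultaneously with a tie in the middle bracket. I would therefore use two non-constant actions. Take \(z_1=(1,-1)\) and \(z_2=(-1,1)\) with singletons \(p,q\) placed symmetrically around \(u\). Then \(\rho_A(z_1)-\rho_A(z_2)=2(p_1-p_2)\), while \(\Dop=\|p-q\|_1\).

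Choose \(p=(1/2+\varepsilon,\,1/2-\varepsilon)\) and \(q=u\). Then \(B\) is indifferent between the two actions, so adversarial tie breaking selects \(a_B=2\). This gives \(L=4\varepsilon\) and \(\Dop=\|p-u\|_1=2\varepsilon\), so \(L=2\Dop\).

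If ties are to be avoided, I would perturb \(q\) to \((1/2-\eta,\,1/2+\eta)\). This gives \(L=4\varepsilon\) and \(\Dop=2(\varepsilon+\eta)\), and letting \(\eta\downarrow0\) makes the ratio tend to \(2\). This shows that no constant smaller than \(2\) works.

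The expected obstacle is only bookkeeping. I need to check that \(\|z_a\|_\infty\le1\) holds for these actions and to state clearly whether ``sharp'' means attained (via ties) or a supremum (via the perturbation).
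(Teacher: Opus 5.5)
Your proof is correct and follows the paper's argument: the same three-term add-and-subtract decomposition gives $\Dop+0+\Dop$, and sharpness uses the same two-state example, $A=\{(1/2+\varepsilon,1/2-\varepsilon)\}$ and $B=\{u\}$ with opposite payoffs $\pm(1,-1)$. The only difference is in how the fresh self's tie is handled. The paper breaks it by shifting the second payoff to $-\alpha(1,-1)+\delta\mathbf 1$ and letting $\alpha\uparrow1$, $\delta\downarrow0$. You instead either select the adverse tied action, which is legitimate because $a_B$ may be any maximizer and gives exact equality, or perturb $q$.
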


\noindent\emph{Proof idea.} Insert the fresh evaluator at the inherited-optimal and fresh-optimal actions.  Fresh optimality makes the middle term nonpositive and each evaluator-change term is at most $\Dop(A,B)$, giving the factor $2$.  A two-state pair of nearby point beliefs and two opposite payoff vectors approaches equality.  EC.3 gives the sharp construction.

The separation is robust to small payoff perturbations because lower expectations are one-Lipschitz.  The quantitative endpoint theorem below strengthens this observation by tying an open-set loss floor to the omitted evaluator's operational distance and strict action margins.

\subsection{An exact dynamic SPE representation without equilibrium selection}

The one-step bound identifies the local welfare channel. In a sophisticated problem the exact continuation object is triangular across ambiguity vintages: self $t$ evaluates a deviation through $V_{t,t+1}^{\pi}$, not through the successor self's diagonal value $V_{t+1,t+1}^{\pi}$.  Moreover, when several pure SPEs coexist, an exact welfare representation should not depend on an arbitrary tie-breaking convention.  We therefore retain both the active vintage valuations and the equilibrium correspondence.

Fix the full rectangular one-step family $\{\K_{s,u}:0\le s\le u<T\}$ from \cref{sec:equilibrium}.  Let $\mathscr R_t(h_t)$ be the class of reward systems on the subtree rooted at $h_t$ satisfying \eqref{eq:reward-bounds}, with rewards independently specifiable on disjoint action--outcome subtrees.  For $r\in\mathscr R_t(h_t)$, let $\mathrm{SPE}_t(r;h_t)$ denote the nonempty set of pure SPE continuation policies on that subtree, and let
\[
P_t^r(h_t):=V_t^{0,*}(h_t)
\]
be vintage $0$'s rectangular precommitment optimum.  For any $\pi\in\mathrm{SPE}_t(r;h_t)$ define the equilibrium-specific Price of Relearning
\begin{equation}\label{eq:equilibrium-specific-por}
\PoR_t(r,\pi;h_t)
:=P_t^r(h_t)-V_{0,t}^{\pi}(h_t)\ge0.
\end{equation}
The inequality follows because the SPE continuation is feasible for the vintage-$0$ rectangular precommitment problem.
This is a directed commitment benchmark, not a claim that vintage $0$ is normatively superior or better informed: it measures loss relative to the evaluator that justified the original plan.

Define the attainable vintage-vector correspondence recursively.  At a terminal history,
\begin{equation}\label{eq:triple-terminal}
\mathfrak S_T(h_T)
:=
\left\{
(z;z,\ldots,z)\in\R^{T+2}: |z|\le G
\right\},
\end{equation}
where the first coordinate is the precommitment coordinate and the remaining $T+1$ coordinates are the valuations of vintages $0,\ldots,T$.  Suppose $\mathfrak S_{t+1}$ has been defined.  For each action $a\in\A_t(h_t)$ choose $c_a\in[-R_t,R_t]$ and, independently for every successor $y$, choose
\[
(p_{a,y};u_{0,a,y},\ldots,u_{t+1,a,y})
\in
\mathfrak S_{t+1}(h_t,a,y).
\]
Define
\begin{equation}\label{eq:triple-action-values}
\begin{aligned}
P_a
&:=c_a+\beta\inf_{q\in\K_{0,t}(h_t,a)}\sum_yq(y)p_{a,y},\\
U_{s,a}
&:=c_a+\beta\inf_{q\in\K_{s,t}(h_t,a)}\sum_yq(y)u_{s,a,y},
\qquad 0\le s\le t.
\end{aligned}
\end{equation}
For every current action
\[
a_F\in\argmax_{a\in\A_t(h_t)}U_{t,a},
\]
include the vector
\begin{equation}\label{eq:triple-recursion}
\left(\max_aP_a;U_{0,a_F},\ldots,U_{t,a_F}\right)
\end{equation}
in $\mathfrak S_t(h_t)$.  Thus the precommitment coordinate is a value and needs no equilibrium selector, whereas the remaining coordinates retain every current action consistent with the SPE one-period-deviation condition.

Define the worst-equilibrium dynamic relearning divergence
\begin{equation}\label{eq:dynamic-por-divergence}
\mathfrak d_t^{\rm RL,SPE}(h_t)
:=
\sup_{(p;u_0,\ldots,u_t)\in\mathfrak S_t(h_t)}(p-u_0).
\end{equation}
It is directed because every equilibrium continuation is evaluated by vintage $0$ against vintage $0$'s precommitment benchmark.

\begin{theorem}[Selection-free exact SPE Price-of-Relearning representation]\label{thm:exact-dynamic-por-representation}
For every history $h_t$,
\begin{equation}\label{eq:attainable-triplet-equality}
\boxed{
\mathfrak S_t(h_t)
=
\left\{
\left(P_t^r(h_t);
V_{0,t}^{\pi}(h_t),\ldots,V_{t,t}^{\pi}(h_t)\right):
 r\in\mathscr R_t(h_t),\ 
 \pi\in\mathrm{SPE}_t(r;h_t)
\right\}.
}
\end{equation}
Consequently,
\begin{equation}\label{eq:exact-dynamic-por}
\boxed{
\mathfrak d_t^{\rm RL,SPE}(h_t)
=
\sup_{r\in\mathscr R_t(h_t)}
\sup_{\pi\in\mathrm{SPE}_t(r;h_t)}
\left[
P_t^r(h_t)-V_{0,t}^{\pi}(h_t)
\right].
}
\end{equation}
Hence $\mathfrak d_t^{\rm RL,SPE}(h_t)=0$ if and only if every bounded reward system in the class and every pure SPE continuation have zero relearning loss at $h_t$.  If every datewise and terminal reward bound is multiplied by $c\ge0$, then $\mathfrak d_t^{\rm RL,SPE}$ is multiplied by $c$.
\end{theorem}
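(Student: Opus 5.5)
The proof is a backward induction on $t$ from $T$ down to the root. The claim is the set equality \eqref{eq:attainable-triplet-equality} at every history $h_t$; the formula \eqref{eq:exact-dynamic-por}, the zero criterion, and the scaling statement are then consequences. At $t=T$ the equality is immediate. A terminal reward system is just a number $g(h_T)$ with $|g|\le G$, the policy is empty, and every vintage value and the precommitment value all equal $g(h_T)$. This is exactly \eqref{eq:triple-terminal}.

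For the inductive step, assume the equality at every successor $(h_t,a,y)$ and prove both inclusions at $h_t$. The organizing fact is that $\mathscr R_t(h_t)$ has a product structure. A reward system on the subtree rooted at $h_t$ is the same thing as a choice of current rewards $c_a=r_t(h_t,a)\in[-R_t,R_t]$ together with independently chosen reward systems $r^{a,y}\in\mathscr R_{t+1}(h_t,a,y)$ on the disjoint successor subtrees. The SPE correspondence factorizes in the same way. By \cref{thm:triangular-v2}, $\pi\in\mathrm{SPE}_t(r;h_t)$ exactly when two things hold: the restriction of $\pi$ to each successor subtree is an SPE for $r^{a,y}$, including successors of non-chosen actions, since SPE demands optimality off path; and $\pi_t(h_t)$ maximizes $Q_t^{t,\pi}(h_t,\cdot)$.

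For ``$\supseteq$'', take $r$ and $\pi$. By induction, each successor triple $(P^{r^{a,y}}_{t+1};V_{0,t+1}^{\pi},\dots,V_{t+1,t+1}^{\pi})$ lies in $\mathfrak S_{t+1}(h_t,a,y)$. Recursion \eqref{eq:vintage-recursion-v2} and definition \eqref{eq:fresh-Q-v2} give $U_{s,a}=Q_t^{s,\pi}(h_t,a)$, and $V_{s,t}^\pi(h_t)=U_{s,\pi_t(h_t)}$. The argmax condition makes $a_F=\pi_t(h_t)$ admissible in \eqref{eq:triple-recursion}. For the first coordinate, the rectangular vintage-$0$ Bellman recursion gives $P_t^r(h_t)=\max_a P_a$ with $p_{a,y}=P^{r^{a,y}}_{t+1}$. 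Note that vintage $s$ at date $t+1$ only needs coordinates $0,\dots,t$ of the successor vector, and the dropped coordinate $t+1$ is just the successor's fresh value. For ``$\subseteq$'', reverse this. Given a generating choice $(c_a,\text{successor vectors},a_F)$, the inductive hypothesis realizes each successor vector by some $(r^{a,y},\pi^{a,y})$. Paste these together with $r_t(h_t,a)=c_a$ and $\pi_t(h_t)=a_F$. Independent specifiability makes the pasted $r$ lie in $\mathscr R_t(h_t)$, and the one-period-deviation characterization makes the pasted $\pi$ an SPE.

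The main obstacle is the first coordinate in the ``$\subseteq$'' direction. The precommitment value $P_{t+1}$ must be a genuine function of the successor reward system alone, so that pasting preserves it and it does not depend on which SPE is chosen downstream. To get this, I will use \cref{thm:bellman-recovery-v2} applied to the single vintage-$0$ rectangular family (a coherent family by construction). This identifies $P_t^r$ with the subgame-optimal Bellman value, which is selection-free, and shows it depends only on $r$ on the subtree. This is what allows the precommitment coordinate and the vintage coordinates to be chosen jointly from independently realized successor data.

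Given the set equality, \eqref{eq:exact-dynamic-por} follows by taking the supremum of $p-u_0$ over both sides. Since each $\PoR_t\ge0$ and $\mathfrak S_t$ is nonempty (by \cref{thm:spe-existence-v2}), the divergence is zero if and only if every such loss is zero. For scaling, multiplying all bounds by $c\ge0$ multiplies $\mathfrak S_t$ by $c$. This follows by induction, using positive homogeneity of $\inf_q\sum_y q(y)(\cdot)$ and of argmax for $c>0$. The case $c=0$ is trivial because every coordinate is then zero.
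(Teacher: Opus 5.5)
Your proof is correct and follows essentially the same route as the paper's. Both use a two-inclusion backward induction: one direction restricts $(r,\pi)$ to the successor subtrees, and the other pastes branchwise rewards and successor SPEs, with $a_F$ enforcing the one-period-deviation condition of \cref{thm:triangular-v2}; the supremum, nonnegativity, and homogeneity conclusions then follow in the same way. The only difference is cosmetic: you justify the selection-free precommitment coordinate by applying \cref{thm:bellman-recovery-v2} to a fictitious vintage-$0$-only system, whereas the paper directly identifies $P_a$ as the vintage-$0$ rectangular Bellman action value.
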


\noindent\emph{Proof idea.} A two-inclusion backward induction identifies the recursion with the equilibrium correspondence. Restriction of any $(r,\pi)$ to successor subtrees generates admissible vectors and \eqref{eq:triple-action-values}; conversely, branchwise reward richness and branch SPEs paste into a global reward system and off-path policy, with $a_F\in\argmax_aU_{t,a}$ enforcing current optimality. EC.3 gives the construction and homogeneity argument.

The theorem separates application-specific loss $\PoR_t(r,\pi;h_t)$ from the universal structural diagnostic $\mathfrak d_t^{\rm RL,SPE}(h_t)$. A deterministic selector is obtained by restricting \eqref{eq:triple-recursion}; the representation itself remains selection-free.

\subsection{Why the triangular state cannot generally be compressed}

The triangular state is not bookkeeping introduced by the proof. Datewise comparisons between only the initial and current ambiguity sets can omit off-diagonal evaluators that determine future equilibrium selections.

\begin{proposition}[Endpoint ambiguity does not identify dynamic relearning]\label{prop:endpoint-insufficiency}
There exist two finite three-date kernel systems $\mathbb K$ and $\widetilde{\mathbb K}$ with identical endpoint pairs
\[
\K_{0,u}=\widetilde\K_{0,u},
\qquad
\K_{u,u}=\widetilde\K_{u,u}
\qquad\forall u,
\]
but
\[
\mathfrak d_0^{\rm RL,SPE}(\mathbb K)>0,
\qquad
\mathfrak d_0^{\rm RL,SPE}(\widetilde{\mathbb K})=0.
\]
The positive-loss witness in $\mathbb K$ has a unique pure SPE and strict action gaps.  Hence endpoint insufficiency is not an equilibrium-selection artifact, and no functional of $\{(\K_{0,u},\K_{u,u})\}_u$ alone can equal the unrestricted worst-equilibrium Price of Relearning on the full model class.
\end{proposition}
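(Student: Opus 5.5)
The construction targets the only kernel endpoint data cannot see: the off-diagonal kernel $\K_{0,1}$ with $0<1<2$. I will use three decision dates $t=0,1,2$ with $T=3$. The two systems will agree on every diagonal kernel $\K_{u,u}$ and every initial-vintage kernel $\K_{0,u}$. They will differ only in the intermediate inherited kernel $\K_{1,2}$, which is vintage $1$'s evaluation at date $2$. The endpoint pairs $(\K_{0,u},\K_{u,u})$ never mention $\K_{1,2}$, so they coincide by construction. The task is then to make $\K_{1,2}$ decision-relevant for the date-$1$ self's equilibrium choice, and through it for vintage $0$'s welfare.

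\emph{Design of the witness.} The key mechanism comes from \cref{thm:triangular-v2}: self $1$ compares deviations through $Q_1^{1,\pi}$, which uses $V_{1,2}^{\pi}$, hence $\K_{1,2}$, and not the date-$2$ diagonal. I will take all kernels at date $0$ and date $1$ to be common across vintages (singletons, say). Then any loss must come from how self $1$ evaluates self $2$'s behavior. At date $2$ there is a binary node with two actions: ``safe'' with constant payoff and ``risky'' with payoff vector $Z=(1,-1)$. The kernels $\K_{0,2}=\{\delta_{\text{bad}}\}$ and $\K_{2,2}=\{\delta_{\text{good}}\}$ are chosen so that self $2$ strictly prefers risky while vintage $0$ strictly prefers safe; this is the separation of \cref{cor:decision-equivalence-v2}.

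At date $1$, self $1$ chooses between entering that date-$2$ node (action $E$) and an outside option $O$ whose value lies strictly between the two possible vintage-$1$ evaluations of the risky continuation. In $\mathbb K$, set $\K_{1,2}=\{\delta_{\text{good}}\}$. Self $1$ then values $E$ highly and enters, whereas vintage $0$ would have preferred $O$, which gives $\PoR>0$ along the unique SPE. In $\widetilde{\mathbb K}$, set $\widetilde\K_{1,2}=\K_{0,2}$, so vintages $0$ and $1$ are operationally equivalent (\cref{thm:vintage-quotient}).

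\emph{Showing $\mathfrak d_0^{\rm RL,SPE}(\widetilde{\mathbb K})=0$.} This is the main obstacle, because the divergence is a supremum over all reward systems in $\mathscr R_0$, not just the witness rewards. Choosing $\widetilde\K_{1,2}=\K_{0,2}$ alone does not suffice: self $2$ still deviates from vintage $0$'s plan, and with rich rewards that deviation could generate loss. I will therefore make date $2$ welfare-inert as well, by letting $\K_{0,2}$ and $\K_{2,2}$ be identical in both systems while varying the date-$1$ structure instead.

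Concretely, I will shift the construction down one level. The differing kernel becomes $\K_{1,2}$ in $\mathbb K$ versus a choice that makes vintage $1$ agree operationally with vintage $0$ at every node, and every $\K_{0,u}=\K_{u,u}$ is the same set $\mathcal K_u$. In $\widetilde{\mathbb K}$, all vintages then coincide, and \cref{thm:bellman-recovery-v2} gives $V_{0,0}^{\pi^*}=V_0^*=P_0^r$ for every reward system and every SPE, so $\mathfrak d=0$. In $\mathbb K$, only $\K_{1,2}\neq\mathcal K_2$. Self $2$ therefore agrees with vintage $0$, but self $1$ evaluates the future through a different kernel; choosing date-$2$ rewards so that the date-$2$ action is fixed, self $1$'s strict preference between $E$ and $O$ is flipped relative to vintage $0$. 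Checking that this single change still produces strict gaps and a unique SPE is the step to verify carefully. The check is a finite computation: pick $\mathcal K_2=\{\delta_{\text{bad}}\}$, $\K_{1,2}=\{\delta_{\text{good}}\}$, a date-$2$ node whose only action is to pay $Z$, and an outside option $0$. Vintage $0$ then values $E$ at $-1$ and self $1$ values it at $1$, so the SPE is unique with gap $1$ and the loss is $1$.

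\emph{Concluding the impossibility claim.} Since the endpoint data of $\mathbb K$ and $\widetilde{\mathbb K}$ agree but $\mathfrak d_0^{\rm RL,SPE}$ differs, no functional of the endpoint data can equal $\mathfrak d_0^{\rm RL,SPE}$ on the unrestricted class. The uniqueness of the SPE with strict gaps rules out a selection artifact. Finally, \cref{thm:exact-dynamic-por-representation} converts the witness loss into $\mathfrak d_0^{\rm RL,SPE}(\mathbb K)\ge1>0$.
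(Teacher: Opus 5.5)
Your final construction is correct and is essentially the paper's. Both keep every endpoint kernel common: the paper takes $\K_{0,2}=\K_{2,2}=\{\mu\}$ with two risky branches, while you take a Dirac kernel with one risky branch and a deterministic outside option. Both then alter only the off-diagonal $\K_{1,2}$ to force a strict self-$1$ reversal with a unique SPE, and reset $\widetilde\K_{1,2}$ to the common kernel so that all vintages coincide and \cref{thm:bellman-recovery-v2} gives zero divergence. Two small fixes: your first sentence should name $\K_{1,2}$, not $\K_{0,1}$, as the kernel the endpoint data cannot see, and the abandoned first design can simply be deleted.
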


\noindent\emph{Proof idea.} Hold all endpoint kernels fixed and change only the off-diagonal $\K_{1,2}$. With terminal payoffs $(0,2)$ and $(1,0)$, one system yields a strict self-$1$ reversal and loss $1/2$ under the unique SPE; replacing $\K_{1,2}$ by the common endpoint kernel makes all vintages decision equivalent. EC.3 gives the construction and a separate tied example.

The counterexample can be made quantitative.  The next result measures the endpoint error directly by the omitted off-diagonal operational separation and shows that the failure survives simultaneous perturbations of payoffs and ambiguity sets.

\begin{theorem}[Quantitative robustness of endpoint omission]\label{thm:endpoint-robustness}
Let $A$ and $B$ be nonempty compact ambiguity sets with
$\delta:=\Dop(A,B)>0$.  For every $\xi\in(0,\delta)$ there is a bounded two-action three-date system with a unique pure SPE and an endpoint-collapsed counterpart such that, for one common reward system,
\begin{equation}\label{eq:endpoint-robust-base}
\PoR_0\ge g>\frac{\delta-\xi}{2},
\qquad
\PoR_0^{\rm end}=0.
\end{equation}
Both the initial-vintage action gap and the off-diagonal self's action gap equal $g$.

More generally, apply a coupled perturbation to the payoffs and to evaluators $A$ and $B$, and define the perturbed endpoint surrogate by replacing the perturbed off-diagonal evaluator by the perturbed endpoint evaluator.  Suppose each of the two robust action values under either evaluator moves by at most $\omega$.  If $2\omega<g$, the two strict choices persist, the endpoint surrogate remains decision equivalent across vintages, and
\begin{equation}\label{eq:endpoint-robust-perturb}
\boxed{\PoR_0-\PoR_0^{\rm end}\ge g-2\omega>0.}
\end{equation}
In particular, if each payoff vector changes by at most $\varepsilon$ in sup norm and the corresponding ambiguity sets change by at most $\kappa$ in $D_{\rm op}$, then one may take $\omega\le\varepsilon+\kappa$.  Endpoint insufficiency therefore holds on a nonempty open set and has a loss floor controlled by the omitted off-diagonal separation.
\end{theorem}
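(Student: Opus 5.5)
The plan is to build an explicit three-date system in which the only nontrivial evaluator is the off-diagonal one, $\K_{0,1}(\cdot)$ or $\K_{0,2}$. Self $1$ chooses between two terminal actions evaluated by its own evaluator. Vintage $0$ judges that choice through a different set. This follows the pattern of \cref{prop:endpoint-insufficiency}, but with payoffs generated from an arbitrary separated pair $(A,B)$.

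\textbf{Step 1: the base construction.} Since $\delta=\Dop(A,B)>0$, \cref{thm:operational-dual-v2} and \cref{cor:decision-equivalence-v2} give a unit payoff $z$ with $|\rho_A(z)-\rho_B(z)|>\delta-\xi$. Say $\rho_A(z)-\rho_B(z)=:\Delta>\delta-\xi$, swapping roles if needed. At date $1$, self $1$ faces a risky action paying $z$ through the next observation, and a safe action paying a constant $c$. I would place $c=(\rho_A(z)+\rho_B(z))/2$ strictly between the two values. Set $g:=\Delta/2>(\delta-\xi)/2$.

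Assign the evaluators as follows:
\begin{itemize}
\item the fresh date-$2$ evaluator (irrelevant at the terminal step) and the initial vintage at date $1$ are $A$;
\item the off-diagonal self that actually moves evaluates with $B$;
\item in the endpoint-collapsed counterpart, that off-diagonal kernel is replaced by the endpoint kernel $A$.
\end{itemize}
The indexing should be arranged so that the endpoint pairs $(\K_{0,u},\K_{u,u})$ agree and only $\K_{1,2}$ (or whichever off-diagonal slot is active) differs, exactly as in \cref{prop:endpoint-insufficiency}. Then vintage $0$ strictly prefers risky by margin $g$. The moving self, evaluating through $B$, strictly prefers safe by margin $g$. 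The SPE is therefore unique and picks safe, and \eqref{eq:equilibrium-specific-por} gives $\PoR_0=\rho_A(z)-c=g$. This uses a date-$0$ dummy action with zero reward, so the precommitment benchmark equals vintage $0$'s value of risky. In the collapsed system all vintages are decision equivalent. By \cref{thm:bellman-recovery-v2} the SPE coincides with Bellman optimality, so $\PoR_0^{\rm end}=0$. Both action gaps equal $g$ by the midpoint choice.

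\textbf{Step 2: perturbation.} Under the coupled perturbation, each of the four robust action values (two actions, two evaluators) moves by at most $\omega$. Each gap of size $g$ therefore shrinks by at most $2\omega<g$, and both strict choices persist. The perturbed SPE is still unique and selects safe, while perturbed vintage $0$ still ranks risky first with margin at least $g-2\omega$. In the perturbed system, $\PoR_0$ equals vintage $0$'s value of risky minus its value of safe, which is at least $g-2\omega$. The perturbed endpoint surrogate uses the same perturbed evaluator at every vintage, so the same Bellman-recovery argument gives $\PoR_0^{\rm end}=0$, yielding \eqref{eq:endpoint-robust-perturb}.

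For the quantitative bound $\omega\le\varepsilon+\kappa$, I would use two facts. First, $\rho_{A'}$ is $1$-Lipschitz in sup norm, so payoff changes cost at most $\varepsilon$. Second, by \cref{def:Dop-v2} with homogeneity for payoffs of norm at most one (rescaling if the perturbed payoff slightly exceeds norm one, or normalizing $z$ so that $\|z\|_\infty\le1$ is kept), changing the set costs at most $\kappa$. Adding the two and using the triangle inequality gives the bound. Openness then follows because the set of perturbations with $\varepsilon+\kappa<g/2$ is open in the product of the sup-norm and $\Dop$ topologies.

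\textbf{Main obstacle.} The delicate part is the bookkeeping in Step 1: placing $A$ and $B$ in the triangular kernel family so that the endpoint pairs are literally identical across the two systems while the moving self still evaluates through the off-diagonal kernel. A related issue is confirming that in the collapsed system no residual vintage difference remains anywhere, including off-path histories, so that $\PoR_0^{\rm end}=0$ exactly. I would first mirror the index layout of the construction in \cref{prop:endpoint-insufficiency}, and then check decision equivalence node by node.
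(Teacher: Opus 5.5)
Your proposal is correct and follows the paper's proof. The shared steps are: a near-separating unit payoff $z$; the midpoint constant $c=(\rho_A(z)+\rho_B(z))/2$, which gives two equal strict gaps $g=\Delta/2$; a deterministic date-$0$ prefix with the second evaluator placed only in the off-diagonal slot $\K_{1,2}$; collapse of that slot to the endpoint evaluator; and the $2\omega$ gap-erosion argument with $\omega\le\varepsilon+\kappa$.

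The paper sidesteps your rescaling concern by changing the set at the unperturbed $z$: $|\rho_{E'}(z')-\rho_E(z)|\le\|z'-z\|_\infty+\|z\|_\infty\Dop(E',E)$. Your only real departure is that you mirror the roles. You put $A$ at the endpoints and $B$ off-diagonal, so the SPE picks the safe action and precommitment the risky one. The paper puts $B$ at the endpoints, so the SPE picks the risky action. Both give loss exactly $g$.

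Do fix the indexing wording, though. $\K_{0,1}$ and $\K_{0,2}$ are endpoint kernels, not off-diagonal ones. The payoff must be realized at the date-$2$ transition after a deterministic date-$1$ move, as in \cref{prop:endpoint-insufficiency}. A payoff paid through the observation immediately after self $1$'s action would be evaluated by the diagonal $\K_{1,1}$, and the endpoint collapse would then change nothing.
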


\noindent\emph{Proof idea.} Choose $z$ with $\rho_A(z)-\rho_B(z)>\delta-\xi$ and place the constant payoff at their midpoint, creating equal strict gaps $g$. A deterministic prefix embeds the disagreement; replacing the off-diagonal evaluator by $B$ removes it. Perturbing each robust action value by at most $\omega$ loses at most $2\omega$, with $\omega\le\varepsilon+\kappa$. EC.3 gives the embedding.

The exact vintage quotient in \cref{thm:vintage-quotient} can reduce the state when future lower-expectation functionals coincide.  Without such equivalence, the dimensional growth is unavoidable in a worst-case topological sense.

\begin{theorem}[Exact and approximate irreducibility of vintage memory]\label{thm:vintage-memory-irreducibility}
For every $t\ge1$, there is a one-decision subtree satisfying the finite-tree and bounded-reward assumptions for which $\mathfrak S_t(h_t)$ contains the open rectangle
\begin{equation}\label{eq:open-vintage-rectangle}
\left(\frac12,\frac34\right)
\times
\left(-\frac14,\frac14\right)^{t+1}
\subset\R^{t+2}
\end{equation}
and also the closed rectangle
$[1/2,3/4]\times[-1/4,1/4]^{t+1}$.  In particular it contains the closed cube
\begin{equation}\label{eq:closed-vintage-cube}
Q_t:=\left(\frac58,0,\ldots,0\right)
+\left[-\frac18,\frac18\right]^{t+2}.
\end{equation}
Let $E:Q_t\to\R^d$ be any continuous state encoder and let
$D:\R^d\to\R^{t+2}$ be any decoder, not necessarily continuous.  If $d<t+2$, then
\begin{equation}\label{eq:approx-memory-lower-bound}
\boxed{
\sup_{x\in Q_t}\|D(E(x))-x\|_\infty\ge\frac18.}
\end{equation}
Consequently, exact recovery and every uniform approximation with error strictly below $1/8$ require $d\ge t+2$.  If all reward bounds are multiplied by $c>0$, the lower bound becomes $c/8$.  Thus $\varepsilon$-sufficient continuous vintage memory has worst-case dimension $\Omega(t)$ even when only approximate value preservation is required.
\end{theorem}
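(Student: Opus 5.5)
Two ingredients are needed: a construction showing that $\mathfrak S_t(h_t)$ contains the stated rectangles, and a topological lower bound for any low-dimensional continuous encoder of the cube $Q_t$.

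\emph{Construction.} Take a subtree rooted at $h_t$ with one decision at date $t$ and two actions, $a^{\rm s}$ and $a^{\rm r}$.
\begin{itemize}
\item The safe action $a^{\rm s}$ leads deterministically to a terminal node.
\item The risky action $a^{\rm r}$ has $t+2$ possible next observations, with terminal payoffs chosen freely, branch by branch, within $[-G,G]$.
\item The vintage kernels at $(h_t,a^{\rm r})$ are assigned as follows. Each vintage $s\in\{0,\dots,t\}$ gets the point mass $\{\delta_{y_s}\}$ on its own outcome $y_s$, with the $y_s$ distinct. The precommitment coordinate uses $\K_{0,t}$, so it is tied to $y_0$. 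This is why the extra outcome is needed.
\end{itemize}
Because the precommitment coordinate must decouple from $u_0$, I would let it come from $\max_aP_a$ with the safe action. Set the safe reward to $c_{\rm s}=p$, and let the current vintage $t$ strictly prefer the risky action. Then $\max_aP_a=\max(p,U_{0,a^{\rm r}})=p$ whenever $p\ge U_{0,a^{\rm r}}$. This is guaranteed on the target rectangle because $p>1/2>1/4>u_0$.

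There is one wrinkle. Fresh optimality requires $U_{t,a^{\rm r}}\ge U_{t,a^{\rm s}}=p$, while $u_t$ is required to lie in $(-1/4,1/4)$, which is below $p$. To fix this, give vintage $t$ a different safe continuation: route the safe action through a successor whose $\K_{t,t}$ selects a low terminal payoff (say $-1$), while $\K_{0,t}$ selects the payoff $p$. In general, every vintage's safe value is separately controllable, again by point-mass kernels on distinct outcomes of the safe action. Rewards are then scaled so that the bounds accommodate $[1/2,3/4]$ and $[-1/4,1/4]$; for example, take $G=1$ and zero stage rewards.

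By the recursion \eqref{eq:triple-recursion}, the vector $(p;u_0,\dots,u_t)$ lies in $\mathfrak S_t(h_t)$ for every $p\in[1/2,3/4]$ and every $u\in[-1/4,1/4]^{t+1}$. This holds by the definition of the recursion, or equivalently by \cref{thm:exact-dynamic-por-representation}. Hence both rectangles, and in particular the cube $Q_t$, are contained in $\mathfrak S_t(h_t)$.

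\emph{Topological bound.} Let $E:Q_t\to\R^d$ be continuous with $d<t+2$. Restrict $E$ to the boundary sphere $\partial Q_t\cong S^{t+1}$ and apply Borsuk--Ulam with respect to the antipodal map about the center $c=(5/8,0,\dots,0)$. Since $d\le t+1$, we may first embed $\R^d\subseteq\R^{t+1}$; then there exist antipodal boundary points $x$ and $x'=2c-x$ with $E(x)=E(x')$. Consequently $D(E(x))=D(E(x'))$ for any decoder $D$, continuous or not. Since $\|x-x'\|_\infty=2\cdot\tfrac18\cdot 2=1/2$ (each lies on a face at sup-distance $1/8$ from $c$, in opposite directions along some coordinate), the triangle inequality forces
\[
\max\{\|D(E(x))-x\|_\infty,\ \|D(E(x))-x'\|_\infty\}\ge\tfrac14\ge\tfrac18,
\]
which gives \eqref{eq:approx-memory-lower-bound}. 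The claim that exact recovery, and every uniform approximation with error below $1/8$, requires $d\ge t+2$ follows immediately. The scaling statement follows from the homogeneity part of \cref{thm:exact-dynamic-por-representation}: multiplying all reward bounds by $c$ scales the constructed rewards, and hence $\mathfrak S_t$, by $c$.

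\emph{Main obstacle.} The main obstacle is the construction: making all $t+2$ coordinates independently controllable while the fresh-argmax constraint couples $u_t$ to the safe value. The device of vintage-specific point-mass kernels on distinct successor outcomes of both actions is what I would rely on. I would double-check that these kernels are admissible primitives, which is permitted since the vintage kernels may be taken as primitives, and that the strict preference of vintage $t$ holds uniformly on the closed rectangle, for example with margin $1/4$.
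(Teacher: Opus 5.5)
Your proposal is essentially the paper's proof. The construction is the same one-decision, two-action subtree: the action the current self selects carries vintage-specific point masses on distinct terminal outcomes with payoffs $u_s$, and the other action is valued at $p$ by vintage $0$ but below every $u_t$ by vintage $t$. The paper uses payoff $-3/4$ where you use $-1$. The lower bound is also the same: Borsuk--Ulam on $\partial Q_t$ with a radial, antipodally equivariant homeomorphism to $S^{t+1}$, followed by the triangle inequality.

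One arithmetic slip needs correcting. Antipodal boundary points about the center $c=(5/8,0,\ldots,0)$ satisfy $\|x-x'\|_\infty=2\|x-c\|_\infty=\tfrac14$, not $\tfrac12$. The triangle inequality therefore gives exactly $\max\{\|D(E(x))-x\|_\infty,\|D(E(x))-x'\|_\infty\}\ge\tfrac18$. Your intermediate claim $\ge\tfrac14$ is false: the constant decoder $D\equiv c$ already achieves uniform error $\tfrac18$, so $\tfrac18$ is tight. The boxed conclusion is unaffected.

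Two smaller clean-ups. The extra $(t+2)$nd outcome of the risky action is unnecessary, because the decoupling of $p$ from $u_0$ comes entirely from the safe action's terminal routing. The stage reward $c_{\rm s}=p$ should be dropped in favor of that routing with zero stage rewards, as your final version already does.
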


\noindent\emph{Proof idea.} The two-action construction used for the exact theorem realizes every point in \eqref{eq:open-vintage-rectangle}.  For the quantitative claim, identify $Q_t$ with a centered $(t+2)$-cube of half-width $1/8$.  Its boundary is antipodally homeomorphic to the sphere.  If $d<t+2$, the Borsuk--Ulam theorem \citep{Matousek2003} gives antipodal boundary points with the same code.  Their sup-norm distance is $1/4$, so a single decoded vector must be at distance at least $1/8$ from one of them.  Scaling rewards scales the attainable cube.  EC.3 gives the construction and the full antipodal argument; see \citet{Matousek2003} for the topological theorem.

The topological statement is worst-case.  The same attainable cube also yields a statistical lower bound for randomized and discontinuous memories once their information capacity is finite.  For an attainable valuation vector $X$ and a possibly randomized representation $Z$, call $I(X;Z)$ its information budget, measured in bits.  Let
\[
h_2(q):=-q\log_2q-(1-q)\log_2(1-q),
\qquad q\in[0,1/2],
\]
and let $h_2^{-1}$ denote its inverse on this interval.  Mutual information and binary entropy are standard information-theoretic quantities \citep{CoverThomas2006}; the specialization below is to attainable ambiguity-vintage states.

\begin{proposition}[Finite-information vintage-memory lower bound]\label{thm:finite-information-memory}
Set $n=t+2$ and let $X$ be uniform on the $2^n$ vertices of the attainable cube $Q_t$.  Consider any possibly randomized memory $Z$ satisfying $I(X;Z)\le B$ bits and any decoder $\widehat X(Z)$.  Then
\begin{equation}\label{eq:average-memory-lower-bound}
\boxed{
\frac1n\mathbb E\|\widehat X-X\|_1
\ge
\frac18\,h_2^{-1}\!\left(\left[1-\frac Bn\right]_+\right).}
\end{equation}
The same right-hand side without the factor $1/8$ lower-bounds the average error probability across the $n$ coordinate-threshold decisions.  Consequently, if a $d$-coordinate implementation stores at most $b>0$ bits per coordinate, average decision error at most $\eta<1/2$ requires
\begin{equation}\label{eq:finite-bit-memory-dimension}
\boxed{
d\ge \frac{(t+2)\{1-h_2(\eta)\}}{b}.}
\end{equation}
For every encoder--decoder pair, its worst conditional risk over $Q_t$ is at least its risk under this uniform distribution.  Hence the same right-hand side is also a minimax lower bound on this attainable subclass.  Under these continuity and finite-information criteria, linear vintage memory is required both for uniform recovery and for randomized average-case/minimax representation.
\end{proposition}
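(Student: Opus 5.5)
The plan is a standard Fano-type argument on the hypercube, applied to the vertex set of $Q_t$ from \cref{thm:vintage-memory-irreducibility}; that theorem guarantees every vertex is an attainable valuation vector.

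\textbf{Step 1: reduce to bits.} Write each vertex as $X=c+\tfrac18\sigma$, where $c=(5/8,0,\ldots,0)$ and $\sigma\in\{-1,1\}^n$ is uniform. Since $X$ is a deterministic bijective function of $\sigma$, we have $I(\sigma;Z)=I(X;Z)\le B$. From any decoder $\widehat X$, define the coordinate-threshold decisions $\widehat\sigma_k:=\operatorname{sign}(\widehat X_k-c_k)$, with ties broken to $+1$. If $\widehat\sigma_k\ne\sigma_k$, then $\widehat X_k$ lies on the wrong side of $c_k$, so $|\widehat X_k-X_k|\ge\tfrac18$. Hence
\[
\mathbb E\|\widehat X-X\|_1\ge\frac18\sum_{k=1}^n\Pp(\widehat\sigma_k\ne\sigma_k),
\]
and the first claim reduces to showing that $\bar p:=\frac1n\sum_k\Pp(\widehat\sigma_k\ne\sigma_k)$ satisfies $\bar p\ge h_2^{-1}([1-B/n]_+)$. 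This bound is also the claimed statement about the average error probability of the threshold decisions.

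\textbf{Step 2: bitwise Fano.} The bits of $\sigma$ are independent, so $H(\sigma\mid Z)=n-I(\sigma;Z)\ge n-B$. By the chain rule and conditioning reduction, $H(\sigma\mid Z)\le\sum_k H(\sigma_k\mid Z)\le\sum_k H(\sigma_k\mid\widehat\sigma_k)$; the last inequality uses data processing, since $\widehat\sigma_k$ is a (possibly randomized) function of $Z$, which we handle by conditioning on shared randomness independent of $\sigma$. Binary Fano gives $H(\sigma_k\mid\widehat\sigma_k)\le h_2(p_k)$, where $p_k:=\Pp(\widehat\sigma_k\ne\sigma_k)$. If $p_k>1/2$, we replace $h_2(p_k)$ by $h_2(\min\{p_k,1-p_k\})$; this only strengthens what we need, because flipping $\widehat\sigma_k$ changes the error to $1-p_k$, and we may take $p_k\le1/2$ for the error bound after noting that $h_2$ is symmetric. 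Concavity and Jensen then give $h_2(\min\{\bar p,1/2\})\ge 1-B/n$. Since $h_2$ is increasing on $[0,1/2]$, this yields $\bar p\ge h_2^{-1}([1-B/n]_+)$ (if $\bar p\ge1/2$ the bound is immediate). This is the step needing the most care: randomized decoders, the truncation $[\cdot]_+$, and the case $p_k>1/2$ all have to be handled.

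\textbf{Step 3: dimension bound and minimax.} A $d$-coordinate memory storing at most $b$ bits per coordinate has $I(X;Z)\le H(Z)\le db$. Requiring $\bar p\le\eta<1/2$ then forces $h_2(\eta)\ge 1-db/n$, i.e. $d\ge n\{1-h_2(\eta)\}/b$. For the minimax statement, the worst-case conditional risk over $Q_t$ dominates the risk averaged against any prior, in particular the uniform vertex prior, which gives the last claims.
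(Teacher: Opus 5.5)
Your proposal is correct and is essentially the paper's proof: the bijection to uniform signs, subadditivity with binary Fano and concavity of $h_2$, threshold rounding costing at least $1/8$ per wrong sign, averaging for the minimax claim, and $I(X;Z)\le H(Z)\le bd$ for the dimension bound. The one difference is that the paper applies Fano to the Bayes-optimal sign predictor from $Z$, whose error is automatically at most $1/2$, and then notes that the decoder's threshold rule cannot beat it; you instead apply data processing and Fano to the threshold rule itself, which is why you needed the (harmless) $p_k>1/2$ remark, and any decoder randomness would have to be independent of $(\sigma,Z)$, not merely of $\sigma$ (randomness shared with the encoder, e.g.\ a one-time pad $Z=\sigma\odot U$, gives $I(X;Z)=0$ with perfect recovery), though this is moot since the stated decoder is a function of $Z$.
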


\noindent\emph{Proof idea.} Write $X=c+(1/8)S$ with independent uniform signs.  If $p_i$ is the Bayes error for recovering sign $i$ from $Z$, entropy subadditivity and binary Fano \citep{CoverThomas2006} inequalities give
$I(S;Z)\ge n\{1-h_2(\bar p)\}$, where $\bar p=n^{-1}\sum_i p_i$.  Hence $\bar p\ge h_2^{-1}([1-B/n]_+)$.  A wrong sign entails absolute coordinate error at least $1/8$.  EC.3 gives the complete argument.  The information inequality is classical; the contribution here is that every sign vector is an actual bounded-reward vintage valuation state \citep{CoverThomas2006}.

The two memory results are complementary: the first gives a uniform error floor for continuous real-valued codes; the second allows randomized/discontinuous encoders but limits mutual information and gives an average-case rate--distortion floor. Neither excludes scalar-objective compression or special subclasses. They isolate information forced by ambiguity vintages rather than physical histories \citep{GivanDeanGreig2003,LiWalshLittman2006,Haskell2026}.

Irreducibility does not preclude exact computation in finite polyhedral models.  EC.3 proves that the attainable correspondence is a finite union of compact polytopes and gives a selection-free disjunctive MILP.  Closed-convex-hull pruning, action-value certificates, and policy cells preserve the exact object.  Shared-state Markov formulations and vintage-backward evaluation address specified state models without independent branchwise successor choices.  The next result shows why the outer maximization over a shared reward system has a different computational status.

\subsection{Computational frontier of universal certification}\label{sec:por-cert-complexity}

\begin{definition}[Universal state-coupled relearning certification]\label{def:rl-certification}
An instance consists of a rational finite state graph, rational polyhedral vintage kernels, a rational box $\Theta\subset\R^p$, an affine state-reward map $\theta\mapsto r^\theta$, and a rational threshold $\gamma$.  Write $\mathcal R_\Theta:=\{r^\theta:\theta\in\Theta\}$.  The same coordinate of $\theta$ may enter rewards at several histories or states, so $\mathcal R_\Theta$ need not satisfy the branchwise independence used in \cref{thm:exact-dynamic-por-representation}.  Its family-specific universal certificate is
\[
\mathfrak d_0^{\rm cert}(\mathcal R_\Theta)
:=
\sup_{r\in\mathcal R_\Theta}\ \sup_{\pi\in\mathrm{SPE}(r)}
\left[P_0^{r}-V_{0,0}^{r,\pi}\right].
\]
We abbreviate this quantity by $\mathfrak d_0^{\rm cert}(\Theta)$ and, when the primitives depend on a graph $G$, by $\mathfrak d_0^{\rm cert}(\Theta;G)$.  The certificate is monotone under reward-family inclusion.  When $\mathcal R_\Theta$ equals the full branchwise-rich bounded class in \cref{thm:exact-dynamic-por-representation}, it coincides with $\mathfrak d_0^{\rm RL,SPE}$; no such identification is made for a state-coupled subfamily.  The reward-interaction graph has one vertex for each reward coordinate and joins two coordinates whenever they enter a common local continuation comparison.  The decision question is whether
\begin{equation}\label{eq:rl-certification-decision}
\mathfrak d_0^{\rm cert}(\Theta)\ge\gamma.
\end{equation}
This is universal Price-of-Relearning certification for a compact state-coupled stress-test family.  Evaluation of one specified $\theta$ is a different problem.
\end{definition}

\begin{theorem}[Hardness and sparse-interaction tractability]\label{thm:por-cert-complexity}
The general problem in \cref{def:rl-certification} is NP-hard.  More precisely, a restricted subclass is NP-complete even with two decision dates, one root action, two actions per post-initial state, zero stage rewards, discount factor one, singleton rectangular kernels, and only the initial and current evaluator vintages.

For every undirected graph $G=(V,E)$ with $|E|\ge1$, one can construct such an instance with shared terminal reward vector $x\in[0,1]^V$ satisfying
\begin{equation}\label{eq:maxcut-por-identity}
\boxed{
\mathfrak d_0^{\rm cert}([0,1]^V;G)
=\frac{\operatorname{MaxCut}(G)}{|E|}.}
\end{equation}
Consequently, fixed horizon and bounded vintage count do not by themselves imply tractability.  On the graph-generated subclass just described, a supplied nice tree decomposition of width $w$ with $N_{\rm bag}$ bags yields the exact certificate in
\begin{equation}\label{eq:treewidth-cert-complexity}
O\!\left(2^{w+1}N_{\rm bag}\right)
\end{equation}
time.  If the supplied or constructed nice decomposition has $N_{\rm bag}=O(|V|+|E|)$, this specializes to
$O(2^{w+1}(|V|+|E|))$.  Standard conversion conventions may instead introduce a factor polynomial in $w$; either form is fixed-parameter tractable in the reward-interaction treewidth.
\end{theorem}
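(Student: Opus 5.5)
The plan is to prove the three claims in order: NP-hardness via an exact MaxCut encoding giving \eqref{eq:maxcut-por-identity}, NP membership of the restricted subclass, and the tree-decomposition algorithm.

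\emph{Reduction.} Given $G=(V,E)$, build the following instance.
\begin{itemize}
\item The root has one action and a singleton kernel that is uniform over $|E|$ edge states.
\item At an edge state $e=\{u,v\}$ (with an arbitrary fixed orientation) there are two actions, $a_e$ and $b_e$. Each leads to two terminal outcomes labelled $u$ and $v$, with shared terminal rewards $x_u$ and $x_v$.
\item The vintage-$0$ singleton kernel after $a_e$ is $\delta_u$, while the current (fresh) vintage's kernel after $a_e$ is $\delta_v$; after $b_e$ the roles are reversed.
\end{itemize}
All stage rewards are zero, $\beta=1$, and only vintages $0$ and current appear. The reward coordinate $x_u$ enters every edge comparison incident to $u$, so the reward-interaction graph is $G$ itself.

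At $e$ the fresh self compares $x_v$ (via $a_e$) with $x_u$ (via $b_e$), and vintage $0$ values the chosen action at the other endpoint. Hence vintage $0$ receives $\min(x_u,x_v)$ whenever $x_u\ne x_v$. Under ties both actions are SPE-consistent, but they give vintage $0$ the same value, so the worst-equilibrium supremum is unaffected. The precommitment value at $e$ is $\max(x_u,x_v)$, because vintage $0$ can choose either action.

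By \cref{thm:triangular-v2}, applied at the edge states and then at the root, which has a single action and a uniform kernel,
\[
\mathfrak d_0^{\rm cert}(x)=\frac1{|E|}\sum_{\{u,v\}\in E}|x_u-x_v|.
\]
This function is convex in $x$, so its maximum over $[0,1]^V$ is attained at a vertex $x\in\{0,1\}^V$. At such a vertex the sum counts exactly the cut edges, which gives \eqref{eq:maxcut-por-identity}. Setting $\gamma=k/|E|$ reduces the decision version of MaxCut, and the instance has polynomial size with rational data.

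\emph{Membership in NP for the restricted subclass.} The certificate is one action selection per post-initial state, together with the orientation attaining the precommitment maximum at each state. For a fixed selection, the SPE conditions (fresh argmax) and the precommitment max-attainment are linear inequalities in $\theta$, and the objective is linear in $\theta$ because the kernels are singletons and the horizon has two dates. The best $\theta$ for that selection is therefore a polynomial-size LP, and its optimal value can be compared with $\gamma$. The supremum over $\theta$ and over SPEs is the maximum over finitely many such closed polyhedral cells, since ties place $\theta$ in several cells at once. Hence a verifier accepts exactly when the certificate exists.

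\emph{Treewidth algorithm.} On the graph-generated subclass, convexity again reduces the problem to $x\in\{0,1\}^V$, with objective $\sum_{e\in E}|x_u-x_v|$. I would run the standard dynamic program over a nice tree decomposition. For each bag and each $0/1$ assignment to its at most $w+1$ vertices, store the best partial cut value. Each edge is charged at the introduce node where it first appears with both endpoints present, which guarantees it is counted exactly once. Introduce, forget and join nodes each cost $O(2^{w+1})$, giving \eqref{eq:treewidth-cert-complexity}.

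\emph{Main obstacle.} I expect the delicate step to be the exactness of the reduction under SPE ties, namely verifying that the worst-equilibrium supremum never exceeds $|x_u-x_v|$ at any edge state. The explicit tie analysis above is the check I would do first. A secondary concern is that the NP-membership cell argument must handle these ties correctly, using closed cells rather than open regions.
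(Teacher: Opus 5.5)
Your hardness argument is the paper's. It uses the same edge gadget, in which the two singleton vintage kernels swap the endpoint terminals. It has the same tie analysis: vintage~$0$ receives $\min\{x_u,x_v\}$ under every current-self maximizer, against a precommitment value $\max\{x_u,x_v\}$. It reaches the same identity $\PoR_0(x)=|E|^{-1}\sum_{\{u,v\}\in E}|x_u-x_v|$ and the same reduction from MaxCut with $\gamma=k/|E|$. Passing to a vertex of $[0,1]^V$ by joint convexity, rather than the paper's coordinate-by-coordinate rounding, is an inessential variation.

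The real difference is NP membership. The paper proves it only for the graph-generated instances, where, once the MaxCut identity is known, a cut with at least $\gamma|E|$ crossing edges is the certificate. Your certificate is a current-self action and a vintage-$0$ maximizing action at each date-one state, checked by one polynomial-size linear program in $\theta$. It does not use the identity. It also covers every instance with the listed structural restrictions, including arbitrary rational boxes and shared affine reward maps. This works because singleton kernels make all action values affine in $\theta$. The weak argmax inequalities then partition the feasible $(\theta,\pi)$ graph into finitely many closed polyhedral cells, and the supremum is attained on one of them. Your route buys NP-completeness of the whole structurally restricted class rather than just the MaxCut image. The price is invoking polynomial-time linear programming instead of counting cut edges.

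One detail in the treewidth step needs tightening. Suppose the nice decomposition has only introduce-vertex nodes. Then an edge can acquire both endpoints at introduce nodes in two different branches below a join. So ``the introduce node where it first appears'' is not unique, and charging there double counts. There are three fixes:
\begin{itemize}
\item attach each edge to a single designated introduce-edge node, as the paper does; this is also what makes every node cost $O(2^{w+1})$;
\item charge each edge at the forget node of whichever endpoint is forgotten first;
\item subtract the bag-internal cut edges at each join.
\end{itemize}
The last two fixes cost at most a factor polynomial in $w$, which the statement explicitly allows.
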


\noindent\emph{Proof idea.} Create one date-one state per edge and let its two actions interchange the singleton terminal laws seen by the initial and current vintages. Then
\[
\PoR_0(x)=|E|^{-1}\sum_{\{u,v\}\in E}|x_u-x_v|.
\]
The convex objective is maximized at a binary cube vertex, hence at a cut; \textsc{Max-Cut} is NP-complete \citep{GareyJohnsonStockmeyer1976}. Tree-decomposition dynamic programming gives the stated FPT bound \citep{CyganEtAl2015}. EC.3 supplies the full reduction and membership argument.

The hardness comes from outer optimization over shared reward coordinates and cross-vintage reversals, not fixed-model evaluation, which is polynomial in the explicit finite representation \citep{PapadimitriouTsitsiklis1987}. Thus specified-model evaluation, sparse-interaction certification, and unrestricted universal certification are distinct tasks.

\subsection{Stagewise benchmark and general bounds}

In the stagewise specialization, all date-$u$ histories are behaviorally equivalent, the action affects only $z_{u,a}(Y_{u+1})$, and admissible menus form $\prod_u\mathscr M_u$.  Let $A_u=\K_{0,u}$, $B_u=\K_{u,u}$, $A_0=B_0$, and
\[
M_u^B(m_u)=\arg\max_a\rho_{B_u}(z_{u,a}),
\qquad
\mathfrak r_u(A_u,B_u)=
\sup_{m_u}\sup_{a_B\in M_u^B(m_u)}
\{\max_a\rho_{A_u}(z_{u,a})-\rho_{A_u}(z_{u,a_B})\}.
\]
Define
\begin{equation}\label{eq:stagewise-spe-correspondence}
\mathrm{SPE}^{\rm stg}(m)=\{\pi:\exists a_u\in M_u^B(m_u),\ \pi_u(h_u)=a_u\ \forall u,h_u\}.
\end{equation}

\begin{proposition}[Stagewise equilibrium embedding]\label{prop:stagewise-spe-embedding}
$\mathrm{SPE}^{\rm stg}(m)$ is a nonempty subset of the full pure-SPE correspondence; the two coincide when every $M_u^B(m_u)$ is a singleton.
\end{proposition}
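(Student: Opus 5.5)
The plan is to verify the SPE condition of \cref{def:spe-v2} directly for each stagewise policy. Fix a menu profile $m=(m_u)_u\in\prod_u\mathscr M_u$ and $a_u\in M_u^B(m_u)$ for every $u$; such selections exist because each menu is finite and $\rho_{B_u}$ is finite, so $\mathrm{SPE}^{\rm stg}(m)\ne\varnothing$. Let $\pi$ play $a_u$ at every date-$u$ history. By \cref{thm:triangular-v2}, $\pi$ is an SPE if and only if, at every history $h_u$, $\pi_u(h_u)$ maximizes the fresh deviation value $Q_u^{u,\pi}(h_u,\cdot)$. The task therefore reduces to computing $Q_u^{u,\pi}(h_u,a)$ in the stagewise specialization and showing that it equals $\rho_{B_u}(z_{u,a})$ plus a term that does not depend on $a$.

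In the stagewise specialization, the action at $u$ affects only the current payoff $z_{u,a}(Y_{u+1})$. The date-$u$ histories are behaviorally equivalent, and the future policy is history-independent. So the continuation value $V_{u,u+1}^{\pi}(h_u,a,y)$ should split as $z_{u,a}(y)$ plus a constant $c_u$ that does not depend on $a$ or $y$. I would prove this by backward induction on the vintage-$u$ recursion \eqref{eq:vintage-recursion-v2}. At each later date, the next-observation payoff vector under the fixed policy does not depend on the history, and the kernel $\K_{u,j}$ is common across the behaviorally equivalent histories. Hence the vintage-$u$ lower expectation of the later stages is a history-free number. Translation invariance of $\rho$ (since $\rho_K(v+c\mathbf 1)=\rho_K(v)+c$) then pulls constants out, giving
\[
Q_u^{u,\pi}(h_u,a)=\beta\rho_{B_u}(z_{u,a})+\beta c_u .
\]
It follows that $\argmax_aQ_u^{u,\pi}(h_u,a)=M_u^B(m_u)\ni a_u$, and $\pi$ is an SPE. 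This proves the inclusion.

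For the coincidence claim, take any pure SPE $\pi'$ and run a backward induction on $u$. The future part of $\pi'$ after date $u$ may a priori be history-dependent. However, the additive separation still holds: whatever the future policy does, its payoffs at later stages do not depend on the date-$u$ action, because histories are behaviorally equivalent and later kernels and payoffs are unaffected. So the same computation yields $\pi'_u(h_u)\in M_u^B(m_u)$ at every $h_u$. When $M_u^B(m_u)=\{a_u\}$ is a singleton, this forces $\pi'_u(h_u)=a_u$ for all $h_u$, so $\pi'\in\mathrm{SPE}^{\rm stg}(m)$.

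The main obstacle is the separation step. I must check that ``behaviorally equivalent histories'' really yields a continuation that is constant across date-$(u+1)$ successors for every vintage, including off-path histories. I would handle this by making the stagewise hypothesis explicit: kernels, menus, and later payoffs at date $j$ depend only on $j$. Under that hypothesis, the continuation term that is not a current payoff is a deterministic function of $j$ and the policy's stage actions. It is exactly this constancy that keeps the non-singleton case only an inclusion. When there are ties, an SPE may choose different maximizers at different date-$u$ histories, and such a policy lies outside $\mathrm{SPE}^{\rm stg}(m)$.
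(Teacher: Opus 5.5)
Your inclusion argument matches the paper's. With the history-invariant stagewise policy fixed after date $u$, vintage $u$'s continuation at $(h_u,a,y)$ is $z_{u,a}(y)$ plus a constant common to all actions and successors, and translation equivariance reduces self $u$'s comparison to $\rho_{B_u}(z_{u,a})$.

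The gap is in the coincidence part. There you assert that the additive separation holds ``whatever the future policy does,'' so that every pure SPE satisfies $\pi'_u(h_u)\in M_u^B(m_u)$ whether or not there are ties. That step fails. A history-dependent $\pi'_j$ at a later date $j>u$ can condition on the date-$u$ action and on $Y_{u+1}$, so the later payoff vector $z_{j,\pi'_j(h_j)}$ depends on them. Self $j$ may be indifferent among its choices under $B_j=\K_{j,j}$, but vintage $u$ values them under $\K_{u,j}$, which need not be indifferent. Vintage $u$'s continuation is then no longer a common constant.

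The paper's example in EC.3 (``Why unrestricted stagewise SPEs require a selection condition'') is exactly this. There $M_1^B(m_1)=\{L\}$, the date-$2$ actions $x,y$ tie under $\K_{2,2}$, and $\K_{1,2}=\{\delta_1\}$. The rule choosing $y$ after $L$ and $x$ after $R$ makes self $1$ strictly prefer $R\notin M_1^B(m_1)$, yet the policy is a pure SPE. Your proposed remedy, that the non-current continuation is a deterministic function of $j$ and the policy's stage actions, does not help, because those stage actions are themselves history dependent.

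The conclusion under all-singleton maximizers is still true, but the singleton hypothesis must be used inside the backward induction, not only at the final step. At $T-1$ the unique fresh maximizer forces every pure SPE to play $a_{T-1}^*$ at every history. Now suppose every pure SPE is already known to play $a_j^*$ at every date-$j$ history for all $j>u$. Then the future part of $\pi'$ is history invariant, your inclusion computation applies verbatim at date $u$, and uniqueness forces $\pi'_u(h_u)=a_u^*$. This is the paper's argument: uniqueness at later dates is what produces the common continuation constant at earlier dates.

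It also shows that your closing diagnosis is incomplete. When ties are present, the inclusion can be strict not only because different maximizers are chosen at different date-$u$ histories. Later tie selections can also push earlier selves outside $M_u^B(m_u)$ altogether.
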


\begin{corollary}[Exact stagewise product matching]\label{cor:stagewise-matching-por}
\begin{equation}\label{eq:stagewise-exact-sum}
\sup_m\sup_{\pi\in\mathrm{SPE}^{\rm stg}(m)}\PoR_0(m,\pi)
=\sum_{u=1}^{T-1}\beta^{u+1}\mathfrak r_u(A_u,B_u).
\end{equation}
This quantity lower-bounds unrestricted worst-SPE loss and equals it under unique fresh maximizers.  With all finite $C_u$-bounded menus admissible, it lies between
\begin{equation}\label{eq:stagewise-two-sided}
\sum_{u=1}^{T-1}\beta^{u+1}C_u\Dop(A_u,B_u)
\quad\text{and}\quad
2\sum_{u=1}^{T-1}\beta^{u+1}C_u\Dop(A_u,B_u),
\end{equation}
with sharp constants.  In the identifiable binary specialization it is at least
\begin{equation}\label{eq:observable-concentration-cost}
2\lambda_M\sum_{u=1}^{T-1}\beta^{u+1}C_u\alpha(P_u)(r_0-r_u),
\qquad \lambda_M=(2+2\cosh M)^{-1}.
\end{equation}
\end{corollary}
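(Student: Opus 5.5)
The plan is to reduce everything to a per-date decomposition made possible by the stagewise structure. In the stagewise specialization, all date-$u$ histories are behaviorally equivalent and the action at date $u$ affects only the payoff $z_{u,a}(Y_{u+1})$. Consequently the vintage-$0$ precommitment problem and the vintage-$0$ evaluation of any $\pi\in\mathrm{SPE}^{\rm stg}(m)$ both separate into sums of discounted one-step lower expectations. I would argue by backward induction on the triangular recursion of \cref{thm:triangular-v2}. Because the continuation value is a constant across successor outcomes plus the current stage term, the lower expectation at each date acts only on $z_{u,a}$. This gives
\[
P_0^m-V_{0,0}^{\pi}=\sum_{u=1}^{T-1}\beta^{u+1}\Bigl[\max_a\rho_{A_u}(z_{u,a})-\rho_{A_u}(z_{u,\pi_u})\Bigr].
\]
The factor $\beta^{u+1}$ reflects the timing convention that the date-$u$ payoff is realized after $Y_{u+1}$, and the $u=0$ term vanishes since $A_0=B_0$.

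\textbf{Exact sum \eqref{eq:stagewise-exact-sum}.} Since $\mathrm{SPE}^{\rm stg}(m)$ is a product over dates of choices $a_u\in M_u^B(m_u)$, and admissible menus form the product $\prod_u\mathscr M_u$, the supremum over $(m,\pi)$ factors into independent per-date suprema. Each per-date supremum is exactly $\mathfrak r_u(A_u,B_u)$. The lower-bound and equality claims relative to the unrestricted worst-SPE loss then follow directly from \cref{prop:stagewise-spe-embedding}: it supplies the inclusion, and equality holds when the fresh maximizers are unique.

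\textbf{Two-sided bound \eqref{eq:stagewise-two-sided}.} Each one-step term is handled with earlier results. For the upper bound, apply \cref{thm:one-step-por-v2} after rescaling payoffs by $C_u$, which gives $\mathfrak r_u\le 2C_u\Dop(A_u,B_u)$. For the lower bound, use \cref{cor:decision-equivalence-v2}. Pick $z$ with $|\rho_{A_u}(z)-\rho_{B_u}(z)|$ close to $\Dop(A_u,B_u)$; by replacing $z$ with $-z$ if needed (the defect is a sup of an absolute value over a symmetric ball), arrange $\rho_{A_u}(z)>\rho_{B_u}(z)$. Then take a two-action menu: a risky action $z$ and a safe constant $c$ set just above $\rho_{B_u}(z)$. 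The fresh self chooses the safe action, and vintage $0$ loses about $\rho_{A_u}(z)-\rho_{B_u}(z)$. Scaling by $C_u$ gives $\mathfrak r_u\ge C_u\Dop$. Sharpness of the constant $2$ is inherited from the sharp construction in \cref{thm:one-step-por-v2}. Sharpness of the constant $1$ needs an example where a single separating direction is all the menus can exploit, for instance a pair with $A_u\subset B_u$. I expect this sharpness-of-$1$ example to require the most care.

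\textbf{Binary bound \eqref{eq:observable-concentration-cost}.} Here I would chain $\Dop(A_u,B_u)\ge\alpha(P_u)^{-1}\Dpred\ge\dots$ carefully in the correct direction. Since the payoffs act on observations, the relevant quantity is actually the predictive defect, and by \cref{thm:parameter-predictive-bridge} we have $\Dpred^{P_u}\ge\alpha(P_u)\Dop$ on parameter sets. It then remains to lower-bound the parameter defect of two concentric log-ratio intervals of radii $r_0$ and $r_u$. Inherited Bayesian translation preserves radius $r_0$ (\cref{thm:shape-defect}), while the fresh set has radius $r_u$. In binary clr coordinates the endpoints move by $r_0-r_u$. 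The softmax derivative $p(1-p)$ is bounded below by $\lambda_M=(2+2\cosh M)^{-1}$ whenever log-odds stay within $M$, so the $\ell_1$ Hausdorff distance is at least $2\lambda_M(r_0-r_u)$. The main obstacle is matching the paper's normalization of the binary clr coordinate and of $M$ so that the constant comes out exactly $2\lambda_M$; I would fix this convention first and verify it on the endpoint computation.
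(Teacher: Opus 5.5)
Your overall route is the paper's: a translation-equivariant per-date decomposition, product factorization with \cref{prop:stagewise-spe-embedding}, the one-step bound for the factor $2$, and the predictive bridge plus a sigmoid-slope bound for the binary case. However, the lower half of \eqref{eq:stagewise-two-sided} rests on a false step. You claim that replacing $z$ by $-z$ lets you arrange $\rho_{A_u}(z)>\rho_{B_u}(z)$ with a near-maximal gap. Lower expectations are concave, not odd. With $h_C$ the support function, $\rho_A(z)-\rho_B(z)=h_B(-z)-h_A(-z)$, whereas $\rho_A(-z)-\rho_B(-z)=h_B(z)-h_A(z)$, so the flip tests a different direction rather than reversing the sign. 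Whenever $B_u\subseteq A_u$ one has $\rho_{A_u}\le\rho_{B_u}$ identically (e.g.\ $A_u=\Delta_2$, $B_u=\{\delta_1\}$), and no such $z$ exists. That is exactly the binary concentration specialization, where $\M_u^F\subseteq\M_u^I$ forces $B_u\subseteq A_u$. As written, your construction therefore gives nothing in the case on which \eqref{eq:observable-concentration-cost} depends.

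The repair is easy. If $\rho_{A_u}(z)<\rho_{B_u}(z)$, put the safe constant just below $\rho_{B_u}(z)$; the fresh self then strictly takes the risky action and vintage $0$ loses about $\rho_{B_u}(z)-\rho_{A_u}(z)$. The paper handles both signs at once. It offers $C_uz$ and the constant $C_u\rho_{B_u}(z)\mathbf 1$, which tie for the fresh self. Since $\mathfrak r_u$ takes the supremum over all fresh maximizers, the $A_u$-inferior member is selected and the regret is $C_u|\rho_{A_u}(z)-\rho_{B_u}(z)|$. The constant is admissible because $|\rho_{B_u}(z)|\le1$.

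Two smaller points. First, in the binary part your opening chain $\Dop(A_u,B_u)\ge\alpha(P_u)^{-1}\Dpred$ points the wrong way. What you need is $\Dop(A_u,B_u)=\Dpred^{P_u}(\M_u^I,\M_u^F)\ge\alpha(P_u)\Dop(\M_u^I,\M_u^F)\ge2\lambda_M\alpha(P_u)(r_0-r_u)$, fed into the repaired lower half of \eqref{eq:stagewise-two-sided}. The paper parametrizes the intervals by log-odds, $b(x)=(\sigma(x),1-\sigma(x))$ with $|x_u|+r_0\le M$. Then $\sigma'=(2+2\cosh x)^{-1}\ge\lambda_M$ gives the constant $2\lambda_M$ directly, and the clr-normalization worry disappears.

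Second, for sharpness of the constant $1$, your nested-pair idea works in either inclusion direction. For instance, if $B\subseteq A$ then $\rho_A\le\rho_B$. For any inherited maximizer $a^*$ and fresh maximizer $a_B$, the regret $\rho_A(z_{a^*})-\rho_A(z_{a_B})$ is at most $\rho_B(z_{a^*})-\rho_A(z_{a_B})\le\rho_B(z_{a_B})-\rho_A(z_{a_B})\le C_u\Dop(A,B)$. This is the factor-one argument of \cref{thm:nested-vintage-factor-one}, so the lower bound is attained. The paper's explicit instance is $A_u=\Delta_2$, $B_u=\{\delta_1\}$ with menu $(C_u,C_u)$, $(C_u,-C_u)$, giving regret $2C_u=C_u\Dop(A_u,B_u)$.
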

EC.3 proves the embedding, product identity, sharpness, and the failure of unrestricted endpoint matching under history-dependent tie selection.

\subsection{Quantitative multistage control}

For vintages $s,s'\le t$, let
\begin{equation}\label{eq:kernel-defect}
\delta_{s,s';t}
:=\sup_{h_t,a}\Dop(\K_{s,t}(h_t,a),\K_{s',t}(h_t,a)).
\end{equation}
For a policy $\pi$, define the vintage-$0$ Bellman residual
\begin{equation}\label{eq:bellman-residual}
e_t^{0,\pi}(h_t)
:=\max_aQ_t^{0,\pi}(h_t,a)-Q_t^{0,\pi}(h_t,\pi_t(h_t)).
\end{equation}

\begin{proposition}[Recursive stability and residual estimates]\label{prop:recursive-por-estimates}
For bounded rewards and compact one-step ambiguity sets:
\begin{enumerate}[label=(\roman*)]
\item For every policy $\pi$ and vintages $s,s'\le t$,
\begin{equation}\label{eq:fixed-policy-vintage-bound}
\sup_{h_t}|V_{s,t}^{\pi}(h_t)-V_{s',t}^{\pi}(h_t)|
\le
\sum_{j=t}^{T-1}\beta^{j-t+1}B_{j+1}\delta_{s,s';j}.
\end{equation}
\item For every policy $\pi$,
\begin{equation}\label{eq:performance-residual-bound}
\sup_{h_0}[V_0^{0,*}(h_0)-V_0^{0,\pi}(h_0)]
\le\sum_{t=0}^{T-1}\beta^t\|e_t^{0,\pi}\|_\infty.
\end{equation}
\item If $\pi^{\spe}$ is an SPE and
\begin{equation}\label{eq:epsilon-t}
\varepsilon_t
:=\sup_{h_t,a}|Q_t^{0,\pi^{\spe}}(h_t,a)-Q_t^{t,\pi^{\spe}}(h_t,a)|,
\end{equation}
then $\|e_t^{0,\pi^{\spe}}\|_\infty\le2\varepsilon_t$.
\item The action-value discrepancy satisfies
\begin{equation}\label{eq:epsilon-bound}
\varepsilon_t
\le
\sum_{j=t}^{T-1}\beta^{j-t+1}B_{j+1}\delta_{0,t;j}.
\end{equation}
\end{enumerate}
\end{proposition}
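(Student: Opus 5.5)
The plan is to prove the four parts in the order (i), (iv), (iii), (ii). Parts (i) and (iv) use the same backward-induction estimate, part (iii) is a sandwich argument using the SPE argmax condition, and part (ii) is a standard performance-difference telescoping.

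\textbf{Parts (i) and (iv).} Fix $\pi$ and let $\Delta_t:=\sup_{h_t}|V_{s,t}^{\pi}(h_t)-V_{s',t}^{\pi}(h_t)|$. By \eqref{eq:vintage-terminal-v2}, $\Delta_T=0$. At date $t<T$, write $a=\pi_t(h_t)$ and subtract the two instances of \eqref{eq:vintage-recursion-v2}. The reward cancels, and I split the remaining difference into two pieces:
\[
\rho_{\K_{s,t}}(V_{s,t+1})-\rho_{\K_{s',t}}(V_{s,t+1})
\quad\text{and}\quad
\rho_{\K_{s',t}}(V_{s,t+1})-\rho_{\K_{s',t}}(V_{s',t+1}).
\]
The continuation $V_{s,t+1}^{\pi}$ is bounded by $B_{t+1}$ by induction from \eqref{eq:B-bounds}. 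Rescaling by $B_{t+1}$ and applying \cref{def:Dop-v2} on the observation simplex (lower expectations are positively homogeneous), the first piece is at most $B_{t+1}\delta_{s,s';t}$. The second piece is at most $\Delta_{t+1}$, because a lower expectation is one-Lipschitz in the sup norm. This gives
\[
\Delta_t\le\beta B_{t+1}\delta_{s,s';t}+\beta\Delta_{t+1},
\]
and unrolling yields \eqref{eq:fixed-policy-vintage-bound}.

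For (iv), apply the same one-step split to $Q_t^{0,\pi}(h_t,a)-Q_t^{t,\pi}(h_t,a)$ at an arbitrary action $a$. This gives
\[
\varepsilon_t\le\beta B_{t+1}\delta_{0,t;t}+\beta\sup_{h_{t+1}}\bigl|V_{0,t+1}^{\pi}-V_{t,t+1}^{\pi}\bigr|.
\]
The last term is bounded by (i) applied with vintages $0,t\le t+1$, and combining the two produces \eqref{eq:epsilon-bound}.

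\textbf{Part (iii).} Let $a^*$ maximize $Q_t^{0,\pi}(h_t,\cdot)$, with $\pi=\pi^{\spe}$. By \eqref{eq:fresh-argmax-v2} in \cref{thm:triangular-v2}, the fresh self's choice satisfies $Q_t^{t,\pi}(h_t,\pi_t(h_t))\ge Q_t^{t,\pi}(h_t,a^*)$. Therefore
\[
e_t^{0,\pi}(h_t)\le\bigl[Q^0(a^*)-Q^t(a^*)\bigr]+\bigl[Q^t(\pi_t)-Q^0(\pi_t)\bigr]\le2\varepsilon_t .
\]
This is the argument of \cref{thm:one-step-por-v2}.

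\textbf{Part (ii), the main step.} This is a performance-difference argument for the rectangular vintage-$0$ evaluator. Let $V^{*}_t:=V^{0,*}_t$ be the vintage-$0$ robust Bellman value \eqref{eq:bellman-v2} with kernels $\K_{0,t}$. Define $\Gamma_t:=\sup_{h_t}[V_t^{*}(h_t)-V_{0,t}^{\pi}(h_t)]$; here I identify $V_0^{0,\pi}$ with $V_{0,0}^{\pi}$.

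Write $V^*_t(h_t)=\max_a[r_t+\beta\rho_{\K_{0,t}}(V^*_{t+1})]$ and compare it with $\max_aQ_t^{0,\pi}(h_t,a)$. Both use the same reward and kernel, but $V^*_{t+1}$ replaces $V^\pi_{0,t+1}$. Because $V^*_{t+1}\ge V^\pi_{0,t+1}$, the lower expectation is monotone and one-Lipschitz, and a max of functions moves by at most the sup of the differences, so
\[
V^*_t-\max_aQ_t^{0,\pi}\le\beta\Gamma_{t+1}.
\]
Adding $e_t^{0,\pi}=\max_aQ_t^{0,\pi}-V_{0,t}^{\pi}$ gives
\[
\Gamma_t\le\|e_t^{0,\pi}\|_\infty+\beta\Gamma_{t+1},
\]
with $\Gamma_T=0$. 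Iterating from $t=0$ yields \eqref{eq:performance-residual-bound}, since the factor $\beta^t$ accumulates exactly as stated.

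The only care needed is the nonnegativity $V^*\ge V^\pi_{0}$. This holds because $\pi$ is feasible in the vintage-$0$ rectangular problem, and in any case only the one-sided bound is required.
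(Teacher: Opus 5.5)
Your proposal is correct and follows essentially the same argument as the paper. It uses the add-and-subtract one-step recursion for (i) and (iv); you insert $\rho_{\K_{s',t}}(V_{s,t+1}^{\pi})$ where the paper inserts the $\K_{s,t}$ lower expectation of $V_{s',t+1}^{\pi}$, which is immaterial. It also uses the three-term sandwich with the current-self argmax for (iii) and the monotone, translation-equivariant performance-residual recursion for (ii), and the reordering of parts is cosmetic since (iv) invokes (i) at date $t+1$ exactly as in the paper.
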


\begin{proposition}[Multistage price of relearning]\label{thm:multistage-por}
For an SPE $\pi^{\spe}$, let
\[
\PoR_0:=\sup_{h_0}[V_0^{0,*}(h_0)-V_0^{0,\pi^{\spe}}(h_0)].
\]
Then
\begin{equation}\label{eq:multistage-por}
\PoR_0
\le
2\sum_{t=0}^{T-1}\beta^t\varepsilon_t
\le
2\sum_{t=0}^{T-1}\sum_{j=t}^{T-1}
\beta^{j+1}B_{j+1}\delta_{0,t;j}.
\end{equation}
The universal factor $2$ is unimprovable.  A sharp sequence places the sharp one-step menu at date $1$ after a deterministic zero-reward date-$0$ prefix.
\end{proposition}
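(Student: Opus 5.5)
The plan is to chain parts (ii)--(iv) of \cref{prop:recursive-por-estimates} for the upper bound, and to build an explicit embedding of the sharp one-step menu of \cref{thm:one-step-por-v2} for the lower bound.

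\textbf{Upper bound.}
\begin{enumerate}[label=(\alph*)]
\item Apply the performance-difference bound \eqref{eq:performance-residual-bound} to $\pi=\pi^{\spe}$. This gives
\[
\PoR_0\le\sum_{t}\beta^t\|e_t^{0,\pi^{\spe}}\|_\infty.
\]
\item Bound each residual by part (iii): $\|e_t^{0,\pi^{\spe}}\|_\infty\le2\varepsilon_t$. The mechanism is the usual three-term insertion. By SPE optimality \eqref{eq:fresh-argmax-v2}, $\pi^{\spe}_t(h_t)$ maximizes $Q_t^{t,\pi^{\spe}}(h_t,\cdot)$. For any action $a$,
\[
Q^0(a)-Q^0(\pi_t)\le\bigl[Q^0(a)-Q^t(a)\bigr]+\bigl[Q^t(a)-Q^t(\pi_t)\bigr]+\bigl[Q^t(\pi_t)-Q^0(\pi_t)\bigr].
\]
The middle bracket is nonpositive, and each outer bracket is at most $\varepsilon_t$. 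This yields the first inequality in \eqref{eq:multistage-por}.
\item Substitute \eqref{eq:epsilon-bound} into the first inequality and absorb the weights via $\beta^t\cdot\beta^{j-t+1}=\beta^{j+1}$. This gives the double-sum inequality.
\end{enumerate}
Part (iv) itself reduces to the fixed-policy vintage bound \eqref{eq:fixed-policy-vintage-bound}. The reduction is a one-step lower-expectation comparison. At the deviation node, the difference in kernels contributes $\beta B_{t+1}\delta_{0,t;t}$. The difference between $V_{0,t+1}$ and $V_{t,t+1}$ contributes the tail terms, since for a bounded vector the inf over a single set is $1$-Lipschitz in sup norm.

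\textbf{Sharpness of the factor $2$.}
\begin{enumerate}[label=(\alph*)]
\item Take $T=2$ with a deterministic, zero-reward date-$0$ prefix, so date $0$ contributes no residual.
\item At date $1$, set $\K_{0,1}=A$ and $\K_{1,1}=B$. Take two point beliefs $A=\{p\}$ and $B=\{q\}$ on a binary outcome space with $\Dop(A,B)=\delta$ small. Let the two actions have terminal payoff vectors $z_1,z_2$ with $\|z_a\|_\infty\le1$, chosen as in the sharp construction of \cref{thm:one-step-por-v2}. That construction gives opposite payoffs, so that $\rho_A(z_1)-\rho_A(z_2)\to2\delta$ while $B$ prefers $z_2$ by an arbitrarily small margin.
\item With $R_1=0$ and $G=1$, the SPE at date $1$ chooses the $B$-optimal action, so $\PoR_0=\beta^2 L(A,B)$.
\item On the right-hand side, all defects except $\delta_{0,1;1}=\delta$ vanish. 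The bound therefore reads $2\beta^{2}B_2\delta=2\beta^2\delta$.
\item The ratio tends to $1$. Hence no universal constant $c<2$ can replace $2$.
\end{enumerate}
One point needs care: this scales to the normalization $B_2=G=1$. A tie in the fresh ranking must be broken by a strict small margin so that the SPE is unique and selection does not matter.

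I expect the main obstacle to be this sharpness construction rather than the upper chain, which is mechanical given \cref{prop:recursive-por-estimates}. The difficulty is in making the sharp one-step pair $(z_1,z_2)$ actually attain $L$ arbitrarily close to $2\Dop$ while the fresh self still strictly prefers the $A$-worse action. I would first try $p=(\tfrac12+\eta,\tfrac12-\eta)$, $q=(\tfrac12-\eta,\tfrac12+\eta)$, $z_1=(1,-1)$, and $z_2=(-1,1)+\kappa\mathbf 1$ with $\kappa\downarrow0$. I would then verify that $L/\Dop\to2$ as $\kappa/\eta\to0$.
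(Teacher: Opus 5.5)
Your upper-bound chain (parts (ii), (iii), (iv) of \cref{prop:recursive-por-estimates}, then $\beta^t\beta^{j-t+1}=\beta^{j+1}$) is exactly the paper's argument. Your sharpness architecture is also the paper's: $T=2$, a deterministic zero-reward prefix so that $\varepsilon_0=0$, and the one-step menu at date $1$ with terminal bound $1$. Your route handles the first inequality implicitly. Sharpness of the outer bound forces sharpness of the middle one, because $\sum_t\beta^t\varepsilon_t$ is sandwiched between them. The paper instead computes $\varepsilon_1$ directly.

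The gap is the explicit witness you plan to verify: it is not sharp. With $p=(\tfrac12+\eta,\tfrac12-\eta)$, $q=(\tfrac12-\eta,\tfrac12+\eta)$, $z_1=(1,-1)$ and $z_2=(-1,1)+\kappa\mathbf 1$, you get $\Dop(\{p\},\{q\})=4\eta$. You also get $\rho_A(z_1)=2\eta$ and $\rho_A(z_2)=-2\eta+\kappa$. Hence $L=4\eta-\kappa$ and $L/\Dop\to1$, not $2$. Rescaling to $z_1=\alpha(1,-1)$, $z_2=\alpha(-1,1)+\kappa\mathbf 1$ only gives ratio $\to\alpha\le1$. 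Separately, $\|z_2\|_\infty=1+\kappa>1$ breaks your normalization $G=1$.

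Your own three-term insertion shows why it fails. The middle term is $\rho_B(z_1)-\rho_B(z_2)=-(4\eta+\kappa)$, which is of order $\Dop$. So the fresh self is not nearly indifferent, contrary to what you require in step (b), and this cancels one of the two $\Dop$ contributions. For point beliefs with $s=(1,-1)$, $z_1=\alpha s+c_1\mathbf 1$ and $z_2=-\alpha s+c_2\mathbf 1$, a fresh preference for $z_2$ by exactly $\kappa$ needs the offset $c_2-c_1=2\alpha q^\top s+\kappa$. That offset reduces to $\kappa$ only when $q^\top s=0$, and then $L=2\alpha\Dop-\kappa$.

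The paper's fix centers the fresh law. Take $A=\{(\tfrac12+\eta,\tfrac12-\eta)\}$ and $B=\{(\tfrac12,\tfrac12)\}$, so $\Dop(A,B)=2\eta$. Take $z_1=\alpha(1,-1)$ and $z_2=-\alpha(1,-1)+\kappa\mathbf 1$, with $\alpha+\kappa\le1$ and $0<\kappa<4\alpha\eta$. Then:
\begin{itemize}
\item the fresh self strictly picks $z_2$, so the SPE is unique, while vintage $0$ picks $z_1$;
\item $\PoR_0=\beta^2(4\alpha\eta-\kappa)$, $\varepsilon_1=2\beta\alpha\eta$ and $\delta_{0,1;1}=2\eta$;
\item both ratios $(4\alpha\eta-\kappa)/(2\alpha\eta)$ and $(4\alpha\eta-\kappa)/(2\eta)$ tend to $2$ as $\alpha\uparrow1$ and $\kappa\downarrow0$.
\end{itemize}

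Citing the sharpness clause of \cref{thm:one-step-por-v2} as a black box would also close the argument. The concrete family you propose, however, would fail the verification you describe.
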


\noindent\emph{Proof idea.} The first inequality combines the performance-residual recursion with approximate inherited greediness; the second substitutes the action-value vintage bound.  EC.3 proves all four recursive estimates and the sharp two-decision embedding.

Nested ambiguity vintages permit a one-sided refinement.  The inclusion condition below is satisfied by the Gaussian inherited-versus-fresh family in \cref{sec:pricing}; it is not imposed on the general model.

\begin{proposition}[Factor-one residual under nested vintages]\label{thm:nested-vintage-factor-one}
Suppose that for every relevant descendant node and $s<u\leq j$,
\[
 K_{s,j}(h,a)\subseteq K_{u,j}(h,a).
\]
Then for every fixed policy $\pi$,
$V_{s,j}^{\pi}(h)\geq V_{u,j}^{\pi}(h)$ and
$Q_{s,j}^{\pi}(h,a)\geq Q_{u,j}^{\pi}(h,a)$.
If self $u$ chooses
$a_u\in\arg\max_aQ_{u,u}^{\pi}(h,a)$, then
\[
 \max_aQ_{s,u}^{\pi}(h,a)-Q_{s,u}^{\pi}(h,a_u)
 \leq
 \max_a\{Q_{s,u}^{\pi}(h,a)-Q_{u,u}^{\pi}(h,a)\}.
\]
Thus the generic factor two in the approximate-greediness argument improves to one.
\end{proposition}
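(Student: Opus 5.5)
The plan is a backward induction on the number of remaining dates for the monotonicity claims, followed by a short algebraic argument for the residual inequality.

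\emph{Step 1: monotonicity of lower expectations.} Fix $\pi$ and vintages $s<u$. The key observation is that inclusion of sets reverses infima: if $K_{s,j}(h,a)\subseteq K_{u,j}(h,a)$, then for every vector $v$,
\[
\inf_{q\in K_{s,j}}q^\top v\ \ge\ \inf_{q\in K_{u,j}}q^\top v.
\]
The same vector then has a weakly larger lower expectation under the smaller (older) set. Combined with monotonicity of the infimum in $v$, this drives the induction.

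\emph{Step 2: induction from the terminal date.}
\begin{enumerate}[label=(\alph*)]
\item At date $T$ both vintage values equal $g$, so the base case holds trivially.
\item Suppose $V_{s,j+1}^{\pi}\ge V_{u,j+1}^{\pi}$ pointwise at all descendants.
\item For any action $a$, the vintage-$s$ action value in \eqref{eq:fresh-Q-v2} has the form $r_j+\beta\inf_{q\in K_{s,j}}\sum_y q(y)V_{s,j+1}^{\pi}$.
\item First replace $V_{s,j+1}^{\pi}$ by the smaller $V_{u,j+1}^{\pi}$. The infimum is monotone in the payoff vector and $\beta\ge0$, so the value does not increase.
\item Then enlarge $K_{s,j}$ to $K_{u,j}$. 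By Step 1 this lowers the infimum again.
\item Together these give $Q_{s,j}^{\pi}(h,a)\ge Q_{u,j}^{\pi}(h,a)$.
\item Evaluating at $a=\pi_j(h)$ through \eqref{eq:vintage-recursion-v2} gives the $V$ inequality and closes the induction.
\end{enumerate}
One indexing point needs care. The hypothesis is stated for $s<u\le j$, but the induction at date $j\ge u$ only uses pairs with $u\le j$. This covers exactly the dates at which both vintages are active on the subtree of self $u$.

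\emph{Step 3: the residual bound.} Let $a^\star\in\arg\max_a Q_{s,u}^{\pi}(h,a)$. Write the residual as
\[
Q_{s,u}^{\pi}(a^\star)-Q_{s,u}^{\pi}(a_u)
=\bigl[Q_{s,u}^{\pi}(a^\star)-Q_{u,u}^{\pi}(a^\star)\bigr]
+\bigl[Q_{u,u}^{\pi}(a^\star)-Q_{u,u}^{\pi}(a_u)\bigr]
+\bigl[Q_{u,u}^{\pi}(a_u)-Q_{s,u}^{\pi}(a_u)\bigr].
\]
Each bracket is handled separately.
\begin{enumerate}[label=(\roman*)]
\item The second bracket is nonpositive because self $u$ maximizes $Q_{u,u}^{\pi}$.
\item The third bracket is nonpositive by Step 2, since $Q_{s,u}^{\pi}\ge Q_{u,u}^{\pi}$.
\item The first bracket is bounded by $\max_a\{Q_{s,u}^{\pi}-Q_{u,u}^{\pi}\}$.
\end{enumerate}
This is exactly the stated inequality. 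It improves the generic factor two because one of the two evaluator-change terms now has a known sign and can be dropped.

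\emph{Main obstacle.} I expect the only real difficulty to be bookkeeping. The continuation values inside $Q_{s,u}^{\pi}$ and $Q_{u,u}^{\pi}$ must be the vintage-specific ones, $V_{s,u+1}^{\pi}$ versus $V_{u,u+1}^{\pi}$. The inclusion hypothesis must therefore hold at all descendant nodes, not just at the root, and the induction in Step 2 has to be run over the whole subtree before the one-step argument in Step 3 is applied.
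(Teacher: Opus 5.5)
Your proposal is correct and follows the paper's proof essentially step for step. It uses the same backward induction: first lower the continuation value inside the infimum, then enlarge the kernel set from $K_{s,j}$ to $K_{u,j}$. It then uses the same three-term add-and-subtract decomposition, in which the current-self optimality term and the vintage-ordering term are both nonpositive; the only cosmetic difference is that you fix a maximizer $a^\star$ of $Q_{s,u}^{\pi}(h,\cdot)$ where the paper writes maxima.
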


The nested order also yields a selection-aware local linear-program envelope and an exact shared-state Markov formulation.  These refinements are stated and proved in EC.3 because they sharpen computation rather than define the welfare object.  General state abstraction is established; the contribution here is the exact reduction for the vintage-indexed correspondence and its certificates \citep{GivanDeanGreig2003,LiWalshLittman2006}.

\subsection{When reconstruction does not create a first-order welfare loss}

\begin{proposition}[Policy robustness under gaps and smooth actions]\label{prop:policy-robustness-regimes}
\begin{enumerate}[label=(\roman*)]
\item Suppose that, at every subgame, the unique maximizer of $Q_t^{0,\pi^{\spe}}$ has gap $\kappa_t(h_t)>2\varepsilon_t$.  Then $\pi^{\spe}$ is also optimal for the vintage-$0$ rectangular problem and
\[
\PoR_0=0.
\]
\item Let $A\subseteq\R^d$ be convex and let $Q^{\inh},Q^{\fresh}$ be continuously differentiable with interior maximizers $a_I,a_F$.  If $Q^{\inh}$ is $\mu$-strongly concave with $L$-Lipschitz gradient and
$\sup_{a\in A}\|\nabla Q^{\fresh}(a)-\nabla Q^{\inh}(a)\|_2\le\varepsilon$, then
\begin{align}
\|a_F-a_I\|_2&\le\varepsilon/\mu,
\label{eq:action-displacement}\\
0\le Q^{\inh}(a_I)-Q^{\inh}(a_F)
&\le L\varepsilon^2/(2\mu^2).
\label{eq:second-order-PoR}
\end{align}
\end{enumerate}
\end{proposition}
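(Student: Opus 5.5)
For part (i) the plan is backward induction from the terminal date. We show by induction on $t$ downward that $\pi^{\spe}$ is Bellman-optimal for vintage $0$ at every subgame, so that $V_{0,t}^{\pi^{\spe}}=V_t^{0,*}$. By \cref{thm:triangular-v2}, at every $h_t$ the SPE action $\pi_t^{\spe}(h_t)$ maximizes $Q_t^{t,\pi^{\spe}}(h_t,\cdot)$. By \eqref{eq:epsilon-t}, $|Q_t^{0,\pi^{\spe}}-Q_t^{t,\pi^{\spe}}|\le\varepsilon_t$ uniformly in actions.

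Let $a^\star$ be the unique maximizer of $Q_t^{0,\pi^{\spe}}(h_t,\cdot)$, and take any $a\ne a^\star$. Then
\[
Q_t^{t,\pi^{\spe}}(h_t,a)\le Q_t^{0,\pi^{\spe}}(h_t,a)+\varepsilon_t\le Q_t^{0,\pi^{\spe}}(h_t,a^\star)-\kappa_t+\varepsilon_t<Q_t^{0,\pi^{\spe}}(h_t,a^\star)-\varepsilon_t\le Q_t^{t,\pi^{\spe}}(h_t,a^\star),
\]
using $\kappa_t>2\varepsilon_t$. So the fresh maximizer is forced to equal $a^\star$, and the Bellman residual $e_t^{0,\pi^{\spe}}(h_t)$ of \eqref{eq:bellman-residual} vanishes at every history. \Cref{prop:recursive-por-estimates}(ii) then gives $V_0^{0,*}-V_0^{0,\pi^{\spe}}\le\sum_t\beta^t\cdot0=0$. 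The reverse inequality holds by feasibility, as noted after \eqref{eq:equilibrium-specific-por}, so $\PoR_0=0$.

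The one point needing care is that the gap is measured for $Q_t^{0,\pi^{\spe}}$, the vintage-$0$ value of deviating and then continuing with $\pi^{\spe}$, rather than for the Bellman $Q^*$. The residual route avoids this issue: residuals are defined with respect to exactly these continuation-$\pi^{\spe}$ action values, and \eqref{eq:performance-residual-bound} converts zero residuals into optimality. I would verify that this bound is indeed stated for those values, which it is.

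For part (ii), first-order optimality at interior points gives $\nabla Q^{\inh}(a_I)=0$ and $\nabla Q^{\fresh}(a_F)=0$. Hence $\|\nabla Q^{\inh}(a_F)\|_2=\|\nabla Q^{\inh}(a_F)-\nabla Q^{\fresh}(a_F)\|_2\le\varepsilon$. Strong concavity gives
\[
\langle\nabla Q^{\inh}(a_I)-\nabla Q^{\inh}(a_F),a_I-a_F\rangle\le-\mu\|a_I-a_F\|_2^2 .
\]
Combining this with Cauchy--Schwarz yields $\mu\|a_F-a_I\|_2^2\le\varepsilon\|a_F-a_I\|_2$, which is \eqref{eq:action-displacement}. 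For \eqref{eq:second-order-PoR}, the lower bound $0$ holds by optimality of $a_I$. The $L$-smooth descent lemma around $a_I$, where the gradient vanishes, gives $Q^{\inh}(a_I)-Q^{\inh}(a_F)\le\frac L2\|a_F-a_I\|_2^2$. Inserting the displacement bound gives $L\varepsilon^2/(2\mu^2)$. Convexity of $A$ guarantees that the segment between $a_I$ and $a_F$ lies in the domain, so both the monotonicity inequality and the descent lemma apply.

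I expect no genuine obstacle here. The step needing the most care is the strict-inequality chain in (i), together with confirming that uniqueness of the maximizer at every subgame, including off-path histories, is what lets the residual vanish everywhere.
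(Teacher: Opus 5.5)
Your proposal is correct and follows essentially the same route as the paper: in (i) the $2\varepsilon_t$ perturbation of the unique vintage-$0$ maximizer's gap forces every current self onto that maximizer, so every vintage-$0$ Bellman residual vanishes and the performance-residual recursion in \cref{prop:recursive-por-estimates}(ii) gives $\PoR_0=0$. In (ii) you use the interior first-order conditions, strong monotonicity with Cauchy--Schwarz, and the $L$-smooth quadratic lower bound around $a_I$, exactly as the paper does.
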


\noindent\emph{Proof idea.}  In the discrete case, a perturbation smaller than half the action gap cannot change the maximizer, so every vintage-$0$ Bellman residual vanishes.  In the smooth case, strong concavity converts the gradient discrepancy into an $O(\varepsilon)$ action displacement, and smoothness converts that displacement into an $O(\varepsilon^2)$ inherited loss.  EC.3 gives the complete arguments.  The two parts distinguish locally flat discrete choice from second-order welfare response under smooth interior control.

\section{Repairing Relearning: Compatible Protocol Design}\label{sec:design}

Compatibility is the repair, but it defines a feasible protocol class rather than a preferred member. Protocol choice is endogenous because it changes the sophisticated policy and hence where statistical fidelity matters. This differs from reward design and policy teaching, which alter incentives to induce target behavior \citep{BenPoratEtAl2024,WuMaFuHan2025}.

\subsection{Repair coupled to equilibrium}

On one likelihood orbit, let $\mathscr X$ be a nonempty finite calibration set, let $w_x>0$, and let $\nu$ be a finite Borel measure on the unit direction sphere.  Each target $C_x$ is nonempty, compact, and convex.  For a set $C$ of the same type, write $h_C(u)=\sup_{c\in C}\langle c,u\rangle$ and define
\begin{equation}\label{eq:static-compatible-loss-v7}
\mathcal L(C)=\sum_{x\in\mathscr X}w_x\int[h_C(u)-h_{C_x}(u)]^2\,d\nu(u).
\end{equation}
All integrals are finite because support functions are continuous and bounded on the unit sphere.
\begin{proposition}[Conditional compatible repair]\label{prop:conditional-compatible-repair-v7}
Let $W=\sum_xw_x$ and let
\[
C^{\rm av}=W^{-1}\sum_xw_xC_x
\]
be the weighted Minkowski average.  Then, for every nonempty compact convex $C$,
\begin{equation}\label{eq:conditional-compatible-variance-identity}
\mathcal L(C)=\mathcal L(C^{\rm av})
+W\int[h_C(u)-h_{C^{\rm av}}(u)]^2\,d\nu(u).
\end{equation}
Hence $C^{\rm av}$ is a minimizer.  Every minimizer agrees with it in support function $\nu$-almost everywhere, and it is unique when $\supp(\nu)$ determines compact convex sets.
\end{proposition}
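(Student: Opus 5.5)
The plan is to work entirely in support-function coordinates. There, Minkowski addition and positive scaling become linear operations on functions on the unit sphere, and the loss becomes a weighted sum of squared $L^2(\nu)$ distances. The first step records two standard facts for nonempty compact convex sets: $h_{C+D}=h_C+h_D$ and $h_{\lambda C}=\lambda h_C$ for $\lambda\ge0$. Applying them to the weighted Minkowski average gives
\[
h_{C^{\rm av}}=W^{-1}\sum_x w_x h_{C_x}.
\]
The set $C^{\rm av}$ is itself nonempty, compact and convex, so it is admissible. Every support function involved is continuous on the compact sphere, hence bounded and in $L^2(\nu)$, since $\nu$ is finite.

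The second step is the bias--variance identity. Write $f=h_C$, $g=h_{C^{\rm av}}$ and $f_x=h_{C_x}$. For each $x$ expand
\[
(f-f_x)^2=(f-g)^2+2(f-g)(g-f_x)+(g-f_x)^2 .
\]
Multiply by $w_x$, sum over $x$, and integrate against $\nu$. The cross term is
\[
2\int (f-g)\sum_x w_x (g-f_x)\,d\nu ,
\]
and it vanishes pointwise because $\sum_x w_x(g-f_x)=Wg-\sum_x w_xf_x=0$ by the first step. The remaining terms are $W\int(f-g)^2\,d\nu+\mathcal L(C^{\rm av})$, which is exactly \eqref{eq:conditional-compatible-variance-identity}. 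It follows at once that $\mathcal L(C)\ge\mathcal L(C^{\rm av})$ for every admissible $C$, so $C^{\rm av}$ is a minimizer.

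For the characterization of minimizers, suppose $C$ is another minimizer, so that $\mathcal L(C)=\mathcal L(C^{\rm av})$. The identity then forces $W\int(h_C-h_{C^{\rm av}})^2\,d\nu=0$. Since $W>0$, we get $h_C=h_{C^{\rm av}}$ $\nu$-almost everywhere. Both functions are continuous, so the set where they differ is open. An open set of $\nu$-measure zero misses $\supp(\nu)$, so the two support functions agree on all of $\supp(\nu)$. Finally, if $\supp(\nu)$ determines compact convex sets, meaning that agreement of support functions there implies equality of the sets, then $C=C^{\rm av}$ and the minimizer is unique.

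The only step needing care is this passage from almost-everywhere agreement to pointwise agreement on the support, which uses continuity of support functions together with the definition of $\supp(\nu)$. Everything else is algebraic once Minkowski linearity of $h$ is in place.
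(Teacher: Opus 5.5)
Your proposal is correct and follows essentially the same route as the paper. Both use Minkowski linearity of support functions to identify $h_{C^{\rm av}}$ with the weighted average of the $h_{C_x}$, then integrate the pointwise weighted variance (bias--variance) identity against $\nu$. Both then pass from $\nu$-almost-everywhere agreement to agreement on $\supp(\nu)$ by continuity of support functions.
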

The identity is weighted least squares applied direction by direction, using Minkowski additivity of support functions \citep{RockafellarWets1998}; EC.4 supplies the complete argument.

Let $\mathfrak C_{\rm stat}$ be a nonempty compact set of statistically admissible compatible protocols, $\boldsymbol\Pi$ the finite pure-policy set, and $\mathrm{SPE}(c)$ the equilibrium correspondence induced by $c$.  Under a nominal law, write $d_t^\pi(h_t)$ for history occupancy and $\ell_t(c;h_t,a)$ for continuous decision-visible fidelity loss.  Define
\begin{equation}\label{eq:endogenous-design-objective}
\mathcal J(c,\pi)=\sum_{t<T}\beta^t\sum_{h_t}d_t^\pi(h_t)\ell_t(c;h_t,\pi_t(h_t)).
\end{equation}
\begin{proposition}[Equilibrium-coupled compatible design]\label{thm:endogenous-design-existence}
If pure-policy action values are continuous in $c$ and every $c\in\mathfrak C_{\rm stat}$ admits a pure SPE, then
\begin{equation}\label{eq:joint-design-problem}
\min\{\mathcal J(c,\pi):c\in\mathfrak C_{\rm stat},\ \pi\in\mathrm{SPE}(c)\}
\end{equation}
has a solution.  Moreover, with $\mathfrak C_\pi:=\{c\in\mathfrak C_{\rm stat}:\pi\in\mathrm{SPE}(c)\}$,
\begin{equation}\label{eq:policy-decomposition-design}
\min_{c,\pi:\,\pi\in\mathrm{SPE}(c)}\mathcal J(c,\pi)
=\min_{\pi:\,\mathfrak C_\pi\ne\varnothing}\ \min_{c\in\mathfrak C_\pi}\mathcal J(c,\pi).
\end{equation}
Every nonempty policy cell is compact.
\end{proposition}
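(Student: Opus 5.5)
The plan is to treat \eqref{eq:joint-design-problem} as minimizing a lower semicontinuous function over a compact graph. The graph is $\mathcal G:=\{(c,\pi):c\in\mathfrak C_{\rm stat},\ \pi\in\mathrm{SPE}(c)\}\subseteq\mathfrak C_{\rm stat}\times\boldsymbol\Pi$. Because $\boldsymbol\Pi$ is finite, $\mathcal G$ is the disjoint union over $\pi$ of $\mathfrak C_\pi\times\{\pi\}$. The whole argument therefore reduces to two facts: each policy cell $\mathfrak C_\pi$ is compact, and $c\mapsto\mathcal J(c,\pi)$ is continuous for fixed $\pi$.

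\textbf{Step 1 (cells are closed).} By \cref{thm:triangular-v2}, $\pi\in\mathrm{SPE}(c)$ holds exactly when, at every date $t$ and every history $h_t$ of the finite tree, we have $Q_t^{t,\pi}(h_t,\pi_t(h_t);c)\ge Q_t^{t,\pi}(h_t,a;c)$ for every $a\in\A_t(h_t)$. The vintage values are computed by the recursion \eqref{eq:vintage-recursion-v2} for the fixed policy $\pi$, so the SPE property is decided entirely by these pure-policy action values. By hypothesis they are continuous in $c$, and each condition is a non-strict inequality between continuous functions. Hence $\mathfrak C_\pi$ is a finite intersection of closed subsets of $\mathfrak C_{\rm stat}$, so it is closed. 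As a closed subset of the compact set $\mathfrak C_{\rm stat}$, it is compact; this gives the final claim of the proposition.

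\textbf{Step 2 (continuity of the objective).} Fix $\pi$. The occupancy $d_t^\pi(h_t)$ is taken under the nominal law, so it does not depend on $c$. The sum in \eqref{eq:endogenous-design-objective} is finite, and each $\ell_t(\cdot;h_t,a)$ is continuous by assumption. So $c\mapsto\mathcal J(c,\pi)$ is a finite nonnegative combination of continuous functions and is continuous.

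\textbf{Step 3 (existence and decomposition).} Every $c\in\mathfrak C_{\rm stat}$ admits a pure SPE, and $\mathfrak C_{\rm stat}\neq\varnothing$, so at least one cell is nonempty. By Weierstrass, each nonempty cell has a minimizer $c_\pi$ of $\mathcal J(\cdot,\pi)$. Taking the minimum over the finitely many $\pi$ with $\mathfrak C_\pi\neq\varnothing$ produces a pair in $\mathcal G$. Every feasible $(c,\pi)$ lies in some nonempty cell, so this pair attains the infimum in \eqref{eq:joint-design-problem}. The same observation gives the identity \eqref{eq:policy-decomposition-design}: the feasible set is partitioned by $\pi$, and both sides are minima over the same set.

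\textbf{Main obstacle.} The delicate point is Step 1. The SPE condition depends on $c$ through all vintage kernels, so the continuity hypothesis must be read as continuity of the fresh action values $Q_t^{t,\pi}$ for each fixed pure $\pi$, including at off-path histories. I would state explicitly that membership in $\mathrm{SPE}(c)$ involves only these finitely many weak inequalities, so that closedness follows. The argument must not assume that $\mathrm{SPE}(\cdot)$ is lower hemicontinuous, since equilibria can disappear under perturbation. Only the upper (closed-graph) direction is needed, and the finiteness of $\boldsymbol\Pi$ supplies it.
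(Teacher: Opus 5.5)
Your proof is correct and follows essentially the paper's argument. Each policy cell $\mathfrak C_\pi$ is closed because membership in $\mathrm{SPE}(c)$ is a finite system of weak inequalities between continuous pure-policy action values, so it is compact inside $\mathfrak C_{\rm stat}$. The map $\mathcal J(\cdot,\pi)$ is continuous because nominal occupancies do not depend on $c$, and Weierstrass plus a minimum over finitely many policies gives existence and the decomposition. The paper's written proof instead establishes closedness sequentially on the whole graph: a convergent sequence eventually has a constant policy, and the inequalities pass to the limit. Your cellwise version is equivalent, and it has the small advantage of making the decomposition identity and cell compactness explicit.
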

\noindent\emph{Proof idea.} Continuity makes each cell closed.  The feasible graph is the nonempty finite union $\bigcup_\pi\mathfrak C_\pi\times\{\pi\}$ and is therefore compact; Weierstrass gives existence and the finite-union identity gives \eqref{eq:policy-decomposition-design}.  EC.4 provides the complete graph argument.
The pair formulation preserves equilibrium multiplicity; absent equilibrium selection, a design objective must specify a selector or a worst-selection criterion.

\subsection{A self-consistent computational selection}

For a one-parameter compatible protocol $r\in[\underline r,\bar r]$, entropy regularization with parameter $\tau>0$ produces a unique soft sophisticated policy $\pi_\tau^r$ \citep{GeistScherrerPietquin2019}.  All protocols are evaluated from a common nominal initial distribution and under a common nominal transition kernel.  Let $\omega_i(r)$ be discounted nominal occupancies, $s_i>0$ control-sensitivity weights, and $\widehat r_i\in[0,R]$ statistical targets.  The conditional repair map is
\begin{equation}\label{eq:endogenous-radius-map}
\mathcal T_\tau(r)=\Pi_{[\underline r,\bar r]}
\frac{\sum_i\omega_i(r)s_i\widehat r_i}{\sum_i\omega_i(r)s_i}.
\end{equation}
\begin{proposition}[Endogenous compatible fixed point]\label{thm:endogenous-design-fixed-point}
Suppose $\sum_i\omega_i(r)s_i\ge\underline d>0$ and
$\sum_i s_i|\omega_i(r)-\omega_i(r')|\le L_a|r-r'|$.  With $\Delta_{\widehat r}=\max_i\widehat r_i-\min_i\widehat r_i$, $\mathcal T_\tau$ has a fixed point and
\begin{equation}\label{eq:design-contraction-modulus}
|\mathcal T_\tau(r)-\mathcal T_\tau(r')|\le q_{\rm tar}|r-r'|,
\qquad q_{\rm tar}=\Delta_{\widehat r}L_a/\underline d.
\end{equation}
If $q_{\rm tar}<1$, the fixed point is unique and $r^{k+1}=\mathcal T_\tau(r^k)$ converges globally at rate $q_{\rm tar}$.
\end{proposition}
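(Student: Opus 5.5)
The plan is to treat $\mathcal T_\tau$ as the composition of the projection $\Pi_{[\underline r,\bar r]}$ with the weighted-average map
\[
F(r):=\frac{\sum_i\omega_i(r)s_i\widehat r_i}{\sum_i\omega_i(r)s_i},
\]
and to prove existence and the Lipschitz estimate separately. The Lipschitz estimate is the substantive step. Existence also follows from the contraction when $q_{\rm tar}<1$, but it should hold without that assumption.

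\emph{Existence.} The assumed bound $\sum_is_i|\omega_i(r)-\omega_i(r')|\le L_a|r-r'|$ makes each $s_i\omega_i$ Lipschitz. Hence both the numerator and the denominator of $F$ are Lipschitz in $r$. The denominator is bounded below by $\underline d>0$, so $F$ is continuous on $[\underline r,\bar r]$. The projection onto an interval is $1$-Lipschitz, so $\mathcal T_\tau$ is a continuous self-map of the compact interval $[\underline r,\bar r]$. A fixed point then exists by the intermediate value theorem applied to $\mathcal T_\tau(r)-r$, which is $\ge0$ at $\underline r$ and $\le0$ at $\bar r$. The uniqueness of the soft policy $\pi^r_\tau$ is what makes $\omega_i(r)$ a single-valued function; the Lipschitz hypothesis is stated directly in terms of $\omega_i$, so no further regularity of the policy is needed.

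\emph{Lipschitz bound.} Write $a_i=\omega_i(r)s_i$, $b_i=\omega_i(r')s_i$, $A=\sum a_i$ and $B=\sum b_i$, with $A,B\ge\underline d$. The key device is to center the targets. Fix any constant $\bar c$, say the midpoint $\bar c=(\max_i\widehat r_i+\min_i\widehat r_i)/2$. Because weighted averages commute with translation,
\[
F(r)-F(r')=\sum_i\Bigl(\frac{a_i}{A}-\frac{b_i}{B}\Bigr)(\widehat r_i-\bar c).
\]
A naive quotient bound on this difference would give a constant such as $2R L_a/\underline d$. To reach $\Delta_{\widehat r}L_a/\underline d$ instead, I would use the identity
\[
\frac{a_i}{A}-\frac{b_i}{B}=\frac{a_i-b_i}{A}-\frac{b_i}{B}\cdot\frac{A-B}{A}.
\]
Substituting, the difference becomes
\[
F(r)-F(r')=\frac1A\sum_i(a_i-b_i)\bigl(\widehat r_i-F(r')\bigr).
\]
Both $\widehat r_i$ and $F(r')$ lie in $[\min\widehat r,\max\widehat r]$, since $F(r')$ is a convex combination of the targets. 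Therefore $|\widehat r_i-F(r')|\le\Delta_{\widehat r}$. Combined with $\sum_i|a_i-b_i|\le L_a|r-r'|$ and $A\ge\underline d$, this yields $|F(r)-F(r')|\le\Delta_{\widehat r}L_a|r-r'|/\underline d$. The projection preserves this bound.

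\emph{Contraction and convergence.} When $q_{\rm tar}<1$, the Banach fixed-point theorem on the complete space $[\underline r,\bar r]$ gives a unique fixed point. It also gives global geometric convergence $|r^k-r^\star|\le q_{\rm tar}^k|r^0-r^\star|$.

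The main obstacle is getting the exact constant in the Lipschitz bound. The delicate step is rewriting the difference so that the deviations $\widehat r_i-F(r')$ appear, rather than the raw targets. This is what replaces the naive factor $2\max_i|\widehat r_i|$ by the range $\Delta_{\widehat r}$.
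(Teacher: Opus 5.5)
Your proposal is correct and follows essentially the same route as the paper: existence by the intermediate value theorem for the continuous self-map of $[\underline r,\bar r]$, the Lipschitz constant from the identity $F(r)-F(r')=A^{-1}\sum_i(a_i-b_i)(\widehat r_i-F(r'))$, nonexpansiveness of the projection, and Banach's theorem. The paper obtains that identity as $\bigl(N(r)-m(r')D(r)\bigr)/D(r)$ using $N(r')-m(r')D(r')=0$, and your auxiliary centering constant $\bar c$ cancels out of the final identity and can be dropped.
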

EC.4 derives the occupancy Lipschitz constant and proves the proposition.  Regularization is a computational selection device, not a redefinition of pure-SPE welfare.

\subsection{Fidelity to a fixed target}

Let $\mathfrak K^{\rm tar}$ be one preassigned rectangular target evaluator, not the changing-vintage fresh-self protocol.  Let $\pi^c$ select a compatible Bellman maximizer at every subgame under the rectangular family $\mathfrak K^c$, so $\pi_t^c(h_t)\in\argmax_a Q_t^{c,\pi^c}(h_t,a)$ for every $(t,h_t)$.  Define
$\varepsilon_t^{\rm tar}(c)=\sup_{h_t,a}|Q_t^{{\rm tar},\pi^c}-Q_t^{c,\pi^c}|$ and
$\delta_t^{\rm tar,c}=\sup_{h_t,a}\Dop(K_t^{\rm tar},K_t^c)$.
\begin{proposition}[Fixed-target welfare certificate]\label{thm:compatible-design-performance}
If $V_0^{{\rm tar},*}$ is the robust-DP optimum under $\mathfrak K^{\rm tar}$, then
\begin{align}
0\le V_0^{{\rm tar},*}-V_0^{{\rm tar},\pi^c}
&\le2\sum_{t<T}\beta^t\varepsilon_t^{\rm tar}(c),\label{eq:compatible-design-performance}\\
\varepsilon_t^{\rm tar}(c)
&\le\sum_{j=t}^{T-1}\beta^{j-t+1}B_{j+1}\delta_j^{\rm tar,c}.\label{eq:compatible-target-epsilon-bound}
\end{align}
Consequently,
\begin{equation}\label{eq:compatible-design-operational-certificate}
V_0^{{\rm tar},*}-V_0^{{\rm tar},\pi^c}
\le 2\sum_{j=0}^{T-1}(j+1)\beta^{j+1}B_{j+1}\delta_j^{{\rm tar},c}.
\end{equation}
The same statements hold with control-reachable defects whenever the continuation values lie in the normalized reachable class.
\end{proposition}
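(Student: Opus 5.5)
The plan is to reuse the architecture of \cref{thm:multistage-por} and \cref{prop:recursive-por-estimates}, with the changing fresh vintage replaced by the fixed compatible family $\mathfrak K^c$ and the inherited vintage $0$ replaced by the target $\mathfrak K^{\rm tar}$. Both families are rectangular, so fixed-policy values $V_t^{{\rm tar},\pi^c}$ and $V_t^{c,\pi^c}$ satisfy ordinary one-vintage policy-evaluation recursions of the form \eqref{eq:coherent-policy-evaluation-v2}. The lower bound $0\le V_0^{{\rm tar},*}-V_0^{{\rm tar},\pi^c}$ is immediate because $\pi^c$ is feasible for the target robust-DP problem.

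\textbf{Step 1 (performance-difference bound).} I will apply \eqref{eq:performance-residual-bound} with vintage $0$ read as the target family. This gives
\[
V_0^{{\rm tar},*}-V_0^{{\rm tar},\pi^c}\le\sum_t\beta^t\|e_t^{{\rm tar},\pi^c}\|_\infty,
\qquad
e_t^{{\rm tar},\pi^c}:=\max_aQ_t^{{\rm tar},\pi^c}-Q_t^{{\rm tar},\pi^c}(\cdot,\pi_t^c).
\]
The proof is the standard telescoping argument. Compare $V^{{\rm tar},*}$ with $V^{{\rm tar},\pi^c}$ one step at a time, and use that the lower expectation is monotone and $1$-Lipschitz in sup norm, so the continuation gap propagates with factor $\beta$.

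\textbf{Step 2 (approximate greediness).} Since $\pi_t^c(h_t)$ maximizes $Q_t^{c,\pi^c}(h_t,\cdot)$, inserting $Q^c$ at the target-optimal action and at $\pi^c_t(h_t)$ gives
\[
e_t^{{\rm tar},\pi^c}\le
\bigl[Q^{\rm tar}(a^\star)-Q^c(a^\star)\bigr]+\bigl[Q^c(a^\star)-Q^c(\pi^c_t)\bigr]+\bigl[Q^c(\pi^c_t)-Q^{\rm tar}(\pi^c_t)\bigr]\le2\varepsilon_t^{\rm tar}(c).
\]
The middle bracket is nonpositive. This is exactly the argument of \cref{prop:recursive-por-estimates}(iii), and it yields \eqref{eq:compatible-design-performance}.

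\textbf{Step 3 (bounding $\varepsilon_t^{\rm tar}$).} To obtain \eqref{eq:compatible-target-epsilon-bound}, I will run the backward recursion behind \eqref{eq:fixed-policy-vintage-bound}. For fixed $\pi^c$, the one-step difference of action values splits as
\[
\beta\Bigl|\inf_{K^{\rm tar}}qV^{\rm tar}_{t+1}-\inf_{K^c}qV^c_{t+1}\Bigr|
\le\beta\Bigl|\inf_{K^{\rm tar}}qV^{\rm tar}_{t+1}-\inf_{K^{\rm tar}}qV^c_{t+1}\Bigr|
+\beta\Bigl|\inf_{K^{\rm tar}}qV^c_{t+1}-\inf_{K^c}qV^c_{t+1}\Bigr|.
\]
The first term is at most $\beta\|V^{\rm tar}_{t+1}-V^c_{t+1}\|_\infty$ by the $1$-Lipschitz property. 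For the second term, normalize $V^c_{t+1}$ by $B_{t+1}$ and apply \cref{def:Dop-v2} to get at most $\beta B_{t+1}\delta_t^{{\rm tar},c}$. The value differences obey the same recursion with terminal difference zero, and unrolling it gives \eqref{eq:compatible-target-epsilon-bound}.

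\textbf{Step 4 (combining).} Substituting Step 3 into \eqref{eq:compatible-design-performance} and exchanging the order of summation gives
\[
\sum_t\beta^t\sum_{j\ge t}\beta^{j-t+1}B_{j+1}\delta_j=\sum_j(j+1)\beta^{j+1}B_{j+1}\delta_j,
\]
which is \eqref{eq:compatible-design-operational-certificate}. For the control-reachable version, the only change is in the second term of Step 3: when the normalized continuation values lie in the reachable class, $\Dop$ may be replaced by $\Dctrl$, using \cref{def:Dctrl} in place of \cref{def:Dop-v2}.

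The main obstacle is Step 1. It must rely only on rectangularity of the single target family, with off-path histories handled because $\pi^c$ is Bellman-maximizing at every subgame. I would also check that $B_{t+1}$, rather than $2B_{t+1}$, is the correct normalizer, since $\|V^c_{t+1}\|_\infty\le B_{t+1}$.
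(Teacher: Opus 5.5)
Your proposal is correct and matches the paper's proof essentially step for step. Approximate greediness of $\pi^c$ at every subgame bounds the target Bellman residual by $2\varepsilon_t^{\rm tar}(c)$, and the robust residual recursion then gives the first bound. Inserting the target lower expectation of the compatible continuation value gives the fixed-policy recursion with normalizer $B_{u+1}$; this is also what makes the control-reachable refinement work, since only the direction $B_{u+1}^{-1}V_{u+1}^{c,\pi^c}$ is ever compared. The only cosmetic difference is that you cite the general residual bound and then bound the residual, whereas the paper folds both into the single recursion $G_t\le2\varepsilon_t+\beta G_{t+1}$.
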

The proof applies the robust performance-residual recursion to the fixed target and then fixed-policy kernel stability; EC.4 gives the control-reachable refinement.  Changing-vintage welfare remains the distinct Price-of-Relearning object.

\section{Operational Feedback in Dynamic Pricing}\label{sec:pricing}

Pricing provides a canonical feedback setting because actions affect both revenue and future information \citep{AramanCaldentey2009,FariasVanRoy2010,CheungSimchiLeviWang2017,NambiarSimchiLeviWang2019}. Unlike adaptive risk learning or decision-dependent DRO \citep{PakimanChenNadarajahJasin2025,LuoMehrotra2020,QuJiaYou2025}, the Bayesian experiment is fixed here; the later manager changes which ambiguity vintage evaluates it.

\subsection{Gaussian inheritance and information feedback}

At a nonnegative price $p_t\in[\underline p,\bar p]$, with $0\le\underline p<\bar p<\infty$, let
\begin{equation}\label{eq:linear-gaussian-demand}
Y_{t+1}=a-\theta p_t+\varepsilon_{t+1},\qquad \varepsilon_{t+1}\sim N(0,\sigma^2),
\end{equation}
where $\theta\in\R$ is an unknown local demand-sensitivity coefficient.  Positive values correspond to downward-sloping demand; the economic interpretation and empirical reporting below are restricted to the positive-sensitivity region.  If $\theta\mid\F_t\sim N(m_t,v_t)$ with $v_t>0$, then
\begin{equation}\label{eq:linear-gaussian-update}
v_{t+1}^{-1}=v_t^{-1}+p_t^2/\sigma^2,
\qquad
m_{t+1}=v_{t+1}\{m_t/v_t+p_t(a-Y_{t+1})/\sigma^2\}.
\end{equation}
Represent a candidate posterior by $N(m_t+v_tq,v_t)$. For $z>0$, define the date-$s$ fixed-$z$ posterior-mean family
\[
\mathcal G_s(z):=\left\{N(m_s+v_sq,v_s): |q|\le z/\sqrt{v_s}\right\}.
\]
If $z$ is chosen as a Gaussian credible-radius multiplier, this is the corresponding fixed-level credible family; no coverage claim is used below.

\begin{theorem}[Gaussian inheritance, pricing, and feedback]\label{thm:linear-gaussian-inheritance-pricing}
Assume $a>0$, $\sigma^2>0$, $z>0$, and $0<v_t\le v_s$.  Fix $s<t$ and apply the same realized experiment to every candidate posterior.  The natural displacement is invariant,
\begin{equation}\label{eq:q-invariance-pricing}m_t^q=m_t+v_tq,
\end{equation}
and the date-$s$ family $\mathcal G_s(z)$ is inherited at date $t$ with raw radius
\begin{align}
\rho_{s\to t}^{I}&=zv_t/\sqrt{v_s},\label{eq:inherited-credible-radius-pricing}\\
\rho_t^F&=z\sqrt{v_t}.\label{eq:fresh-credible-radius-pricing}
\end{align}
Thus $v_t<v_s$ implies $\rho_t^F>\rho_{s\to t}^I$.  For a $q$-budget $k\ge0$, put $d(k):=m_t+kv_t$.  Worst-case expected one-period revenue on the nonnegative price interval is
\begin{equation}\label{eq:robust-linear-revenue}
\underline r(p;m_t,v_t,k)=ap-d(k)p^2,
\end{equation}
and its unique constrained maximizer is
\begin{equation}\label{eq:robust-linear-price}
p^\star(d)=
\begin{cases}
\bar p, & d\le0,\\[2pt]
\Pi_{[\underline p,\bar p]}\!\left\{a/(2d)\right\}, & d>0.
\end{cases}
\end{equation}
The map $d\mapsto p^\star(d)$ is nonincreasing.  For the inherited and fresh budgets $k_I=z/\sqrt{v_s}$ and $k_F=z/\sqrt{v_t}$, $v_t<v_s$ gives $d(k_F)>d(k_I)$ and therefore $p_t^F\le p_{s,t}^I$.  If $d(k_I)>0$, both objectives are strictly concave; equality then occurs exactly when the two projected optima coincide, and both interior optima imply $p_t^F<p_{s,t}^I$.  If $d(k_I)\le0$, then $p_{s,t}^I=\bar p$, with strict inequality precisely when $p_t^F<\bar p$.  If every candidate posterior mean is to remain positive, the additional economic restriction is $m_t-k_Fv_t>0$; this restriction concerns the means, not the Gaussian supports, and is not needed for the constrained quadratic maximization.
\end{theorem}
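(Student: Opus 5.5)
The plan is to verify the theorem in four parts: the Gaussian recursion, the two radii, the robust revenue, and the comparison of prices.

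\emph{Displacement invariance.} Apply the update \eqref{eq:linear-gaussian-update} to a candidate prior $N(m_u+v_uq,v_u)$. The precision recursion does not involve the mean, so every candidate shares the variance path $v_{u+1}$ with the nominal posterior. The mean recursion is affine in the prior mean with coefficient $v_{u+1}/v_u$. The displaced mean is therefore
\[
m_u+v_{u+1}q+v_{u+1}p_u(a-Y_{u+1})/\sigma^2 .
\]
Subtracting the nominal update leaves the displacement $v_{u+1}q$. Induction from $s$ to $t$ along the same realized prices and observations gives \eqref{eq:q-invariance-pricing}.

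\emph{Radii.} By \eqref{eq:q-invariance-pricing}, the inherited family consists of the laws $N(m_t+v_tq,v_t)$ with $|q|\le z/\sqrt{v_s}$. Its raw mean radius is $v_tz/\sqrt{v_s}$, which is \eqref{eq:inherited-credible-radius-pricing}. The fresh family has radius $v_tz/\sqrt{v_t}=z\sqrt{v_t}$, which is \eqref{eq:fresh-credible-radius-pricing}. Their ratio is $\sqrt{v_t/v_s}$, so $v_t<v_s$ yields $\rho_t^F>\rho_{s\to t}^I$.

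\emph{Robust revenue and price.} Expected revenue under mean $\mu$ is $ap-\mu p^2$. For $p\ge0$ this is decreasing in $\mu$, so the infimum over $|q|\le k$ is attained at $q=k$. This gives \eqref{eq:robust-linear-revenue} with $d(k)=m_t+kv_t$.

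The maximization then splits into two cases.
\begin{itemize}
\item If $d\le0$, the objective is nondecreasing on $[0,\infty)$ and strictly increasing once $p>0$. Since $\bar p>0$, the unique maximizer is $\bar p$.
\item If $d>0$, the objective is strictly concave with vertex $a/(2d)>0$, so the unique constrained maximizer is the projection $\Pi_{[\underline p,\bar p]}\{a/(2d)\}$.
\end{itemize}
Monotonicity follows because $a/(2d)$ decreases in $d$ on $d>0$, and the projection is monotone and bounded above by $\bar p$, which is the value for $d\le0$.

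\emph{Comparison of prices.} The fresh and inherited budgets give
\[
d(k_F)-d(k_I)=zv_t\bigl(v_t^{-1/2}-v_s^{-1/2}\bigr),
\]
which is positive when $v_t<v_s$. Monotonicity of $p^\star$ then gives $p_t^F\le p_{s,t}^I$. The equality and strictness cases follow from the case split.
\begin{itemize}
\item If $d(k_I)>0$, both $d$ values are positive and both objectives are strictly concave. When both vertices are interior, $a/(2d)$ is strictly decreasing, so $p_t^F<p_{s,t}^I$; equality can occur only if the projections clip to the same endpoint.
\item If $d(k_I)\le0$, then $p_{s,t}^I=\bar p$, and the strict inequality holds exactly when $p_t^F<\bar p$.
\end{itemize}
The closing remark about positive means is definitional. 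The smallest candidate mean is $m_t-k_Fv_t$, and the quadratic maximization never used its sign.

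The main obstacle is bookkeeping rather than depth. The one place needing care is the induction for invariance, where every candidate must see the same prices $p_u$ and observations $Y_{u+1}$. This holds because the theorem fixes the realized experiment, so it must be stated explicitly rather than derived from policy behavior.
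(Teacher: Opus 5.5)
Your proof is correct and follows the paper's argument step for step. The paper phrases displacement invariance as additivity of the natural mean $m/v$ under a common likelihood increment, which is exactly the one-step recursion you iterate. One slip needs fixing: the displayed displaced mean should read $\frac{v_{u+1}}{v_u}m_u+v_{u+1}q+v_{u+1}p_u(a-Y_{u+1})/\sigma^2$. You give the coefficient $v_{u+1}/v_u$ correctly in the preceding sentence but drop it from $m_u$ in the display; with it restored, subtracting the nominal update leaves $v_{u+1}q$, as you claim.
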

The natural-parameter calculation and the quadratic maximization are given in EC.5.  The same derivation shows that the raw fresh-minus-inherited radius is largest at an intermediate information level rather than at either zero or infinite information.

\begin{proposition}[Conditional information-feedback amplification]\label{prop:pricing-feedback}
Assume $a>0$, $z>0$, $\sigma^2>0$, a common positive posterior-mean sequence, and positive price denominators along both conditional paths.  Let the fresh and inherited myopic price rules use the radii in \eqref{eq:inherited-credible-radius-pricing}--\eqref{eq:fresh-credible-radius-pricing}.  Starting from a common variance, their conditional recursions satisfy
\[
v_t^F\ge v_t^I,
\qquad
p_t^F(v_t^F)\le p_t^I(v_t^I)
\quad\text{for all later dates}.
\]
A strict feasible price gap gives a strict next-period variance gap.  Hence fresh reconstruction can lower price, reduce Fisher information, and preserve the variance that sustains future ambiguity.
\end{proposition}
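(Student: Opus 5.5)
Since the two conditional recursions are deterministic once the common posterior-mean sequence is fixed, the plan is a joint induction on the date index. Write the two recursions explicitly. The variance update in \eqref{eq:linear-gaussian-update} is
\[
(v_{t+1}^{j})^{-1}=(v_t^{j})^{-1}+(p_t^{j})^2/\sigma^2,\qquad j\in\{F,I\}.
\]
The myopic prices are $p_t^j=p^\star(d_t^j)$ from \eqref{eq:robust-linear-price}, where $d_t^j=m_t+k_t^jv_t^j$.

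The two budgets are $k^F_t=z/\sqrt{v_t^F}$ and $k_t^I=z/\sqrt{v_s}$, with the inherited vintage anchored at the common initial variance $v_s=v_0$. This gives the effective slopes
\[
d_t^F=m_t+z\sqrt{v_t^F},\qquad d_t^I=m_t+zv_t^I/\sqrt{v_0}.
\]
Both are strictly increasing in their own variance argument; the positive-mean and positive-denominator hypotheses keep them in the region $d>0$.

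The induction hypothesis is $v_t^F\ge v_t^I$. At the base date the variances are equal, so the hypothesis holds with equality.

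For the price step, first compare the slopes:
\[
d_t^F=m_t+z\sqrt{v_t^F}\;\ge\; m_t+z\sqrt{v_t^I}\;\ge\; m_t+zv_t^I/\sqrt{v_0}=d_t^I.
\]
The first inequality uses the induction hypothesis and monotonicity of the square root. The second uses $v_t^I\le v_0$, which makes $\sqrt{v_t^I}\ge v_t^I/\sqrt{v_0}$; this is exactly the radius comparison $\rho_t^F\ge\rho^I_{0\to t}$ of \cref{thm:linear-gaussian-inheritance-pricing}. Since $p^\star$ is nonincreasing (established in \cref{thm:linear-gaussian-inheritance-pricing}), $p_t^F\le p_t^I$. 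For $v_t^I\le v_0$, note that precision only accumulates along the inherited path.

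For the variance step, use $0\le p_t^F\le p_t^I$, which relies on the price interval being nonnegative. Then $(p_t^F)^2\le(p_t^I)^2$, so
\[
(v_{t+1}^F)^{-1}=(v_t^F)^{-1}+(p_t^F)^2/\sigma^2\le (v_t^I)^{-1}+(p_t^I)^2/\sigma^2=(v_{t+1}^I)^{-1},
\]
using $(v_t^F)^{-1}\le(v_t^I)^{-1}$ from the hypothesis. This gives $v_{t+1}^F\ge v_{t+1}^I$ and closes the induction.

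For strictness, a strict price gap $p_t^F<p_t^I$ with nonnegative prices gives $(p_t^F)^2<(p_t^I)^2$. The precision inequality above then becomes strict, so $v_{t+1}^F>v_{t+1}^I$. The interpretive sentence follows because $p_t^2/\sigma^2$ is the Fisher information about $\theta$ carried by one observation. Hence a lower price means less information, and the larger variance feeds a larger fresh radius $z\sqrt{v}$ at the next date.

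The main obstacle is the slope comparison once the two variances have diverged. \Cref{thm:linear-gaussian-inheritance-pricing} compares radii only at a common variance $v_t$, whereas here the fresh path runs at $v_t^F$ and the inherited path at $v_t^I$. The chain above handles this because the fresh radius is monotone in its own variance. I will also check that the common-mean assumption is what removes any mean-driven price reversal, and that projection onto $[\underline p,\bar p]$ preserves the weak inequality.
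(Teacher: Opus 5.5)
Your proof is correct and follows the paper's route. The paper also inducts on $v_t^F\ge v_t^I$, and its price step is the chain $p^F(v_t^F)\le p^F(v_t^I)\le p^I(v_t^I)$, which you express as $d_t^F\ge d_t^I$; its variance step is monotonicity of $\Psi(v,p)=v/(1+vp^2/\sigma^2)$ in both arguments, which you verify in precision coordinates instead. Your precision form also yields the strict next-period variance gap directly from any strict price gap, without the paper's detour through a common-variance date.
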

At a common variance, the fresh penalty is weakly larger, and it is strictly larger when $v<v_s$; both price rules are nonincreasing in variance; on the nonnegative price interval the variance update is increasing in variance and decreasing in price.  Induction proves the proposition.  The result is an analytically transparent myopic benchmark conditional on a common posterior-mean path.  It is neither stochastic dominance of the endogenous posterior processes nor a comparative static for fully nonmyopic sophisticated pricing policies.

\subsection{Controlled benchmark and scanner-data scale}

EC.5 reports the scanner-calibrated held-out-parameter experiment, and EC.3 reports the separate certification benchmarks. Under common simulated shocks, fresh reconstruction lowers price, information, and discounted revenue relative to inheritance. We do not report a compatible-design pricing policy here: \cref{sec:design} is the prescriptive theory, whereas this experiment isolates the inherited-versus-fresh mechanism.

The three-brand Dominick's orange-juice scanner panel \citep{Montgomery1997}, available for academic research, contains 28,947 store--brand--week observations at 83 stores.  A descriptive store--brand and week fixed-effects regression gives own log-price sensitivities between $2.84$ and $3.22$ in absolute value.  After a 24-week initial vintage, the median final posterior standard deviation is $45\%$--$61\%$ of its initial value, so the fresh radius is $1.63$--$2.22$ times the inherited radius.  In paths that remain in the positive-sensitivity region, the median local price-suppression index is $9.8\%$--$15.9\%$.

\begin{figure}[t]
\centering
\includegraphics[width=0.88\textwidth]{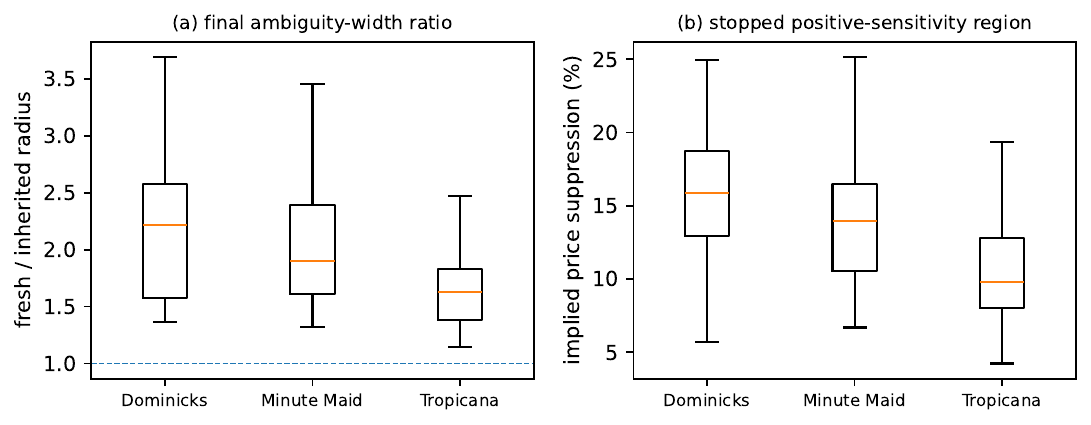}
\caption{Retrospective scanner-data replay.  Panel (a) reports the final fresh-to-inherited radius ratio.  Panel (b) reports the local suppression index only in the positive-sensitivity region.  Historical prices are observational; the figure measures scale, not a causal protocol effect.}
\label{fig:oj-real-replay-main}
\end{figure}

A held-out-parameter experiment initializes policies from early-period posteriors, uses each held-out store's later posterior mean only as an evaluation sensitivity, uses the other four store folds to form a reference sensitivity for the intercept normalization $a=2\theta_{\rm ref}$, and couples policies with common Gaussian shocks.  Over 20 periods, fresh reconstruction lowers normalized discounted revenue relative to inheritance by $0.894$, $0.486$, and $0.330$ for Dominicks, Minute Maid, and Tropicana, while reducing information.  For the fresh policy, the experiment also reports discounted inherited local regret, a one-period diagnostic rather than the exact dynamic Price of Relearning because price changes future information.  EC.5 bounds the omitted continuation term and reports uncertainty and sensitivity analyses.  All contrasts are model generated, not historical treatment effects.

\subsection{What the observational panel can identify}

Let $\mathcal O_H$ be the state--action support of the historical law, $\Delta^{F-I}$ the discounted outcome contrast between the two algorithms, and $d_t^\pi(h,a)=\Pp^\pi(H_t=h,A_t=a)$ their model-induced occupancies.
\begin{proposition}[Identification boundary for a dynamic protocol effect]\label{prop:protocol-identification-boundary}
\begin{enumerate}[label=(\roman*)]
\item Fix the two algorithmic decision rules and the transition law.  If their induced occupancies differ at a reachable state--action pair outside $\mathcal O_H$, then $\Delta^{F-I}$ is not point identified over bounded conditional-mean reward models unrestricted there.
\item Fix the two protocol decision rules and the transition law at their fitted values, so their date-$t$ occupancies $d_t^F,d_t^I$ are fixed.  If the conditional mean reward differs from its fitted value by $u_t(h,a)$ with $|u_t(h,a)|\le\Gamma_t(h,a)$, then the sharp sensitivity interval over this disturbance class is
\begin{equation}\label{eq:sharp-protocol-sensitivity-set}
[\widehat\Delta-B_\Gamma,\widehat\Delta+B_\Gamma],
\qquad
B_\Gamma=\sum_t\beta^t\sum_{h,a}\Gamma_t(h,a)|d_t^F(h,a)-d_t^I(h,a)|.
\end{equation}
\item Suppose whole-horizon potential outcomes are well defined, observed outcomes satisfy consistency, stores do not interfere with one another, and matched-pair assignments are independent with probability one half.  Whole-horizon random assignment within each pair then makes the signed within-pair contrast unbiased for the finite-population average dynamic protocol effect.
\end{enumerate}
\end{proposition}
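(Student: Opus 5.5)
We handle the three parts separately. All three are statements about a finite-horizon discounted sum that is linear in the conditional mean rewards once the two protocols' occupancies are held fixed. For each protocol $\pi\in\{F,I\}$ we therefore write the discounted value as
\[
J(\pi)=\sum_t\beta^t\sum_{h,a}d_t^\pi(h,a)\,\mu_t(h,a),
\]
where $\mu_t$ is the conditional mean reward. This gives $\Delta^{F-I}=\sum_t\beta^t\sum_{h,a}(d_t^F-d_t^I)(h,a)\,\mu_t(h,a)$. The identity holds because, for fixed decision rules and a fixed transition law, the occupancies do not depend on the reward. The tower property then replaces each realized reward by its conditional mean.

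\textbf{Part (i).} The decision rules and transitions are fixed, and the observational law identifies $\mu$ only on $\mathcal O_H$. Take a reachable pair $(h^\circ,a^\circ)$ at some date $t^\circ$ with $(h^\circ,a^\circ)\notin\mathcal O_H$ and $d_{t^\circ}^F(h^\circ,a^\circ)\ne d_{t^\circ}^I(h^\circ,a^\circ)$. Build two bounded reward models, $\mu$ and $\mu+c\,\mathbf 1\{(t^\circ,h^\circ,a^\circ)\}$ with $c\neq0$ small enough to keep the bound. They agree on $\mathcal O_H$, so they induce the same historical law of the observables. By the linear identity, their contrasts differ by $c\,\beta^{t^\circ}(d^F-d^I)_{t^\circ}(h^\circ,a^\circ)\ne0$. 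One data law thus corresponds to two values of the target, so $\Delta^{F-I}$ is not point identified. The only care needed is to confirm that changing the mean off support leaves the observable distribution unchanged. I would handle this with a location shift of the reward distribution at that pair only, which the historical law never samples.

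\textbf{Part (ii).} Substitute $\mu=\widehat\mu+u$ into the linear identity to get $\Delta=\widehat\Delta+\sum_t\beta^t\sum_{h,a}(d_t^F-d_t^I)u_t$. The perturbation term is a linear functional of $u$ over the box $|u_t|\le\Gamma_t$. Its maximum is $B_\Gamma$, attained at $u_t=\Gamma_t\operatorname{sign}(d_t^F-d_t^I)$, and its minimum is $-B_\Gamma$, attained at the negated choice. The image of a box under a linear map is connected, so the whole interval is attained. This proves sharpness.

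\textbf{Part (iii).} This is a standard design-based argument. Within pair $k$, let $Z_k$ indicate that the first unit receives $F$, and define the signed contrast $\tau_k=Z_k(Y_{k1}-Y_{k2})+(1-Z_k)(Y_{k2}-Y_{k1})$. Consistency and no interference write $\tau_k$ in terms of whole-horizon potential outcomes. With $\Pp(Z_k=1)=1/2$,
\[
\E\tau_k=\tfrac12\bigl[Y_{k1}(F)-Y_{k2}(I)\bigr]+\tfrac12\bigl[Y_{k2}(F)-Y_{k1}(I)\bigr],
\]
which is the average of the two units' effects. Averaging over pairs gives unbiasedness for the finite-population mean dynamic effect. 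Independence across pairs is needed only for variance statements, not here. I expect the main obstacle to be the observational-equivalence step in (i); parts (ii) and (iii) reduce to the linear-occupancy representation and a two-point expectation.
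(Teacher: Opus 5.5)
Your proposal is correct and follows the paper's proof essentially step for step. The paper likewise writes the contrast as linear in the conditional mean reward once the occupancies are fixed. For (i) it perturbs the mean reward at a single off-support triple $(t,h,a)$, leaving the historical observable law unchanged. For (ii) it attains the endpoints with $u_t=\pm\Gamma_t\operatorname{sgn}(d_t^F-d_t^I)$. For (iii) it uses the signed pair contrast $(2Z_j-1)(Y_{j1}^{\rm obs}-Y_{j2}^{\rm obs})$.

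Your extra remarks are valid small refinements of the same argument. Convexity of the box fills in the interior of the interval, which the paper leaves implicit by checking only the endpoints. Independence across pairs is indeed unnecessary for unbiasedness.
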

The result applies established potential-outcome and partial-identification logic to an adaptive protocol \citep{Robins1986,Murphy2003,ImbensRubin2015,Manski2003}.  The break-even disturbances in the replay are $1.09$, $1.33$, and $1.10$ residual standard deviations.  Current heterogeneity implies planning counts of 24, 20, and 12 matched pairs under 90\% target power, a two-sided 5\% level, and a fourfold design-effect allowance.  These are prospective design calculations, not a completed intervention; EC.5 gives the proofs and power formula.

\subsection{Computation of the benchmark}

The application uses the two computational objects only in their proper roles. Fixed-model vintage backward evaluation is polynomial by \cref{thm:vintage-backward-evaluation}; universal worst-reward certification is the separate hard problem in \cref{thm:por-cert-complexity}. On the enumerable instance, four formulations agree to $1.6\times10^{-13}$; the scanner-calibrated four-date benchmark has a feasible-incumbent/analytic-certificate bracket $[0.275,0.599]$, and a 64-date compression experiment attains error below $5\times10^{-5}$ using 14 simultaneous vintage coordinates. EC.3 gives formulations, scaling studies, and archived outputs.

\FloatBarrier

\section{Discussion and Conclusion}\label{sec:conclusion}

Relearning can replace the evaluator that supported an earlier plan. Under weak-evidence richness, the likelihood quotient characterizes provenance-preserving reconstruction. When the discrepancy is decision-visible, evaluator vintage enters the dynamic state; the triangular recursion is exact and its directed welfare gap is the Price of Relearning.

Vintage quotients remove operationally redundant coordinates, while continuous and finite-information lower bounds rule out the stated low-complexity representations uniformly over the unrestricted class. Specified finite models with explicitly listed one-step ambiguity laws are polynomially evaluable; universal worst-reward certification is NP-hard, with fixed-parameter tractability on the stated sparse-interaction subclass. Compatible design preserves provenance while choosing for statistical fidelity and welfare.

Operationally, there is no need to retain every vintage when future lower-envelope functionals coincide or admit a certified cover; absent such structure, the unrestricted results provide no uniform justification for discarding provenance, and compatible reconstruction is the coherence-preserving alternative.

The pricing benchmark exhibits the reconstruction--action--information loop; the identification analysis separates this mechanism from causal evidence. Thus, when rebuilding uncertainty changes the evaluator, provenance itself becomes economically relevant state information.

\section*{Data and Code Accessibility}
We acknowledge the Kilts Center for Marketing at the University of Chicago Booth School of Business for making the Dominick's data available for academic research.  A separate replication package has been prepared containing the code, archived computational outputs, dependency lock, source metadata, checksums, and README needed to reproduce the reported computational results.  The scanner data are retrievable for academic research.  The replication materials record a pinned immutable digital source, provide a download-and-verification script, and record the expected SHA-256 without redistributing the raw CSV.  Additional reproduction details are provided in EC.3 and EC.5.

\bibliographystyle{informs2014}
\bibliography{references}

\clearpage
\pagenumbering{arabic}
\setcounter{section}{0}
\setcounter{subsection}{0}
\setcounter{equation}{0}
\setcounter{figure}{0}
\setcounter{table}{0}
\setcounter{footnote}{0}
\setlength{\abovedisplayskip}{6pt plus 2pt minus 2pt}
\setlength{\belowdisplayskip}{6pt plus 2pt minus 2pt}
\setlength{\abovedisplayshortskip}{3pt plus 1pt}
\setlength{\belowdisplayshortskip}{4pt plus 1pt minus 1pt}

\begin{center}
{\Large\bfseries Electronic Companion to \emph{The Price of Relearning}\par}
\end{center}
\vspace{-0.4em}
\noindent Complete proofs and auxiliary evidence follow; references to main-paper results link directly to the corresponding labels.

\section{EC.1 Proofs for Bayes-Compatible Geometry}

For a centered log-likelihood increment $\ell\in\ClrH$, the associated map $\mathsf B_\ell:\Delta_m\to\Delta_m$ is
\[
\mathsf B_\ell(p)_i:=\frac{p_i e^{\ell_i}}{\sum_k p_k e^{\ell_k}},
\qquad
\mathsf B_\ell^{\#}[A]:=\{\mathsf B_\ell(q):q\in A\}.
\]
This is the Bayes map associated with the positive likelihood ratio $\exp(\ell)$ and its set image.

\subsection{Restricted-likelihood orbit representation}

Let $L\subseteq\ClrH$ be the attainable centered log-likelihood increments and define the generated additive subgroup by
\[
\Gamma(L):=\{0\}\cup\left\{\sum_{k=1}^n n_k\ell_k:n\ge1,\ n_k\in\mathbb Z,\ \ell_k\in L\right\}.
\]
A nonempty-valued map $\Psi:\ClrH\rightrightarrows\ClrH$ is $L$-equivariant if
$\Psi(x+\ell)=\Psi(x)+\ell$ for every $x$ and $\ell\in L$.

\begin{theorem}[Restricted-likelihood orbit representation]\label{thm:EC-orbit-representation}
The map $\Psi$ is $L$-equivariant if and only if there exists a unique set-valued map
$C:\ClrH/\Gamma(L)\rightrightarrows\ClrH$ such that
\[
\Psi(x)=x+C([x])\qquad\forall x\in\ClrH.
\]
Thus exact evidence compatibility permits the ambiguity shape to vary only across likelihood orbits.
\end{theorem}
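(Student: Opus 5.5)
The plan is to pass to the residual map $S(x):=\Psi(x)-x=\{y-x:y\in\Psi(x)\}$, which is nonempty-valued because $\Psi$ is. Since translation by a fixed vector is a bijection on subsets of $\ClrH$, we have $\Psi(x)=x+S(x)$ for every $x$. Moreover, $\Psi$ determines $S$ and conversely, so the whole theorem reduces to a statement about $S$.

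\emph{Step 1: equivariance becomes invariance.} $L$-equivariance is equivalent to $S(x+\ell)=S(x)$ for all $x$ and all $\ell\in L$. Indeed,
\[
S(x+\ell)=\Psi(x+\ell)-x-\ell
\quad\text{and}\quad
S(x)=\Psi(x)-x,
\]
so $\Psi(x+\ell)=\Psi(x)+\ell$ holds exactly when these two sets agree.

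\emph{Step 2: invariance under $\Gamma(L)$.} I extend invariance from $L$ to the generated subgroup $\Gamma(L)$.
\begin{itemize}
\item Negatives: applying $S(y+\ell)=S(y)$ with $y=x-\ell$ gives $S(x-\ell)=S(x)$.
\item Finite integer combinations: induction on the number of summands, each being $\pm\ell_k$, yields $S(x+g)=S(x)$ for every $g\in\Gamma(L)$.
\item The zero element is trivial.
\end{itemize}
Hence $S$ is constant on every coset $x+\Gamma(L)$, i.e.\ on every orbit.

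\emph{Step 3: existence and uniqueness.} For existence, define $C([x]):=S(x)$. This is well defined by Step 2, and it gives $\Psi(x)=x+C([x])$. For uniqueness, suppose $C$ and $C'$ both represent $\Psi$. Then $x+C([x])=x+C'([x])$, and cancelling the translation by $x$ gives $C([x])=C'([x])$ for every class. For the converse, if $\Psi(x)=x+C([x])$, then for $\ell\in L$ we have $[x+\ell]=[x]$, so
\[
\Psi(x+\ell)=x+\ell+C([x])=\Psi(x)+\ell .
\]

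No step is a genuine obstacle. The only point needing care is Step 2: equivariance is assumed only for $\ell\in L$, not for $-\ell$, so closure under negatives must be derived by the substitution $y=x-\ell$ rather than assumed. No topology is needed, because the quotient here is by $\Gamma(L)$ itself, not by its closure $G_L$; continuity enters only in the main-text \cref{thm:likelihood-quotient-characterization}.
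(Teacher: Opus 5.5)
Your proof is correct and follows essentially the same route as the paper. The paper first extends equivariance of $\Psi$ from $L$ to $\Gamma(L)$, handling negatives by evaluating at $x-\ell$, and only then passes to the residual $S(x)=\Psi(x)-x$. It then defines $C([x]):=S(x)$ and obtains uniqueness and the converse exactly as you do; your version just converts to invariance of $S$ before extending to the subgroup rather than after.
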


\begin{proof}
Assume first that $\Psi$ is $L$-equivariant.  We show that equivariance extends from $L$ to the additive subgroup $\Gamma(L)$.  Repeated application gives
\[
\Psi\!\left(x+\sum_{j=1}^n\ell_j\right)
=\Psi(x)+\sum_{j=1}^n\ell_j
\]
for positive finite sums.  If $\ell\in L$, apply equivariance at $x-\ell$:
\[
\Psi(x)=\Psi(x-\ell)+\ell,
\qquad
\Psi(x-\ell)=\Psi(x)-\ell.
\]
Combining positive and negative increments yields
\[
\Psi(x+g)=\Psi(x)+g
\qquad\forall g\in\Gamma(L).
\]
Define the translated shape $S(x):=\Psi(x)-x:=\{y-x:y\in\Psi(x)\}$.  For every $g\in\Gamma(L)$,
\[
S(x+g)=\Psi(x+g)-(x+g)=\Psi(x)-x=S(x).
\]
Hence $S$ is constant on every coset of $\Gamma(L)$.  The map
\[
C([x]):=S(x),\qquad [x]\in\ClrH/\Gamma(L),
\]
is therefore well defined and satisfies $\Psi(x)=x+C([x])$.  If another map $\widetilde C$ had the same representation, subtracting $x$ would give $\widetilde C([x])=C([x])$, proving uniqueness.

Conversely, suppose $\Psi(x)=x+C([x])$.  For $\ell\in L\subseteq\Gamma(L)$, $[x+\ell]=[x]$, and therefore
\[
\Psi(x+\ell)=x+\ell+C([x+\ell])
=x+\ell+C([x])=\Psi(x)+\ell.
\]
Thus $\Psi$ is $L$-equivariant.
\end{proof}

\begin{corollary}[Dense likelihood directions restore rigidity]\label{cor:EC-dense-rigidity}
Suppose $\Psi$ is compact-valued and Hausdorff-continuous.  If
$\overline{\Gamma(L)}=\ClrH$, then there is a fixed compact set $C\subseteq\ClrH$ such that
$\Psi(x)=x+C$ for every $x$.  In particular, the conclusion holds if the attainable increments contain a nonempty open set.
\end{corollary}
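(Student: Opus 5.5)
By \cref{thm:EC-orbit-representation}, the residual $S(x):=\Psi(x)-x$ is constant on every coset of $\Gamma(L)$, that is,
\[
S(x+g)=S(x)\qquad\forall g\in\Gamma(L).
\]
The plan is to extend this invariance to the closure $\overline{\Gamma(L)}=\ClrH$ by continuity. Then $S$ is constant, and we set $C:=S(0)=\Psi(0)$.

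First I will check that $S$ is compact-valued and Hausdorff-continuous. Compactness is clear: $S(x)$ is the translate $\Psi(x)-x$ of a compact set. For continuity, translation by a vector $v$ moves a compact set by exactly $\|v\|$ in the Hausdorff metric built from the same norm. By the triangle inequality,
\[
d_H(S(x),S(y))\le d_H(\Psi(x),\Psi(y))+\|x-y\|,
\]
and the right-hand side tends to zero as $y\to x$ because $\Psi$ is Hausdorff-continuous.

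Next, fix arbitrary $x,y\in\ClrH$. Density of $\Gamma(L)$ gives $g_n\in\Gamma(L)$ with $g_n\to y-x$. Invariance gives $S(x)=S(x+g_n)$ for every $n$, and continuity gives $S(x+g_n)\to S(y)$ in the Hausdorff metric. So the constant sequence $S(x)$ converges to $S(y)$. The Hausdorff metric is a genuine metric on nonempty compact sets, so limits are unique and $S(x)=S(y)$. This yields $\Psi(x)=x+C$ with $C$ compact. This step is the only one needing care: the argument must stay inside nonempty compact sets, where $d_H$ separates points, since for general sets the Hausdorff distance is only a pseudometric.

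For the final assertion, suppose $L$ contains a nonempty open set $U$. Then $\Gamma(L)\supseteq U-U$, which is a neighbourhood of $0$. A subgroup of $\ClrH$ containing a neighbourhood of $0$ contains $\ClrH$: every $v\in\ClrH$ satisfies $v/N\in U-U$ for large $N$, so $v=N\cdot(v/N)\in\Gamma(L)$. Hence $\Gamma(L)=\ClrH$, in particular it is dense, and the previous argument applies.
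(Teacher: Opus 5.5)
Your proof is correct and follows the paper's argument essentially step for step. You use $\Gamma(L)$-invariance of $S(x)=\Psi(x)-x$, approximate $y-x$ by $g_n\in\Gamma(L)$ and use Hausdorff continuity to conclude $S(y)=S(x)$, and observe that $L-L\subseteq\Gamma(L)$ contains a neighbourhood of zero. Your explicit checks fill in details the paper leaves implicit here: that $S$ inherits Hausdorff continuity from $\Psi$ via translation, and that limits are unique among nonempty compact sets. Like the paper, you tacitly assume that $\Psi$ is $L$-equivariant, which is the standing hypothesis of the orbit theorem you invoke.
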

\begin{proof}
The translated shape $S(x)=\Psi(x)-x$ is invariant under $\Gamma(L)$.  For arbitrary $x,y$, choose $g_n\in\Gamma(L)$ with $g_n\to y-x$.  Invariance gives $S(x+g_n)=S(x)$, while Hausdorff continuity gives $S(x+g_n)\to S(y)$; hence $S(y)=S(x)$.  If $L$ contains an open set, $L-L$ contains a neighborhood of zero, and an additive subgroup containing such a neighborhood is all of $\ClrH$.
\end{proof}

\subsection{Restricted cross-time compatibility and concentration}

\begin{proof}[Proof of \cref{thm:likelihood-quotient-characterization}]
Write
\[
S_t(x):=\Psi_t(x)-x=\{y-x:y\in\Psi_t(x)\}.
\]
Translation invariance of Hausdorff distance makes each $S_t$ compact-valued and continuous.  For $\ell_n\in L$ with $\ell_n\to0$, compatibility gives
\[
S_{t+1}(x+\ell_n)
=\Psi_{t+1}(x+\ell_n)-(x+\ell_n)
=\Psi_t(x)-x
=S_t(x).
\]
Continuity and $n\to\infty$ give $S_{t+1}(x)=S_t(x)$; write the common residual as $S$.

For $\ell\in L$, compatibility gives $S(x+\ell)=S(x)$, hence invariance under $\Gamma(L)$.  If $g_n\in\Gamma(L)$ and $g_n\to g\in G_L$, then
\[
S(x+g)=\lim_n S(x+g_n)=S(x),
\]
so $S$ is constant on every coset of the closed subgroup $G_L$.  Define
\[
\mathcal C(\pi_Lx):=S(x).
\]
This is representative independent; since $\pi_L$ is a quotient map and $\mathcal C\circ\pi_L=S$ is continuous, $\mathcal C$ is continuous and compact-valued, with
\[
\Psi_t(x)=x+\mathcal C(\pi_Lx).
\]
Uniqueness follows by subtracting $x$.  Conversely, the representation is time independent and $\pi_L(x+\ell)=\pi_Lx$ for $\ell\in L$, so
\[
\Psi_{t+1}(x+\ell)=x+\ell+\mathcal C(\pi_Lx)=\Psi_t(x)+\ell.
\]
Let $x^\star$ be a consistency anchor and choose $x_t\to x^\star$ as in the main paper.  The representation and translation invariance give
\[
d_{H,*}\!\left(
\mathcal C(\pi_Lx_t),\{x^\star-x_t\}
\right)\longrightarrow0.
\]
Continuity and $x^\star-x_t\to0$ give $\mathcal C(\pi_Lx^\star)=\{0\}$.  Density of $\pi_L(\mathcal A)$ then forces $\mathcal C(q)=\{0\}$ throughout the quotient.

For necessity, the quotient is metrizable; fix a compatible metric $d_Q$ and write
$F:=\overline{\pi_L(\mathcal A)}$.

If $F=\varnothing$, choose any nonzero $c_0\in\ClrH$ and let
\[
\mathcal C(q):=\{\alpha c_0:|\alpha|\le1\}
\qquad\text{for every quotient point }q.
\]
This continuous compact-valued compatible map is nondegenerate, and there are no anchor restrictions.

Suppose instead that $\varnothing\ne F$ is a proper closed subset.  Choose $q_0\notin F$, a nonzero $c_0\in\ClrH$, and set
\[
f(q):=\min\{1,d_Q(q,F)\},
\qquad
\mathcal C(q):=\{\alpha c_0:|\alpha|\le f(q)\}.
\]
Then $\mathcal C$ is compact-valued and continuous, vanishes on $F$, is nondegenerate at $q_0$, and induces an $L$-compatible constructor.  It equals $\{0\}$ at every anchor and therefore concentrates along every $x_t\to x^\star$.  Thus nondensity cannot force global degeneracy, including when the anchor set is empty.
\end{proof}

\subsection{Exact geometric and operational reconstruction defects}
\begin{proof}[Proof of \cref{thm:shape-defect}]
Write $x=\clr(p)$ and let the realized centered log-likelihood increment be $\ell$.  By Bayes translation, the inherited set in log-ratio coordinates is
\[
x+\ell+C_t,
\]
whereas the freshly reconstructed set is
\[
x+\ell+C_{t+1}.
\]
Hausdorff distance induced by any norm is translation invariant, so
\[
d_{H,*}(x+\ell+C_t,x+\ell+C_{t+1})=d_{H,*}(C_t,C_{t+1}),
\]
which proves the exact geometric identity.

For the operational estimate, let $\sigma=\softmax$.  Its Jacobian at $x$ is
\[
D\sigma(x)=\operatorname{diag}(\sigma(x))-\sigma(x)\sigma(x)^\top.
\]
For $h\in\ClrH$ and $\bar h=\sum_i\sigma_i(x)h_i$,
\[
\|D\sigma(x)h\|_1
=\sum_i\sigma_i(x)|h_i-\bar h|
\le 2\|h\|_\infty
\le 2c_*\|h\|_*.
\]
Integrating the derivative along the line segment from $x$ to $y$ gives
\[
\|\softmax(x)-\softmax(y)\|_1\le 2c_*\|x-y\|_*.
\]
Thus the softmax map sends the two log-ratio sets into probability sets whose $\ell_1$-Hausdorff distance is at most $2c_*d_{H,*}(C_t,C_{t+1})$.  Taking closed convex hulls cannot increase this bound.  The support-function duality in \cref{thm:operational-dual-v2} identifies the resulting $\ell_1$-Hausdorff distance with the operational defect, proving
\[
\Dop\bigl(\mathsf B_\ell^{\#}[\Phi_t(p)],\Phi_{t+1}(\mathsf B_\ell p)\bigr)
\le2c_*d_{H,*}(C_t,C_{t+1}).
\]
Finally, if $C_t=r_t\mathbb B_*$ and $C_{t+1}=r_{t+1}\mathbb B_*$, the Hausdorff distance of concentric norm balls is $|r_{t+1}-r_t|$.
\end{proof}

\section{EC.2 Decision-Visibility Proofs}

\begin{proof}[Proof of \cref{thm:operational-dual-v2}]
Let $C=\clco A$ and $D=\clco B$, and let $h_C(z)=\sup_{p\in C}p^\top z$.  Linear lower expectations satisfy
\[
\rho_A(z)=-h_C(-z),\qquad \rho_B(z)=-h_D(-z),
\]
so
\[
\Dop(A,B)=\sup_{\|z\|_\infty\le1}|h_C(z)-h_D(z)|.
\]
For compact convex $D$, norm duality gives
\[
\dist_1(x,D)=\sup_{\|z\|_\infty\le1}\{z^\top x-h_D(z)\}.
\]
Taking the supremum over $x\in C$ and interchanging the two suprema yields
\[
\sup_{x\in C}\dist_1(x,D)
=\sup_{\|z\|_\infty\le1}\{h_C(z)-h_D(z)\}.
\]
The reverse directed excess is obtained by exchanging $C$ and $D$.  Their maximum is the Hausdorff distance in $\ell_1$, proving the result.  Since $d_{\rm TV}(p,q)=\tfrac12\|p-q\|_1$, the total-variation statement follows.
\end{proof}

\begin{proof}[Proof of \cref{thm:parameter-predictive-bridge}]
For $v\in\mathbb R^n$,
\[
\inf_{p\in A}pPv=\inf_{\pi\in T_P(A)}\pi^\top v.
\]
Thus $\Dpred^P(A,B)$ is exactly the operational defect between the predictive images.  Applying \cref{thm:operational-dual-v2} and using linearity,
\[
\clco T_P(A)=T_P(\clco A),
\]
gives the exact Hausdorff representation.

For any signed row vector $x$, stochasticity of $P$ implies
\[
\|xP\|_1
\le \sum_i|x_i|\sum_jP_{ij}
=\|x\|_1.
\]
Hence $T_P$ is an $\ell_1$ contraction, which proves the upper bound after applying it to both directed Hausdorff excesses.

Let $C=\clco A$ and $D=\clco B$.  For $p\in C$, every difference $p-q$ with $q\in D$ lies in the zero-sum subspace $\ClrH$.  Therefore
\[
\dist_1(pP,T_P(D))
=\inf_{q\in D}\|(p-q)P\|_1
\ge \alpha(P)\inf_{q\in D}\|p-q\|_1.
\]
Taking the supremum over $p\in C$ controls one directed excess; exchanging $C,D$ controls the other.  The lower bound follows.

The unit $\ell_1$ sphere in $\ClrH$ is compact, so $\alpha(P)>0$ if and only if the restriction of $x\mapsto xP$ to $\ClrH$ has trivial kernel.  A nonzero zero-sum vector in that kernel is exactly a nontrivial affine dependence among the rows of $P$.
\end{proof}

\begin{proof}[Proof of \cref{thm:control-observability}]
Set $F(v):=|\inf_{p\in A}pPv-\inf_{q\in B}qPv|$.  Each lower expectation is one-Lipschitz in the sup norm, so $F$ is continuous and $\sup_{v\in\V}F(v)=\sup_{v\in\overline{\V}}F(v)$.  The upper bound follows because the reachable payoff class is contained in the unit sup-norm ball.  If $\chi(\V)=0$, the richness lower bound is trivial.  Otherwise fix $0<c<\chi(\V)$ and $\eta>0$, and choose $v$ in the unit ball whose predictive separation is at least $\Dpred^P(A,B)-\eta$.  Since $cv\in\overline{\V}$, the displayed closure identity and positive homogeneity give
\[
\Dctrl^{P,\V}(A,B)
\ge c\bigl(\Dpred^P(A,B)-\eta\bigr).
\]
Letting $\eta\downarrow0$ and $c\uparrow\chi(\V)$ gives the richness bound.

For the covering bound, choose such a near-separating $v$ and $w\in\overline{\V}$ with
\[
\|v-w\|_\infty\le\epsilon(\V)+\eta.
\]
For every ambiguity set $C$, the map
\[
v\mapsto\inf_{p\in C}pPv
\]
is one-Lipschitz in sup norm because $pP$ is a probability vector.  Replacing $v$ by $w$ therefore changes the difference of the two lower expectations by at most $2\|v-w\|_\infty$.  It follows that
\[
\Dctrl^{P,\V}(A,B)
\ge \Dpred^P(A,B)-2\epsilon(\V)-3\eta.
\]
Let $\eta\downarrow0$ and combine with nonnegativity.  The equality statements follow by setting $\chi(\V)=1$ or $\epsilon(\V)=0$.
\end{proof}

\subsection{Dynamic protocol and coherence proofs}

\begin{proof}[Proof of \cref{prop:static-parameter-rect}]
At a descendant history $h_j$, any $p\in M(h_j)$ assigns positive probability to $h_j$, and Bayes changes the mixing weights to $B_{t:j}(p)$.  The conditional next-observation law is the corresponding mixture, so varying $p$ gives exactly the displayed Bayes image.  Priors outside $M(h_j)$ put zero mass on the conditioning history.

Under full support, $M(h_j)=M$ at every feasible node.  The conditional family is version free, and its rectangular hull is the smallest closed path-law family permitting independent history-wise pasting; equality holds exactly when the committed family is already stable under those pastings.  Without full support the Bayes calculation remains valid on positive-probability domains, while null histories require specified conditional versions, taken as primitives in the equilibrium section.

Strict inclusion can occur under full support.  Let the first observation have law $(1/2,1/2)$ on $\{L,R\}$ under both parameters.  Conditional on either history, let the probability of the next outcome $1$ be $\varepsilon$ under parameter $1$ and $1-\varepsilon$ under parameter $2$, where $0<\varepsilon<1/2$, and take $M=\Delta_2$.  Any committed mixture has the same posterior weight after $L$ and $R$, whereas the rectangular hull can paste weights $0$ and $1$ at the two histories.  That pasted path law is outside the committed family.
\end{proof}

\begin{proof}[Proof of \cref{thm:universal-two-defect}]
If both operational defects vanish, the committed, rectangular-inherited, and fresh lower-expectation functionals agree on every bounded continuation payoff.  This proves statement (ii) of the theorem and immediately implies identical safe-versus-risky rankings and identical finite-menu choice correspondences.

Conversely, suppose universal safe-versus-risky coherence holds and, for example, the within-vintage defect is positive.  By the definition of operational distance there is a bounded payoff $Z$ with different committed and rectangular robust values.  Choose a constant $c$ strictly between those two values.  One evaluator strictly prefers $Z$ to $c$ and the other strictly prefers $c$ to $Z$, contradicting universal coherence.  The same argument applies to a positive cross-vintage defect.  Finally, absence of finite-menu separation implies safe-versus-risky coherence because two-action menus are a subclass.  The equivalences and the strict two-action separation follow.
\end{proof}

\begin{proof}[Proof of \cref{thm:global-dynamic-iff}]
Apply \cref{thm:universal-two-defect} at each continuation problem.  If all local within- and cross-vintage defects vanish, the three local evaluators agree on every bounded continuation payoff.  At the last decision date their optimal-action correspondences therefore coincide.  Suppose inductively that all continuation preference and policy correspondences coincide after date $t$.  Every current action then induces the same continuation payoff under the three protocols, and local functional equality gives the same current ranking.  Backward induction yields identical continuation preferences and policy correspondences at every subgame.

For the converse, if any local defect is positive, the local theorem supplies a bounded safe-versus-risky augmentation that produces strict opposite rankings at that subgame.  The global criterion quantifies over every subgame, including off-path histories, so this local separation already contradicts global agreement.  If one replaces the subgame-wise criterion by a root-only criterion, full-support reachability of the augmented node is sufficient to transmit the separation to the root.  Thus global agreement is equivalent to vanishing of both defect families.
\end{proof}

\begin{proof}[Proof of \cref{thm:bellman-recovery-v2}]
Fix $\pi$.  At $T$, every vintage assigns $g$.  If all active vintages share $V_{t+1}^{\pi}$ at the successors of $h_t$, Assumption~\ref{ass:kernel-coherence-v2} makes $\K_{s,t}(h_t,a)$ and $\K_t(h_t,a)$ induce the same lower expectation for $a=\pi_t(h_t)$.  Hence
\[
V_{s,t}^{\pi}(h_t)
=
r_t(h_t,a)
+\beta\inf_{q\in\K_t(h_t,a)}
\sum_yq(y)V_{t+1}^{\pi}(h_t,a,y),
\]
This is independent of $s$ and equals \cref{eq:coherent-policy-evaluation-v2}; the same argument before substituting the policy action makes every $Q_t^{s,\pi}(h_t,a)$ vintage independent.  Backward induction proves (i).

Let $V_t^*$ satisfy \cref{eq:bellman-v2} and let $\pi^*$ be an SPE.  At $T-1$, vintage-independent action values make the SPE action a Bellman maximizer.  If all successor subgames after $t$ have common value $V_{t+1}^*$, then for every current action
\[
Q_t^{t,\pi^*}(h_t,a)
=
r_t(h_t,a)
+\beta\inf_{q\in\K_t(h_t,a)}
\sum_yq(y)V_{t+1}^*(h_t,a,y).
\]
The one-period-deviation condition in \cref{thm:triangular-v2} makes $\pi_t^*(h_t)$ a maximizer, hence $V_{s,t}^{\pi^*}(h_t)=V_t^*(h_t)$ for all $s\le t$.  Thus every SPE is subgame-wise Bellman optimal with value $V_t^*$.

Conversely, a policy selecting a Bellman maximizer at every history has continuation value $V_{t+1}^*$ by backward induction and therefore satisfies the triangular one-period-deviation conditions, so it is an SPE.  The rectangularized initial-vintage precommitment problem has the same Bellman recursion and value.  Root-optimal policies may be arbitrary off path, so policy-set equality is asserted only for subgame-wise Bellman-optimal policies.  If the original vintage is a committed static-parameter family decision equivalent to its rectangular hull on every relevant subtree, their Bellman values also coincide.
\end{proof}

\section{EC.3 Exact Dynamic SPE Price-of-Relearning Representation}

\subsection{Sharp one-step bound}
\begin{proof}[Proof of \cref{thm:one-step-por-v2}]
Let $D=\Dop(A,B)$.  For inherited- and fresh-optimal actions $a_A,a_B$,
\begin{align*}
L(A,B)
&=\rho_A(z_{a_A})-\rho_B(z_{a_A})
 +\rho_B(z_{a_A})-\rho_B(z_{a_B})
 +\rho_B(z_{a_B})-\rho_A(z_{a_B})\\
&\le D+0+D.
\end{align*}
For sharpness, take
\[
A=\{(1/2+\varepsilon,1/2-\varepsilon)\},
\qquad
B=\{(1/2,1/2)\},
\]
so $\Dop(A,B)=2\varepsilon$.  Let $s=(1,-1)$ and use two actions
\[
z_1=\alpha s,
\qquad
z_2=-\alpha s+\delta\mathbf 1,
\]
with $\alpha+\delta\le1$.  Evaluator $B$ strictly selects action 2; evaluator $A$ selects action 1 whenever $\delta<4\alpha\varepsilon$, in which case the inherited loss from the fresh action is $4\alpha\varepsilon-\delta$.  Let $\alpha\uparrow1$ and $\delta\downarrow0$ to obtain a loss-to-distance ratio approaching $2$.
\end{proof}

\subsection{Why a triangular vintage state is required}
Self $t$ evaluates a deviation with $V_{t,t+1}^{\pi}$, not the successor self's generally different $V_{t+1,t+1}^{\pi}$.  Exact SPE representation therefore retains every active evaluator vintage.

\begin{proof}[Proof of \cref{thm:vintage-quotient}]
The relation in \cref{def:vintage-equivalence} is an equivalence relation because $\Dop=0$ means equality of the closed-convex-hull lower-expectation functionals, and equality of functionals is reflexive, symmetric, and transitive.

Fix equivalent vintages $s\sim_{t,h_t}s'$ and a policy $\pi$.  We prove equality of their values on every descendant subgame by backward induction.  At the terminal date,
$V_{s,T}^{\pi}=V_{s',T}^{\pi}=g$.  Suppose equality holds at every date-$(j+1)$ descendant of $h_t$.  At a date-$j$ descendant $h_j$, write $a=\pi_j(h_j)$ and let $v(y)$ be the common continuation value at $(h_j,a,y)$.  Operational equivalence at this node gives
\[
\inf_{q\in\K_{s,j}(h_j,a)}\sum_yq(y)v(y)
=
\inf_{q\in\K_{s',j}(h_j,a)}\sum_yq(y)v(y).
\]
The current reward is common to the two evaluators, so the recursive equations imply
$V_{s,j}^{\pi}(h_j)=V_{s',j}^{\pi}(h_j)$.  This proves part (i).

If $s\not\sim_{t,h_t}s'$, there is a descendant state--action triple $(j,h_j,a)$ at which the two lower-expectation functionals differ.  By \cref{cor:decision-equivalence-v2}, there is a bounded vector $z$ over the next observations with
\[
\inf_{q\in\K_{s,j}(h_j,a)}q^\top z
\ne
\inf_{q\in\K_{s',j}(h_j,a)}q^\top z.
\]
Local continuation richness supplies a bounded reward system on the successor subtrees and a common continuation policy whose next-date value is $c z(y)$ for every active vintage, for some $c>0$ small enough to satisfy the reward bounds.  Choose action $a$ at $h_j$ and use zero current reward.  The two date-$j$ values then differ by $\beta c$ times the displayed separation.  Pasting arbitrary admissible rewards and actions outside this branch yields one reward system and one policy on the full subtree.  This proves part (ii).

Part (i) shows that all valuation vectors are constant on the equivalence classes, so one coordinate per class is sufficient.  Part (ii) shows that identifying two distinct classes would force equality in a continuation problem where their values are unequal.  Hence no universal coordinate-identification rule can merge distinct classes under local continuation richness, proving part (iii).
\end{proof}

\begin{proof}[Proof of \cref{thm:exact-dynamic-por-representation}]
Fix a history $h_t$ and define the actual equilibrium-attainable set
\[
\mathcal A_t(h_t)
:=
\left\{
\left(P_t^r(h_t);
V_{0,t}^{\pi}(h_t),\ldots,V_{t,t}^{\pi}(h_t)\right):
 r\in\mathscr R_t(h_t),\ 
 \pi\in\mathrm{SPE}_t(r;h_t)
\right\}.
\]
We prove $\mathfrak S_t(h_t)=\mathcal A_t(h_t)$ at every history by backward induction on the remaining horizon.

\paragraph{Base date.}
At $t=T$, an admissible subtree reward system consists only of a terminal reward $z=g(h_T)$ with $|z|\le G$.  There is no action choice, so every evaluator and the precommitment problem have value $z$.  Hence
\[
\mathcal A_T(h_T)
=
\{(z;z,\ldots,z):|z|\le G\}
=
\mathfrak S_T(h_T).
\]

\paragraph{Actual reward--equilibrium pair $\Rightarrow$ attainable vector.}
Assume the equality holds at every successor history and fix $r\in\mathscr R_t(h_t)$ together with $\pi\in\mathrm{SPE}_t(r;h_t)$.  A subgame-perfect policy specifies behavior after every current action, including off-path actions.  Therefore, for each action $a$ and successor $y$, the restriction of $(r,\pi)$ to the subtree rooted at $(h_t,a,y)$ is an admissible reward system paired with a pure SPE of that successor subgame.  By the induction hypothesis,
\[
\left(
P_{t+1}^r;
V_{0,t+1}^{\pi},\ldots,V_{t+1,t+1}^{\pi}
\right)(h_t,a,y)
\in
\mathfrak S_{t+1}(h_t,a,y).
\]
Choose these successor vectors and $c_a=r_t(h_t,a)$ in the recursive construction.  Then
\[
P_a
=r_t(h_t,a)
+\beta\inf_{q\in\K_{0,t}(h_t,a)}
\E_q[P_{t+1}^r(H_{t+1})]
\]
is the vintage-$0$ Bellman action value, so $P_t^r(h_t)=\max_aP_a$.  For every $s\le t$,
\[
U_{s,a}
=r_t(h_t,a)
+\beta\inf_{q\in\K_{s,t}(h_t,a)}
\E_q[V_{s,t+1}^{\pi}(H_{t+1})]
=Q_t^{s,\pi}(h_t,a).
\]
Because $\pi$ is an SPE, \cref{thm:triangular-v2} gives
\[
\pi_t(h_t)\in\argmax_aU_{t,a}.
\]
Taking $a_F=\pi_t(h_t)$ in the attainable recursion therefore yields
\[
\left(P_t^r(h_t);
V_{0,t}^{\pi}(h_t),\ldots,V_{t,t}^{\pi}(h_t)\right)
\in\mathfrak S_t(h_t).
\]
Thus $\mathcal A_t(h_t)\subseteq\mathfrak S_t(h_t)$.

\paragraph{Attainable vector $\Rightarrow$ one reward system and one SPE.}
Take $(p;u_0,\ldots,u_t)\in\mathfrak S_t(h_t)$.  By construction there are current rewards $c_a\in[-R_t,R_t]$, successor vectors
\[
(p_{a,y};u_{0,a,y},\ldots,u_{t+1,a,y})
\in\mathfrak S_{t+1}(h_t,a,y),
\]
and a current action $a_F\in\argmax_aU_{t,a}$ that generate the selected vector.  By the induction hypothesis, for every $(a,y)$ there exist a reward system $r^{a,y}$ and a pure successor SPE $\pi^{a,y}$ realizing the selected successor vector.

The successor subtrees are disjoint.  Branchwise reward richness therefore allows the $r^{a,y}$ to be pasted into a single reward system $\bar r$, with $\bar r_t(h_t,a)=c_a$.  Paste the policies $\pi^{a,y}$ on the corresponding successor subgames and set $\bar\pi_t(h_t)=a_F$.  Every proper descendant subgame is governed by one of the selected branch SPEs.  At the current history, the branch valuations reproduce exactly the arrays $(U_{s,a})_a$, and $a_F\in\argmax_aU_{t,a}$ by construction.  Hence \cref{thm:triangular-v2} implies
\[
\bar\pi\in\mathrm{SPE}_t(\bar r;h_t).
\]
The vintage-$0$ precommitment value is $\max_aP_a=p$, while the vintage evaluations of $\bar\pi$ are $u_0,\ldots,u_t$.  Therefore
\[
(p;u_0,\ldots,u_t)
=
\left(P_t^{\bar r};V_{0,t}^{\bar\pi},\ldots,V_{t,t}^{\bar\pi}\right)(h_t)
\in\mathcal A_t(h_t).
\]
This proves the reverse inclusion.

\paragraph{Worst-equilibrium divergence and homogeneity.}
Equality of the correspondences gives
\[
\mathfrak d_t^{\rm RL,SPE}(h_t)
=
\sup_{r\in\mathscr R_t(h_t)}
\sup_{\pi\in\mathrm{SPE}_t(r;h_t)}
\left[P_t^r(h_t)-V_{0,t}^{\pi}(h_t)\right].
\]
Each bracket is nonnegative because $P_t^r$ maximizes the vintage-$0$ rectangular value over all feasible policies.  For $c>0$, scaling all rewards and reward bounds by $c$ scales every action value by $c$ and leaves every argmax correspondence, hence every pure SPE set, unchanged.  For $c=0$, every attainable valuation vector is zero.  Thus the correspondence and its directed divergence are positively homogeneous.
\end{proof}

\subsection{A diagnostic counterexample to diagonal nesting}
The distinction above is substantive.  Take $T=2$, $\beta\in(0,1]$, two deterministic date-0 actions $A,B$, and one action at each date-1 branch.  All stage rewards are zero.  On branch $A$, let the terminal payoff be $g(1)=1$, $g(2)=-1$; on branch $B$, let $g(1)=g(2)=0$.  At date 1 let vintage 0 use $\K_{0,1}=\{\delta_1\}$ and the fresh date-1 self use $\K_{1,1}=\{\delta_2\}$.  Then
\[
V_{0,1}(A)=\beta,
\qquad
V_{1,1}(A)=-\beta,
\qquad
V_{0,1}(B)=V_{1,1}(B)=0.
\]
Self 0 evaluates the continuation after $A$ with $V_{0,1}$ and therefore selects $A$, as does the vintage-0 precommitment problem; the genuine SPE Price of Relearning is zero.  A diagonal nesting that instead substitutes the successor self's $V_{1,1}$ assigns $A$ value $-\beta^2$ and selects $B$, falsely reporting a positive loss $\beta^2$.  This is why the main theorem carries the triangular vintage vector.

\begin{proof}[Proof of \cref{prop:stagewise-spe-embedding}]
Fix $m$ and choose $a_u\in M_u^B(m_u)$ at every date.  Define $\pi_u(h_u)\equiv a_u$ at every date-$u$ history.  We verify the SPE conditions backward.  At date $T-1$, the only action-dependent continuation term is $\beta\rho_{B_{T-1}}(z_{T-1,a})$, so $a_{T-1}$ is optimal.  Suppose the candidate policy is already fixed and history invariant after date $u$.  By the stagewise assumptions, neither the current action nor $Y_{u+1}$ changes any later primitive; because the future policy is also history invariant, the value of the continuation after the one-step payoff is the same for every current action and successor history.  Translation equivariance therefore reduces the date-$u$ action comparison to
\[
\beta\rho_{B_u}(z_{u,a})+\text{a common continuation constant}.
\]
Hence $a_u\in M_u^B(m_u)$ is optimal.  Backward induction proves $\pi\in\mathrm{SPE}_{\rm full}(m)$.

Now suppose every $M_u^B(m_u)$ is a singleton, say $\{a_u^*\}$.  At date $T-1$, every pure SPE must use $a_{T-1}^*$ at every history.  If all SPE actions after date $u$ have already been shown to be history invariant and equal to their unique datewise maximizers, then the same common-continuation argument applies to any candidate SPE at date $u$ and forces $a_u^*$ at every history.  Backward induction gives $\mathrm{SPE}_{\rm full}(m)=\mathrm{SPE}^{\rm stg}(m)$.
\end{proof}

\begin{proof}[Proof of \cref{cor:stagewise-matching-por}]
Fix a menu tuple $m$ and a history-invariant stagewise equilibrium $\pi\in\mathrm{SPE}^{\rm stg}(m)$, with $\pi_u(h_u)\equiv a_u$.  The class is a literal specialization of the master timing: set $r_0=0$, set
\[
r_t(h_t,a_t)=z_{t-1,a_{t-1}}(Y_t),\qquad 1\le t<T,
\]
and set $g(h_T)=z_{T-1,a_{T-1}}(Y_T)$.  The realized $r_t$ term is common across all date-$t$ actions.  The bounds $C_{t-1}\le R_t$ and $C_{T-1}\le G$ make the embedding admissible.

Because the selected action at every future date is constant across histories and current actions do not alter later primitives, backward induction with translation equivariance gives
\begin{align*}
P_0(m)
&=\sum_{u=0}^{T-1}\beta^{u+1}\max_{a\in\A_u}\rho_{A_u}(z_{u,a}),\\
V_{0,0}^{\pi}(m)
&=\sum_{u=0}^{T-1}\beta^{u+1}\rho_{A_u}(z_{u,a_u}).
\end{align*}
Therefore
\begin{equation}\label{eq:EC-stagewise-policy-decomposition}
\PoR_0(m,\pi)
=\sum_{u=0}^{T-1}\beta^{u+1}L_u(m_u,a_u),
\qquad
L_u(m_u,a_u):=\max_a\rho_{A_u}(z_{u,a})-\rho_{A_u}(z_{u,a_u}).
\end{equation}
At $u=0$, $A_0=B_0$ and $a_0\in M_0^B(m_0)$, so $L_0=0$.

For fixed $m$, the correspondence in \cref{eq:stagewise-spe-correspondence} permits the action $a_u$ to be chosen independently from $M_u^B(m_u)$ at each date.  Hence
\[
\sup_{\pi\in\mathrm{SPE}^{\rm stg}(m)}\PoR_0(m,\pi)
=
\sum_{u=1}^{T-1}\beta^{u+1}
\sup_{a_u\in M_u^B(m_u)}L_u(m_u,a_u).
\]
Define
\[
r_u^*:=\sup_{m_u\in\mathscr M_u}\sup_{a_u\in M_u^B(m_u)}L_u(m_u,a_u)
=\mathfrak r_u(A_u,B_u).
\]
The upper bound
\[
\sup_{m\in\prod_u\mathscr M_u}\sup_{\pi\in\mathrm{SPE}^{\rm stg}(m)}\PoR_0(m,\pi)
\le\sum_{u=1}^{T-1}\beta^{u+1}r_u^*
\]
is immediate.  For the reverse inequality, let
\[
I_+:=\{u\in\{1,\ldots,T-1\}:\beta^{u+1}>0\},\qquad n_+:=|I_+|.
\]
If $n_+=0$, both sides are zero.  Otherwise, for any $\varepsilon>0$ and each $u\in I_+$ choose $m_u^\varepsilon$ and $a_u^\varepsilon\in M_u^B(m_u^\varepsilon)$ with
\[
L_u(m_u^\varepsilon,a_u^\varepsilon)
>r_u^*-\frac{\varepsilon}{n_+\beta^{u+1}}.
\]
Choose arbitrary admissible menus at zero-weight dates.  Cartesian-product feasibility permits all selected menus simultaneously, and \cref{prop:stagewise-spe-embedding} pastes the selected datewise maximizers into one history-invariant pure SPE.  Equation \eqref{eq:EC-stagewise-policy-decomposition} then gives a root loss exceeding
\[
\sum_{u=1}^{T-1}\beta^{u+1}r_u^*-\varepsilon.
\]
Letting $\varepsilon\downarrow0$ proves \cref{eq:stagewise-exact-sum}.  The inclusion $\mathrm{SPE}^{\rm stg}(m)\subseteq\mathrm{SPE}_{\rm full}(m)$ gives the unrestricted lower-bound statement in the corollary; if all datewise current-self maximizers are unique, \cref{prop:stagewise-spe-embedding} gives equality of the two SPE correspondences and hence the full-SPE version of the exact sum.

For every menu class, the one-step upper bound gives
\[
r_u^*\le 2C_u\Dop(A_u,B_u).
\]
If $\mathscr M_u$ contains every finite $C_u$-bounded menu, the lower bound follows from an explicit two-action construction.  Let $D=\Dop(A_u,B_u)$ and choose $z$ with $\|z\|_\infty\le1$ such that $|\rho_{A_u}(z)-\rho_{B_u}(z)|\ge D-\eta$.  Offer the payoff $C_uz$ and the constant payoff $C_u\rho_{B_u}(z)\mathbf 1$.  The two actions tie under $B_u$.  Under $A_u$, one is better by
\[
C_u|\rho_{A_u}(z)-\rho_{B_u}(z)|.
\]
Because the stagewise loss takes the supremum over all fresh maximizers, it may select the $A_u$-inferior member of the tie.  Letting $\eta\downarrow0$ gives
\[
C_u\Dop(A_u,B_u)\le r_u^*\le2C_u\Dop(A_u,B_u).
\]
The constant action is admissible because $|\rho_{B_u}(z)|\le1$.  The lower coefficient is attained exactly.  On two outcomes take $A_u=\Delta_2$, $B_u=\{\delta_1\}$, so $\Dop(A_u,B_u)=2$, and offer
\[
z_1=(C_u,C_u),
\qquad
z_2=(C_u,-C_u).
\]
Both actions have $B_u$-value $C_u$, so the worst fresh tie selects $z_2$.  Their $A_u$-values are $C_u$ and $-C_u$, giving inherited regret
$2C_u=C_u\Dop(A_u,B_u)$.  Under this full-menu condition, the constants $1$ and $2$ in \cref{eq:stagewise-two-sided} are therefore unimprovable as universal constants over the product model class.  Indeed, at any post-initial date with positive discount weight, embed respectively a one-step lower-sharp or upper-sharp pair and set $A_v=B_v$ at all other dates.  Cartesian-product feasibility then reduces the aggregate ratio to the corresponding one-step ratio.  Since the history-invariant benchmark is a subset of the unrestricted SPE correspondence, the lower embedding also proves that the accumulated one-step terms in the general worst-equilibrium theory cannot be replaced by an order-smaller universal quantity.
\end{proof}

\subsection{Endpoint insufficiency without equilibrium multiplicity}

\begin{proof}[Proof of \cref{prop:endpoint-insufficiency}]
Take $T=3$ and $\beta=1$.  Date $0$ has one action and a deterministic transition to the date-$1$ history.  At date $1$, actions $L$ and $R$ lead deterministically to distinct date-$2$ histories.  Date $2$ has one feasible action at each history and two terminal outcomes.  All stage rewards are zero.  Let
\[
z_L=(0,2),
\qquad
z_R=(1,0),
\qquad
\mu=(1/2,1/2).
\]
The date-$1$ endpoint kernels are identical and the date-$2$ endpoint kernels satisfy
\[
\K_{0,2}=\K_{2,2}=\{\mu\}.
\]
The first triangular system $\mathbb K$ differs only through
\[
\K_{1,2}=\{\delta_1\}.
\]
There is no continuation choice at date $2$.  Self $1$ values $L$ at $0$ and $R$ at $1$, so $R$ is its unique action and the pure SPE is unique.  Vintage $0$ evaluates the two branches under $\mu$: it assigns value $1$ to $L$ and $1/2$ to $R$.  Its rectangular precommitment policy therefore uniquely selects $L$, whereas its value of the unique SPE is $1/2$.  Hence
\[
\mathfrak d_0^{\rm RL,SPE}(\mathbb K)\ge\frac12.
\]
Both action comparisons have strict gaps.

Define $\widetilde{\mathbb K}$ by leaving every endpoint kernel unchanged and replacing only the off-diagonal kernel by
\[
\widetilde\K_{1,2}=\{\mu\}.
\]
All vintages are then decision equivalent at every node.  By \cref{thm:bellman-recovery-v2}, every pure SPE is optimal for vintage $0$'s rectangular precommitment problem for every bounded reward system, and
\[
\mathfrak d_0^{\rm RL,SPE}(\widetilde{\mathbb K})=0.
\]
The two systems have identical endpoint collections but different exact divergences.  Since the positive-loss construction has a unique pure SPE and strict action gaps, endpoint insufficiency does not rely on tie-breaking or equilibrium multiplicity.
\end{proof}

\begin{proof}[Proof of \cref{thm:endpoint-robustness}]
Let $\delta=\Dop(A,B)>0$ and fix $\xi\in(0,\delta)$.  By the definition of the operational distance, after interchanging $A$ and $B$ if necessary, there is a payoff vector $z$ with $\|z\|_\infty\le1$ such that
\[
\Delta:=\rho_A(z)-\rho_B(z)>\delta-\xi.
\]
Let
\[
c:=\frac{\rho_A(z)+\rho_B(z)}2,
\qquad
g:=\frac\Delta2.
\]
Because robust evaluation of a constant equals that constant, evaluator $B$ strictly prefers the constant action $C$ with payoff $c$ to the risky action $Z$ with payoff $z$, and evaluator $A$ strictly prefers $Z$ to $C$.  Both strict gaps equal $g>(\delta-\xi)/2$.

Embed this menu after a deterministic date-$0$ prefix in a three-date triangular system.  Keep the initial and current endpoint evaluator equal to $B$, and use $A$ only as the relevant off-diagonal evaluator.  The off-diagonal self uniquely chooses $Z$, whereas the initial-vintage rectangular precommitment uniquely chooses $C$.  The initial vintage values the selected continuation at $\rho_B(z)$, so its relearning loss is
\[
c-\rho_B(z)=g.
\]
In the endpoint-collapsed system replace the off-diagonal evaluator by $B$.  All vintages then choose $C$ and are decision equivalent, so the Price of Relearning is zero for every reward system.

Now perturb the payoff vectors and the two evaluators jointly, and construct the perturbed endpoint surrogate by replacing the perturbed off-diagonal evaluator $A'$ by the perturbed endpoint evaluator $B'$.  Suppose that, for each action and evaluator, the perturbed robust action value differs from its baseline value by at most $\omega$.  A difference of two action values can move by at most $2\omega$.  Hence both strict gaps in the full system remain at least $g-2\omega$, and the initial-vintage loss from the selected risky action remains at least the same amount.  The coupled endpoint surrogate uses $B'$ for every relevant vintage and therefore remains decision equivalent, so its Price of Relearning is zero.  If $2\omega<g$, the full-system choices remain unique and
\[
\PoR_0-\PoR_0^{\rm end}\ge g-2\omega>0.
\]
Finally, if each payoff vector changes by at most $\varepsilon$ in sup norm and the corresponding ambiguity sets change by at most $\kappa$ in $D_{\rm op}$, payoff Lipschitzness and set Lipschitzness give
\[
|\rho_{E'}(z')-\rho_E(z)|
\le \|z'-z\|_\infty
 +\|z\|_\infty D_{\rm op}(E',E)
\le\varepsilon+\kappa
\]
for the risky action; constants satisfy the same bound without the second term.  Thus $\omega\le\varepsilon+\kappa$, proving the explicit perturbation statement.  Since all inequalities are strict, the construction contains a nonempty open neighborhood of systems with the same endpoint failure.
\end{proof}

\subsection{Why unrestricted stagewise SPEs require a selection condition}

The history-invariant restriction in the stagewise product benchmark cannot generally be removed. Take $T=3$, $\beta=1$, a deterministic date-0 transition, and date-1 actions $L,R$ with payoffs
\[
z_{1,L}\equiv1,
\qquad
z_{1,R}\equiv0,
\qquad
\K_{0,1}=\K_{1,1}.
\]
At date $2$, actions $x$ and $y$ have terminal payoff vectors
\[
z_{2,x}=(1,-1),
\qquad
z_{2,y}=(-1,1),
\qquad
\K_{0,2}=\K_{2,2}=\{(1/2,1/2)\}.
\]
Thus $A_u=B_u$ at every post-initial date, and both date-$2$ actions are tied for the current self.  Set $\K_{1,2}=\{\delta_1\}$ and let the date-$2$ rule choose $y$ after $L$ and $x$ after $R$.  Self $1$ then values $L$ at $1+(-1)=0$ and $R$ at $0+1=1$, so the history-dependent rule is a pure SPE.  Vintage $0$ precommits to $L$ and obtains $1$, but values this SPE at $0$.  Therefore the root loss is one even though
\[
\mathfrak r_1(A_1,B_1)=\mathfrak r_2(A_2,B_2)=0.
\]
A payoff-irrelevant history selects among an indifferent future self's actions, and the omitted off-diagonal vintage changes the earlier action comparison.  This equilibrium-selection channel is captured by the triangular correspondence but not by an endpoint product formula.

\subsection{Observable cost of binary concentration}

\begin{corollary}[Observable cost of binary concentration]\label{cor:EC-observable-concentration}
Let $b(x)=(e^x/(1+e^x),1/(1+e^x))$.  Fix $r_0>0$, $0\le r_u\le r_0$, and $|x_u|+r_0\le M$.  Let
\[
\M_u^I=\{b(x_u+c):|c|\le r_0\},\qquad
\M_u^F=\{b(x_u+c):|c|\le r_u\},
\]
and let $A_u=T_{P_u}(\M_u^I)$ and $B_u=T_{P_u}(\M_u^F)$.  If every finite $C_u$-bounded menu is feasible, then
\[
\sup_m\sup_{\pi\in\mathrm{SPE}_{\rm full}(m)}\PoR_0(m,\pi)
\ge 2\lambda_M\sum_{u=1}^{T-1}\beta^{u+1}C_u\alpha(P_u)(r_0-r_u),
\quad \lambda_M=(2+2\cosh M)^{-1}.
\]
\end{corollary}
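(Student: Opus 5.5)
The plan is to reduce to the stagewise lower bound in \cref{cor:stagewise-matching-por} and then bound each $\Dop(A_u,B_u)$ from below. Since every finite $C_u$-bounded menu is feasible, \eqref{eq:stagewise-two-sided} gives
\[
\sup_m\sup_{\pi\in\mathrm{SPE}^{\rm stg}(m)}\PoR_0(m,\pi)\ge\sum_{u=1}^{T-1}\beta^{u+1}C_u\Dop(A_u,B_u).
\]
By \cref{prop:stagewise-spe-embedding}, $\mathrm{SPE}^{\rm stg}(m)\subseteq\mathrm{SPE}_{\rm full}(m)$, so the same lower bound holds for the unrestricted worst-SPE loss. It therefore suffices to prove, for each $u\ge1$,
\[
\Dop(A_u,B_u)\ge 2\lambda_M\,\alpha(P_u)\,(r_0-r_u).
\]

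Next pass from predictive to parameter space using \cref{thm:parameter-predictive-bridge}. Since $A_u=T_{P_u}(\M_u^I)$ and $B_u=T_{P_u}(\M_u^F)$, we have $\Dop(A_u,B_u)=\Dpred^{P_u}(\M_u^I,\M_u^F)$. The lower bilipschitz inequality \eqref{eq:bridge-bilipschitz} then gives
\[
\Dop(A_u,B_u)\ge\alpha(P_u)\,\Dop(\M_u^I,\M_u^F).
\]
By \cref{thm:operational-dual-v2}, this last quantity is the $\ell_1$-Hausdorff distance between the closed convex hulls. For $m=2$, both $\M_u^I$ and $\M_u^F$ are connected arcs in the one-dimensional simplex, so they are already convex segments. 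They are nested, with $\M_u^F\subseteq\M_u^I$, since $b$ is continuous and monotone in its argument.

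The remaining step is the explicit computation, and it is where the constant $\lambda_M$ must come out; I expect this to be the only delicate point. The Hausdorff distance between the nested segments is the larger endpoint gap:
\[
\max\bigl\{\|b(x_u+r_0)-b(x_u+r_u)\|_1,\ \|b(x_u-r_0)-b(x_u-r_u)\|_1\bigr\}.
\]
With $\sigma(x)=e^x/(1+e^x)$, each gap has $\ell_1$ norm $2|\sigma(x_u\pm r_0)-\sigma(x_u\pm r_u)|$. By the mean value theorem this equals $2\sigma'(\xi)(r_0-r_u)$ for some $\xi$ between the two arguments. Since $|\xi|\le|x_u|+r_0\le M$ and $\sigma'(\xi)=(2+2\cosh\xi)^{-1}$ is even and decreasing in $|\xi|$, we get $\sigma'(\xi)\ge\lambda_M$. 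Hence
\[
\Dop(\M_u^I,\M_u^F)\ge2\lambda_M(r_0-r_u).
\]
The case $r_u=r_0$ is trivial. Substituting this bound into the previous two displays and summing over $u$ gives the stated inequality.
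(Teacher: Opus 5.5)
Your proposal is correct and follows the paper's proof essentially step for step. The steps are the full-menu lower bound in \eqref{eq:stagewise-two-sided} combined with $\mathrm{SPE}^{\rm stg}(m)\subseteq\mathrm{SPE}_{\rm full}(m)$, then the lower half of the predictive bridge, then the logistic bound $\sigma'(x)=(2+2\cosh x)^{-1}\ge\lambda_M$ on $[-M,M]$ applied to an endpoint gap of the nested intervals; the only cosmetic difference is that you compute the exact Hausdorff distance as the larger endpoint gap via the mean value theorem, whereas the paper simply lower-bounds it by the right-endpoint gap.
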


\begin{proof}[Proof of \cref{cor:EC-observable-concentration}]
Let $\sigma(x)=e^x/(1+e^x)$, so $b(x)=(\sigma(x),1-\sigma(x))$.  On $[-M,M]$,
\[
\sigma'(x)=\sigma(x)(1-\sigma(x))
=\frac{1}{2+2\cosh x}
\ge\lambda_M.
\]
Because $\M_u^F\subseteq\M_u^I$, the distance from the right outer endpoint of $\M_u^I$ to $\M_u^F$ is attained at the right endpoint of $\M_u^F$.  Hence
\begin{align*}
\Dop(\M_u^I,\M_u^F)
&=d_H^{\ell_1}(\M_u^I,\M_u^F)\\
&\ge \|b(x_u+r_0)-b(x_u+r_u)\|_1\\
&=2\{\sigma(x_u+r_0)-\sigma(x_u+r_u)\}\\
&\ge2\lambda_M(r_0-r_u),
\end{align*}
where the first equality uses \cref{thm:operational-dual-v2} and convexity of the binary probability intervals.  Applying the lower half of the predictive bridge gives
\[
\Dop(A_u,B_u)
=\Dpred^{P_u}(\M_u^I,\M_u^F)
\ge\alpha(P_u)\Dop(\M_u^I,\M_u^F)
\ge2\lambda_M\alpha(P_u)(r_0-r_u).
\]
The full-menu lower bound in \cref{eq:stagewise-two-sided} now yields
\[
\sup_m\sup_{\pi\in\mathrm{SPE}^{\rm stg}(m)}\PoR_0(m,\pi)
\ge
2\lambda_M\sum_{u=1}^{T-1}\beta^{u+1}C_u\alpha(P_u)(r_0-r_u).
\]
Finally, $\mathrm{SPE}^{\rm stg}(m)\subseteq\mathrm{SPE}_{\rm full}(m)$, so the same lower bound holds for the unrestricted worst-equilibrium quantity.  A positive summand proves strict positivity.
\end{proof}

\begin{proof}[Proof of \cref{thm:vintage-memory-irreducibility}]
Fix $t\ge1$ and consider a subtree with one decision at date $t$, discount factor $\beta=1$, zero current rewards, and terminal reward bound $G=1$.  There are two current actions, $A$ and $B$, and terminal outcomes $\{0,1,\ldots,t\}$.  For action $A$, set
\[
\K_{s,t}(A)=\{\delta_s\},\qquad s=0,\ldots,t,
\]
and choose $g(A,s)=x_s$ with $x_s\in[-1/4,1/4]$.  For action $B$, set
\[
\K_{0,t}(B)=\{\delta_0\},\qquad
\K_{t,t}(B)=\{\delta_t\},
\]
use arbitrary nonempty kernels for intermediate vintages, and choose
\[
g(B,0)=p\in[1/2,3/4],
\qquad
g(B,y)=-3/4\quad(y=1,\ldots,t).
\]
The date-$t$ self values $A$ at $x_t\ge-1/4>-3/4$ and $B$ at $-3/4$, so $A$ is its unique action.  Every selected-policy vintage value is therefore $V_{s,t}=x_s$.  Vintage $0$ values $B$ at $p\ge1/2>1/4\ge x_0$ and $A$ at $x_0$, so its precommitment value is $P_t=p$.  This realizes the closed rectangle
\[
[1/2,3/4]\times[-1/4,1/4]^{t+1},
\]
whose interior is the open rectangle in the main paper.  The closed cube $Q_t$ in the main paper is contained in this rectangle.

Set $n=t+2$ and translate $Q_t$ to the centered cube $[-a,a]^n$ with $a=1/8$.  Let $E:Q_t\to\mathbb R^d$ be continuous with $d<n$.  The boundary of the cube is antipodally homeomorphic to $S^{n-1}$: one may map a sphere point radially to the unique point on the cubical boundary, and this map commutes with sign reversal.  Compose this homeomorphism with $E$ and, when $d<n-1$, append zero coordinates to view the range as $\mathbb R^{n-1}$.  The Borsuk--Ulam theorem \citep{EC-Matousek2003} gives an antipodal pair $x,-x$ on the cubical boundary with
\[
E(x)=E(-x).
\]
Every boundary point satisfies $\|x\|_\infty=a$.  For any decoder $D$, put $y=D(E(x))=D(E(-x))$.  The triangle inequality yields
\[
2a=\|x-(-x)\|_\infty
\le \|x-y\|_\infty+\|y-(-x)\|_\infty.
\]
At least one of the two reconstruction errors is therefore at least $a=1/8$.  This proves the uniform approximate lower bound; exact recovery is the zero-error special case.  Multiplying all rewards and reward bounds by $c$ multiplies the attainable cube and the lower bound by $c$.

For the decision-query interpretation, choose a coordinate $j$ with $|x_j|=a$ and compare that coordinate with its cube center by a binary safe-versus-risky menu.  The correct strict action is opposite at $x$ and $-x$, but the code is identical.  Any decision rule based only on the code must therefore err on one state with margin $a$.
\end{proof}

\begin{proposition}[Polyhedral attainable correspondence]\label{prop:EC-polyhedral-attainable}
Suppose every one-step kernel set is a polytope and the reward class is a box.  Then $\mathfrak S_t(h_t)$ is a finite union of compact polytopes at every history, and the maximum of $p-u_0$ over that correspondence is attained.  If all primitive data are rational, this maximum admits an exact finite mixed-integer linear formulation.  The formulation can be exponential in the finite model description.
\end{proposition}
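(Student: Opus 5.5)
I would prove the polyhedral statement by backward induction on the remaining horizon, with the recursion \eqref{eq:triple-terminal}--\eqref{eq:triple-recursion} as the inductive step. The base case is immediate. $\mathfrak S_T(h_T)$ is the image of the interval $[-G,G]$ under the linear map $z\mapsto(z;z,\ldots,z)$, so it is a single compact polytope.

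For the inductive step, assume each $\mathfrak S_{t+1}(h_t,a,y)$ is a finite union of compact polytopes. Each such piece $\Pi^{a,y}_k$ has finitely many vertices. The key is the lower expectations in \eqref{eq:triple-action-values}. Because $\K_{s,t}(h_t,a)$ is a polytope with finite vertex set $\operatorname{ext}\K_{s,t}(h_t,a)$, linearity gives
\[
\inf_{q\in\K_{s,t}(h_t,a)}\sum_yq(y)u_y=\min_{q\in\operatorname{ext}\K_{s,t}(h_t,a)}\sum_yq(y)u_y .
\]
So each $U_{s,a}$ and $P_a$ is a minimum of finitely many affine functions of the choice variables. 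The choice variables are $c_a\in[-R_t,R_t]$ and the successor vectors.

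I would then fix combinatorial data:
\begin{itemize}
\item a piece index $k(a,y)$ for each successor;
\item an active vertex of each relevant kernel, recorded for every $(s,a)$ and for vintage $0$ in $P_a$;
\item the action $a^\dagger$ attaining $\max_aP_a$;
\item the fresh selection $a_F$.
\end{itemize}
On the cell where these selections are valid, every coordinate of the output vector $(\max_aP_a;U_{0,a_F},\ldots,U_{t,a_F})$ is an affine function of the choice variables. Validity means finitely many weak linear inequalities:
\begin{itemize}
\item the chosen vertex attains each minimum;
\item $P_{a^\dagger}\ge P_a$ for all $a$;
\item $U_{t,a_F}\ge U_{t,a}$ for all $a$.
\end{itemize}
All inequalities are weak, so each cell is a compact polyhedron. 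It is compact because it lies in the product of compact pieces and a box. The union over all combinatorial data covers every feasible choice, since minimizers and maximizers always exist. The image of a compact polytope under an affine map is a compact polytope, so $\mathfrak S_t(h_t)$ is a finite union of compact polytopes. This closes the induction.

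Attainment of the maximum of $p-u_0$ then follows: a continuous linear function on a finite union of compact sets attains its maximum.

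For rationality and the mixed-integer linear formulation, I would encode the same recursion disjunctively. I would introduce binary indicators for each piece choice, active-vertex choice, argmax choice for $P$, and fresh action $a_F$. Each inner minimum would be linearized with the standard constraint pair: $U\le$ every vertex term, and $U\ge$ the selected vertex term minus $M(1-b)$. Each argmax would be handled the same way with big-$M$ constants. Valid rational $M$ constants come from the reward bounds $B_t$ in \eqref{eq:B-bounds}, since all values lie in $[-B_t,B_t]$.

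Products of binary indicators with continuous successor vectors are avoided in one of two ways:
\begin{itemize}
\item recursively substituting the MILP for each successor subtree, which is linear because each subtree has its own variables;
\item using Balas' disjunctive formulation for the union of pieces.
\end{itemize}
With rational primitive data, all coefficients are rational, so the formulation is an exact finite MILP. The number of variables grows with the tree and with the kernel vertex counts, which accounts for the possible exponential size.

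The main obstacle is the fresh-argmax disjunction. The set $\mathfrak S_t$ includes a vector for every maximizer $a_F$, including ties. Representing it exactly requires weak inequalities $U_{t,a_F}\ge U_{t,a}$ over a cell, not strict ones, or compactness and closedness would fail. I would check that weak inequalities give exactly the set of maximizers, and not a superset, on each cell. Since the $U_{t,a}$ are the true minima there, this holds.
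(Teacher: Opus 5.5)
Your proposal is correct and follows the paper's proof essentially step for step. You use backward induction from the terminal segment and fix a finite active pattern (successor polytope component, active kernel vertex for each lower expectation, precommitment maximizer, current-self maximizer); on each pattern the recursion is affine and validity is a system of weak linear inequalities, so $\mathfrak S_t(h_t)$ is a finite union of affine images of bounded polyhedra, followed by attainment via compactness and a disjunctive encoding for the rational MILP. The one variation is your node-local big-$M$ linearization of the inner minima and argmax selections, with constants from $B_t$. It is an equally exact alternative to the paper's one-binary-per-active-pattern encoding, and its size is polynomial in the explicitly listed tree and kernel vertex lists. The paper's possible exponential size instead comes from enumerating active patterns.
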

\begin{proof}
We proceed by backward induction.  At a terminal history,
\[
\mathfrak S_T(h_T)=\{(z;z,\ldots,z):|z|\le G\}
\]
is a compact polytope.  Suppose every successor correspondence is a finite union of compact polytopes.  For each action--successor pair $(a,y)$, select one successor polytope component.  Because every one-step kernel set is a polytope, its lower expectation of a continuation vector is the minimum of finitely many affine functions, one for each extreme point.  Select one active extreme point for every lower expectation appearing in the recursion, one precommitment-maximizing action, and one current-self maximizing action.  Conditional on this finite active pattern, the following requirements are all linear:
\begin{enumerate}[label=(\roman*)]
\item membership of each successor vector in its selected polytope component;
\item current reward-box constraints;
\item inequalities making the selected kernel vertex attain each lower expectation;
\item inequalities making the selected precommitment action maximize $P_a$;
\item inequalities making the selected current-self action maximize $U_{t,a}$.
\end{enumerate}
The recursively produced vector is an affine image of the resulting bounded polyhedron.  It is therefore a compact polytope.  There are finitely many actions, successor components, and kernel vertices, so taking the union over all active patterns yields a finite union of compact polytopes.  The inequalities are weak, so ties and all equilibrium selections are included.  This proves the structural claim at date $t$ and completes the induction.

The objective $p-u_0$ is continuous.  A finite union of compact sets is compact, so the supremum defining $\mathfrak d_t^{\rm RL,SPE}$ is attained.  If the primitive data are rational, every active-pattern polytope has a rational linear description.  Introduce one binary selector per active pattern (or an equivalent extended disjunctive encoding), impose the associated linear system conditionally, and maximize $p-u_0$.  Standard disjunctive-programming formulations \citep{EC-Balas1979} represent the finite union exactly.  The number of active patterns can grow exponentially with the tree, actions, vertices, and successor components; the proposition therefore makes no polynomial-complexity claim.
\end{proof}

\begin{proposition}[Policy-cell performance certificate]\label{prop:EC-policy-cell-certificate}
Let $\chi$ be a partial pure policy, and suppose
$\epsilon_t(h,a)$ uniformly bounds
$|Q_t^{0,\pi}(h,a)-Q_t^{t,\pi}(h,a)|$ for every continuation policy $\pi$.  Define $b_T^\chi=0$ and, backward in time,
\[
e_t^\chi(h)=
\begin{cases}
\max_a\epsilon_t(h,a)+\epsilon_t(h,\chi(h)),&h\in\operatorname{dom}\chi,\\
2\max_a\epsilon_t(h,a),&h\notin\operatorname{dom}\chi,
\end{cases}
\]
and
\[
b_t^\chi(h)=e_t^\chi(h)+\beta
\max_a\sup_{q\in K_{0,t}(h,a)}
\sum_yq(y)b_{t+1}^\chi(h,a,y).
\]
Every pure SPE $\pi$ extending $\chi$ satisfies
\[
0\le P_t(h)-V_{0,t}^{\pi}(h)\le b_t^\chi(h).
\]
If $\chi'$ extends $\chi$, then $b_t^{\chi'}(h)\le b_t^\chi(h)$ at every ancestor whose continuation is further fixed.  Consequently, if a finite collection of policy cells partitions the pure-SPE graph and cell $c$ has certified lower and upper bounds $L_c,U_c$, then
\[
\max_cL_c\le\mathfrak d_0^{\rm RL,SPE}\le\max_cU_c.
\]
The bounds remain valid under time limits, and become exact when every surviving cell closes.
\end{proposition}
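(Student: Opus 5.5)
The plan is a backward induction on $t$ that tracks the vintage-$0$ quantities along the SPE $\pi$ extending $\chi$. I would first compare $P_t(h)-V_{0,t}^{\pi}(h)$ one step at a time. Write $a^\star\in\argmax_aQ_t^{0,*}(h,a)$ for the vintage-$0$ Bellman action and $a_\pi=\pi_t(h)$. Here $Q_t^{0,*}$ is the vintage-$0$ rectangular Bellman action value with continuation $P_{t+1}$. Then decompose
\[
P_t(h)-V_{0,t}^{\pi}(h)=\bigl[Q_t^{0,*}(h,a^\star)-Q_t^{0,\pi}(h,a^\star)\bigr]+\bigl[Q_t^{0,\pi}(h,a^\star)-Q_t^{0,\pi}(h,a_\pi)\bigr].
\]
The first bracket is a continuation gap at action $a^\star$. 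Both terms share the reward $r_t(h,a^\star)$ and the kernel set $\K_{0,t}(h,a^\star)$. The lower-envelope difference $\inf_q q^\top P_{t+1}-\inf_q q^\top V^\pi_{0,t+1}$ is at most $\sup_{q\in\K_{0,t}(h,a^\star)}q^\top(P_{t+1}-V^\pi_{0,t+1})$, which I bound by evaluating the second infimum at a near-minimizer. By the induction hypothesis this is at most $\beta\sup_q q^\top b_{t+1}^\chi$. Bounding by the maximum over $a$ gives the second summand of $b_t^\chi$. The subgames after $(h,a,y)$ are those of $\pi$ restricted, and $\pi$ restricted still extends $\chi$ there, so the hypothesis applies at every successor.

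The second bracket is the vintage-$0$ Bellman residual, and I would treat it as in \cref{prop:recursive-por-estimates}(iii), but with pointwise epsilons. The SPE condition of \cref{thm:triangular-v2} gives $Q_t^{t,\pi}(h,a_\pi)\ge Q_t^{t,\pi}(h,a^\star)$, so
\[
Q_t^{0,\pi}(h,a^\star)-Q_t^{0,\pi}(h,a_\pi)\le\epsilon_t(h,a^\star)+\epsilon_t(h,a_\pi).
\]
If $h\in\operatorname{dom}\chi$ then $a_\pi=\chi(h)$, and this is at most $\max_a\epsilon_t(h,a)+\epsilon_t(h,\chi(h))$. Otherwise it is at most $2\max_a\epsilon_t(h,a)$. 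This is exactly $e_t^\chi(h)$. Nonnegativity follows as in the definition of $\PoR$, since $\pi$ is feasible for the precommitment problem. The base case $t=T$ is trivial, because $P_T=V_{0,T}=g$.

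Monotonicity under extension is a second backward induction. At each $h$, either $h\in\operatorname{dom}\chi$, so both $e$ terms coincide, or $h$ is newly fixed, so $\epsilon_t(h,\chi'(h))\le\max_a\epsilon_t(h,a)$ gives $e^{\chi'}\le e^\chi$. The recursion for $b$ is monotone in $b_{t+1}$ because it takes a sup of nonnegative mixtures. For the cell bracket, every pure SPE graph point lies in some cell, so $\mathfrak d_0^{\rm RL,SPE}$ is the maximum over cells of the cell suprema, each of which lies in $[L_c,U_c]$. Anytime validity and exactness on closure then follow directly.

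The main obstacle is the uniformity hypothesis on $\epsilon_t$, together with the scope of the first bracket. The induction uses $\epsilon$ at the actual continuation $\pi$, which the hypothesis covers since it holds for every continuation policy. I also have to check that the continuation comparison uses the sup over $\K_{0,t}$ rather than the inf, which is why the recursion carries $\sup_q$. If the paper's precommitment benchmark is over the full reward class rather than a fixed reward, I would apply the argument reward by reward.
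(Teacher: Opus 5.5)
Your proposal is correct and follows the paper's proof essentially step for step. The pointwise add-and-subtract of current-self action values, with $\chi$ pinning the SPE action on its domain, bounds the local vintage-$0$ residual by $e_t^\chi(h)$. The robust performance-residual step, with $\sup_{q\in\K_{0,t}(h,a)}$ and a maximum over all actions, then propagates the descendant certificates. The monotonicity claim and the cell bracket follow by the same backward induction and disjoint-cover argument the paper uses. One harmless difference: you take the precommitment action $a^\star$ rather than a maximizer of $Q_t^{0,\pi}$, so your second bracket is only bounded by the Bellman residual rather than equal to it, and your direct bound $\epsilon_t(h,a^\star)+\epsilon_t(h,a_\pi)$ covers that case.
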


\begin{proof}
Fix an SPE $\pi$ extending $\chi$.  At a history $h$ let $a_0$ maximize the initial-vintage action value and let $a_t=\pi_t(h)$ maximize the current-self action value.  Add and subtract the two current-self values to obtain
\[
Q_t^{0,\pi}(h,a_0)-Q_t^{0,\pi}(h,a_t)
\le\epsilon_t(h,a_0)+\epsilon_t(h,a_t).
\]
If $h\in\operatorname{dom}\chi$, then $a_t=\chi(h)$ and the right side is at most $e_t^\chi(h)$; otherwise it is at most twice the largest action-specific deviation.  Applying the robust performance-residual recursion gives the displayed recursion for $b_t^\chi$.  The continuation maximum remains over all actions because the initial-vintage precommitment action need not equal the fixed SPE action.  Backward induction proves the value bound.

Fixing an additional policy action replaces $2\max_a\epsilon_t(h,a)$ by
$\max_a\epsilon_t(h,a)+\epsilon_t(h,\chi'(h))$, which is no larger; the continuation operator is unchanged but is monotone in the smaller descendant certificates.  Backward induction gives the refinement claim.  Finally, the policy cells form a disjoint cover of the finite pure-policy graph.  Maximizing valid cell lower and upper bounds therefore gives valid global bounds; if every cell is solved or pruned by its upper bound, the two maxima coincide with the exact optimum.
\end{proof}

\subsection{Approximate vintage quotients and age-forgetting compression}
\label{sec:EC-approximate-vintage-quotient}

\begin{proof}[Proof of \cref{thm:approximate-vintage-quotient}]
Fix $s,s'\le t$ and a continuation policy $\pi$.  For a descendant date $j$ define
\[
\Delta_j:=\sup_{h_j\succeq h_t}
\left|V_{s,j}^{\pi}(h_j)-V_{s',j}^{\pi}(h_j)\right|,
\qquad \Delta_T=0,
\]
and
\[
\delta_j:=
\sup_{\substack{h_j\succeq h_t\\a\in\A_j(h_j)}}
\Dop\!\left(\K_{s,j}(h_j,a),\K_{s',j}(h_j,a)\right).
\]
At a date-$j$ history, add and subtract the lower expectation of the $s'$ continuation value under the $s$ kernel.  The lower-expectation functional is one-Lipschitz in its payoff, while the definition of $\Dop$ bounds the change of ambiguity set by $B_{j+1}\delta_j$.  Hence
\[
\Delta_j\le \beta\Delta_{j+1}+\beta B_{j+1}\delta_j.
\]
Backward iteration gives
\[
\Delta_t
\le
\sum_{j=t}^{T-1}\beta^{j-t+1}B_{j+1}\delta_j
=d_{t,h_t}^{+}(s,s'),
\]
which proves \cref{eq:operational-vintage-cover-value}.  If $R$ is an $\varepsilon$-cover, choose $r(s)\in R$ with $d_{t,h_t}^{+}(s,r(s))\le\varepsilon$ and decode the root coordinate $V_{s,t}^{\pi}(h_t)$ by $V_{r(s),t}^{\pi}(h_t)$.  The error is at most $\varepsilon$.  Applying the same argument with each descendant pair $(j,h_j)$ as the root shows that a cover under $\bar d_{t,h_t}^{+}$ recovers, uniformly over descendant roots, the coordinates of the vintages already active at $h_t$, namely $V_{s,j}^{\pi}(h_j)$ for $s\le t$.  Vintages born after date $t$ require the corresponding cover at their own root date.

For the policy statement, let $a^*$ maximize the exact current-self action values and let $\widetilde Q$ be the compressed values.  For every action $a$,
\[
\widetilde Q(a^*)-\widetilde Q(a)
\ge
Q(a^*)-Q(a)-2\varepsilon_u.
\]
Thus $a^*$ is $2\varepsilon_u$-optimal in the compressed recursion; if the exact gap is strictly larger than $2\varepsilon_u$, it remains the unique maximizer.  Applying this argument backward at every subgame preserves the pure SPE.  When vintage $0$ is retained without approximation, its precommitment recursion and its evaluation of the preserved policy use the original kernels at every date.  Both root coordinates, and therefore their difference, are exact.
\end{proof}

\begin{proof}[Proof of \cref{cor:logarithmic-vintage-memory}]
For each date $j$, let $R_j^L$ and $\phi_j^L$ satisfy the conditions in the corollary.  Define the compressed fixed-policy recursion by
\[
\widetilde V_{r,T}^{\pi}(h_T):=g(h_T),\qquad r\in R_T^L,
\]
and, for $j<T$ and $r\in R_j^L$,
\[
\widetilde V_{r,j}^{\pi}(h_j)
:=r_j(h_j,\pi_j(h_j))
+\beta\inf_{q\in\K_{r,j}(h_j,\pi_j(h_j))}
\sum_yq(y)\widetilde V_{\phi_{j+1}^L(r),j+1}^{\pi}(h_j,\pi_j(h_j),y).
\]
An omitted vintage $s$ is decoded at date $j$ by
$\widetilde V_{\phi_j^L(s),j}^{\pi}$.  The retraction property gives the intended value for stored representatives, while dynamic coherence gives
\[
\phi_{j+1}^L(\phi_j^L(s))=\phi_{j+1}^L(s),
\]
so the date-$j$ representative and the directly assigned date-$(j+1)$ representative use the same continuation coordinate.  This identity is the condition missing from independent datewise clustering.

Let
\[
E_j:=\sup_{\substack{s\le j\\h_j}}
\left|V_{s,j}^{\pi}(h_j)-
\widetilde V_{\phi_j^L(s),j}^{\pi}(h_j)\right|,
\qquad E_T=0.
\]
Adding and subtracting the exact continuation payoff under the representative kernel gives
\[
E_j\le\beta E_{j+1}+\beta\bar B C\rho^L.
\]
Indeed, the continuation-value term is bounded by $\beta E_{j+1}$, and the kernel replacement is bounded by $\beta\bar B C\rho^L$.  Backward iteration yields
\[
E_j
\le
\bar B C\rho^L\sum_{k=1}^{T-j}\beta^k
\le
\frac{\beta\bar B C}{1-\beta}\rho^L.
\]
The same comparison before substituting the policy action gives the action-value bound.  Solving the last display for $L$ gives logarithmic memory.  Because $\phi_j^L(0)=0$ for every $j$, vintage $0$ is evaluated exactly once a policy is fixed.  Because $\phi_j^L(j)=j$, the current-self action values use the correct current kernel; the gap condition therefore preserves the exact SPE, after which the root Price of Relearning is recomputed exactly.
\end{proof}

The coherence condition is substantive.  To see why, consider one action at dates $2$ and $3$, let $K_{1,2}=K_{2,2}$ lead deterministically to the unique date-$3$ state, and choose date-$3$ kernels and a bounded terminal reward with
$V_{1,3}=1/4$ and $V_{2,3}=-1/4$.  Set
$\phi_2(1)=2$, $\phi_3(1)=1$, and $\phi_3(2)=2$.  The direct local discrepancies between every original vintage and its datewise representative are zero, but the compressed coordinate stored as vintage $2$ at date $2$ continues through vintage $2$ at date $3$.  It therefore returns $-\beta/4$ instead of $\beta/4$.  Independent datewise closeness is thus insufficient; the identity
$\phi_{j+1}\circ\phi_j=\phi_{j+1}$ is what makes the recursive representative path well defined.

The archived compression experiment uses coherent retractions with fixed rewards and policy.  At horizon 64, 14 coordinates replace 65; the largest observed fixed-policy error is $4.86\times10^{-5}$ against the bound $5.91\times10^{-4}$.

\subsection{Recursive stability and multistage welfare}
\label{sec:EC-recursive-por-estimates}

\begin{proof}[Proof of \cref{prop:recursive-por-estimates}]
For part (i), fix $\pi$ and define
\[
D_u:=\sup_{h_u}|V_{s,u}^{\pi}(h_u)-V_{s',u}^{\pi}(h_u)|,
\qquad D_T=0.
\]
At a date-$u$ history, both vintages receive the same current reward under the fixed policy.  Add and subtract the lower expectation of $V_{s',u+1}^{\pi}$ under $K_{s,u}$.  The payoff Lipschitz bound and the operational set bound give
\[
D_u\le\beta D_{u+1}+\beta B_{u+1}\delta_{s,s';u}.
\]
Backward iteration yields \cref{eq:fixed-policy-vintage-bound}.

For part (ii), let
\[
G_t:=\sup_{h_t}\{V_t^{0,*}(h_t)-V_t^{0,\pi}(h_t)\},
\qquad G_T=0.
\]
Pointwise, $V_{t+1}^{0,*}\le V_{t+1}^{0,\pi}+G_{t+1}$.  Monotonicity and translation equivariance of lower expectation therefore imply
\[
V_t^{0,*}(h_t)
\le
\max_aQ_t^{0,\pi}(h_t,a)+\beta G_{t+1}.
\]
Subtracting $V_t^{0,\pi}(h_t)=Q_t^{0,\pi}(h_t,\pi_t(h_t))$ and taking suprema gives
\[
G_t\le\|e_t^{0,\pi}\|_\infty+\beta G_{t+1}.
\]
Iteration proves \cref{eq:performance-residual-bound}.

For part (iii), fix a history and let $a_0$ maximize $Q_t^{0,\pi^{\spe}}$ while $a_t=\pi_t^{\spe}(h_t)$ maximizes $Q_t^{t,\pi^{\spe}}$.  Then
\begin{align*}
e_t^{0,\pi^{\spe}}(h_t)
&=Q_t^{0,\pi^{\spe}}(h_t,a_0)-Q_t^{0,\pi^{\spe}}(h_t,a_t)\\
&=\{Q_t^{0,\pi^{\spe}}-Q_t^{t,\pi^{\spe}}\}(h_t,a_0)
 +\{Q_t^{t,\pi^{\spe}}(h_t,a_0)-Q_t^{t,\pi^{\spe}}(h_t,a_t)\}\\
&\quad+\{Q_t^{t,\pi^{\spe}}-Q_t^{0,\pi^{\spe}}\}(h_t,a_t)
\le2\varepsilon_t,
\end{align*}
where the middle term is nonpositive by current-self optimality.

For part (iv), fix $(h_t,a)$ and add and subtract the vintage-$0$ lower expectation of $V_{t,t+1}^{\pi^{\spe}}$.  Part (i) from date $t+1$ gives
\begin{align*}
&|Q_t^{0,\pi^{\spe}}(h_t,a)-Q_t^{t,\pi^{\spe}}(h_t,a)|\\
&\qquad\le
\beta\sup_{h_{t+1}}
|V_{0,t+1}^{\pi^{\spe}}(h_{t+1})-V_{t,t+1}^{\pi^{\spe}}(h_{t+1})|
+\beta B_{t+1}\delta_{0,t;t}\\
&\qquad\le
\sum_{j=t}^{T-1}\beta^{j-t+1}B_{j+1}\delta_{0,t;j}.
\end{align*}
Taking the supremum over histories and actions proves \cref{eq:epsilon-bound}.
\end{proof}

\begin{proof}[Proof of \cref{thm:multistage-por}]
Apply part (ii) of \cref{prop:recursive-por-estimates} to $\pi^{\spe}$ and then part (iii):
\[
\PoR_0
\le
\sum_{t=0}^{T-1}\beta^t\|e_t^{0,\pi^{\spe}}\|_\infty
\le
2\sum_{t=0}^{T-1}\beta^t\varepsilon_t.
\]
Substituting part (iv) and using
$\beta^t\beta^{j-t+1}=\beta^{j+1}$ proves the second inequality.

To show sharpness, take $T=2$.  Date $0$ has one zero-reward action leading deterministically to date $1$.  At date $1$, use two terminal-payoff actions from the sharp one-step construction: inherited kernel
$A=\{(1/2+\epsilon,1/2-\epsilon)\}$, fresh kernel
$B=\{(1/2,1/2)\}$, and payoffs
$z_1=\alpha(1,-1)$ and
$z_2=-\alpha(1,-1)+\delta\mathbf 1$, with
$\alpha+\delta\le1$ and $0<\delta<4\alpha\epsilon$.
The fresh self strictly selects $z_2$, whereas vintage $0$ selects $z_1$.  The root loss is
\[
\PoR_0=\beta^2(4\alpha\epsilon-\delta).
\]
Moreover $\varepsilon_1=2\beta\alpha\epsilon$ and all other $\varepsilon_t$ vanish, so
\[
\frac{\PoR_0}{\sum_t\beta^t\varepsilon_t}
=
\frac{4\alpha\epsilon-\delta}{2\alpha\epsilon}
\longrightarrow2
\]
as $\alpha\uparrow1$ and $\delta\downarrow0$.  Since
$\delta_{0,1;1}=\Dop(A,B)=2\epsilon$ and the terminal bound is one, the same sequence asymptotically attains the factor two in the ambiguity-defect inequality.
\end{proof}

\subsection{Finite-information memory and nested-vintage residual}
\label{sec:EC-memory-and-nesting}

\begin{proof}[Proof of \cref{thm:finite-information-memory}]
Write $X=c+S/8$, where $S=(S_1,\ldots,S_n)$ is uniform on $\{-1,+1\}^n$.  The map between $X$ and $S$ is bijective, so $I(S;Z)=I(X;Z)\le B$.  Let $p_i$ be the Bayes error for predicting $S_i$ from $Z$.  The binary Fano inequality gives $H(S_i\mid Z)\le h_2(p_i)$.  Entropy subadditivity, $H(S)=n$, and concavity of binary entropy yield
\[
n-B
\le H(S\mid Z)
\le \sum_{i=1}^n h_2(p_i)
\le n h_2(\bar p),
\qquad
\bar p:=\frac1n\sum_i p_i.
\]
Because $h_2$ is increasing on $[0,1/2]$,
\[
\bar p\ge h_2^{-1}\!\left(\left[1-\frac Bn\right]_+\right).
\]
For any reconstruction $\widehat X$, predict the sign of coordinate $i$ by the side of $c_i$ on which $\widehat X_i$ lies.  An incorrect sign implies $|\widehat X_i-X_i|\ge1/8$, and the induced sign rule cannot outperform the Bayes rule.  Therefore
\[
\frac1n\E\|\widehat X-X\|_1
\ge
\frac1{8n}\sum_i p_i
\ge
\frac18h_2^{-1}\!\left(\left[1-\frac Bn\right]_+\right).
\]
The same argument without $1/8$ gives the threshold-decision bound.  For every estimator, worst conditional risk over the cube is at least its average risk under the uniform distribution, so the result is also minimax on this attainable subclass.  Finally, for $b>0$, $d$ coordinates of at most $b$ bits satisfy $I(X;Z)\le H(Z)\le bd$.  Requiring average threshold error at most $\eta$ gives $bd\ge n[1-h_2(\eta)]$.
\end{proof}

\begin{proof}[Proof of \cref{thm:nested-vintage-factor-one}]
At the terminal date all vintages assign the same reward.  Suppose the value ordering holds at date $j+1$.  Monotonicity of lower expectation and the inclusion $K_{s,j}(h,a)\subseteq K_{u,j}(h,a)$ imply
\[
\inf_{q\in K_{s,j}(h,a)}E_q[V_{s,j+1}^{\pi}]
\ge
\inf_{q\in K_{s,j}(h,a)}E_q[V_{u,j+1}^{\pi}]
\ge
\inf_{q\in K_{u,j}(h,a)}E_q[V_{u,j+1}^{\pi}].
\]
Adding the common current reward proves $Q_{s,j}^{\pi}(h,a)\ge Q_{u,j}^{\pi}(h,a)$, and evaluating the fixed policy action gives the value ordering.  Backward induction proves the first claim.

Let $a_u$ maximize $Q_{u,u}^{\pi}(h,\cdot)$.  Add and subtract current-self values:
\begin{align*}
\max_aQ_{s,u}^{\pi}(h,a)-Q_{s,u}^{\pi}(h,a_u)
&\le
\max_a\{Q_{s,u}^{\pi}(h,a)-Q_{u,u}^{\pi}(h,a)\}\\
&\quad+\max_a\{Q_{u,u}^{\pi}(h,a)-Q_{u,u}^{\pi}(h,a_u)\}\\
&\quad+Q_{u,u}^{\pi}(h,a_u)-Q_{s,u}^{\pi}(h,a_u).
\end{align*}
The middle term is zero by current-self optimality and the last is nonpositive by the vintage ordering.  Only the first term remains.
\end{proof}

\subsection{Policy robustness under gaps and smooth actions}
\label{sec:EC-policy-robustness-regimes}

\begin{proof}[Proof of \cref{prop:policy-robustness-regimes}]
For part (i), let $a^*$ be the unique vintage-$0$ maximizer at a subgame.  For any $a\ne a^*$,
\[
Q_t^{t,\pi^{\spe}}(a^*)-Q_t^{t,\pi^{\spe}}(a)
\ge
Q_t^{0,\pi^{\spe}}(a^*)-Q_t^{0,\pi^{\spe}}(a)-2\varepsilon_t
\ge \kappa_t(h_t)-2\varepsilon_t>0.
\]
Thus every current self chooses the vintage-$0$ maximizer.  The vintage-$0$ Bellman residual is zero at every subgame, and the performance-residual recursion gives $\PoR_0=0$.

For part (ii), interior optimality and differentiability give
\[
\nabla Q^{\inh}(a_I)=0,
\qquad
\nabla Q^{\fresh}(a_F)=0.
\]
Hence
\[
\|\nabla Q^{\inh}(a_F)\|_2
=
\|\nabla Q^{\inh}(a_F)-\nabla Q^{\fresh}(a_F)\|_2
\le\varepsilon.
\]
For a differentiable $\mu$-strongly concave function, $-\nabla Q^{\inh}$ is $\mu$-strongly monotone.  Cauchy--Schwarz therefore gives
\[
\mu\|a_F-a_I\|_2
\le
\|\nabla Q^{\inh}(a_F)-\nabla Q^{\inh}(a_I)\|_2
=
\|\nabla Q^{\inh}(a_F)\|_2,
\]
which proves the displacement bound.  Finally, $L$-smoothness and $\nabla Q^{\inh}(a_I)=0$ imply
\[
Q^{\inh}(a_F)
\ge
Q^{\inh}(a_I)-\frac L2\|a_F-a_I\|_2^2.
\]
Optimality of $a_I$ supplies the nonnegative lower bound, and substitution yields the second inequality.
\end{proof}

\subsection{Exact fixed-model evaluation and universal certification}
\label{sec:EC-certified-computation}

A specified finite model with explicitly listed one-step ambiguity laws is evaluated by vintage backward induction.  Universal certification additionally optimizes rewards and equilibrium; it is NP-complete already on the two-date singleton-kernel subclass of \cref{thm:por-cert-complexity}.

\paragraph{Vintage backward evaluation.}
Let $\X_t$ be the finite state set at date $t$, let $f_t(x,a,y)$ be the successor state, and suppose each ambiguity set is listed by at most $K$ probability laws.  For a fixed reward system, initialize
\[
 V_{s,T}(x)=g(x),\qquad s<T,
\]
and, for $t=T-1,\ldots,0$, compute
\begin{align*}
 Q_{s,t}(x,a)
 &=r_t(x,a)+\beta\min_{q\in K_{s,t}(x,a)}
   \sum_yq(y)V_{s,t+1}(f_t(x,a,y)),\qquad s\le t,\\
 \pi_t(x)&=\tau_{t,x}\!\left(\argmax_a Q_{t,t}(x,a)\right),\\
 V_{s,t}(x)&=Q_{s,t}(x,\pi_t(x)),\qquad s\le t.
\end{align*}
A separate vintage-zero Bellman recursion gives $P_t(x)$.  The root difference is the selected-SPE Price of Relearning.

\begin{proof}[Proof of \cref{thm:vintage-backward-evaluation}]
At $T$ all vintages assign $g$.  If $V_{s,t+1}$ is correct for every active $s$, the displayed $Q_{s,t}$ is exactly vintage $s$'s lower expectation under the selected continuation policy.  The date-$t$ self maximizes the diagonal value, satisfying the one-period-deviation condition in \cref{thm:triangular-v2}, while the same action is recorded under every older vintage.  Backward induction therefore yields the tie-broken pure Markov SPE and all vintage values.  The vintage-zero Bellman recursion gives the rectangular precommitment optimum; subtraction gives the selected-SPE Price of Relearning.

Each $(t,x,a,s)$ evaluation costs at most $KY$ multiply--adds, so summing over actions, states, and active vintages gives
\[
 O\!\left(AKY\sum_{t=0}^{T-1}(t+1)|\X_t|\right).
\]
One value per active vintage--state pair, plus precommitment values and policies, gives the storage bound; compressing the input kernels requires additional structure.
\end{proof}

The two-date validation agrees with $\PoR_0=0.94^2=0.8836$ and passes exhaustive policy and one-period-deviation checks.  Table~\ref{tab:EC-vintage-backward-scaling} reports median runtimes for the eight-state benchmark.

\begin{proposition}[Selection-aware certificate and shared-state Markov formulation]
\label{prop:EC-selection-aware-certificate}
At a decision node with finite action set $\mathcal A$, let $P_a$, $I_a$, and $F_a$ denote precommitment, initial-vintage policy, and current-self action values.  Let $\varnothing\ne\mathcal J\subseteq\mathcal A$, suppose the realized selectors satisfy
\[
a_P\in\arg\max_{a\in\mathcal A}P_a,
\qquad
a_F\in\mathcal J\cap\arg\max_{a\in\mathcal A}F_a,
\]
and assume
\[
0\le P_a-I_a\le c_a,
\qquad
0\le I_a-F_a\le e_a,
\qquad
|P_a|,|I_a|,|F_a|\le B.
\]
Define $\Omega_B(c,e;\mathcal J)$ as the maximum of $P_{a_P}-I_{a_F}$ over all triples and selectors satisfying these inequalities and maximizing-action conditions.  Then
\begin{equation}\label{eq:EC-selection-aware-envelope}
P_{a_P}-I_{a_F}
\le \Omega_B(c,e;\mathcal J)
\le \max_{a\in\mathcal A}(c_a+e_a).
\end{equation}
For finite actions, $\Omega_B$ is obtained by enumerating $(a_P,a_F)$ and solving linear programs.  Recursively substituting robust continuation certificates for $c_a$ and coupled vintage-action certificates for $e_a$ gives a valid state-dependent Price-of-Relearning upper bound.  The linear-program envelope is no larger than the closed-form factor-one bound.

Now let rewards and kernels be state based on a finite recombining graph, and let $\Pi^{\rm M}$ be the pure Markov policies.  For a state-coupled reward family $\mathcal R_\Theta$, define
\[
\mathfrak d_0^{\rm cert,M}(\mathcal R_\Theta)
:=
\sup_{r\in\mathcal R_\Theta}
\sup_{\pi\in\mathrm{SPE}(r)\cap\Pi^{\rm M}}
\{P_0^r-V_{0,0}^{r,\pi}\}.
\]
There is an exact global shared-state polyhedral/disjunctive formulation for this Markov-restricted certificate.  If $p=\dim(\Theta)$, date $t$ has $S_t$ states, and there are at most $A$ actions, $Y$ outcomes, and $K$ listed laws per ambiguity set, its explicit description has size polynomial in
\begin{equation}\label{eq:EC-shared-state-formulation-size}
p+AKY\sum_{t=0}^{T-1}(t+1)S_t.
\end{equation}
Always $\mathfrak d_0^{\rm cert,M}\le\mathfrak d_0^{\rm cert}$.  A fixed statewise tie-breaking rule yields the corresponding selected-equilibrium certificate, not the selection-free unrestricted supremum.  Numerical equality with the unrestricted certificate holds whenever its supremum is attained or approached by state-consistent pure SPEs.  The stronger condition that every relevant pure SPE is state consistent is sufficient for equality of the equilibrium correspondences; uniqueness of the current-self action at every state is one such condition.  Neither stronger condition is necessary for numerical equality.
\end{proposition}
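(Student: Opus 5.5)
The plan is to prove the local envelope \eqref{eq:EC-selection-aware-envelope} by a feasibility argument plus a three-term telescoping. For the first inequality, the realized action values and selectors $(P,I,F,a_P,a_F)$ satisfy every constraint defining $\Omega_B(c,e;\mathcal J)$, so they are feasible and their objective is at most the maximum. Attainment of that maximum comes from finite enumeration:
\begin{itemize}
\item For each admissible selector pair $(a_P,a_F)\in\mathcal A\times\mathcal J$, the constraints are linear in $(P_a,I_a,F_a)_a$: the box bounds, the order bounds, $P_{a_P}\ge P_a$ and $F_{a_F}\ge F_a$ for all $a$.
\item The objective $P_{a_P}-I_{a_F}$ is linear, and the feasible set is a compact polytope.
\item So each subproblem is a bounded linear program. $\Omega_B$ is the maximum over the finitely many nonempty ones, and the pair realized at the node guarantees at least one is nonempty.
\end{itemize}
For the second inequality, I would take any feasible point and write
\[
P_{a_P}-I_{a_F}=(P_{a_P}-F_{a_P})+(F_{a_P}-F_{a_F})+(F_{a_F}-I_{a_F}).
\]
The middle term is nonpositive by current-self optimality of $a_F$, and the last is nonpositive because $I_a\ge F_a$. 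The first term is at most $c_{a_P}+e_{a_P}\le\max_a(c_a+e_a)$. Since this holds at every feasible point, the LP envelope is no larger than the closed-form factor-one bound, which is the same mechanism as \cref{thm:nested-vintage-factor-one}.

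For the recursive certificate, I would argue by backward induction in the style of \cref{prop:EC-policy-cell-certificate}. At each node, $c_a$ is replaced by a robust continuation certificate bounding $P_a-I_a$. This certificate is obtained from the descendant bounds through monotonicity and translation equivariance of the vintage-$0$ lower expectation, exactly as in part (ii) of \cref{prop:recursive-por-estimates}. Likewise, $e_a$ is replaced by a coupled bound on $I_a-F_a$ from the fixed-policy vintage stability estimate, part (i) of \cref{prop:recursive-por-estimates}. Validity is inherited because each substituted quantity dominates the true gap, and $\Omega_B$ is monotone in $(c,e)$ since enlarging $(c,e)$ only relaxes the constraints.

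For the shared-state part, I would first record that, on a recombining graph with state-based primitives, a pure Markov policy has vintage values depending only on $(t,x)$. This follows by the induction in \cref{thm:vintage-backward-evaluation}. Consequently the SPE condition of \cref{thm:triangular-v2} reduces to statewise argmax conditions on $Q_{t,t}(x,\cdot)$. The formulation then has the following ingredients:
\begin{itemize}
\item The variables are $\theta\in\Theta$, with rewards affine in $\theta$, together with $V_{s,t}(x)$, $Q_{s,t}(x,a)$ for $s\le t$ and $P_t(x)$.
\item Each listed-law lower expectation is a minimum of $K$ affine expressions. It is encoded by ``at most each term'' inequalities plus a binary indicator selecting an active law, with big-$M$ constants supplied by the bounds $B_t$.
\item Binary action selectors $\pi_t(x,a)$ impose $Q_{t,t}(x,\pi)\ge Q_{t,t}(x,a)$ for all $a$ and set $V_{s,t}(x)=Q_{s,t}(x,\pi)$ for every active vintage $s$.
\item A second binary selector encodes the maximizing action in the vintage-$0$ Bellman recursion for $P$.
\item The objective is to maximize $P_0-V_{0,0}$.
\end{itemize}
All inequalities are weak, so every tie, and hence every state-consistent pure SPE, is representable. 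Counting one block per $(t,x,a,s)$ gives the size \eqref{eq:EC-shared-state-formulation-size}, plus the $p$ coordinates of $\theta$.

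The remaining claims follow quickly. The bound $\mathfrak d_0^{\rm cert,M}\le\mathfrak d_0^{\rm cert}$ holds because $\mathrm{SPE}(r)\cap\Pi^{\rm M}\subseteq\mathrm{SPE}(r)$. A fixed tie-breaking rule replaces the selector by a deterministic choice, which yields the selected-equilibrium certificate. Numerical equality holds when the unrestricted supremum is attained or approached by state-consistent SPEs, by the same inclusion together with a limit. If every current-self maximizer is unique, backward induction forces each SPE to play that action at every state: once continuation values depend only on state, so do the current action values. Hence the correspondences coincide. For non-necessity, I would give a small example with a tied, payoff-irrelevant state reached by two histories, where a history-dependent SPE exists but achieves no larger loss than a Markov one.

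The main obstacle is exactness of the disjunctive encoding. It must capture the minimum in each lower expectation and the argmax with ties simultaneously, across all vintages sharing one selected action, without admitting spurious points; the big-$M$ bounds must be valid uniformly over $\theta\in\Theta$. I would handle this first by proving that every feasible point decodes to a genuine (reward, Markov SPE) pair, using the induction above, and then the converse.
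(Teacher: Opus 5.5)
Your proposal is correct and follows essentially the same route as the paper: the realized triple is feasible, $P_{a_P}-I_{a_F}$ is split by the same add-and-subtract decomposition (you use three terms where the paper writes four), enumerating $(a_P,a_F)$ reduces $\Omega_B$ to linear programs, descendant certificates are substituted monotonically in a backward pass, and the shared-state disjunctive formulation uses one equilibrium selector per date--state, with exactness checked by decoding feasible points in both directions. The inequality $\mathfrak d_0^{\rm cert,M}\le\mathfrak d_0^{\rm cert}$, the tie-breaking remark, and the equality claims are handled as in the paper, via the inclusion $\mathrm{SPE}(r)\cap\Pi^{\rm M}\subseteq\mathrm{SPE}(r)$ and backward induction under unique current-self maximizers.
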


\begin{proof}
The realized triple is feasible for $\Omega_B$, proving the first inequality.  For any feasible triple,
\begin{align*}
P_{a_P}-I_{a_F}
&=(P-I)_{a_P}+(I-F)_{a_P}
 +(F_{a_P}-F_{a_F})+(F-I)_{a_F}\\
&\le c_{a_P}+e_{a_P}
\le\max_{a\in\mathcal A}(c_a+e_a).
\end{align*}
The last two terms are nonpositive by current-self optimality and the one-sided vintage order.  Enumerating $(a_P,a_F)$ leaves a linear feasible set in $(P,I,F)$.  Valid descendant certificates remain valid under robust lower expectation, so backward induction preserves the envelope.

For the recombining formulation, use one reward parameter vector $\theta$, state-action rewards from its affine map, precommitment values/selectors by date--state, vintage values and action values, and finite disjunctions for minimizing listed laws.  One equilibrium selector per date--state imposes current-self optimality for every incoming history, and all predecessor edges share the same successor-state variables.

A reward system and pure Markov SPE supply a feasible point with the correct objective.  Conversely, any feasible point defines a state-based reward system and Markov policy; shared Bellman equalities evaluate all vintages, diagonal maximizing constraints enforce one-period deviations, and precommitment selectors recover the initial-vintage optimum.  This proves exactness and the size bound.  Because Markov SPEs are a subset of all pure SPEs, the certificate inequality follows.  The stated attainment/approximation condition gives equality of the suprema; full state consistency gives equality of the equilibrium correspondences, while tied history-dependent equilibria can still leave the two numerical suprema equal.
\end{proof}

State-based primitives do not force every pure SPE to be Markov: after recombination, tied current-self actions may be selected differently across histories and valued differently by an older vintage.  One statewise selector therefore represents the Markov, not the unrestricted, equilibrium correspondence.

\paragraph{Universal worst-reward certification.}
The universal polyhedral formulation maximizes over bounded rewards and pure SPEs using reward, vintage-value, ambiguity-vertex, and action-selector variables.  It is exact but not asserted polynomial in the unrestricted history tree \citep{EC-Balas1979}.

A partial policy cell $\chi$ fixes only the SPE action at histories in its domain.  It does not fix the initial-vintage precommitment action.  Let $\varepsilon_{t,a}(h)$ be an action-specific initial-versus-current vintage bound and define
\[
\ell_t^\chi(h)=
\begin{cases}
\max_a\varepsilon_{t,a}(h)+\varepsilon_{t,\chi(h)}(h),&h\in\operatorname{dom}\chi,\\
2\max_a\varepsilon_{t,a}(h),&h\notin\operatorname{dom}\chi.
\end{cases}
\]
The valid recursion is
\begin{equation}\label{eq:EC-correct-policy-cell-certificate}
b_t^\chi(h)=\ell_t^\chi(h)+\beta
\max_{a\in\A_t(h)}\sup_{q\in K_{0,t}(h,a)}
E_q[b_{t+1}^\chi(H_{t+1})],
\qquad b_T^\chi=0.
\end{equation}
The continuation maximum remains over all actions whether or not the SPE action is fixed.  If a global pilot bound is available, the reported cell and global bounds are
\[
U_\chi=\min\{U_\chi^{\rm MIP},B_\chi,U^{\rm pilot}\},
\qquad
U=\min\{U^{\rm pilot},\max_\chi U_\chi\}.
\]
A regression witness has old fixed-branch value $0.285911$, exact cell optimum $0.297387$, and corrected certificate $0.400933$; the audit fails if the invalid recursion returns.

Four independent two-date calculations agree within $1.57\times10^{-13}$.  The corrected three-date interval is $[0.5708068461,1.7245881687]$; on the four-date scanner tree the incumbent is $0.2750440124$, with factor-two, factor-one, and selection-aware bounds $1.2124365273$, $0.6062182637$, and $0.5993942250$.  The $3.66\%$ normalization is additive, not a relative MIP gap.  An independent bracket checker verifies primal witnesses by direct robust backward evaluation and SPE inequalities and recomputes the analytic bounds by small continuous LPs, reproducing the three-date bracket exactly and the scanner upper bound within $1.3\times10^{-14}$.

\paragraph{Approximate vintage compression.}
The age-forgetting experiment fixes rewards and a Markov policy, retains the initial vintage and coherent recent-age representatives, and decodes omitted coordinates.  Table~\ref{tab:EC-vintage-compression} compares actual and certified errors; this structured upper bound is distinct from the unrestricted memory lower bounds.

\paragraph{Reproducibility.}
The archive records inputs, bounds, checksums, and independent regression tests.

\subsection{Complexity of universal Price-of-Relearning certification}
\label{sec:EC-por-cert-complexity}

The hardness statement concerns the outer certification problem, not evaluation of a fixed reward model.  We use the following state-graph subclass.  Let $G=(V,E)$ be an undirected graph with $m=|E|\ge1$.  There is one date-zero state with one action.  Its unique kernel is uniform over date-one edge states $x_e$, $e\in E$.  Each edge state $x_e$, for $e=\{u,v\}$, has two actions $a_e^u,a_e^v$ and two successor terminal states $z_u,z_v$.  All stage rewards vanish.  The terminal reward variables are shared by state and satisfy
\[
 g(z_v)=x_v\in[0,1],\qquad v\in V.
\]
At $x_e$ the singleton vintage kernels are
\begin{align*}
 K_{0,1}(x_e,a_e^u)&=\{\delta_{z_u}\},&
 K_{1,1}(x_e,a_e^u)&=\{\delta_{z_v}\},\\
 K_{0,1}(x_e,a_e^v)&=\{\delta_{z_v}\},&
 K_{1,1}(x_e,a_e^v)&=\{\delta_{z_u}\}.
\end{align*}
Thus the date-one evaluator reverses the two terminal coordinates relative to the initial evaluator.

\begin{proof}[Proof of \cref{thm:por-cert-complexity}]
Fix a terminal vector $x\in[0,1]^V$.  At edge state $e=\{u,v\}$, the initial-vintage precommitment problem chooses the larger of $x_u,x_v$ and has value
\[
 P_1(x_e)=\max\{x_u,x_v\}.
\]
The date-one self values $a_e^u$ at $x_v$ and $a_e^v$ at $x_u$.  It therefore selects an action whose initial-vintage value is $\min\{x_u,x_v\}$.  Ties do not change that value.  Since the root has one action and $\beta=1$,
\begin{equation}\label{eq:EC-maxcut-por-identity}
 \PoR_0(x)
 =\frac1m\sum_{\{u,v\}\in E}
 \left(\max\{x_u,x_v\}-\min\{x_u,x_v\}\right)
 =\frac1m\sum_{\{u,v\}\in E}|x_u-x_v|.
\end{equation}

For fixed coordinates other than $x_v$, the right side of \eqref{eq:EC-maxcut-por-identity} is a convex piecewise-linear function of $x_v\in[0,1]$.  Its maximum on the interval is attained at an endpoint.  Rounding one coordinate at a time therefore produces a binary vector $s\in\{0,1\}^V$ without decreasing the objective.  For a binary vector,
\[
 \sum_{\{u,v\}\in E}|s_u-s_v|
\]
is exactly the number of edges crossing the cut $\{v:s_v=1\}$.  Hence
\begin{equation}\label{eq:EC-por-maxcut}
 \mathfrak d_0^{\rm cert}([0,1]^V;G)=\frac{\operatorname{MaxCut}(G)}{|E|}.
\end{equation}

Given an unweighted \textsc{Max-Cut} instance $(G,k)$, construct the preceding Price-of-Relearning instance and set $\gamma=k/|E|$.  The construction is polynomial, uses two decision dates, zero stage rewards, one root action, two actions per edge state, singleton kernels, and the shared terminal reward box $[0,1]^V$.  Equation \eqref{eq:EC-por-maxcut} gives a yes-instance if and only if $G$ has a cut of size at least $k$.  Simple \textsc{Max-Cut} is NP-complete \citep{EC-GareyJohnsonStockmeyer1976}, so the general certification problem is NP-hard.  The restricted problem is in NP because a cut is a polynomial certificate and its cardinality verifies the threshold.  This proves NP-completeness of the graph-generated restricted subclass.  The construction has fixed horizon and only vintages zero and one, proving that neither parameter alone yields tractability.

For the tractability statement, the reward-interaction graph of this subclass is $G$: two terminal reward variables interact precisely when they appear at a common edge state.  Given a nice tree decomposition of width $w$, maintain for every bag $B$ and assignment $\sigma:B\to\{0,1\}$ the largest number of crossing edges introduced in the processed subtree among assignments extending $\sigma$.  Introduce-vertex nodes copy the child table, introduce-edge nodes add $\mathbf 1\{\sigma(u)\ne\sigma(v)\}$, forget nodes maximize over the forgotten bit, and join nodes add child values after each edge is assigned to exactly one introduce-edge node.  There are at most $2^{w+1}$ assignments per bag.  For a supplied nice decomposition with $N_{\rm bag}$ bags, the running time is
\[
 O\!\left(2^{w+1}N_{\rm bag}\right).
\]
When the supplied or constructed nice decomposition has $N_{\rm bag}=O(|V|+|E|)$, the linear-size specialization in the main paper follows.  More general nice-decomposition conversions may introduce a factor polynomial in $w$; standard treewidth algorithms still make the problem fixed-parameter tractable in $w$ \citep{EC-CyganEtAl2015}.  Dividing the resulting maximum cut by $|E|$ gives $\mathfrak d_0^{\rm cert}([0,1]^V;G)$ for this graph-generated subclass.  The statement does not extend the treewidth guarantee to arbitrary pairwise state-coupled certification instances.
\end{proof}

The archived verification script enumerates the cuts of 35 graphs with four through ten vertices and independently evaluates the constructed relearning objective.  The maximum absolute discrepancy between $\operatorname{MaxCut}(G)/|E|$ and the Price-of-Relearning value is $1.12\times10^{-16}$.

The reduction uses singleton rectangular kernels.  Its hardness is therefore not inherited from nonrectangular robust policy evaluation or from evaluating a fixed Markov decision process, which is polynomial for explicitly represented finite-horizon models \citep{EC-PapadimitriouTsitsiklis1987}.  It is created by the outer optimization over shared reward coordinates together with the cross-vintage reversal of continuation choices.  The treewidth result identifies the corresponding structural parameter: sparse interaction among shared reward coordinates, rather than horizon or the number of active vintages alone.

\section{EC.4 Endogenous Compatible Protocol Design}

\begin{proof}[Proof of \cref{prop:conditional-compatible-repair-v7}]
Let $W=\sum_{x\in\mathscr X}w_x$ and
$\bar h(u)=W^{-1}\sum_xw_xh_{C_x}(u)$.  Minkowski additivity and positive homogeneity of support functions \citep{EC-RockafellarWets1998} give
\[
\bar h=h_{C^{\rm av}}.
\]
For every direction $u$ and every scalar $y$, the weighted variance identity is
\[
\sum_xw_x(y-h_{C_x}(u))^2
=
\sum_xw_x(h_{C_x}(u)-\bar h(u))^2
+W(y-\bar h(u))^2.
\]
Set $y=h_C(u)$ and integrate with respect to $\nu$.  The calibration set is finite, $\nu$ is finite, and all support functions are continuous and bounded on the unit sphere, so every term is measurable and integrable.  The resulting identity is exactly \cref{eq:conditional-compatible-variance-identity}, and it proves that $C^{\rm av}$ is a minimizer.

If $C$ is any minimizer, the nonnegative second term in that identity vanishes, hence $h_C=h_{C^{\rm av}}$ $\nu$-almost everywhere.  If the two continuous support functions differed at a point of $\supp(\nu)$, continuity would produce a relatively open neighborhood of positive $\nu$-measure on which they differed, a contradiction.  Thus they agree on $\supp(\nu)$.  When that support determines compact convex sets, equality of support functions there implies $C=C^{\rm av}$.
\end{proof}

\begin{proof}[Proof of \cref{thm:endogenous-design-existence}]
Let
\[
\mathfrak G:=\{(c,\pi):c\in\mathfrak C_{\rm stat},\ \pi\in\mathrm{SPE}(c)\}.
\]
The graph is nonempty by assumption.  To prove closedness, take $(c_n,\pi_n)\to(c,\pi)$ with $(c_n,\pi_n)\in\mathfrak G$.  Since the pure-policy set is finite, a subsequence has $\pi_n=\pi$ for all $n$.  At every subgame and for every unilateral deviation, the equilibrium inequality under $c_n$ involves finitely many robust action values.  Their continuity in $c$ permits passage to the limit, so the same inequalities hold at $c$ and $\pi\in\mathrm{SPE}(c)$.  Thus $\mathfrak G$ is closed.  It is compact because $\mathfrak C_{\rm stat}$ is compact and the policy set is finite.

For a fixed policy, its nominal occupancy probabilities are fixed, and each fidelity loss is continuous in $c$.  Hence the finite discounted sum defining the design objective is continuous on every policy slice and therefore on $\mathfrak G$.  Weierstrass' theorem gives a minimizer.
\end{proof}

\begin{lemma}[Soft-policy occupancy Lipschitz bound]\label{lem:EC-soft-occupancy-lipschitz}
Suppose the entropy-regularized action values satisfy
$\sup_{t,h,a}|Q_t^r(h,a)-Q_t^{r'}(h,a)|\le L_Q|r-r'|$ and the two policies use a common nominal transition kernel.  Let $d_0^r,d_0^{r'}$ be their initial state distributions.  Then
\begin{align}
\|d_t^r-d_t^{r'}\|_1
&\le \|d_0^r-d_0^{r'}\|_1
+t\frac{2L_Q}{\tau}|r-r'|,\label{eq:EC-soft-state-lipschitz}\\
\|\omega_t^r-\omega_t^{r'}\|_1
&\le \|d_0^r-d_0^{r'}\|_1
+(t+1)\frac{2L_Q}{\tau}|r-r'|.\label{eq:EC-soft-occupancy-datewise}
\end{align}
Consequently,
\begin{align}
\sum_{t=0}^{T-1}\beta^t\|\omega_t^r-\omega_t^{r'}\|_1
&\le
\left(\sum_{t=0}^{T-1}\beta^t\right)\|d_0^r-d_0^{r'}\|_1\notag\\
&\quad+
\frac{2L_Q}{\tau}\left(\sum_{t=0}^{T-1}\beta^t(t+1)\right)|r-r'|.\label{eq:EC-soft-occupancy-general}
\end{align}
In particular, the first term vanishes under a common initial distribution.
\end{lemma}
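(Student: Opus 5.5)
The plan is a standard coupling/telescoping argument for Markov chains driven by nearby policies, with the softmax form of the entropy-regularized policy supplying the per-step policy perturbation. The first step is a pointwise policy bound. The soft policy is $\pi_t^r(a\mid h)\propto\exp(Q_t^r(h,a)/\tau)$, and we aim to show
\[
\|\pi_t^r(\cdot\mid h)-\pi_t^{r'}(\cdot\mid h)\|_1\le\frac{2}{\tau}\|Q_t^r(h,\cdot)-Q_t^{r'}(h,\cdot)\|_\infty\le\frac{2L_Q}{\tau}|r-r'|.
\]
This follows from the global softmax estimate already used in the proof of \cref{thm:shape-defect}, namely $\|\softmax(x)-\softmax(y)\|_1\le2\|x-y\|_\infty$. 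That estimate was derived on $\ClrH$, but the softmax is invariant under adding constants, so we may first center $x-y$. Centering can at most double the sup-norm, so the cleaner route is to use the Jacobian bound $\sum_i\sigma_i|h_i-\bar h|\le 2\|h\|_\infty$, which holds for arbitrary $h$. We apply it with $x=Q^r/\tau$ and $y=Q^{r'}/\tau$.

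The second step is a datewise recursion for the state marginals. Write $d_{t+1}^r=\sum_{h,a}d_t^r(h)\pi_t^r(a\mid h)P(\cdot\mid h,a)$ under the common nominal kernel $P$. We add and subtract $\sum_{h,a}d_t^{r'}(h)\pi_t^r(a\mid h)P(\cdot\mid h,a)$. Because the map $d\mapsto d\pi^rP$ is a Markov operator, it is an $\ell_1$ contraction, which bounds the first difference by $\|d_t^r-d_t^{r'}\|_1$. The second difference is $\sum_h d_t^{r'}(h)\sum_a(\pi_t^r-\pi_t^{r'})(a\mid h)P(\cdot\mid h,a)$. Stochasticity of $P$ and the fact that $d_t^{r'}$ is a probability vector bound it by $\sup_h\|\pi_t^r-\pi_t^{r'}\|_1\le 2L_Q|r-r'|/\tau$. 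This gives
\[
\|d_{t+1}^r-d_{t+1}^{r'}\|_1\le\|d_t^r-d_t^{r'}\|_1+\frac{2L_Q}{\tau}|r-r'|,
\]
and induction yields \eqref{eq:EC-soft-state-lipschitz}.

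The third step passes to state--action occupancies and sums. Set $\omega_t^r(h,a)=d_t^r(h)\pi_t^r(a\mid h)$. The same add-and-subtract split gives $\|\omega_t^r-\omega_t^{r'}\|_1\le\|d_t^r-d_t^{r'}\|_1+2L_Q|r-r'|/\tau$, since the first term is exactly $\|d_t^r-d_t^{r'}\|_1$ because each $\pi$ row sums to one. Combining this with step two gives \eqref{eq:EC-soft-occupancy-datewise}. Multiplying by $\beta^t$ and summing over $t<T$ gives \eqref{eq:EC-soft-occupancy-general}, and the first term vanishes when $d_0^r=d_0^{r'}$.

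The only delicate point is the constant in the softmax step. We must make sure the factor is $2/\tau$ for an arbitrary (uncentered) $Q$-difference rather than $2c_*$ for a norm on $\ClrH$. This is handled by applying the Jacobian computation $\|D\sigma(x)h\|_1=\sum_i\sigma_i|h_i-\bar h|\le2\|h\|_\infty$ directly to all $h\in\R^{|\A|}$ and integrating along the segment. Everything else is routine contraction bookkeeping.
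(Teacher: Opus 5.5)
Your proposal is correct and follows essentially the same route as the paper: the $\ell_\infty\to\ell_1$ bound $\|(\operatorname{diag}(p)-pp^\top)h\|_1\le 2\|h\|_\infty$ on the softmax Jacobian gives the per-step policy perturbation $2L_Q|r-r'|/\tau$, and add-and-subtract steps under the common kernel then give the state and occupancy recursions. The only difference is cosmetic. You insert the mixed term built from $d_t^{r'}$ and $\pi_t^r$, whereas the paper uses $d_t^r$ and $\pi_t^{r'}$, and this choice does not affect the argument.
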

\begin{proof}
At a subgame, entropy regularization gives the softmax policy
\[
\pi_\tau^r(a\mid h)=
\frac{\exp(Q_t^r(h,a)/\tau)}{\sum_b\exp(Q_t^r(h,b)/\tau)}.
\]
The softmax Jacobian is $\tau^{-1}(\operatorname{diag}(p)-pp^\top)$ and its operator norm from $\ell_\infty$ to $\ell_1$ is at most $2/\tau$.  Hence, with $\eta=(2L_Q/\tau)|r-r'|$,
\[
\sup_h\|\pi_t^r(\cdot\mid h)-\pi_t^{r'}(\cdot\mid h)\|_1\le\eta.
\]
Under the common transition kernel, adding and subtracting the state--action law formed from $d_t^r$ and $\pi_t^{r'}$ gives
\[
\|d_{t+1}^r-d_{t+1}^{r'}\|_1
\le
\|d_t^r-d_t^{r'}\|_1+\eta.
\]
Induction proves \eqref{eq:EC-soft-state-lipschitz}.  Since
$\omega_t^r=d_t^r\pi_t^r$, another add-and-subtract step yields
\[
\|\omega_t^r-\omega_t^{r'}\|_1
\le\|d_t^r-d_t^{r'}\|_1+\eta,
\]
which proves \eqref{eq:EC-soft-occupancy-datewise}.  Discounting and summing gives \eqref{eq:EC-soft-occupancy-general}.
\end{proof}

\begin{proof}[Proof of \cref{thm:endogenous-design-fixed-point}]
Write
\[
N(r)=\sum_i\omega_i(r)s_i\widehat r_i,
\qquad
D(r)=\sum_i\omega_i(r)s_i,
\qquad
m(r)=\frac{N(r)}{D(r)}.
\]
The weighted occupancy Lipschitz assumption makes $N$ and $D$ continuous and $D(r)\ge\underline d>0$.  Because interval projection is continuous and maps into $[\underline r,\bar r]$, $\mathcal T_\tau$ is a continuous self-map of that compact interval.  In one dimension, define $g(r)=\mathcal T_\tau(r)-r$.  Then $g(\underline r)\ge0$ and $g(\bar r)\le0$, so the intermediate value theorem gives a fixed point.

For the Lipschitz estimate, let $\Delta_{\widehat r}=\max_i\widehat r_i-\min_i\widehat r_i$.  Because $m(r')$ is a nonnegative weighted average of the targets, $m(r')\in[\min_i\widehat r_i,\max_i\widehat r_i]$.  Moreover,
\begin{align*}
N(r)-m(r')D(r)
&=\sum_i s_i\omega_i(r)[\widehat r_i-m(r')]\\
&=\sum_i s_i[\omega_i(r)-\omega_i(r')][\widehat r_i-m(r')],
\end{align*}
where the second equality uses $N(r')-m(r')D(r')=0$.  Therefore
\begin{align*}
|m(r)-m(r')|
&=\frac{|N(r)-m(r')D(r)|}{D(r)}\\
&\le\frac{\Delta_{\widehat r}}{D(r)}\sum_i s_i|\omega_i(r)-\omega_i(r')|\\
&\le\frac{\Delta_{\widehat r}L_a}{\underline d}|r-r'|.
\end{align*}
Projection is nonexpansive, so the same modulus applies to $\mathcal T_\tau$.  If $q_{\rm tar}=\Delta_{\widehat r}L_a/\underline d<1$, Banach's theorem gives a unique fixed point and, for every initialization,
\[
|r^k-r_\tau^*|\le q_{\rm tar}^k|r^0-r_\tau^*|.
\]
\end{proof}

\begin{proof}[Proof of \cref{thm:compatible-design-performance}]
Let $\pi^c$ be subgame-wise Bellman optimal for the compatible family and abbreviate
\[
\varepsilon_t=\varepsilon_t^{\rm tar}(c).
\]
Because $\pi^c$ is optimal for the compatible Bellman system, at every subgame
\begin{align*}
&\max_a Q_t^{{\rm tar},\pi^c}(h_t,a)
-Q_t^{{\rm tar},\pi^c}(h_t,\pi_t^c(h_t))\\
&\quad\le
\max_a\left[Q_t^{c,\pi^c}(h_t,a)+\varepsilon_t\right]
-
\left[Q_t^{c,\pi^c}(h_t,\pi_t^c(h_t))-\varepsilon_t\right]
=2\varepsilon_t.
\end{align*}
Let
\[
G_t
:=
\sup_{h_t}
\left(V_t^{{\rm tar},*}(h_t)-V_t^{{\rm tar},\pi^c}(h_t)\right),
\qquad G_T=0.
\]
The usual robust Bellman residual argument for the single fixed target family gives
\[
G_t\le 2\varepsilon_t+\beta G_{t+1}.
\]
Backward iteration yields
\[
V_0^{{\rm tar},*}-V_0^{{\rm tar},\pi^c}
\le
2\sum_{t=0}^{T-1}\beta^t\varepsilon_t.
\]
This proves \cref{eq:compatible-design-performance}.

It remains to control $\varepsilon_t$.  Evaluate the same fixed policy $\pi^c$ under the target and compatible kernel families and define
\[
D_u
:=
\sup_{h_u}
\left|V_u^{{\rm tar},\pi^c}(h_u)-V_u^{c,\pi^c}(h_u)\right|,
\qquad D_T=0.
\]
At a date-$u$ history, insert and subtract the target lower expectation of the compatible continuation value.  The elementary value- and set-Lipschitz inequalities for lower expectations give
\[
D_u
\le
\beta D_{u+1}
+
\beta B_{u+1}\delta_u^{\rm tar,c}.
\]
The identical argument after a one-period deviation $a$ gives
\[
\left|
Q_t^{{\rm tar},\pi^c}(h_t,a)
-
Q_t^{c,\pi^c}(h_t,a)
\right|
\le
\beta D_{t+1}
+
\beta B_{t+1}\delta_t^{\rm tar,c}.
\]
Backward iteration of the first recursion and substitution into the second imply
\[
\varepsilon_t^{\rm tar}(c)
\le
\sum_{j=t}^{T-1}
\beta^{j-t+1}B_{j+1}\delta_j^{\rm tar,c},
\]
which is \cref{eq:compatible-target-epsilon-bound}.  Multiplying by $2\beta^t$, summing over $t$, and using
$\beta^t\beta^{j-t+1}=\beta^{j+1}$ gives \cref{eq:compatible-design-operational-certificate}.

For the control-reachable refinement, suppose that for every $(u,h_u,a)$ the normalized class $\V_{u+1}(h_u,a)$ contains the continuation function
\[
B_{u+1}^{-1}V_{u+1}^{c,\pi^c}(h_u,a,\cdot)
\]
whenever $B_{u+1}>0$; zero bounds are trivial.  Define the local target--candidate control defect by restricting the normalized payoff supremum in $\Dop$ to $\V_{u+1}(h_u,a)$.  The set-Lipschitz step above then uses this smaller defect, because the only payoff direction at which the two kernel families are compared is the compatible continuation value just displayed.  The same backward induction therefore proves the two bounds with control-reachable defects in place of $\delta_u^{\rm tar,c}$.  When the target and candidate predictive kernels arise from parameter ambiguity sets through a controlled observation matrix, \cref{thm:control-observability} provides the stated link to predictive and parameter discrepancies.
\end{proof}
\paragraph{Why root optimality alone is insufficient.}
Let $T=2$, $\beta=1$, and stage rewards be zero.  The sole root action reaches $A$ under the compatible kernel and $B$ under the target kernel.  State $A$ has one zero-payoff action; at $B$, where the two date-$1$ evaluators coincide, actions $b_{\rm bad}$ and $b_{\rm good}$ pay zero and one.  A policy choosing $b_{\rm bad}$ at $B$ is compatible-root optimal because $B$ is unreachable, yet its target loss is one.  Both one-step target--compatible discrepancies along that policy are zero.  Thus initial-state optimality cannot replace compatible Bellman optimality at every subgame.

\section{EC.5 Pricing Derivations and Scanner-Data Calibration}

\subsection{Gaussian inheritance and pricing wedge}
\begin{proof}[Proof of \cref{thm:linear-gaussian-inheritance-pricing}]
The Gaussian natural mean is $m/v$.  Replacing $m_s$ by $m_s+v_sq$ adds $q$ to the natural mean.  The realized likelihood contributes the same sufficient-statistic increment and the same precision increment to every candidate, so
\[
\frac{m_t^q}{v_t}=\frac{m_t}{v_t}+q,
\qquad
m_t^q=m_t+v_tq.
\]
The date-$s$ fixed-level credible family has $|q|\le z/\sqrt{v_s}$.  Multiplication by $v_t$ gives inherited raw radius $zv_t/\sqrt{v_s}$, while rebuilding a fixed-level interval at date $t$ gives $z\sqrt{v_t}$.  If $v_t<v_s$, their ratio is $\sqrt{v_s/v_t}>1$.

Under a candidate mean $m_t+v_tq$, expected one-period revenue is
\[
p\{a-p(m_t+v_tq)\}.
\]
Because prices are nonnegative, the worst candidate is $q=k$.  With $d=m_t+kv_t$ the objective is $ap-dp^2$.  If $d\le0$, its derivative $a-2dp$ is strictly positive on $[\underline p,\bar p]$, so the unique maximizer is $\bar p$.  If $d>0$, the objective is strictly concave and its constrained maximizer is the projection of $a/(2d)$.  This proves \eqref{eq:robust-linear-price}.

The resulting optimizer $p^\star(d)$ is nonincreasing in $d$: it equals $\bar p$ for $d\le0$ and for positive $d$ whose unconstrained optimum exceeds $\bar p$, decreases as $a/(2d)$ while the optimum is interior, and equals $\underline p$ once the unconstrained optimum falls below the lower boundary.  Since $v_t<v_s$ implies $k_F>k_I$ and hence $d_F>d_I$, we obtain $p_t^F\le p_{s,t}^I$.  If $d_I>0$, both objectives are strictly concave and equality is equivalent to the two projected optima coinciding; both interior optima give strict inequality.  If $d_I\le0$, the inherited choice is $\bar p$, and the comparison is strict exactly when the fresh choice lies below that boundary.
\end{proof}

\begin{corollary}[Moderate information maximizes the raw reconstruction wedge]\label{cor:EC-moderate-information-wedge}
Let $I_{s,t}=\sigma^{-2}\sum_{u=s}^{t-1}p_u^2$ and $x=v_t/v_s$.  Then
\[
\rho_t^F-\rho_{s\to t}^I=z\sqrt{v_s}(\sqrt{x}-x),
\qquad
x=\frac{1}{1+v_sI_{s,t}}.
\]
The wedge vanishes at zero and infinite information and has the unique maximum $z\sqrt{v_s}/4$ at $v_sI_{s,t}=3$.
\end{corollary}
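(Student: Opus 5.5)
The argument rests on two facts from the proof of \cref{thm:linear-gaussian-inheritance-pricing}: the inherited radius is $\rho^I_{s\to t}=zv_t/\sqrt{v_s}$ and the fresh radius is $\rho^F_t=z\sqrt{v_t}$. Factoring out $\sqrt{v_s}$ and writing $x=v_t/v_s$ gives
\[
\rho_t^F-\rho_{s\to t}^I=z\sqrt{v_s}\bigl(\sqrt{x}-x\bigr).
\]
This is pure algebra and needs no further input.

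The second step expresses $x$ in terms of information. Iterating the precision update \eqref{eq:linear-gaussian-update} from date $s$ to date $t$ gives
\[
v_t^{-1}=v_s^{-1}+\sigma^{-2}\sum_{u=s}^{t-1}p_u^2=v_s^{-1}+I_{s,t}.
\]
Multiplying through by $v_s$ yields $x=1/(1+v_sI_{s,t})$. Since $I_{s,t}\ge0$, the map $I_{s,t}\mapsto x$ is a strictly decreasing bijection from $[0,\infty)$ onto $(0,1]$. Zero information corresponds to $x=1$, and information tending to infinity corresponds to $x\downarrow0$.

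The third step is a one-variable maximization. Define $f(x)=\sqrt{x}-x$ on $(0,1]$. The wedge vanishes at the two ends: $f(1)=0$ and $f(x)\to0$ as $x\downarrow0$. Substituting $y=\sqrt{x}\in(0,1]$ turns $f$ into the concave quadratic $y-y^2$. This quadratic has a unique maximizer $y=1/2$, that is $x=1/4$, with maximum value $1/4$.

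Translating back through $x=1/(1+v_sI_{s,t})$, the condition $x=1/4$ is exactly $v_sI_{s,t}=3$. The maximal wedge is therefore $z\sqrt{v_s}/4$. Uniqueness of this maximizer in terms of $I_{s,t}$ follows from the bijectivity established in the second step.

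No step is a genuine obstacle. The only point needing care is to state that $v_s$, and hence the factor $z\sqrt{v_s}$, is held fixed while $I_{s,t}$ varies. The claim is thus a comparative statement in information alone, consistent with the conditional framing of the pricing benchmark.
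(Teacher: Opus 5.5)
Your proposal is correct and follows the paper's proof. Conjugate precision updating gives $v_t^{-1}=v_s^{-1}+I_{s,t}$, and the wedge reduces to maximizing $\sqrt{x}-x$ on $(0,1]$; the only cosmetic difference is that you complete the square via $y=\sqrt{x}$, whereas the paper uses the first- and second-derivative test, and both give the unique maximizer $x=1/4$, i.e.\ $v_sI_{s,t}=3$.
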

\begin{proof}
Conjugate updating gives $v_t^{-1}=v_s^{-1}+I_{s,t}$ and hence the displayed formula.  The derivative of $\sqrt{x}-x$ is $(2\sqrt{x})^{-1}-1$, which vanishes only at $x=1/4$; its second derivative is negative.  Since $x=1$ at zero information and tends to zero under unbounded information, the endpoint limits are zero and the stated maximizer is unique.
\end{proof}

\begin{proof}[Proof of \cref{prop:pricing-feedback}]
Write
\[
p^F(v)=\Pi_{[\underline p,\bar p]}
\left(\frac{a}{2(m+z\sqrt v)}\right),
\qquad
p^I(v)=\Pi_{[\underline p,\bar p]}
\left(\frac{a}{2(m+zv/\sqrt{v_s})}\right),
\]
where $a>0$, $z>0$, $\sigma^2>0$, and the common posterior mean $m>0$ may vary by date but is the same across the two conditional paths; all displayed denominators are assumed positive.  For every $v\in(0,v_s]$,
\[
z\sqrt v\ge z\frac{v}{\sqrt{v_s}},
\]
with strict inequality for $v<v_s$.  The unconstrained price is decreasing in the ambiguity penalty and interval projection is monotone, so
\begin{equation}\label{eq:EC-price-order-same-variance}
p^F(v)\le p^I(v).
\end{equation}
Both price rules are nonincreasing in $v$: their denominators are increasing in $v$, and projection preserves the order.

Let
\[
\Psi(v,p):=\frac{v}{1+vp^2/\sigma^2}.
\]
Direct differentiation gives
\[
\frac{\partial\Psi}{\partial v}
=\frac{1}{(1+vp^2/\sigma^2)^2}>0,
\qquad
\frac{\partial\Psi}{\partial p}
=-\frac{2v^2p/\sigma^2}{(1+vp^2/\sigma^2)^2}\le0
\]
on the maintained feasible interval $0\le\underline p\le p\le\bar p$.  Start from $v_s^F=v_s^I=v_s$.  Suppose inductively that $v_t^F\ge v_t^I$.  Because $p^F(\cdot)$ is nonincreasing,
\[
p_t^F(v_t^F)\le p_t^F(v_t^I)\le p_t^I(v_t^I),
\]
where the second inequality is \eqref{eq:EC-price-order-same-variance}.  Monotonicity of $\Psi$ then yields
\[
\begin{aligned}
v_{t+1}^F
&=\Psi(v_t^F,p_t^F(v_t^F))\\
&\ge \Psi(v_t^I,p_t^F(v_t^F))\\
&\ge \Psi(v_t^I,p_t^I(v_t^I))
=v_{t+1}^I.
\end{aligned}
\]
This proves both weak orderings by induction.

At any date at which the two conditional paths have a common variance $v<v_s$, the unconstrained fresh price is strictly below the inherited price.  If projection does not collapse both values to the same boundary, the price inequality is strict.  On the nonnegative line, $0\le p_1<p_2$ implies $p_1^2<p_2^2$ and hence $\Psi(v,p_1)>\Psi(v,p_2)$, even when $p_1=0$.  Thus every strict feasible price gap makes the following inherited variance strictly smaller.  Once $v_t^F>v_t^I$, the same monotonicity argument preserves the weak ordering, with strictness whenever neither price projection nor a zero-information action collapses it.  These are conditional pathwise comparisons at a common posterior-mean sequence; they do not compare the full endogenous distributions of posterior means.
\end{proof}

\subsection{Dominick's scanner-data replay and policy experiment}\label{sec:EC-oj-replay}
The Dominick's three-brand scanner panel contains 28,947 store--brand--week observations from 83 stores over 121 weeks \citep{EC-Montgomery1997}.  The reproducibility archive records the pinned immutable analysis source, SHA-256, schema, and verified downloader; it does not redistribute the raw CSV.  All estimates below are descriptive or model based.

We estimate the pooled interacted fixed-effects model
\[
\log Q_{ibt}=\alpha_{ib}+\gamma_t+\beta_b\log p_{ibt}+\delta_b\mathrm{Feat}_{ibt}+e_{ibt},
\]
with store--brand intercepts, common week effects, brand-specific price and feature coefficients, and store-clustered standard errors.  The own log-price coefficients are $-3.219$ $(0.050)$, $-3.105$ $(0.057)$, and $-2.842$ $(0.053)$ for Dominicks, Minute Maid, and Tropicana.

For the chronological replay, log sales and log price are residualized by brand on store and week fixed effects and the feature indicator, then fitted as $\widetilde y_{it}=-\theta_i\widetilde x_{it}+\epsilon_{it}$.  The pooled slope, cross-store dispersion, and residual set the Gaussian prior and noise scale.  After a 24-observation reference vintage, median final standard-deviation ratios are $0.451$, $0.525$, and $0.615$; fresh-to-inherited radius ratios are $2.219$, $1.904$, and $1.626$; and median positive-region suppression indices are $15.9\%$, $13.9\%$, and $9.8\%$.

\begin{lemma}[Local-regret diagnostic and dynamic residual]\label{lem:EC-local-regret-diagnostic}
Fix a history $h_t$, a continuation policy $\pi$, and the initial-vintage evaluator.  Write the action value as
\[
Q_t^{0,\pi}(h_t,a)=g_t(h_t,a)+c_t^{\pi}(h_t,a),
\]
where $g_t$ is the one-period inherited robust payoff and $c_t^{\pi}$ contains the discounted continuation value.  Define
\[
\ell_t^{I,\pi}(h_t)
:=\max_a g_t(h_t,a)-g_t(h_t,\pi_t(h_t))
\]
and let $e_t^{0,\pi}(h_t)$ be the dynamic Bellman residual in \cref{eq:bellman-residual}.  Then
\begin{equation}\label{eq:EC-local-regret-residual-gap}
\left|e_t^{0,\pi}(h_t)-\ell_t^{I,\pi}(h_t)\right|
\le
\max_a c_t^{\pi}(h_t,a)-\min_a c_t^{\pi}(h_t,a).
\end{equation}
Consequently the two quantities coincide whenever the inherited continuation term is action independent, as in the stagewise specialization.  They need not coincide when actions change future information.
\end{lemma}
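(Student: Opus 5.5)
The argument is a deterministic add-and-subtract comparison at the single history $h_t$. I will write $g_a:=g_t(h_t,a)$, $c_a:=c_t^{\pi}(h_t,a)$ and $a_\pi:=\pi_t(h_t)$. Then $e_t^{0,\pi}(h_t)=\max_a(g_a+c_a)-(g_{a_\pi}+c_{a_\pi})$ and $\ell_t^{I,\pi}(h_t)=\max_a g_a-g_{a_\pi}$. Their difference is
\[
e-\ell=\Bigl[\max_a(g_a+c_a)-\max_a g_a\Bigr]-c_{a_\pi}.
\]
The plan is to bound the bracket above by $\max_a c_a$ and below by $\min_a c_a$.

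For the upper bound, $\max_a(g_a+c_a)\le\max_a g_a+\max_a c_a$. For the lower bound, evaluate the joint maximum at a maximizer $a^\star$ of $g$, which gives $\max_a(g_a+c_a)\ge g_{a^\star}+c_{a^\star}\ge\max_a g_a+\min_a c_a$. Subtracting $c_{a_\pi}$, which lies in $[\min_a c_a,\max_a c_a]$, then yields
\[
\min_a c_a-\max_a c_a\;\le\; e-\ell\;\le\;\max_a c_a-\min_a c_a,
\]
which is exactly \eqref{eq:EC-local-regret-residual-gap}. Finite action sets make every maximum and minimum attained, so no limiting argument is needed.

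For the consequence, if $c_a$ is action independent the right side vanishes and $e=\ell$. This is the situation in the stagewise specialization: there, as in the proof of \cref{cor:stagewise-matching-por}, current actions do not alter later primitives and the future policy is history invariant, so the continuation term is a common constant. For the last claim, a two-action example suffices. If one action has higher immediate inherited payoff but a lower continuation value because it yields less information, then $\ell>0$ while $e=0$.

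The only point needing care is the decomposition itself, since robust lower expectations are not additive. I will therefore define $c_t^{\pi}:=Q_t^{0,\pi}-g_t$ directly, so the identity holds by definition and the inequality uses nothing beyond elementary max/min algebra. I do not expect any step to be a real obstacle.
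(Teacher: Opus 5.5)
Your proof is correct and is essentially the paper's argument: the paper also sets $M=\max_a(g_a+c_a)-\max_a g_a$, observes $\min_a c_a\le M\le\max_a c_a$, and notes that $e-\ell=M-c_{\pi_t(h_t)}$ then lies in $[\min_a c_a-c_{\pi_t(h_t)},\,\max_a c_a-c_{\pi_t(h_t)}]$. The only thing to tighten is your non-coincidence illustration, which should state that $\pi_t(h_t)$ is the action with the smaller immediate payoff $g_a$ but the larger total $g_a+c_a$, so that $e=0<\ell$.
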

\begin{proof}
Let $M=\max_a(g_a+c_a)-\max_a g_a$.  Since $\min_a c_a\le M\le\max_a c_a$,
\[
e_t^{0,\pi}-\ell_t^{I,\pi}=M-c_{\pi_t(h_t)}
\]
lies between $\min_a c_a-c_{\pi_t(h_t)}$ and $\max_a c_a-c_{\pi_t(h_t)}$.  This proves \eqref{eq:EC-local-regret-residual-gap}.
\end{proof}

For a simulated pricing path $\omega$, Panel D reports
\[
\mathcal L_{\rm loc}^{I}(\pi;\omega)
:=\sum_{t=0}^{H-1}\beta^t\ell_t^{I,\pi}(H_t(\omega)),
\]
with $g_t$ equal to inherited one-period robust revenue.  This is a discounted local-regret diagnostic.  It is not the exact dynamic Price of Relearning because the pricing action changes posterior precision and hence the continuation term in \eqref{eq:EC-local-regret-residual-gap}.

Each held-out fold starts from the store's early posterior, estimates the reference sensitivity from the other four folds, sets $a=2\theta_{\rm ref}$, and couples policies with common shocks for 20 periods. Fresh reconstruction lowers mean discounted expected revenue by $2.34\%$, $1.34\%$, and $0.88\%$ across brands. These outcomes are semisynthetic, not causal historical effects.

\subsection{Identification boundary, sharp sensitivity, and prospective assignment}
\label{sec:EC-identification-boundary}

\begin{proof}[Proof of \cref{prop:protocol-identification-boundary}]
For part (i), let $(t,h,a)$ be outside the historical support and suppose the undiscounted date-$t$ occupancy difference between the fresh and inherited algorithms at that pair is $w=d_t^F(h,a)-d_t^I(h,a)\ne0$.  Hold every transition and reward law on the historical support fixed.  Construct two bounded conditional mean reward functions that agree everywhere except at $(t,h,a)$, where they differ by $2c\operatorname{sgn}(w)$ for some admissible $c>0$.  Because the historical policy never visits the pair, the complete distribution of historical observables is identical under the two models.  The counterfactual protocol contrast differs by $2\beta^t c|w|$.  It is therefore not point identified.  If the available reward range is large enough relative to the fitted contrast, the same construction reverses its sign.

For part (ii), the two decision rules and transition law are fixed at their fitted values, so the protocol occupancies are fixed.  The protocol effect is then linear in the conditional mean reward:
\[
\Delta
=
\sum_t\beta^t\sum_{h,a}
\{d_t^F(h,a)-d_t^I(h,a)\}\{\widehat\mu_t(h,a)+u_t(h,a)\}.
\]
Subtracting the fitted-model effect and applying $|u_t(h,a)|\le\Gamma_t(h,a)$ gives the radius $B_\Gamma$ in \cref{eq:sharp-protocol-sensitivity-set}.  The endpoints are attainable by taking
\[
u_t(h,a)=\pm\Gamma_t(h,a)
\operatorname{sgn}\{d_t^F(h,a)-d_t^I(h,a)\},
\]
with arbitrary values when the occupancy difference is zero.  Hence the interval is sharp within the stated fixed-policy, fixed-transition disturbance class.  If the policy or transition law is re-estimated after perturbing the outcome model, this linear sensitivity set need not remain sharp and no such claim is made.

For part (iii), assume well-defined whole-horizon potential outcomes, consistency of observed with assigned outcomes, no interference across stores, and independent matched-pair randomization.  Index matched pairs by $j$ and stores within a pair by $k$.  Let $Y_{jk}(F)$ and $Y_{jk}(I)$ include the complete learning path under the assigned algorithm.  Draw $Z_j$ independently with probability one half.  If $Z_j=1$, assign fresh to store 1 and inheritance to store 2; reverse the assignment if $Z_j=0$.  With $D_j^{\rm obs}=Y_{j1}^{\rm obs}-Y_{j2}^{\rm obs}$,
\[
\widehat\tau=\frac1J\sum_{j=1}^J(2Z_j-1)D_j^{\rm obs}.
\]
Taking expectation over assignment gives
\[
\E_Z[\widehat\tau]
=\frac1{2J}\sum_{j=1}^J
\{Y_{j1}(F)-Y_{j1}(I)+Y_{j2}(F)-Y_{j2}(I)\},
\]
the finite-population average protocol effect.  Assignment must be held for the full horizon because an earlier protocol changes the later learning state.
\end{proof}

Prospective planning uses
\[
J_0=\left\lceil\left\{\frac{z_{1-\alpha/2}+z_{1-\beta}}{|\delta|/s_D}\right\}^2\right\rceil,
\qquad J=\lceil D_{\rm eff}J_0\rceil.
\]
With $\alpha=0.05$, 90\% target power, and $D_{\rm eff}=4$, the model-based requirements are 24, 20, and 12 matched pairs; they are planning inputs, not results from an implemented experiment.

\setlength{\bibsep}{0pt}

\begin{center}
\centering
\captionof{table}{Supplementary computation and scanner-calibrated evidence. Panel D reports a fresh-policy inherited local-regret diagnostic, not exact dynamic PoR.}
\label{tab:EC-vintage-backward-scaling}
\label{tab:EC-vintage-compression}
\label{tab:EC-oj-replay}
\label{tab:EC-policy-value}
\scriptsize
\renewcommand{\arraystretch}{0.82}

\noindent
\begin{minipage}[t]{0.49\textwidth}
\centering
\textbf{Panel A: exact fixed-model evaluation}\par
\setlength{\tabcolsep}{2.0pt}
\begin{tabular}{@{}rrrrr@{}}
\toprule
Horizon & \shortstack{Date--state\\nodes} & \shortstack{Vintage\\values} & \shortstack{Operation\\scale} & \shortstack{Median\\seconds}\\
\midrule
10  & 73   & 520     & 11,691    & 0.0014\\
25  & 193  & 2,800   & 70,011    & 0.0064\\
50  & 393  & 10,600  & 275,211   & 0.0252\\
100 & 793  & 41,200  & 1,090,611 & 0.0985\\
200 & 1,593& 162,400 & 4,341,411 & 0.3796\\
\bottomrule
\end{tabular}
\end{minipage}%
\hfill
\begin{minipage}[t]{0.49\textwidth}
\centering
\textbf{Panel B: coherent vintage compression}\par
\setlength{\tabcolsep}{1.8pt}
\begin{tabular}{@{}rrrrrr@{}}
\toprule
Horizon & \shortstack{Recent\\classes} & \shortstack{Maximum\\stored} & \shortstack{Storage\\fraction} & \shortstack{Maximum\\error} & Bound\\
\midrule
16 & 8  & 10 & 0.815 & $5.53\times10^{-4}$ & $2.28\times10^{-3}$\\
32 & 8  & 10 & 0.507 & $5.83\times10^{-4}$ & $6.32\times10^{-3}$\\
32 & 12 & 14 & 0.660 & $4.05\times10^{-5}$ & $3.91\times10^{-4}$\\
64 & 8  & 10 & 0.281 & $6.99\times10^{-4}$ & $8.63\times10^{-3}$\\
64 & 12 & 14 & 0.381 & $4.86\times10^{-5}$ & $5.91\times10^{-4}$\\
\bottomrule
\end{tabular}
\end{minipage}

\vspace{1pt}
\textbf{Panel C: retrospective scanner-data replay}\par
\setlength{\tabcolsep}{1.8pt}
\begin{tabular}{lrrrrrr}
\toprule
Brand & $\widehat\beta$ (standard error) & \shortstack{Posterior standard\\deviation ratio} & \shortstack{Fresh-to-inherited\\radius ratio} & \shortstack{Positive\\paths} & \shortstack{Median\\suppression} & \shortstack{90th-percentile\\suppression}\\
\midrule
Dominicks & $-3.219$ (.050) & .451 & 2.219 & 61/83 & 15.9\% & 22.6\%\\
Minute Maid & $-3.105$ (.057) & .525 & 1.904 & 46/83 & 13.9\% & 21.7\%\\
Tropicana & $-2.842$ (.053) & .615 & 1.626 & 68/83 & 9.8\% & 16.6\%\\
\bottomrule
\end{tabular}

\vspace{1pt}
\textbf{Panel D: held-out-parameter policy experiment}\par
\setlength{\tabcolsep}{4.0pt}
\begin{tabular}{lrrrr}
\toprule
Brand & Stores & Revenue effect & Information effect & Fresh-policy local regret\\
\midrule
Dominicks & 61 & $-0.894$ $(-1.053,-0.734)$ & $-21.92$ & $0.492$\\
Minute Maid & 46 & $-0.486$ $(-0.575,-0.397)$ & $-30.93$ & $0.255$\\
Tropicana & 68 & $-0.330$ $(-0.371,-0.288)$ & $-43.34$ & $0.198$\\
\bottomrule
\end{tabular}
\end{center}

\end{document}